\newtheorem{thm}{Theorem}[section]
\newtheorem{prop}[thm]{Proposition}
\newtheorem{lem}[thm]{Lemma}
\newtheorem{rem}[thm]{Remark}
\newtheorem{example}[thm]{Example}
\numberwithin{equation}{section}
\newcommand{\dsum}[2]{{\displaystyle\sum_{#1}^{#2}\,}}
\newcommand{\dprod}[2]{{\displaystyle\prod_{#1}^{#2}\,}}
\newcommand{\dint}[2]{{\displaystyle\int_{#1}^{#2}}\,}
\newcommand{\tsum}[2]{{\textstyle\sum_{#1}^{#2}}\,}
\newcommand{\br}[1]{{\langle{#1}\rangle}}
\newcommand{\pr}[1]{{\left\{{#1}\right\}}}
\newcommand{\iso}{\stackrel{\sim}{\to}}
\newcommand{\norm}[1]{{||{#1}||}}
\title{A direct approach to the 
bispectral problem for the Ruijsenaars-Macdonald $q$-difference operators}
\author{Masatoshi~Noumi and Jun'ichi~Shiraishi}
\address{MN: Department of Mathematics, Kobe University, Rokko, Kobe 657-8501, Japan}
\email{noumi@math.kobe-u.ac.jp}
\address{JS: Graduate School of Mathematical Sciences, University of Tokyo, Komaba, Tokyo 153-8914, Japan}
\email{shiraish@ms.u-tokyo.ac.jp}
\begin{document}

\begin{abstract}
We present a direct approach to the bispectral problem
associated with the Ruijsenaars-Macdonald $q$-difference operators of $GL$ type. 
We give an explicit construction of 
the meromorphic function $\psi_n(x;s|q,t)$ on ${\mathbb T}^n_x \times {\mathbb T}^n_s$, which is 
the solution of the bispectral problem up to a 
certain gauge transformation.
Some basic properties of $\psi_n(x;s|q,t)$ are studied, 
including the structure of the divisor of poles,  the symmetries $\psi_n(x;s|q,t)=\psi_n(s;x|q,t)=\psi_n(x;s|q,q/t)$, and 
the recursive structure 
described in terms of Jackson integrals or $q$-difference operators.  
\end{abstract}
\maketitle
\baselineskip=16pt

%%%%%%%%%%%%%%%%%%%%%%%%%%%%%%

\section{Introduction}
%\subsection{Bispectral problem}
In this paper we study the bispectral problem for the Ruijsenaars-Macdonald 
$q$-difference operators of $GL$ type.   
Let $\mathbb{T}^n_x=(\mathbb{C}^\ast)^n$ be the $n$-dimensional 
algebraic torus with canonical coordinates $x=(x_1,\ldots,x_n)$.  
For each $r=0,1,\ldots,n$, we denote by  
$D^{x}_r=D_r(x,T_{q,x}|t)$ the Ruijsenaars-
Macdonald $q$-difference operator of order $r$ 
in $x$ variables with parameter $t\in\mathbb{C}$; it is defined by 
\begin{align}
D_r^{x}=t^{\binom{n}{2}}
\dsum{|I|=r}{}\ 
\dprod{i\in I;\,j\notin I}{}
\dfrac{tx_i-x_j}{x_i-x_j}
\,
\dprod{i\in I}{}
T_{q,x_i}\qquad(r=0,1,\ldots,n), 
\end{align}
summed over all subsets $I\subseteq\pr{1,\ldots,n}$ 
of cardinality $r$, where $T_{q,x_i}$ stands for the 
$q$-shift operator with respect to $x_i$ ($i=1,\ldots,n$).   
Denoting the dual coordinates by $s=(s_1,\ldots,s_n)$, 
we investigate 
the joint bispectral problem 
\begin{align}\label{eq:bispsys}
%\left\{
\begin{array}{ll}
D^{x}_r\,f(x;s)=f(x;s)\,e_r(s)\quad&(r=1,\ldots,n),
\\[8pt]
D^{s}_r\,f(x;s)=f(x;s)\,e_r(x)\quad&(r=1,\ldots,n), 
\end{array}
%\right.
\end{align}
for an unknown meromorphic function 
$f(x;s)$ on $\mathbb{T}^n_x\times\mathbb{T}^n_{s}$. 
Here, for each $r=0,1\ldots,n$,  
$e_r(s)$ denotes the elementary symmetric polynomial 
in $s$ variables of degree $r$.  
We also write this system \eqref{eq:bispsys} of 
$q$-difference equations as 
\begin{align}\label{eq:bisp}
%\left\{
\begin{array}{ll}
D^{x}(u)\,f(x;s)=f(x;s)\,\dprod{i=1}{n}(1-us_i),\\
D^{s}(u)\,f(x;s)=f(x;s)\,\dprod{i=1}{n}(1-ux_i), 
\end{array}
%\right. 
\end{align}
by using the generating function 
$D^x(u)=\tsum{r=0}{n}\,(-u)^r\,D^x_r$
of the Ruijsenaars-Macdonald operators. 

It is known by Ruijsenaars \cite{R1987} and Macdonald \cite{M1995} 
that the $q$-difference operators $D^x_1,\ldots,D^x_n$ 
commute with each other.  
On the ring $\mathbb{C}[x]^{\mathfrak{S}_n}$ of symmetric 
polynomials in $x$,  
they are diagonalized simultaneously 
by the Macdonald polynomials $P_{\lambda}(x|q,t)$ 
associated with partitions $\lambda=(\lambda_1,\ldots,\lambda_n)$: 
\begin{align}
D^x(u)\,P_{\lambda}(x|q,t)=P_\lambda(x|q,t)\,\dprod{i=1}{n}(1-ut^{n-i}q^{\lambda_i}).  
\end{align}
It is also known that the Macdonald polynomials
$\widetilde{P}_\lambda(x|q,t)={P_\lambda(x|q,t)}/{P_\lambda(t^\delta|q,t)}$
normalized by the values at $t^\delta=(t^{n-1},t^{n-2},\ldots,1)$ 
have the duality property 
\begin{align}
\widetilde{P}_\lambda(t^\delta q^\mu|q,t)
=\widetilde{P}_\mu(t^\delta q^\lambda|q,t)
\end{align}
for any partitions $\lambda$, $\mu$ of length $\le n$.  
This duality implies that the function 
$f(x;s)=\widetilde{P}_{\lambda}(t^\delta q^\mu|q,t)$ 
solves the bispectral problem \eqref{eq:bisp} on the 
discrete set of points $(x;s)=(t^\delta q^\mu,t^\delta q^\lambda)$ 
indexed by partitions. 

The bispectral problem \eqref{eq:bisp} 
for the Ruijsenaars-Macdonald operators in complex variables 
is studied by Cherednik in the context of  difference  Harish-Chandra theory \cite{C2003,C2009}, and by 
van Meer and Stokman \cite{MS2010} in connection with 
the quantum KZ equations.  The existence of meromorphic solutions 
has also been established by their works.   
In this article, we present a direct approach to the bispectral problem
in the region $|x_1| \gg |x_2|\gg\cdots \gg |x_n|$
and $|s_1| \gg |s_2|\gg\cdots \gg |s_n|$, 
based on formal solutions of the 
$q$-difference equations \eqref{eq:bisp} of scalar type.  
In particular we give explicit construction of 
meromorphic solutions, and discuss various properties of 
solutions, including duality and recursive constructions by 
Jackson integral representations and $q$-difference operators.  
%\par\medskip

%\subsection{Formal series eigenfunctions}
First, we establish several fundamental properties concerning the joint
eigenfunction problem with respect to the Ruijsenaars-Macdonald operators acting on $x$
\begin{align}
D^{x}(u)\,f(x;s)=f(x;s)\,\dprod{i=1}{n}(1-us_i),\label{eigen-x}
\end{align}
with  $s=(s_1,\ldots,s_n)$ being a given set of complex variables. Set 
$s_i=t^{n-i}q^{\lambda_i}$ $(1\leq i\leq n)$. 
Let $Q_+={\mathbb N}\alpha_1\oplus \cdots\oplus {\mathbb N}\alpha_{n-1}$ be the positive cone of the root lattice of $A_{n-1}$ with 
simple roots $\alpha_i=\epsilon_i-\epsilon_{i+1}$ ($1\leq i\leq n-1$).
We write $x^\lambda=\prod_i x_i^{\lambda_i}$,
$x^{-\mu}=x_1^{-\mu_1}\cdots x_n^{-\mu_n}$ for each $\mu=\sum_i \mu_i \epsilon_i \in Q_+$, and 
$ {\mathbb C}(s^{-Q_+})[[x^{-Q_+}]]= {\mathbb C}(s_2/s_1,\ldots,s_n/s_{n-1})[[x_2/x_1,\ldots,x_n/x_{n-1}]]$ for short.
Let $f(x;s)$ be a formal series solutions for the joint
eigenfunction problem (\ref{eigen-x})
given by  $f(x;s)=x^\lambda \varphi(x;s)$, $\varphi(x;s)=\sum_{\mu\in Q_+} x^{-\mu} \varphi_\mu(s)\in {\mathbb C}(s^{-Q_+})[[x^{-Q_+}]]$
having the initial term $\varphi_0(s)=1$.
\begin{thm}[Theorem \ref{thm:FR}]\label{thm-1}
$(1)$ Such a series $\varphi(x;s)$ exists uniquely. $(2)$
For each $\mu\in Q_+$ the coefficient $\varphi_\mu(s)\in {\mathbb C}(s^{-Q_+})$ is regular at the origin $(s_2/s_1,\ldots,s_n/s_{n-1})=0$. 
$(3)$ Each $\varphi_\mu(s)$ has at most simple poles along the divisors 
$q^{k+1}s_j/s_i=1$ $(1\leq i<j\leq n;k=0,1,2,\ldots.)$. 
\end{thm}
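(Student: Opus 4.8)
The plan is to convert the joint eigenfunction equation \eqref{eigen-x} into a triangular recursion for the coefficients $\varphi_\mu(s)$ and solve it by induction on the height of $\mu\in Q_+$. Writing $f=x^\lambda\varphi$ with $\varphi=\sum_\mu x^{-\mu}\varphi_\mu(s)$, using $T_{q,x_i}\,x^{\lambda-\mu}=q^{\lambda_i-\mu_i}\,x^{\lambda-\mu}$, and expanding each rational coefficient $\prod_{i\in I,\,j\notin I}(tx_i-x_j)/(x_i-x_j)$ as a power series in the variables $x_j/x_i=x^{-(\epsilon_i-\epsilon_j)}$ with $i<j$, I would extract the coefficient of $x^{\lambda-\kappa}$ on both sides of $D^x(u)f=f\prod_i(1-us_i)$. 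Since every factor $x_j/x_i$ with $i<j$ lowers the weight by a positive root, only $\mu\preceq\kappa$ contribute and the diagonal term reproduces the leading eigenvalue $\prod_i(1-us_iq^{-\kappa_i})$. This yields, for each $\kappa$, the identity in $u$
\[
\varphi_\kappa\,\gamma_\kappa(u;s)=\sum_{\mu\prec\kappa}\varphi_\mu\,B_{\mu,\kappa}(u;s),\qquad \gamma_\kappa(u;s)=\prod_{i=1}^n(1-us_i)-\prod_{i=1}^n(1-us_iq^{-\kappa_i}),
\]
where the $B_{\mu,\kappa}$ are explicit Laurent polynomials in $s$ built from the expansion coefficients of the coefficients $A_I$ of the operators.

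For part $(1)$ I would induct on $\mathrm{ht}(\kappa)$. Uniqueness is immediate: the coefficient of $u^1$ in $\gamma_\kappa$ equals $\sum_i s_i(q^{-\kappa_i}-1)$, a nonzero rational function for $\kappa\neq0$, so the displayed identity determines $\varphi_\kappa$ from the lower $\varphi_\mu$, forcing at most one solution with $\varphi_0=1$. For existence I would construct $\varphi$ from this recursion and then verify that it satisfies the \emph{full} system. Here I would use the commutativity of the Ruijsenaars--Macdonald operators: setting $g=D^x_r f-e_r(s)f$, the relation $[D^x_1,D^x_r]=0$ gives $D^x_1 g=e_1(s)\,g$, while $\varphi_0=1$ forces the leading coefficient of $g$ to vanish; the same triangular recursion, now homogeneous, then propagates $g_0=0$ to $g\equiv0$, so $f$ solves every equation $D^x_r f=e_r(s)f$.

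For parts $(2)$ and $(3)$ the essential point is that the recursion holds for \emph{all} $u$, so for generic fixed $u_0$ one may write $\varphi_\kappa=\bigl(\sum_\mu\varphi_\mu B_{\mu,\kappa}(u_0;\cdot)\bigr)/\gamma_\kappa(u_0;\cdot)$. Hence the poles of $\varphi_\kappa$ are contained in the poles of the lower $\varphi_\mu$ together with $\bigcap_u Z(\gamma_\kappa(u;\cdot))$, the locus where the multisets $\{s_i\}$ and $\{s_iq^{-\kappa_i}\}$ coincide. A short analysis shows that the only codimension-one components of this locus arise from a transposition $(i\,j)$ with $\kappa=c(\epsilon_i-\epsilon_j)$, which forces $i<j$ and $c\ge1$ since $\kappa\in Q_+$, giving exactly a divisor $q^{c}s_j/s_i=1$. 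By induction the inherited poles already lie in the family $\mathcal{H}=\{q^{k+1}s_j/s_i=1: i<j,\ k\ge0\}$, so the poles of $\varphi_\kappa$ lie in $\mathcal{H}$, proving the divisor statement of $(3)$. Since every divisor in $\mathcal{H}$ misses the origin (there $s_j/s_i\to0$, so $q^{k+1}s_j/s_i\to0\neq1$), regularity at the origin, part $(2)$, follows at once, the numerators being polynomial in $s$ and the $\varphi_\mu$ regular there by induction.

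The main obstacle will be the \emph{simplicity} of these poles. Along a divisor $H\in\mathcal{H}$ that is not a new resonance for $\kappa$, one may choose $u_0$ with $\gamma_\kappa(u_0;\cdot)$ nonvanishing on $H$, so the pole of $\varphi_\kappa$ is purely inherited and stays simple. The delicate case is when $H$ is a new resonance, i.e. $\kappa=(k+1)(\epsilon_i-\epsilon_j)$: then $\gamma_\kappa(u;\cdot)$ has a simple zero on $H$ while the source terms $\sum_\mu\varphi_\mu B_{\mu,\kappa}$ already carry, by the induction hypothesis, a simple pole there, so a naive count permits a double pole. The heart of the proof is therefore a local Laurent analysis along $H$ showing that the residues of the source terms combine to cancel the potential second-order part; I expect to establish this by computing the leading Laurent coefficient of both sides of the recursion on $H$ and specializing $u$ so that $\gamma_\kappa(u;\cdot)$ has a simple zero, thereby forcing the order of the pole of $\varphi_\kappa$ down to one.
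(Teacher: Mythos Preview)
Your treatment of parts (1) and (2) is essentially the paper's argument: extract a triangular recursion from the eigenvalue equation, observe that the first-order piece already determines $\varphi_\kappa$ uniquely, and use commutativity of the $D^x_r$ to upgrade the $D^x_1$-solution to a joint eigenfunction. Your device for locating the poles --- intersecting the zero loci $Z(\gamma_\kappa(u;\cdot))$ over all $u$ and identifying the codimension-one components with transpositions --- is a pleasant variant; the paper instead reads the denominator off directly from a well-chosen specialization of $u$, but both routes give the family $\mathcal{H}$.

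The genuine gap is in the \emph{simplicity} of the poles. You correctly isolate the dangerous case $\kappa=c(\epsilon_i-\epsilon_j)$ on $H:q^{c}s_j/s_i=1$, where $\gamma_\kappa(u;\cdot)$ vanishes identically in $u$ on $H$. But your proposed remedy --- ``specializing $u$ so that $\gamma_\kappa(u;\cdot)$ has a simple zero'' --- does not reduce the order: you get (simple zero)$\cdot\varphi_\kappa=$(something with at most a simple pole), which still allows a double pole in $\varphi_\kappa$. A bare Laurent computation of residues on $H$ will not close this without a further structural input, and you do not supply one.

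What the paper does instead is the missing idea. Write $\mu=k_r\alpha_r+\cdots+k_{n-1}\alpha_{n-1}$ with $k_r>0$, keep the full $u$-dependent recursion, and specialize $u=q^{k_r}/s_r$. The key observation (a factorization of the expansion coefficients, Lemma~2.6 in the paper) is that for $\nu$ supported on $\alpha_r,\dots,\alpha_{n-1}$ the source coefficient factors as $\prod_{i<r}(1-us_iq^{-\mu_i})$ times the corresponding $(n-r+1)$-variable coefficient; at $u=q^{k_r}/s_r$ this kills every term whose $\alpha_r$-component equals $k_r$, so only $\nu$ with \emph{strictly smaller} $\alpha_r$-component contribute. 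The surviving recursion has the explicit denominator $(1-q^{k_r})\prod_{j>r}(1-q^{k_r}s_j/s_r)$, and since by induction the contributing $\psi_\nu$ carry no factor $(1-q^{k_r}s_j/s_r)$ in their denominators (their $\alpha_r$-height is $<k_r$), the new pole enters to first order only. Iterating gives the closed form $\psi_\mu(s)=a_\mu(s)\big/\prod_{r\le i<j\le n}(qs_j/s_i;q)_{k_i}$ with polynomial $a_\mu$, from which simplicity is immediate. Your framework is compatible with this --- the same factorization holds for the $A_I$ you use --- but the argument hinges on this specific specialization and the resulting vanishing of terms, not on a residue cancellation along $H$.
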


Set  $\psi(x;s)$ via the gauge transformation 
\begin{align}
\varphi(x;s)=\prod_{1\leq i<j\leq n}{(qx_j/x_i;q)_\infty \over (qx_j/tx_i;q)_\infty} \psi(x;s),
\end{align}
where we used the standard notation
for the  $q$-shifted factorial
$(z;q)_\infty=\prod_{k=0}^{\infty}\,(1-q^k z)$ assuming that $|q|<1$. Then (\ref{eigen-x}) is recast as 
\begin{align}
L^{(x;s)}(u) \psi(x;s)= \psi(x;s)\prod_{i=1}^n(1-u s_i),
\end{align}
where 
\begin{align}
&L^{(x;s)}(u)
=
\dsum{I\subseteq\pr{1,\ldots,n}}{}(-u)^{|I|}s^{\epsilon_I}B_{I}(x)\,T_{q,x}^{\,\epsilon_I}\qquad (\epsilon_I=\sum_{i\in I}\epsilon_i),
\nonumber\\
&B_{I}(x)=
\dprod{i<j;\, i\notin I, j\in I}{}\dfrac{1-tx_j/x_i}{1-x_j/x_i}
\dfrac{1-qx_j/tx_i}{1-qx_j/x_i}
\quad(I\subseteq \pr{1,\ldots,n}).
\end{align}
Since $L^{(x;s)}(u)$ is invariant under the exchange $t \leftrightarrow q/t$, we have
\begin{thm}[Theorem \ref{thm:FRpsi}]
The series $\psi(x;s)$ is invariant under the change of parameters $t \leftrightarrow q/t$.
\end{thm}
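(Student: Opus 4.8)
The plan is to deduce the invariance of $\psi(x;s)$ from the manifest invariance of the operator $L^{(x;s)}(u)$ under $t\leftrightarrow q/t$, combined with a uniqueness statement for the gauge-transformed equation that is furnished by Theorem~\ref{thm-1} through the gauge transformation. To keep the parameter dependence visible I write $\psi(x;s|q,t)$, $L^{(x;s)}(u|q,t)$ and $B_I(x|t)$.

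First I would verify the operator invariance at the level of the coefficients $B_I(x)$. Under $t\mapsto q/t$ the factor $1-tx_j/x_i$ is carried to $1-qx_j/tx_i$ and, conversely, $1-qx_j/tx_i$ is carried to $1-tx_j/x_i$, while the denominators $1-x_j/x_i$ and $1-qx_j/x_i$ stay fixed; hence the two numerators attached to each pair $(i,j)$ simply interchange and the product defining $B_I(x)$ is unchanged. Since the remaining ingredients $(-u)^{|I|}$, $s^{\epsilon_I}$ and $T_{q,x}^{\epsilon_I}$ do not involve $t$, this yields $L^{(x;s)}(u|q,q/t)=L^{(x;s)}(u|q,t)$ as $q$-difference operators.

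Next I would fix the space and normalization in which $\psi$ is sought. The gauge factor $g_t(x)$ appearing in the displayed gauge transformation is a power series in the variables $x_j/x_i=x^{-(\epsilon_i-\epsilon_j)}$ ($i<j$, with $\epsilon_i-\epsilon_j\in Q_+$), and its constant term is $1$; thus $g_t\in{\mathbb C}[[x^{-Q_+}]]$ is a unit, and $\psi(x;s|q,t)=g_t(x)^{-1}\varphi(x;s|q,t)$ again belongs to ${\mathbb C}(s^{-Q_+})[[x^{-Q_+}]]$ with leading coefficient $\psi_0(s)=\varphi_0(s)=1$. Consequently multiplication by $g_t$ is a normalization-preserving bijection of ${\mathbb C}(s^{-Q_+})[[x^{-Q_+}]]$ which, by the recasting recorded just above the theorem, sends the solutions of $L^{(x;s)}(u|q,t)\psi=\psi\prod_{i=1}^n(1-us_i)$ normalized by $\psi_0=1$ bijectively onto the solutions of \eqref{eigen-x} for $\varphi$ normalized by $\varphi_0=1$. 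By Theorem~\ref{thm-1}(1) the latter solution is unique, so the $\psi$-equation with parameter $t$ likewise has a unique normalized solution, namely $\psi(x;s|q,t)$.

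Finally I would run the identical construction with $t$ replaced by $q/t$, obtaining $\psi(x;s|q,q/t)$ as the unique normalized solution of $L^{(x;s)}(u|q,q/t)\psi=\psi\prod_{i=1}^n(1-us_i)$ in the same space. By the first step this is literally the same equation as the one characterizing $\psi(x;s|q,t)$, and both are normalized by $\psi_0=1$; uniqueness then forces $\psi(x;s|q,t)=\psi(x;s|q,q/t)$. The point requiring the most care is exactly this transfer of uniqueness: one must confirm that $q/t$ is as admissible a parameter as $t$ in Theorem~\ref{thm-1}, so that the unique normalized $\varphi(x;s|q,q/t)$ genuinely exists, and that the gauge map $g_{q/t}$ remains a leading-term-preserving unit after the substitution. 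Both are routine once the constant-term-$1$ property of the gauge factor is in hand, so the only structurally essential input is the operator invariance of the opening step.
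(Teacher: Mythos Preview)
Your argument is correct and is essentially the paper's own proof: the key observation is that the coefficients $B_I(x)$ are manifestly symmetric in $t$ and $q/t$, so $L^{(x;s)}(u)$ is invariant under $t\leftrightarrow q/t$, and the unique existence of a normalized formal solution then forces $\psi(x;s|q,t)=\psi(x;s|q,q/t)$. The only cosmetic difference is that the paper establishes uniqueness directly for the $\psi$-equation (its Theorem~\ref{thm:FFpsi}) and deduces the $\varphi$-uniqueness from it, whereas you go the other way, pulling uniqueness back from Theorem~\ref{thm-1} through the gauge factor; since the gauge factor is a unit in $\mathbb{C}[[x^{-Q_+}]]$ with constant term $1$, the transfer works in either direction and the two routes are equivalent.
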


Now we turn to an explicit formula for the eigenfunction of (\ref{eigen-x}).
For each $n=1,2,\ldots$, 
we denote by $M_n$ the set of all strictly upper triangular 
$n\times n$ matrices with nonnegative integer coefficients:
$
M_n=\{ \theta=(\theta_{ij})_{i,j=1}^{n}\in\mbox{Mat}(n;\mathbb{N})\ |\ \  
\theta_{ij}=0\ \ (1\le j\le i\le n) \}. 
$
For each $\theta\in M_n$, we define a rational function 
$c_n(\theta;s|q,t)$ in the variables $s=(s_1,\ldots,s_n)$ by
\begin{align}\label{def-cn}
c_n(\theta;s|q,t)
&=
\dprod{k=2}{n}
\dprod{1\le i<j\le k}{}
\dfrac{(q^{\sum_{a>k}(\theta_{i,a}-\theta_{j,a})}ts_j/s_i;q)_{\theta_{i,k}}}
{(q^{\sum_{a>k}(\theta_{i,a}-\theta_{j,a})}qs_j/s_i;q)_{\theta_{i,k}}}
\nonumber\\
&\quad\cdot
\dprod{k=2}{n}
\dprod{1\le i\le j<k}{}
\dfrac{(q^{-\theta_{j,k}+\sum_{a>k}(\theta_{i,a}-\theta_{j,a})}qs_j/ts_i;q)_{\theta_{i,k}}}
{(q^{-\theta_{j,k}+\sum_{a>k}(\theta_{i,a}-\theta_{j,a})}s_j/s_i;q)_{\theta_{i,k}}}, 
\end{align}
where 
$(z;q)_k=(z;q)_\infty/(q^kz;q)_\infty$ $(k\in\mathbb{Z})$, and 
$\sum_{a>k}=\sum_{a=k+1}^{n}$. 
Set 
\begin{align}
p_n(x;s|q,t)&=\sum_{\theta\in M_n}c_n(\theta;s|q,t) \prod_{1\leq i<j\leq n} (x_j/x_i)^{\theta_{ij}}\in {\mathbb C}(s^{-Q_+})[[x^{-Q_+}]].%,\\
%\varphi_n(x;s|q,t)&=
%\prod_{1\leq i<j\leq n}  {(qs_j/s_i;q)\infty \over (qs_j/t s_i;q)\infty}p_n(x;s|q,t)
\end{align}

\begin{thm}[Theorem \ref{thm:eigenDfn}]
We have the eigenfunction equation in $x$
\begin{align}
D^x(u) x^\lambda p_n(x;s|q,t)=x^\lambda p_n(x;s|q,t)\prod_{i=1}^n (1-u s_i).
\end{align}
\end{thm}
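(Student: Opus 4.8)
The statement is precisely that the formal series $x^\lambda p_n(x;s|q,t)$ solves the eigenfunction equation (\ref{eigen-x}) with $s_i=t^{n-i}q^{\lambda_i}$. Since the initial coefficient is $c_n(0;s|q,t)=1$ and $p_n\in{\mathbb C}(s^{-Q_+})[[x^{-Q_+}]]$, Theorem~\ref{thm:FR} will then also identify $p_n=\varphi$; but the whole substance is the single formal-series identity $D^x(u)\,x^\lambda p_n=x^\lambda p_n\,\prod_{i=1}^n(1-us_i)$. So the plan is to establish this identity by substitution and comparison of coefficients, and to reduce the resulting combinatorial recursion to a known $q$-series summation.

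First I would record how $D^x(u)$ acts on a single monomial. Writing $\mu(\theta)=\sum_{i<j}\theta_{ij}(\epsilon_i-\epsilon_j)\in Q_+$, so that $\prod_{i<j}(x_j/x_i)^{\theta_{ij}}=x^{-\mu(\theta)}$, each subset $I$ contributes the shift $T_{q,x}^{\epsilon_I}$, which multiplies $x^{\lambda-\mu(\theta)}$ by $q^{\langle\lambda-\mu(\theta),\epsilon_I\rangle}$, together with the normalisation $t^{\binom{n}{2}}$ and the rational coefficient $\prod_{i\in I,\,j\notin I}\frac{tx_i-x_j}{x_i-x_j}$, which I expand as a power series in the ratios $x_j/x_i$ ($i<j$) in the region $|x_1|\gg\cdots\gg|x_n|$. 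With $s_i=t^{n-i}q^{\lambda_i}$ the leading ($\theta=0$) term reproduces the right-hand eigenvalue $\prod_i(1-us_i)$. Equating the coefficient of each monomial $x^{\lambda-\nu}$, and of each power of $u$, then turns the eigenfunction equation into a finite linear recursion among the $c_n(\theta;s|q,t)$ whose indices $\theta$ are the neighbours of a fixed matrix produced by the shifts $T_{q,x}^{\epsilon_I}$ and by the expansion of the prefactors.

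To verify these recursions for the explicit product (\ref{def-cn}), I would argue by induction on $n$, splitting $\theta\in M_n$ into its principal block $\theta'\in M_{n-1}$ and its last column $(\theta_{1n},\ldots,\theta_{n-1,n})$. In (\ref{def-cn}) the factor $k=n$ depends only on this last column, while the factors $k\le n-1$ reproduce $c_{n-1}$ up to the shifts $q^{\sum_{a>k}(\theta_{ia}-\theta_{ja})}$ coming from the columns to its right; keeping track of these shifts is exactly what couples the last column to the principal block. Using the inductive hypothesis, the $n$-variable recursion should collapse to a single one-column summation identity.

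I expect that one-column identity to be the main obstacle. After clearing the $q$-shifted factorials it becomes a terminating, balanced ${}_r\phi_{r-1}$ sum of $q$-Chu--Vandermonde / $q$-Saalsch\"utz type, and the delicate point is that the arguments $ts_j/s_i$, $qs_j/ts_i$ and the integer shifts $-\theta_{jk}+\sum_{a>k}(\theta_{ia}-\theta_{ja})$ must combine so as to make the sum balanced. I would first carry this out completely for $n=2$, where the identity is a single $q$-Chu--Vandermonde summation, in order to fix the normalisation and the direction of the shifts, and only then extract and match the general single-column identity. The real work lies not in any conceptual difficulty but in the correct bookkeeping of these $q$-exponents.
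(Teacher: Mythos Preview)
Your plan is a genuinely different route from the one the paper takes, and the main step you flag as ``the obstacle'' is in fact not carried out. The paper's proof in Section~3 is short and conceptual: it observes that the coefficients $c_n(\theta;s|q,t)$ are exactly the branching coefficients $\psi_{\mu^{(k)}/\mu^{(k-1)}}(q,t)$ in the tableau representation of Macdonald polynomials, so that for any partition $\lambda$ the specialization $s=t^{\delta}q^{\lambda}$ gives $x^{\lambda}p_n(x;t^{\delta}q^{\lambda}|q,t)=P_{\lambda}(x|q,t)$. Since Macdonald polynomials are known eigenfunctions of $D^x(u)$, the eigenfunction equation holds at every such specialization. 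But the equation, rewritten for $\varphi=p_n$, is an infinite family of identities among rational functions in $(s_2/s_1,\ldots,s_n/s_{n-1})$; their validity on the Zariski-dense set of partition points forces them to hold identically. No $q$-series summation is performed at all.

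Your approach---expand $D^x(u)$ on monomials, match coefficients, and verify the resulting recursion on $c_n$ by induction on $n$ via a single-column identity---is in principle legitimate, and is closer in spirit to the paper's \emph{second} proof in Sections~4--5, which also proceeds by induction on $n$ but packages the inductive step as a Jackson integral transform (equivalently, the action of a row-type Ruijsenaars--Macdonald operator $K^{(x;s)}(u)$). That machinery is precisely what replaces the bare ``one-column summation'' you are aiming for. The risk in your formulation is the expectation that the identity collapses to a single terminating balanced ${}_r\phi_{r-1}$: in fact the coupling of all subsets $I$ in $D^x(u)$ to the last column, together with the shifts $q^{\sum_{a>k}(\theta_{ia}-\theta_{ja})}$, produces a multivariate sum that is not obviously of $q$-Saalsch\"utz type, and the paper's integral-transform argument is exactly the device that organizes it. So your proposal identifies the right inductive structure but underestimates the residual identity; as written it is a plan with its central lemma left open, whereas the paper's Section~3 argument bypasses that computation entirely by invoking Macdonald theory and a density argument.
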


In Section 3, this will be proved by using the property of $c_n(\theta;s|q,t)$ and the theory of Macdonald polynomials.
We note that an alternative proof which does not rely on the Macdonald theory will be 
presented in Sections 4 and 5 based on a recursive use of certain $q$-difference operators.

Now we embark on the study of the bispectral problem (\ref{eq:bispsys}).
Let $e_n(x;s)$ be a (possibly multi-valued) meromorphic function in $x,s$ such that 
\begin{align}
T_{q,x_i} e_n(x;s)=e_n(x;s) s_i t^{-n+i},\qquad 
T_{q,s_i} e_n(x;s)=e_n(x;s) x_i t^{-n+i} \qquad (1=1,\ldots,n),
\end{align}
and $e_n(x;s)=e_n(s,x)$.
Set 
\begin{align}
%p_n(x;s|q,t)&=\sum_{\theta\in M_n}c_n(\theta;s|q,t) \prod_{1\leq i<j\leq n} (x_j/x_i)^{\theta_{ij}}\in {\mathbb C}(s^{-Q_+})[[x^{-Q_+}]].%,\\
\varphi_n(x;s|q,t)&=
\prod_{1\leq i<j\leq n}  {(qs_j/s_i;q)_\infty \over (qs_j/t s_i;q)_\infty}p_n(x;s|q,t),\\
\psi_n(x;s|q,t)&=
\prod_{1\leq i<j\leq n}  {(qx_j/tx_i;q)_\infty \over (qx_j/x_i;q)_\infty}p_n(x;s|q,t)\nonumber\\
&=
\prod_{1\leq i<j\leq n}  {(qx_j/tx_i;q)_\infty \over (qx_j/x_i;q)_\infty}
 {(qs_j/ts_i;q)_\infty \over (qs_j/s_i;q)_\infty}
\varphi_n(x;s|q,t),\\
f_n(x;s|q,t)&=e_n(x;s) \varphi_n(x;s|q,t)\in e_n(x;s) {\mathbb C}[[s^{-Q_+}]][[x^{-Q_+}]].
\end{align}
By a recursive use of  certain Jackson integral transformations studied in \cite{MN1997}, we have
\begin{thm}[Theorem \ref{bispect-thm}]\label{bis}
The series $f_n(x;s|q,t)$ is a formal solution of the bispectral problem (\ref{eq:bispsys})
associated with the Ruijsenaars-Macdonald operators.
We have the symmetries
\begin{align}
\varphi_n(x;s|q,t)&=\varphi_n(s;x|q,t),\\
\varphi_n(x;s|q,t)&=\prod_{1\leq i<j\leq n}
{(t x_j/x_i;q)_\infty  \over (q x_j/t x_i;q)_\infty }
{(t s_j/s_i;q)_\infty  \over (q s_j/t s_i;q)_\infty }\varphi_n(x;s|q,q/t),\\
\psi_n(x;s|q,t)&=\psi_n(s;x|q,t),\\
\psi_n(x;s|q,t)&=\psi_n(x;s|q,q/t).
\end{align}
\end{thm}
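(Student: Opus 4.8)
The plan is to reduce the whole theorem to the single duality identity $\varphi_n(x;s|q,t)=\varphi_n(s;x|q,t)$, and then to prove that identity by induction on $n$ through a Jackson integral recursion.

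I would first record that Theorem~\ref{thm:eigenDfn} already yields the $x$-half of the bispectral system \eqref{eq:bispsys} for $f_n$. Since $T_{q,x_i}e_n=e_n\,s_it^{-n+i}$ and $T_{q,x_i}x^\lambda=x^\lambda s_it^{-n+i}$, the ratio $e_n/x^\lambda$ is invariant under every $q$-shift in $x$ and hence passes through $D^x(u)$; moreover the factor $\prod_{i<j}(qs_j/s_i;q)_\infty/(qs_j/ts_i;q)_\infty$ relating $\varphi_n$ to $p_n$ depends on $s$ only and likewise commutes with $D^x(u)$. Writing $f_n=(e_n/x^\lambda)\,x^\lambda\varphi_n$ and applying Theorem~\ref{thm:eigenDfn} therefore gives $D^x(u)f_n=f_n\prod_i(1-us_i)$. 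Granting the duality $\varphi_n(x;s)=\varphi_n(s;x)$ together with the assumed symmetry $e_n(x;s)=e_n(s;x)$, the function $f_n=e_n\varphi_n$ is itself invariant under $x\leftrightarrow s$; relabelling $x\leftrightarrow s$ in the $x$-equation then produces $D^s(u)f_n=f_n\prod_i(1-ux_i)$, the $s$-half of \eqref{eq:bispsys}.

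The remaining three symmetries are consequences of the duality and of results already available. The prefactor $\prod_{i<j}\frac{(qx_j/tx_i;q)_\infty}{(qx_j/x_i;q)_\infty}\frac{(qs_j/ts_i;q)_\infty}{(qs_j/s_i;q)_\infty}$ linking $\psi_n$ to $\varphi_n$ is manifestly symmetric under $x\leftrightarrow s$, so $\psi_n(x;s)=\psi_n(s;x)$ is immediate from the duality. For the parameter symmetry I would invoke the uniqueness in Theorem~\ref{thm:FR}: by Theorem~\ref{thm:eigenDfn} the series $x^\lambda p_n$ solves \eqref{eigen-x} and $p_n$ has leading term $c_n(0)=1$, so $p_n$ equals the unique formal solution $\varphi$; consequently $\psi_n$ is exactly the gauge-transformed series $\psi$ of Theorem~\ref{thm:FRpsi}, which gives $\psi_n(x;s|q,t)=\psi_n(x;s|q,q/t)$ directly. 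Substituting the $\psi_n$–$\varphi_n$ relation for both parameter values and simplifying the $q$-shifted factorials (using $q/(q/t)=t$) then produces the stated $t\leftrightarrow q/t$ relation for $\varphi_n$.

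It remains to prove the duality $\varphi_n(x;s|q,t)=\varphi_n(s;x|q,t)$, which I would establish by induction on $n$. The base case $n=1$ is trivial, since $M_1=\{0\}$ forces $\varphi_1\equiv1$. For the inductive step I would apply the Jackson integral transformation of \cite{MN1997}, a $q$-integral whose kernel is a ratio of $q$-shifted factorials and is symmetric under $x\leftrightarrow s$, sending the rank $n-1$ object to the rank $n$ object; the integration accounts for the summation over the last column of $\theta\in M_n$, i.e. for the dependence on the newly adjoined pair $(x_n,s_n)$. Iterating the recursion down to $n=1$ represents $\varphi_n$ as an iterated Jackson integral whose integrand is symmetric under the simultaneous exchange $x\leftrightarrow s$, whence the duality. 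The principal obstacle is showing that this transformation genuinely reproduces the explicit coefficients $c_n(\theta;s|q,t)$ of \eqref{def-cn} out of those of $c_{n-1}$: after isolating the $k=n$ factors in \eqref{def-cn}, evaluating the associated Jackson integral reduces to a nontrivial $q$-hypergeometric summation of $q$-Saalschütz/Gustafson type, and one must also check that all the $q$-integrals are well defined as formal power series in the chamber $|x_1|\gg\cdots\gg|x_n|$ and $|s_1|\gg\cdots\gg|s_n|$. Once the recursion and the $x\leftrightarrow s$ symmetry of its kernel are secured, the duality — and with it the entire theorem — follows.
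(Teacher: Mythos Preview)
Your reductions in the first two paragraphs are fine and essentially match the paper: once the duality $\varphi_n(x;s|q,t)=\varphi_n(s;x|q,t)$ is known, the $s$-half of \eqref{eq:bispsys} follows from the $x$-half by relabelling, the $\psi$-duality comes from the manifest $x\leftrightarrow s$ symmetry of the gauge factor, and the $t\leftrightarrow q/t$ relations come from Theorem~\ref{thm:FRpsi} via the identification $p_n=\varphi$.

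The gap is in your inductive proof of the duality itself. The Jackson integral kernel of \cite{MN1997} (the function $K_\lambda(x;y)$ in \eqref{eq:defK}) is \emph{not} symmetric under $x\leftrightarrow s$; it depends on $x_1,\dots,x_{n+1}$, on the integration variables $y_1,\dots,y_n$, and on $s$ only through the exponents $\lambda_i-\lambda_{n+1}+(n-i)\beta$ in a way that has no visible exchange symmetry. So the sentence ``the integrand is symmetric under the simultaneous exchange $x\leftrightarrow s$, whence the duality'' is not justified, and no $q$-Saalsch\"utz/Gustafson summation is in fact needed or used.

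What the paper actually does with the Jackson integral is different in spirit. The recursion \eqref{eq:indformphi} is used to show that $e_n(x;s)\varphi_n(s;x|q,t)$ (with the roles of $x$ and $s$ already swapped inside $\varphi_n$) is a formal solution of the \emph{same} $x$-eigenfunction equation \eqref{eigen-x}. The key technical step is to rewrite the Jackson recursion for the associated $\psi$-series as \eqref{eq:indformpsi} and then apply the already available symmetry $\psi_n(s;x|q,t)=\psi_n(s;x|q,q/t)$ (from Theorem~\ref{thm:FRpsi}) to obtain \eqref{eq:indformpsi2}; that version matches term by term the defining recursion \eqref{eq:recpn} for $p_n(s;x|q,t)$, so the object produced by the Jackson integral is exactly $\varphi_n(s;x|q,t)$. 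At this point you have two formal solutions of \eqref{eigen-x} in $x^\lambda\mathbb{C}[[s^{-Q_+}]][[x^{-Q_+}]]$, namely $x^\lambda\varphi_n(x;s|q,t)$ and $x^\lambda\varphi_n(s;x|q,t)$, and a direct computation shows they share the leading coefficient $\prod_{i<j}(qs_j/s_i;q)_\infty/(qs_j/ts_i;q)_\infty$. The duality then follows from the \emph{uniqueness} in Theorem~\ref{thm:FR}, not from any symmetry of the kernel. If you want to salvage your inductive plan, replace the symmetry-of-the-kernel claim by this ``two solutions with the same leading term'' argument, and note that the $t\leftrightarrow q/t$ flip of $\psi_n$ is needed \emph{inside} the induction to identify the Jackson recursion with \eqref{eq:recpn}.
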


We can recast the recurrence of the Jackson integrals by 
that of a certain $q$-difference operators. 
Note that such a class of $q$-difference operators, called the Ruijsenaars-Macdonald operators of 
{\it row type}, is
studied in \cite{NS2012}.
Set
\begin{align}
K^{(x;s|q,t)}(u)
=\dsum{\nu\in\mathbb{N}^n}{}
\dprod{i=1}{n}(u/s_i)^{\nu_i}
\dprod{i=1}{n}
\dfrac{(t;q)_{\nu_i}}{(q;q)_{\nu_i}}
\dprod{1\le i<j\le n}{}
\dfrac{(tx_j/x_i;q)_{\nu_i}}{(qx_j/x_i;q)_{\nu_i}}
\dfrac{(q^{-\nu_j+1}x_j/t x_i;q)_{\nu_i}}{(q^{-\nu_j}x_j/x_i;q)_{\nu_i}}\ 
T_{q,x}^{-\nu}. 
\end{align}

\begin{thm}[Theorem \ref{Kxs}]
We have the recurrence relations
\begin{align}
\psi_{n+1}(s;x|q,t)
&=\prod_{i=1}^n
{(tx_{n+1}/x_i;q)_\infty \over (qx_{n+1}/x_i;q)_\infty }
{(qs_{n+1}/ts_i;q)_\infty \over (qs_{n+1}/s_i;q)_\infty }\\
&\cdot K^{(x;s|q,t)}(q s_{n+1}/t)K^{(s;x|q,q/t)}(t x_{n+1}) \psi_n (s;x|q,t)\qquad (n=1,2,\ldots).\nonumber
\end{align}
\end{thm}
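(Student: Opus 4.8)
The plan is to derive this identity by unwinding the Jackson integral recurrence that underlies Theorem~\ref{bis} and recognizing the resulting discrete sum as the action of the $q$-difference operator $K^{(x;s|q,t)}(u)$. The starting observation is that a Jackson integral along a geometric $q$-lattice is, by definition, an infinite series: $\int_0^{\xi}g(z)\,d_qz=(1-q)\xi\sum_{k\ge0}g(\xi q^k)q^k$. Hence each rank-raising Jackson integral transformation from \cite{MN1997} used in the proof of Theorem~\ref{bis}, which passes from $n$ to $n+1$ variables, becomes after discretization a sum indexed by a lattice vector. First I would write out explicitly the kernel of that transform, which is a product of $q$-shifted factorials $(\,\cdot\,;q)_\infty$ in the integration variables and the existing variables, multiplied by the scalar factor carrying the new spectral parameter $s_{n+1}$.

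Second, I would expand the Jackson integral into a multiple $q$-series indexed by $\nu\in\mathbb{N}^n$, where $\nu_i$ records the number of $q$-steps taken by the $i$-th integration variable. The key computation is to show, using the elementary identities $(a;q)_\infty/(aq^m;q)_\infty=(a;q)_m$ and the factorization of the kernel over pairs $i<j$, that the coefficient of $T_{q,x}^{-\nu}$ produced in this way is exactly
\begin{align}
\prod_{i=1}^n(u/s_i)^{\nu_i}\prod_{i=1}^n\frac{(t;q)_{\nu_i}}{(q;q)_{\nu_i}}\prod_{1\le i<j\le n}\frac{(tx_j/x_i;q)_{\nu_i}}{(qx_j/x_i;q)_{\nu_i}}\frac{(q^{-\nu_j+1}x_j/tx_i;q)_{\nu_i}}{(q^{-\nu_j}x_j/x_i;q)_{\nu_i}},\nonumber
\end{align}
with $u=qs_{n+1}/t$. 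The shift operator $T_{q,x}^{-\nu}$ arises because evaluating the integrand at the lattice point $q^{-\nu_i}x_i$ replaces $x_i$ by $q^{-\nu_i}x_i$ inside the transformed function. This identifies a single Jackson integral transformation with one $K$-operator.

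Third, since the bispectral step adds the new variables $x_{n+1}$ and $s_{n+1}$ on equal footing, reflecting the symmetry $\psi_n(x;s|q,t)=\psi_n(s;x|q,t)$ of Theorem~\ref{bis}, the full passage from $\psi_n$ to $\psi_{n+1}$ is a composition of two such transforms: one in the $s$-direction (applied first), and one in the $x$-direction. Applying the second step of this argument to the $x$-shift with spectral parameter $qs_{n+1}/t$ gives the left factor $K^{(x;s|q,t)}(qs_{n+1}/t)$; the right factor is obtained from the same formula by exchanging the roles of $x$ and $s$ and invoking the duality $t\leftrightarrow q/t$ of Theorem~\ref{bis}, which produces precisely $K^{(s;x|q,q/t)}(tx_{n+1})$. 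The remaining scalar prefactor $\prod_{i=1}^n (tx_{n+1}/x_i;q)_\infty/(qx_{n+1}/x_i;q)_\infty\cdot(qs_{n+1}/ts_i;q)_\infty/(qs_{n+1}/s_i;q)_\infty$ then collects the gauge normalization relating $\psi$ to $p_n$ for the newly created pairs $(i,n+1)$ together with the boundary contributions of the two integrals.

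The hardest part will be the second step: matching the expanded Jackson integral summand with the asymmetric-looking coefficient above. In particular, the cross factor $(q^{-\nu_j+1}x_j/tx_i;q)_{\nu_i}/(q^{-\nu_j}x_j/x_i;q)_{\nu_i}$, in which the index $\nu_j$ appears shifted inside a factorial governed by $\nu_i$, encodes the coupling between distinct integration variables generated by the product structure of the kernel. Verifying that this coupling is reproduced exactly, rather than up to a spurious factor, requires careful use of the reflection identities for $q$-shifted factorials at $q^{-m}$ and a consistent ordering of the integration lattice, and it is here that the rank-raising combinatorics of \cite{MN1997} must be brought fully to bear.
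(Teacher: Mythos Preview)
Your identification of the first operator is correct and matches the paper: discretizing the single Jackson integral of Section~4 over the lattice $y_i=q^{-\nu_i-1}tx_i$ produces exactly the recurrence \eqref{eq:indformpsi3}, which the paper rewrites as
\[
\psi_{n+1}(s;x|q,t)=\Bigl(\textstyle\prod_{i}\frac{(tx_{n+1}/x_i;q)_\infty}{(qx_{n+1}/x_i;q)_\infty}\frac{(qs_{n+1}/ts_i;q)_\infty}{(qs_{n+1}/s_i;q)_\infty}\Bigr)
K^{(x;s|q,t)}(qs_{n+1}/t)\,\Bigl(\textstyle\prod_{i}\frac{(qx_{n+1}/x_i;q)_\infty}{(tx_{n+1}/x_i;q)_\infty}\Bigr)\,\psi_n(s;x|q,t).
\]
So your ``hardest step~2'' is real, but it is the only Jackson integral present.

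The gap is in your step~3. There is \emph{no} second Jackson integral transform ``in the $s$-direction'': the passage $\psi_n\to\psi_{n+1}$ in Section~4 is a single $n$-fold integral in auxiliary variables $y$, and $s_{n+1}$ enters only as a parameter in the kernel. The second operator $K^{(s;x|q,q/t)}(tx_{n+1})$ is \emph{not} obtained by a second discretized integral; it is inserted via the eigenvalue equation. By induction the duality $\psi_n(x;s|q,t)=\psi_n(s;x|q,t)$ holds, so $\psi_n$ solves the eigenfunction equation in the $s$ variables, and then the Wronski relations of Section~5.1 give
\[
K^{(s;x|q,q/t)}(tx_{n+1})\,\psi_n(s;x|q,t)=\psi_n(s;x|q,t)\,\prod_{i=1}^n\frac{(qx_{n+1}/x_i;q)_\infty}{(tx_{n+1}/x_i;q)_\infty}.
\]
This scalar eigenvalue is exactly the leftover multiplicative factor above, so replacing it by the operator $K^{(s;x|q,q/t)}(tx_{n+1})$ acting on $\psi_n$ yields the two-$K$ recurrence. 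Your proposal misses this mechanism entirely; without it, you would have to manufacture a second integral representation that does not exist in the construction. The point of the theorem, as the paper stresses, is precisely that the second $K$ enters through the inductive duality and the eigenfunction property rather than through an additional integral, and this is what makes the argument independent of Macdonald theory.
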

This provides us with an alternative proof of Theorem \ref{bis} which does not depend on the theory of Macdonald polynomials.

Finally, let $e_n(x;s)\varphi_n(x;s|q,t)$ be our formal solution of the bispectral problem for the Ruijsenaars-Macdonald operators, and set 
\begin{align}
F_n(x;s|q,t)=\prod_{1\leq i<j\leq n} (q x_j/t x_i)_\infty (q s_j/t s_i)_\infty \, \varphi_n(x;s|q,t).
\end{align}
\begin{thm}[Theorem \ref{holo}]
The series $F_n(x;s|q,t)$ represents a holomorphic function on ${\mathbb C}_z^{n-1}\times {\mathbb C}_w^{n-1}$
in the variables $(z;w)=(z_1,\ldots,z_{n-1};w_1,\ldots,w_{n-1})$
with $z_i=x_{i+1}/x_i,w_i=s_{i+1}/s_i$ $(i=1,\ldots,n-1)$, depending holomorphically on $t\in {\mathbb C}^*$ such that 
$F_n(x;s|q,t)=F_n(s;x|q,t)=F_n(x;s|q,q/t)$.
\end{thm}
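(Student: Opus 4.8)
The plan is to reduce all three assertions to the analytic behaviour of the factor $\psi_n$, and to handle them in increasing order of difficulty. First I would recast $F_n$ in a manifestly symmetric form. Combining the definition of $F_n$ with the relations between $\varphi_n$, $\psi_n$ and $p_n$, one checks the two identities
\begin{align}
F_n(x;s|q,t)
&=\prod_{1\le i<j\le n}(qx_j/tx_i;q)_\infty\,(qs_j/s_i;q)_\infty\,p_n(x;s|q,t)\nonumber\\
&=\prod_{1\le i<j\le n}(qx_j/x_i;q)_\infty\,(qs_j/s_i;q)_\infty\,\psi_n(x;s|q,t).\nonumber
\end{align}
In the second expression the prefactor is invariant under $x\leftrightarrow s$ and does not involve $t$, so the symmetries $F_n(x;s|q,t)=F_n(s;x|q,t)=F_n(x;s|q,q/t)$ follow immediately from the corresponding symmetries of $\psi_n$ established in Theorem \ref{bis}. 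This settles the symmetry part.

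Next I would show that $F_n$ has no poles, working coefficientwise in the $x$-expansion via the first expression above. Identifying $p_n$ with the unique series $\varphi$ of Theorem \ref{thm-1} (both have initial term $1$), each coefficient of $p_n$ in the monomials $\prod(x_j/x_i)^{\theta_{ij}}$ is, by Theorem \ref{thm-1}(2)(3), regular at the origin and carries at most simple poles along the divisors $q^{k+1}s_j/s_i=1$ ($i<j$, $k\ge0$). The zeros of $(qs_j/s_i;q)_\infty=\prod_{m\ge0}(1-q^{m+1}s_j/s_i)$ lie exactly on these divisors and are simple, with precisely one factor of $\prod_{i<j}(qs_j/s_i;q)_\infty$ vanishing on each of them; hence multiplication by this prefactor cancels all the poles, so every coefficient of the $x$-expansion of $F_n$ becomes entire in $w=(w_1,\dots,w_{n-1})$. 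A direct inspection of $c_n(\theta;s|q,t)$ shows that these coefficients are also holomorphic in $t\in\mathbb{C}^\ast$, the $t$-dependence entering only through numerators. By the $x\leftrightarrow s$ symmetry from the first step the roles of $z$ and $w$ are interchangeable, so the analogous regularity holds in the $s$-expansion.

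The main obstacle is the convergence that upgrades these formal statements to genuine holomorphy on all of $\mathbb{C}^{n-1}_z\times\mathbb{C}^{n-1}_w$. Here I would argue by induction on $n$, the base case being $F_1=1$, propagating entireness through the recurrence of Theorem \ref{Kxs}, which produces $\psi_{n+1}$ (hence $F_{n+1}$) by applying the operators $K^{(x;s|q,t)}(u)$ and $K^{(s;x|q,q/t)}(u)$ to $\psi_n$. Since these $K$-operators are infinite sums of $q$-shift operators, the crux is to show that they send the entire functions produced at level $n$ to entire functions at level $n+1$, with control that is locally uniform in $(w,t)$. The key analytic input, already visible in the rank-one computation where the $w$-independent ratio $(q^{1-\theta}/t;q)_\theta/(q^{-\theta};q)_\theta$ simplifies to $(q/t)^\theta\,(t;q)_\theta/(q;q)_\theta$, is that the super-exponential ($q^{-\binom{\theta}{2}}$-type) growth coming from individual $q$-shifted factorials in $c_n$ cancels, leaving only exponential growth of the regularized coefficients; this residual growth is then absorbed by the order-zero, $q$-Gaussian decay of the prefactor $\prod(qx_j/tx_i;q)_\infty$. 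Making this cancellation and the attendant estimates uniform across all $\theta\in M_n$, rather than merely in rank one, is the technically delicate point, and it is precisely what turns the formal series $F_n$ into an entire function of $(z;w)$ depending holomorphically on $t\in\mathbb{C}^\ast$.
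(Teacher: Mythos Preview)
Your treatment of the symmetries and of the pole cancellation is essentially the same as the paper's: rewriting $F_n=\prod_{i<j}(qx_j/x_i;q)_\infty(qs_j/s_i;q)_\infty\,\psi_n$ makes the two symmetries immediate from those of $\psi_n$, and the simple poles of the coefficients of $p_n$ along $q^{k+1}s_j/s_i=1$ are killed by the simple zeros of $\prod_{i<j}(qs_j/s_i;q)_\infty$, so each $x$-coefficient of $F_n$ is entire in $w$. So far, so good.

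The convergence argument, however, is where your proposal diverges from the paper and also where it has a genuine gap. You propose to propagate entireness by induction on $n$ through the $K$-operator recurrence of Theorem~\ref{Kxs}, and you correctly identify the crux as controlling the infinite sums defining $K^{(x;s|q,t)}(u)$ when applied to entire functions. But you do not actually carry out those estimates; you only illustrate the rank-one mechanism and say the general case is ``technically delicate.'' As written, this is not a proof: nothing guarantees that the $K$-operators send entire functions of $(z,w)$ to entire functions with locally uniform control, and the inductive step remains an assertion.

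The paper avoids this difficulty by a completely different and more elementary route. First, a direct estimate on the individual $c_n(\theta;s|q,t)$ (Lemma~\ref{lem:estimate}, a uniform bound for ratios $\prod_{i\in I}(1-q^iau)/(1-q^iu)$ on compact sets away from $q^{\mathbb Z}$) gives absolute convergence of $p_n$, hence of $F_n$, on a product $U_z(r_0)\times D_w$ with $r_0>0$ depending only on $|q/t|$ and $D_w=\{\,w:\ s_j/s_i\notin q^{\mathbb Z}\,\}$. Second, since the $x$-coefficients $F_\mu(s)$ are already entire in $w$, the maximum modulus principle on polydiscs (taking the \v Silov boundary $|w_i|=|q|^{-\rho}$ with $\rho$ irrational, so that it lies inside $D_w$) upgrades convergence from $U_z(r_0)\times K$ with $K\subset D_w$ to $U_z(r_0)\times K$ for arbitrary compact $K\subset\mathbb C^{n-1}_w$; thus $F_n$ is holomorphic on $U_z(r_0)\times\mathbb C^{n-1}_w$. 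Third, the already-established symmetry $F_n(x;s)=F_n(s;x)$ gives holomorphy on $\mathbb C^{n-1}_z\times U_w(r_0)$ as well, and logarithmic convexity of the domain of convergence of a power series then forces it to be all of $\mathbb C^{n-1}_z\times\mathbb C^{n-1}_w$. No induction on $n$ and no analysis of the $K$-operators is needed; the three ingredients are a coefficient estimate, the \v Silov-boundary trick, and convexity.
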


As an application of  our results, we have a summation formula which can be regarded as an infinite series version of the
principal substitution of the Macdonald polynomial $P_\lambda(t^\delta|q,t)$.
\begin{thm}[Theorem \ref{pric-sp}]
Let $|t|>|q|^{-(n-2)}$.  
We have
\begin{align}
\dsum{\theta\in M_n}{}c_n(\theta;s|q,t)\,t^{\sum_{i<j}(i-j)\theta_{ij}}
=
\dprod{i=1}{n}\dfrac{(q/t;q)_\infty}{(q/t^i;q)_\infty}
\dprod{1\le i<j\le n}{}
\dfrac{(qs_j/ts_i;q)_\infty}{(qs_j/s_i;q)_\infty}.  
\end{align}
\end{thm}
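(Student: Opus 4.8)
The plan is to read the left-hand side as the principal specialization $x=t^\delta$ of the eigenfunction series $p_n(x;s|q,t)$, to evaluate it by combining the $x\leftrightarrow s$ symmetry of $\psi_n$ with the collapse of $p_n$ at the base spectral point, and only then to address convergence. Since $t^\delta=(t^{n-1},\ldots,1)$ gives $x_j/x_i=t^{i-j}$ for $i<j$, one has $\prod_{1\le i<j\le n}(x_j/x_i)^{\theta_{ij}}\big|_{x=t^\delta}=t^{\sum_{i<j}(i-j)\theta_{ij}}$, so the left-hand side is literally $p_n(t^\delta;s|q,t)$. The whole problem thus reduces to evaluating this one specialization.

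First I would prove the reduction $p_n(x;t^\delta|q,t)=1$, equivalently $c_n(\theta;t^\delta|q,t)=0$ for all $\theta\neq0$. In the parametrization $s_i=t^{n-i}q^{\lambda_i}$ the point $s=t^\delta$ is the empty partition $\lambda=0$, and there the eigenvalue in \eqref{eigen-x} is $\prod_{i=1}^n(1-ut^{n-i})$; the constant $1=P_0$ is an eigenfunction of $D^x(u)$ with exactly this eigenvalue (the classical value $D_r^x\cdot1=e_r(t^\delta)$). By the uniqueness statement in Theorem \ref{thm-1} (valid at $s=t^\delta$ for generic $q,t$, where the construction is regular), the normalized solution with leading term $1$ is unique, hence $p_n(x;t^\delta|q,t)=1$. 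Feeding this into the definition of $\psi_n$ shows that $\psi_n(s;t^\delta|q,t)$ is not merely a formal series but the honest finite product $\prod_{1\le i<j\le n}\frac{(qs_j/ts_i;q)_\infty}{(qs_j/s_i;q)_\infty}$.

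I would then transport this evaluation into the other slot. The symmetry $\psi_n(x;s|q,t)=\psi_n(s;x|q,t)$ of Theorem \ref{bis} gives $\psi_n(t^\delta;s|q,t)=\prod_{1\le i<j\le n}\frac{(qs_j/ts_i;q)_\infty}{(qs_j/s_i;q)_\infty}$, and it remains only to remove the $x$-gauge factor at $x=t^\delta$. Writing $P_m=(q/t^m;q)_\infty$ and collecting the $n-m$ pairs with $j-i=m$, this factor telescopes:
\[
\prod_{1\le i<j\le n}\frac{(qx_j/tx_i;q)_\infty}{(qx_j/x_i;q)_\infty}\bigg|_{x=t^\delta}
=\prod_{m=1}^{n-1}\Big(\frac{P_{m+1}}{P_m}\Big)^{n-m}
=\frac{\prod_{m=2}^{n}P_m}{P_1^{\,n-1}}
=\Big(\prod_{i=1}^{n}\frac{(q/t;q)_\infty}{(q/t^i;q)_\infty}\Big)^{-1}.
\]
Dividing $\psi_n(t^\delta;s|q,t)$ by this factor yields $p_n(t^\delta;s|q,t)=\prod_{i=1}^{n}\frac{(q/t;q)_\infty}{(q/t^i;q)_\infty}\prod_{1\le i<j\le n}\frac{(qs_j/ts_i;q)_\infty}{(qs_j/s_i;q)_\infty}$, which is exactly the asserted right-hand side.

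All of the above identifies the value of the meromorphic function $p_n(t^\delta;s|q,t)$; the remaining, genuinely analytic, point is that the series $\sum_{\theta\in M_n}c_n(\theta;s|q,t)\,t^{\sum_{i<j}(i-j)\theta_{ij}}$ actually converges to this value, and this is where the hypothesis $|t|>|q|^{-(n-2)}$ enters. I expect this to be the main obstacle. Here one must estimate $|c_n(\theta;s|q,t)|$: the $q$-shifted factorials in the second block of \eqref{def-cn} carry shifts of the form $q^{-\theta_{j,k}+\cdots}$, so that $|c_n(\theta;s|q,t)|$ grows like a controlled negative power of $|q|$ in the entries of $\theta$, while the monomial contributes the decay $|t|^{-\sum_{i<j}(j-i)\theta_{ij}}$. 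Balancing these two effects along the extreme (longest-root) directions of $M_n$ is what produces the threshold $|t|>|q|^{-(n-2)}$, above which the sum is dominated by a convergent multigeometric series and hence represents the function computed above. I would organize this estimate column by column in $k=n,n-1,\ldots,2$, matching the iterated structure of $c_n(\theta;s|q,t)$.
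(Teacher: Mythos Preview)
Your proposal is correct and follows essentially the same route as the paper: compute $p_n(x;t^\delta|q,t)=1$, transport this through the duality to obtain $p_n(t^\delta;s|q,t)$, and then justify the specialization $x=t^\delta$ analytically. The differences are cosmetic. You argue $p_n(x;t^\delta|q,t)=1$ via uniqueness of the normalized eigenfunction at $\lambda=0$; the paper instead observes directly from the recurrence \eqref{eq:recpn} that the factor $(ts_{i+1}/s_i;q)_{\nu_i}=(1;q)_{\nu_i}$ kills every term with $\nu\ne0$ when $s=t^\delta$, so $p_{n+1}(x;t^\delta)=p_n(x;t^\delta)$ and the claim follows by induction. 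You phrase the duality step through $\psi_n$, the paper through $\varphi_n$; these are the same up to the gauge factor you telescope. Finally, the convergence estimate you outline column by column is exactly the content of Proposition~\ref{prop:convpn} (via Lemma~\ref{lem:estimate}), which gives the radius $r_0=|q/t|^{(n-2)/(n-1)}$ when $|q/t|\le1$; the condition $|t^{-1}|<r_0$ is equivalent to $|t|>|q|^{-(n-2)}$, so your ``main obstacle'' is already packaged in the paper and no new idea is needed there.
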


In view of the explicit construction of $\varphi_n(x;s|q,t)$ as above,
one observes that the properties stated in Theorems \ref{thm-1} and \ref{bis} are hidden and not evident at all.
It seems a challenging problem to have an alternative expression of $\varphi_n(x;s|q,t)$ in which 
such properties are manifest.
At least the cases $n=2$ and $3$ can be worked out. 
The case $n=2$ is easy and given in (\ref{eq:n=2final}).
\begin{thm}[Theorem \ref{n=3}]
We have 
\begin{align}
&\varphi_3(x,s|q,t)=%
%\prod_{1\leq i<j\leq 3}{(q s_j/s_i;q)_\infty \over (q s_j/ts_i;q)_\infty}
%p_3(x;s|q,t)\nonumber\\
%&=&
%\prod_{1\leq i<j\leq 3}{(t;q)_\infty (q x_js_j/x_is_i;q)_\infty \over (q x_j/tx_i;q)_\infty (q s_j/ts_i;q)_\infty}\\
%&&\times
\sum_{k\geq 0}
{(q/t;q)_k(q/t;q)_k \over 
(q;q)_k(t;q)_k} 
( qx_3s_3/x_1s_1)^k \\
&\quad \times
\prod_{1\leq i<j\leq 3}
{(t;q)_\infty (q x_js_j/x_is_i;q)_\infty \over (q x_j/tx_i;q)_\infty (q s_j/ts_i;q)_\infty}\,\,
{}_2\phi_1\left[
{qx_j/tx_i,qs_j/ts_i \atop q x_js_j/x_is_i};q,q^k t\right] \qquad (|t|<1),\nonumber
\end{align}
which manifestly shows the 
regularity in Theorem \ref{thm-1}, and the 
duality $\varphi_3(x,s|q,t)=\varphi_3(s,x|q,t)$.
\end{thm}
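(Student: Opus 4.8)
The plan is to prove the asserted identity by a direct resummation of the defining triple series, using the explicit form of $c_3(\theta;s|q,t)$ from \eqref{def-cn} together with applications of Heine's transformation. I would first specialize \eqref{def-cn} to $n=3$, writing $\theta=(\theta_{12},\theta_{13},\theta_{23})=(a,b,c)$, and simplify the self-paired factors via the reversal identity $(q^{1-m}/t;q)_m/(q^{-m};q)_m=(q/t)^m(t;q)_m/(q;q)_m$. After absorbing the resulting powers $(q/t)^{a+b+c}$ into the monomials $\prod_{i<j}(x_j/x_i)^{\theta_{ij}}$, the summand of $p_3$ takes the form
\begin{align*}
(qx_2/tx_1)^a(qx_3/tx_1)^b(qx_3/tx_2)^c\,\frac{(t;q)_a(t;q)_b(t;q)_c}{(q;q)_a(q;q)_b(q;q)_c}\,R_{a,b,c}(s),
\end{align*}
where $R_{a,b,c}(s)$ collects the remaining $s$-dependent Pochhammer ratios, several of which carry the coupling shifts $q^{b-c}$ and $q^{-c}$. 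Multiplying by the $s$-prefactor $\prod_{i<j}(qs_j/s_i;q)_\infty/(qs_j/ts_i;q)_\infty$ then gives $\varphi_3$ as an explicit triple sum over $a,b,c\ge 0$.

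The heart of the argument is to decouple the three indices one pair at a time, exactly as in the $n=2$ computation that yields the symmetric form \eqref{eq:n=2final}. Holding $b,c$ fixed, the $a$-sum is a ${}_2\phi_1$ in the argument $qx_2/tx_1$; applying Heine's transformation (which in the pure $n=2$ case already converts the ${}_2\phi_1$ of $p_2$ into the manifestly $x\leftrightarrow s$ symmetric factor $\frac{(t;q)_\infty(qx_2s_2/x_1s_1;q)_\infty}{(qx_2/tx_1;q)_\infty(qs_2/ts_1;q)_\infty}$ times a ${}_2\phi_1$ with base $qx_2/tx_1,\,qs_2/ts_1$ over $qx_2s_2/x_1s_1$) produces the pair-$(1,2)$ term of the target. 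The same maneuver applied to the $c$-sum produces the pair-$(2,3)$ factor. The subtlety is that the coupling shifts enter the parameters of these ${}_2\phi_1$'s as $q^{b-c}$ and leave behind residual prefactors of the form $(q^{b-c}qs_2/ts_1;q)_\infty/(q^{b-c}qs_2/s_1;q)_\infty$; keeping track of these, and of the fact that the symmetric form naturally carries the constant argument $t$ rather than the target's $q^{k}t$, is where the bookkeeping is heaviest.

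It then remains to resum over $b$. I expect this last sum, combined with the leftover shift factors, to split into the advertised $k$-series via a $q$-Chu--Vandermonde type reindexing: the point is that replacing parameters $(B,C)\mapsto(q^{b}B,q^{b}C)$ in a ${}_2\phi_1$ is equivalent to a shift of the summation index, so the $b$-sum separates into a new index $k$ — contributing the diagonal monomial $(qx_3s_3/x_1s_1)^k$ and the coefficient $(q/t;q)_k^2/((q;q)_k(t;q)_k)$ through the pair-$(1,3)$ channel — together with the internal index of the pair-$(1,3)$ ${}_2\phi_1$, while simultaneously promoting all three ${}_2\phi_1$ arguments to the common value $q^{k}t$. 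The main obstacle is precisely this final synchronization: identifying the correct $q$-hypergeometric transformation that performs the splitting of $b$ and brings the three arguments into register. The earlier steps are essentially routine once the $n=2$ template is in hand.

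Once equality with the closed form is established, both asserted consequences are immediate from the shape of the right-hand side. The coefficient $(q/t;q)_k^2/((q;q)_k(t;q)_k)$, the monomial $(qx_3s_3/x_1s_1)^k$, and each factor $\frac{(t;q)_\infty(qx_js_j/x_is_i;q)_\infty}{(qx_j/tx_i;q)_\infty(qs_j/ts_i;q)_\infty}\,{}_2\phi_1[qx_j/tx_i,qs_j/ts_i;qx_js_j/x_is_i;q,q^kt]$ are invariant under $x_i\leftrightarrow s_i$ (the two upper parameters of every ${}_2\phi_1$ merely exchange), so $\varphi_3(x,s|q,t)=\varphi_3(s,x|q,t)$ follows with no further work; and the only poles visible in the expression lie along the divisors coming from the denominators $(qs_j/ts_i;q)_\infty$ and $(qx_js_j/x_is_i;q)_\infty$, exhibiting the regularity at the origin and the simple-pole structure predicted by Theorem \ref{thm-1}. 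An alternative route would be to invoke uniqueness (Theorem \ref{thm-1}(1)) by checking that the closed form, multiplied by $x^\lambda$, satisfies the eigenfunction equation of Theorem \ref{thm:eigenDfn} with initial term $1$, or to derive the formula from the $K$-operator recurrence of Theorem \ref{Kxs} applied to the $n=2$ case; but I would prefer the direct resummation, since it makes transparent why the symmetric ${}_2\phi_1$ form emerges.
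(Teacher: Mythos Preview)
Your plan has a genuine gap at exactly the point you flag as ``the main obstacle,'' and it is not just bookkeeping. After the $a$-sum, the Heine-transformed ${}_2\phi_1$ carries parameters shifted by $q^{b-c}$, and the leftover $s$-factors include the cross term $(q^{-c}qs_2/ts_1;q)_b/(q^{-c}s_2/s_1;q)_b$. Consequently the $c$-sum is \emph{not} a ${}_2\phi_1$ in any variable: the dependence on $c$ sits simultaneously in the $(1,2)$-block (through $b-c$), in a Pochhammer of length $b$, and in the $(2,3)$-block. There is no order of summation in which two of the three indices decouple into separate ${}_2\phi_1$'s that Heine can handle independently, and a $q$-Chu--Vandermonde reindexing does not untangle a coupling of this shape. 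So the hope that iterating the $n=2$ template three times suffices is unfounded.

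The paper's route is substantially heavier and structurally different. It first establishes an intermediate identity for $p_3(x;s|q,q/t)$ (Theorem~\ref{n=3-henkan}) by summing along the \emph{diagonal} direction $k=\theta_{13}$ with $\theta_{12}+\theta_{13}$ and $\theta_{13}+\theta_{23}$ held fixed; that diagonal sum is recognized as a terminating very-well-poised ${}_{14}W_{13}$ (Lemma~\ref{lem-3}), which is then reduced via a bespoke transformation (Proposition~\ref{koushiki-1}) to a sum of ${}_{10}W_9$'s, and those in turn are handled with Gasper--Rahman Exercise~2.20, two applications of the Sears ${}_4\phi_3$ transformation, and $q$-Saalsch\"utz (Proposition~\ref{koushiki-2}). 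Only after this work does one reach the product-of-${}_2\phi_1$ form of Theorem~\ref{n=3-henkan}; the passage from there to the symmetric expression in the present theorem is then indeed the easy $q$-binomial/Heine step you envision, exactly parallel to \eqref{eq:n=2final}. In short, your Heine step corresponds only to the last line of the paper's argument; the real content lies in the very-well-poised machinery that precedes it, which your proposal does not supply. Of the alternative routes you mention, the uniqueness argument or the $K$-operator recurrence from Theorem~\ref{Kxs} would succeed, but the direct resummation as you outline it does not.
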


As yet another application, we will revisit the problem considered in \cite{S2006}, and 
examine some properties of a certain family of integral transformations called $I(\alpha)$  ($\alpha\in {\mathbb C}$) (see Section 8).
We claim that  we have the commutativity $[D^x(u),I(\alpha)]=0$ and $[I(\alpha),I(\beta)]=0$.

%%%%%%%%%%%%%%%%%%%%%%%%%%
Organization of the paper is as follows.
In Section 2, we establish several fundamental properties of formal solutions 
for the joint eigenfunction problem associated with the Ruijsenaars-Macdonald operators in the variables $x$.
In Section 3, we study the properties of the explicit formulas for the eigenfunction. 
In Section 4, the recursive structure in the explicit formula is investigated from 
the point of view of the Jackson integrals, which enables us to study the eigenfunction problem associated with the
Ruijsenaars-Macdonald operators in the variables $s$.
In Section 5, the recursive structure is recast in 
yet another form based on a recursive application of certain $q$-difference operators,
thereby establishing the bispectral eigenfunction problem associated with the
Ruijsenaars-Macdonald operators in the pair of variables $x$ and $s$, 
without relying on the theory of Macdonald polynomials.
Section 6 is devoted to the study of the convergence of the formal series solutions.
In Section 7, we treat the case $n=3$ and obtain some formulas for the eigenfunctions in which 
the duality can be seen manifestly. As an application of our result, we study
the family of integral transformations $I(\alpha)$ ($\alpha\in {\mathbb C}$) in Section 8.

%%%%%%%%%%%%%%%%%%%%%%%%%

\par\medskip
Throughout this paper, we fix nonzero constants $q, t\in\mathbb{C}^\ast$ 
with the assumption $|q|<1$. 
%We also fix the complex parameter $t\in\mathbb{C}^\ast$. 
We use the standard notation
of $q$-shifted factorials
\begin{align}
(z;q)_\infty=\dprod{k=0}{\infty}\,(1-q^k z),
\qquad
(z;q)_k=\dfrac{(z;q)_\infty}{(q^kz;q)_\infty}\quad(k\in\mathbb{Z}),
\end{align}
theta function
$\theta(z;q)=(z;q)_\infty (q/z;q)_\infty (q;q)_\infty$,
and Jackson integrals
\begin{align}
&\dint{0}{a}f(z)\dfrac{d_qz}{z}=(1-q)\dsum{k=0}{\infty}f(q^ka),
\quad
\dint{a}{\infty}f(z)\dfrac{d_qz}{z}=(1-q)\dsum{k=0}{\infty}f(q^{-k-1}a), 
\nonumber\\
&\dint{0}{a\infty}f(z)\dfrac{d_qz}{z}=
\dint{0}{a}f(z)\dfrac{d_qz}{z}+
\dint{a}{\infty}f(z)\dfrac{d_qz}{z}=
(1-q)\dsum{k=-\infty}{\infty}f(q^ka).  
\end{align}
The basic hypergeometric series ${}_{r+1}\phi_{r}(a_1,a_2,\ldots,a_{r+1};b_1,\dots,b_{r};q,z)$ $(r=0,1,\ldots)$ is 
defined by 
\begin{align}
{}_{r+1}\phi_r\left[ {a_1,a_2,\ldots,a_{r+1}\atop b_1,\dots,b_{r}};q,z\right]=
\sum_{n=0}^\infty {(a_1,q)_n (a_2,q)_n \cdots (a_{r+1},q)_n \over 
(q,q)_n (b_1,q)_n \cdots (b_{r},q)_n} z^n.
\end{align}
We also use the notation for the very well-poised series as
\begin{align}
&{}_{r+1}W_r(a_1;a_4,a_5,\ldots,a_{r+1};q,z)\\
&=
{}_{r+1}\phi_r\left[ {a_1,q a_1^{1/2}-q a_1^{1/2},a_4,\ldots,a_{r+1}\atop
a_1^{1/2},-a_1^{1/2},q a_1/a_4,\ldots,q a_1/a_{r+1}};q,z\right].\nonumber
\end{align}

The authors wish to thank Y. Komori and M. Lassalle for stimulating discussion.
Research of J.S. is supported by the Grant-in-Aid for Scientific Research C-24540206.

\newpage
\tableofcontents
\newpage

%%%%%%%%%%%%%%%%%%%%%%%%%%%%%%

\section{Formal solutions of the eigenfunction equation}

In this section we investigate {\em formal solutions} 
$f(x;s)$ of the eigenfunction equation
\begin{align}\label{eq:eigenDf}
D^{x}(u)\,f(x;s)=f(x;s)\,\dprod{i=1}{n}(1-us_i)
\end{align}
in the variables $x=(x_1,\ldots,x_n)$, and 
prove the unique existence of a formal solution 
$f(x;s)$ with a given leading coefficient.
We also establish certain regularity of coefficients of 
$f(x;s)$ with respect to $s=(s_1,\ldots,s_n)$, and   
symmetry of solutions under the change of 
parameters $t\leftrightarrow q/t$.  

\par\medskip
We first introduce some notations in order to clarify the meaning of 
``formal solutions''.  
Let 
%$P=\bigoplus_{i=1}^n\,\mathbb{Z}\epsilon_i$ 
$P=\mathbb{Z}\epsilon_1\oplus\cdots\oplus\mathbb{Z}\epsilon_n$ 
be the free $\mathbb{Z}$-module 
with canonical basis $\pr{\epsilon_1,\ldots,\epsilon_n}$, 
and $\br{\ ,\ }\ :\ P\times P\to\mathbb{Z}$ 
the symmetric bilinear form defined by $\br{\epsilon_i,\epsilon_j}
=\delta_{ij}$ ($i,j\in\pr{1,\ldots,n}$).  
We denote by 
\begin{align}
Q_+=\mathbb{N}\alpha_1\oplus\cdots\oplus
\mathbb{N}\alpha_{n-1}\subseteq P\qquad(\mathbb{N}=\pr{0,1,2,\ldots})
\end{align}
the cone generated by the simple roots
$\alpha_i=\epsilon_{i}-\epsilon_{i+1}$ ($i=1,\ldots,n-1$).  
%and by 
%\begin{align}
%\Delta_+=\pr{\,\epsilon_i-\epsilon_j\ |\ 1\le i<j\le n}
%\end{align}
%the set of positive roots. 
Under the identification $P\iso\mathbb{Z}^n$, we 
use the multi-index notation of monomials 
$x^{\mu}=x_{1}^{\mu_1}\cdots x_n^{\mu_n}$
and $q$-difference operators 
$T_{q,x}^\mu=T_{q,x_1}^{\mu_1}\cdots T_{q,x_n}^{\mu_n}$ 
for each $\mu=\sum_{i=1}^{n}\mu_i\epsilon_i=(\mu_1,\ldots,\mu_n)\in P$, 
so that 
$T_{q,x}^\mu(x^\nu)=q^{\br{\mu,\nu}}x^{\nu}$ 
for any $\mu,\nu\in P$. 
We use the notations 
\begin{align}
\mathbb{C}[[x^{-Q_+}]]=\mathbb{C}[[x_2/x_1,\ldots,x_n/x_{n-1}]]
\quad
\mbox{and}\quad 
\mathbb{C}(x^{-Q_+})=\mathbb{C}(x_2/x_1,\ldots,x_n/x_{n-1})
\end{align} 
for the ring of formal power series  
and the field of rational functions 
in $(n-1)$ variables $x^{-\alpha_i}=x_{i+1}/x_i$ ($i=1,\ldots,n-1$), 
respectively. 
When we work with these algebras, through the identification 
$z_i=x_{i+1}/x_i$ ($i=1,\ldots,n-1)$, 
we use 
$(x_2/x_1,\ldots,x_n/x_{n-1})$ 
as a conventional notation for the canonical coordinates 
$z=(z_1,\ldots,z_{n-1})$ 
of an $(n-1)$-dimensional affine space 
$\mathbb{C}^{n-1}_z$.  
By this convention, we interpret 
the expression $x_j/x_i$ ($i<j$)  
as representing the monomial $z_iz_{i+1}\cdots z_{j-1}$. 

\subsection{Formal solution with a given leading coefficient}
We consider formal solutions $f(x;s)$ of \eqref{eq:eigenDf} in 
the form
\begin{align}\label{eq:fxphi}
f(x;s)=x^\lambda\varphi(x;s),\qquad
\varphi(x;s)=\dsum{\mu\in Q_+}{} x^{-\mu}\varphi_\mu(s), 
\end{align}
where $\lambda=(\lambda_1,\ldots,\lambda_n)$ denotes the 
complex variables such that $s_i=t^{n-i}q^{\lambda_i}$ 
($i=1,\ldots,n$).  
As far as $q$-difference equations are concerned, 
the power function $x^\lambda=x_1^{\lambda_1}\cdots x_n^{\lambda_n}$ 
above may be replaced 
by any function $e(x;s)$ such that 
$T_{q,x_i}e(x;s)=e(x;s)q^{\lambda_i}=e(x;s)s_i/t^{n-i}$
($i=1,\ldots,n$).  
In the expansion \eqref{eq:fxphi} of $\varphi(x;s)$, 
we assume that all the coefficients $\varphi_{\mu}(s)$  
belong either to the ring 
$\mathbb{C}[[s^{-Q_+}]]$ of formal power series, or to the ring 
$\mathbb{C}(s^{-Q_+})$ of rational functions in 
the variables $(s_2/s_1,\ldots,s_n/s_{n-1})$.  
The coefficient $\varphi_0(s)$ is called the {\em leading coefficient} 
of the solution $f(x;s)$, or the {\em initial value of} 
$\varphi(x;s)$ {\em at the origin} $(x_2/x_1,\ldots,x_n/x_{n-1})=0$, 
depending on the situation.  

\begin{thm}[Unique existence of a formal solution]
\label{thm:FF} 
Let $\lambda=(\lambda_1,\ldots,\lambda_n)$ be 
the complex variables  such that $s_i=t^{n-i}q^{\lambda_i}$ for $i=1,\ldots,n$. 
Then, for any 
$c(s)\in\mathbb{C}[[s^{-Q_+}]]$ $($resp. $\in\mathbb{C}(s^{-Q_+})$$)$, 
there exists a unique formal solution 
\begin{align}
f(x;s)=x^\lambda\varphi(x;s)\in x^\lambda\, \mathbb{C}[[s^{-Q_+}][[x^{-Q_+}]]
\quad(\mbox{resp.} \in x^\lambda\, \mathbb{C}(s^{-Q_+})[[x^{-Q_+}]])
\end{align}
of \eqref{eq:eigenDf} with leading coefficient $\varphi_0(s)=c(s)$.  
\end{thm}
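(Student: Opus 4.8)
**The plan is to prove this by a triangular recursion on the coefficients $\varphi_\mu(s)$, using the filtration by height of $\mu \in Q_+$.** The operator equation \eqref{eq:eigenDf} must hold as an identity of formal series in $\mathbb{C}[[x^{-Q_+}]]$ (with coefficients in the appropriate ring over $s$). The natural strategy is to substitute the ansatz $f(x;s)=x^\lambda \varphi(x;s)=\sum_{\mu\in Q_+} x^{\lambda-\mu}\varphi_\mu(s)$ into the generating-function equation and compare coefficients of each monomial $x^{\lambda-\mu}$ separately. First I would compute how $D^x(u)$ acts on a single term $x^{\lambda-\mu}$. Since $D^x(u)=\sum_{I}(-u)^{|I|} t^{\binom{n}{2}}\prod_{i\in I,j\notin I}\frac{tx_i-x_j}{x_i-x_j}\,T_{q,x}^{\epsilon_I}$, and $T_{q,x}^{\epsilon_I}$ multiplies $x^{\lambda-\mu}$ by $q^{\langle \epsilon_I,\lambda-\mu\rangle}=\prod_{i\in I} s_i t^{-(n-i)} q^{-\langle\epsilon_i,\mu\rangle}$, each term of $D^x(u)$ sends $x^{\lambda-\mu}$ to $x^{\lambda-\mu}$ times a scalar, plus correction terms coming from the rational coefficients $\frac{tx_i-x_j}{x_i-x_j}$ when expanded as power series in $x^{-Q_+}$.

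The key point to isolate is the \textbf{diagonal part}. Each rational factor $\frac{tx_i-x_j}{x_i-x_j}$ with $i<j$ expands as $t + (t-1)\sum_{m\ge 1}(x_j/x_i)^m$, while for $i>j$ it expands as $1+(t-1)\sum_{m\ge1}(x_i/x_j)^m$; only the constant terms of these expansions preserve the monomial $x^{\lambda-\mu}$, and the higher terms shift $\mu$ strictly upward in $Q_+$. Collecting the constant contributions, the coefficient of $x^{\lambda-\mu}$ on the left that multiplies $\varphi_\mu(s)$ is a \emph{diagonal eigenvalue} $E_\mu(u;s)$; after matching against the right-hand side $\prod_{i}(1-us_i)$, one gets
\begin{align}
\bigl(E_\mu(u;s)-\tprod{i=1}{n}(1-us_i)\bigr)\,\varphi_\mu(s)
= R_\mu(u;s),
\end{align}
where $R_\mu$ is a polynomial in $u$ whose coefficients are explicit $\mathbb{C}$-linear combinations of the \emph{lower} coefficients $\varphi_\nu(s)$ with $\nu<\mu$ (strictly smaller in the height order on $Q_+$). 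For $\mu=0$ the right side vanishes and $E_0(u;s)=\prod_i(1-us_i)$ identically, so $\varphi_0(s)=c(s)$ is free, as required.

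The crux of the argument — and the step I expect to be the main obstacle — is showing that the coefficient multiplying $\varphi_\mu(s)$ is \textbf{invertible} in the relevant ring, so that the recursion determines $\varphi_\mu(s)$ uniquely from the lower terms. Writing $E_\mu(u;s)=\prod_{i=1}^n(1-u\,\xi_i(\mu))$ where the diagonal shifted values $\xi_i(\mu)$ are built from $s_i t^{-(n-i)}q^{-\mu_i}$ times the constant factors, one must check that as polynomials in $u$ the two products $E_\mu(u;s)$ and $\prod_i(1-us_i)$ are \emph{not} identically equal whenever $\mu\ne 0$; equivalently, matching the coefficient of the top power $u^n$ (or a suitable intermediate one) produces a nonzero factor. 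I would verify that the difference is governed by a nonvanishing element of $\mathbb{C}[[s^{-Q_+}]]$ (resp.\ $\mathbb{C}(s^{-Q_+})$) — concretely, its value at the origin $s_2/s_1=\cdots=s_n/s_{n-1}=0$ reduces to a product of factors of the form $(1-t^{a}q^{b})$ with $(a,b)\ne(0,0)$, which is a nonzero constant under the standing genericity on $q,t$ — hence invertible in the formal-power-series ring and a fortiori in the rational-function field. This invertibility, combined with the strict triangularity of the recursion with respect to the height of $\mu$, gives both existence and uniqueness by induction on $\mathrm{ht}(\mu)$, treating the two cases $c(s)\in\mathbb{C}[[s^{-Q_+}]]$ and $c(s)\in\mathbb{C}(s^{-Q_+})$ uniformly since each step only requires dividing by the invertible diagonal factor.
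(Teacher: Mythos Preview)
Your overall strategy --- expand the conjugated operator $E^{(x;s)}(u)=x^{-\lambda}D^x(u)x^\lambda$ as a series in $x^{-Q_+}$ and run a triangular recursion on the $\varphi_\mu(s)$ --- is exactly what the paper does. But two points in your write-up are not right, and one of them is a genuine gap.

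\textbf{The consistency gap.} Your equation
\[
\bigl(E_\mu(u;s)-\tprod{i}{}(1-us_i)\bigr)\,\varphi_\mu(s)=R_\mu(u;s)
\]
is a polynomial identity in $u$: it is $n$ scalar equations for the single unknown $\varphi_\mu(s)$. Picking one coefficient of $u$ and inverting it \emph{defines} $\varphi_\mu(s)$, but it does not by itself show that the remaining $n-1$ equations are then satisfied; equivalently, you have proved unique existence for one $D^x_r$, not for the joint system. The paper closes this gap as follows: it proves unique existence for the first-order equation $L_1^{(x;s)}\psi=(\sum_i s_i)\psi$ alone, and then observes that since $L^{(x;s)}(u)$ commutes with $L_1^{(x;s)}$, the series $\bigl(L^{(x;s)}(u)-\prod_i(1-us_i)\bigr)\psi$ is again a formal solution of the first-order equation with leading coefficient $0$, hence vanishes by uniqueness. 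You need this commutativity step (or an equivalent argument) for existence in the full system.

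\textbf{The invertibility check.} The leading (constant in $x^{-Q_+}$) part of $E^{(x;s)}(u)$ is $\prod_i(1-us_iT_{q,x_i})$, so $E_\mu(u;s)=\prod_i(1-us_iq^{-\mu_i})$; no $t$ survives here, so your description ``a product of factors $(1-t^aq^b)$'' is off. More importantly, for $\mu\in Q_+$ one has $\sum_i\mu_i=0$, so the $u^n$-coefficient of $E_\mu(u;s)-\prod_i(1-us_i)$ vanishes identically --- your suggested ``top power'' does not work. The paper takes the $u^1$-coefficient, which is $\sum_i s_i(q^{-\mu_i}-1)$, and divides not by $s_1$ but by $s_r$ where $r$ is the smallest index with $\mu_r\ne0$; the constant term in $\mathbb{C}[[s^{-Q_+}]]$ is then $q^{-\mu_r}-1\ne0$, which is the invertibility you need. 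Dividing by $s_1$ would fail whenever $\mu_1=0$ (e.g.\ $\mu=\alpha_2$).

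With these two fixes --- correctly identifying the invertible leading term for the first-order equation, and then invoking commutativity to pass from the first-order equation to the full generating function --- your argument becomes the paper's proof.
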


\begin{thm}[Formal solution with rational coefficients]
\label{thm:FR} 
With the variables $\lambda=(\lambda_1,\ldots,\lambda_n)$ 
such that $s_i=t^{n-i}q^{\lambda_i}$ $(i=1,\ldots,n)$, let 
\begin{align}
f(x;s)=x^\lambda\varphi(x;s)\in x^\lambda\,\mathbb{C}(s^{-Q_+})[[x^{-Q_+}]],
\quad
\varphi(x;s)=\dsum{\mu\in Q_+}{}x^{-\mu}\varphi_\mu(s), 
\end{align}
be the formal solution 
of \eqref{eq:eigenDf} with leading coefficient $\varphi_0(s)=1$.  
Then, for each $\mu\in Q_+$, 
the coefficient $\varphi_\mu(s)\in \mathbb{C}(s^{-Q_+})$
is regular at the origin $(s_2/s_1,\ldots,s_n/s_{n-1})=0$, and 
has at most simple poles along the divisors 
\begin{align}
q^{k+1}s_{j}/s_i=1\qquad(1\le i<j\le n;\, k=0,1,2,\ldots).
\end{align}
\end{thm}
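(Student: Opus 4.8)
The plan is to prove Theorem~\ref{thm:FR} by analyzing the recursion that determines the coefficients $\varphi_\mu(s)$, which comes directly from substituting the ansatz $f(x;s)=x^\lambda\sum_{\mu\in Q_+}x^{-\mu}\varphi_\mu(s)$ into \eqref{eq:eigenDf} and comparing coefficients of $x^{\lambda-\nu}$. First I would extract the explicit action of $D^x(u)$ on the monomial $x^{\lambda-\mu}$. Using $T_{q,x}^{\epsilon_I}(x^{\lambda-\mu})=q^{\br{\epsilon_I,\lambda-\mu}}x^{\lambda-\mu}$ together with the substitution $s_i=t^{n-i}q^{\lambda_i}$, the ``diagonal'' term (coming from the leading behaviour) produces the eigenvalue $\prod_i(1-us_i)$, while the off-diagonal part shifts the index $\mu$. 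Collecting the coefficient of each $u^r x^{\lambda-\nu}$ then yields, for every $\nu\neq 0$, a relation of the schematic form
\begin{align}\label{eq:recplan}
\bigl(E_\nu(s)-\overline{E}_\nu(s)\bigr)\,\varphi_\nu(s)
=\sum_{\mu<\nu}A_{\nu,\mu}(s)\,\varphi_\mu(s),
\end{align}
where $E_\nu(s)$ is the formal eigenvalue produced by the shifted monomial $x^{\lambda-\nu}$, $\overline{E}_\nu(s)=\prod_i(1-us_i)$ is the target eigenvalue, and the $A_{\nu,\mu}$ are coefficients built from the $B_I(x)$-type rational factors evaluated at shifted arguments. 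Since $\overline{E}_\nu$ is independent of $\nu$, the prefactor $E_\nu(s)-\overline{E}_\nu(s)$ is the genuine ``denominator'' that must be inverted to solve for $\varphi_\nu$ in terms of lower terms; its nonvanishing as a formal object is exactly what Theorem~\ref{thm:FF} already guarantees, so I may take unique solvability for granted and concentrate on \emph{where} the inverse introduces poles.

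The key analytic point is to identify the zero locus of this prefactor. Writing $E_\nu(s)$ as a generating polynomial in $u$ and subtracting $\prod_i(1-us_i)$, one finds that the difference factors through expressions of the form $1-q^{m}s_j/s_i$ for integers $m\ge 1$ and pairs $i<j$ dictated by $\nu$; concretely, because each application of a shift operator replaces some $s_i=t^{n-i}q^{\lambda_i}$ by $q^{\pm 1}$ times itself, the resolvent denominators are products of factors $q^{k+1}s_j/s_i-1$. Thus the plan is: (1)~show by the explicit form of $D^x(u)$ that the solvability prefactor for $\varphi_\nu$ is, up to units in $\mathbb{C}[[s^{-Q_+}]]$, a product of linear factors $1-q^{k+1}s_j/s_i$ with $i<j$ and $0\le k$; (2)~verify that each such factor enters to the \emph{first} power in the relevant coefficient-comparison step, so that dividing by it creates only simple poles along the listed divisors; and (3)~control the right-hand side of \eqref{eq:recplan} to see that the numerators $A_{\nu,\mu}\varphi_\mu$ do not worsen the pole order. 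Regularity at the origin $(s_2/s_1,\ldots,s_n/s_{n-1})=0$ should follow because at that point all ratios $s_j/s_i\to 0$ for $i<j$, so every prohibited factor $1-q^{k+1}s_j/s_i$ tends to $1$ and the denominators are invertible in $\mathbb{C}[[s^{-Q_+}]]$; this gives the claim $\varphi_\mu(s)\in\mathbb{C}(s^{-Q_+})$ regular at the origin as a byproduct of the same induction.

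The whole argument runs by induction on $\mu\in Q_+$ with respect to the natural partial order (or a compatible total order refining the height $|\mu|=\sum_i\mu_i$). At each step I assume all $\varphi_\mu$ with $\mu<\nu$ are rational, regular at the origin, and have at most simple poles along $q^{k+1}s_j/s_i=1$; then \eqref{eq:recplan} expresses $\varphi_\nu$ as a lower-order combination divided by the solvability prefactor, and I must check the pole order does not accumulate. The main obstacle I anticipate is precisely this non-accumulation: a priori, dividing repeatedly by factors $1-q^{k+1}s_j/s_i$ at successive induction steps could raise a simple pole to a higher-order one, and the same divisor $q^{k+1}s_j/s_i=1$ can be hit by many different $\nu$. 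Overcoming this requires a careful bookkeeping argument showing that whenever a given factor appears in the denominator for $\varphi_\nu$, it is \emph{cancelled to the appropriate order} by a matching factor in the numerator coming from the $B_I$-factors (which themselves carry zeros of the form $1-q x_j/tx_i$ and poles $1-qx_j/x_i$ whose $s$-shadows produce the needed cancellations). Establishing this exact matching of zeros and poles at each divisor — rather than merely bounding pole orders crudely — is the delicate combinatorial heart of the proof, and I would expect to devote the bulk of the argument to it, likely by tracking the $q$-shifted factorial structure that will later be made explicit in the formula \eqref{def-cn} for $c_n(\theta;s|q,t)$.
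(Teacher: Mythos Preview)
Your framework is correct and you have put your finger on the real difficulty: preventing pole orders from accumulating under the recursion. But the plan you sketch for overcoming it --- tracking ad hoc cancellations between zeros of the $B_I$-factors and poles introduced by dividing out the solvability prefactor, or falling back on the explicit coefficients $c_n(\theta;s|q,t)$ --- is not a workable route. The explicit formula \eqref{def-cn} has not yet been shown to solve the equation at this point, so invoking it would be circular; and a direct cancellation-tracking in the $\varphi$-recursion has no obvious structure to exploit, since the same divisor $1-q^{k+1}s_j/s_i$ recurs at infinitely many $\nu$.

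The paper closes this gap with two ideas you are missing. First, it passes from $\varphi$ to $\psi$ via the gauge factor $\prod_{i<j}(qx_j/x_i;q)_\infty/(qx_j/tx_i;q)_\infty$, which replaces $E^{(x;s)}(u)$ by an operator $L^{(x;s)}(u)$ whose coefficients $B_I(x)$ depend only on pairs $(i,j)$ with $i\notin I$, $j\in I$ and $i<j$; this triangularity is what makes the later reduction possible. Second --- and this is the decisive trick --- the paper exploits the free parameter $u$: writing $\mu=k_r\alpha_r+\cdots+k_{n-1}\alpha_{n-1}$ with $k_r>0$, one specializes the full recurrence \eqref{eq:recpsi} at $u=q^{k_r}/s_r$. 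A factorization lemma (Lemma~\ref{lem:bnu}) shows that for $\nu$ supported on $\alpha_r,\ldots,\alpha_{n-1}$ one has $b_\nu(u;s;\xi)=\prod_{i<r}(1-us_i\xi_i)\,b_\nu^{(r)}(u;s;\xi)$, and after the substitution any $\nu$ with zero $\alpha_r$-coefficient contributes $0$. Hence the recursion for $\psi_\mu$ involves only $\psi_\nu$ with strictly smaller $\alpha_r$-coefficient, and the new denominator introduced is exactly $(1-q^{k_r})\prod_{j>r}(1-q^{k_r}s_j/s_r)$. Induction on $(k_r,k_{r+1},\ldots)$ then gives $\psi_\mu(s)=a_\mu(s)/\prod_{r\le i<j\le n}(qs_j/s_i;q)_{k_i}$ with polynomial numerator, which is the simple-pole statement. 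Regularity at the origin follows from the first-order equation $L_1^{(x;s)}$ alone, whose recursion \eqref{eq:recpsireg1} has denominators that are units in $\mathbb{C}[[s^{-Q_+}]]$.

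So: keep your inductive skeleton, but replace the cancellation bookkeeping by the gauge transformation to $\psi$ and the specialization $u=q^{k_r}/s_r$; that is the missing mechanism that controls pole accumulation.
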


In order to prove these theorems, we rewrite the eigenfunction 
equation for $f(x;s)$ to that of $\varphi(x;s)$. 
The $q$-difference operator 
\begin{align}
E^{(x;s)}(u)=x^{-\lambda} D^{x}(u)\, x^\lambda 
\end{align}
defined by conjugation is expressed explicitly as 
\begin{align}
&E^{(x;s)}(u)=\dsum{I\subseteq\pr{1,\ldots,n}}{}
(-u)^{|I|}
s^{\epsilon_I}\,
A_I(x)\, T_{q,x}^{\,\epsilon_I},
\nonumber\\
&A_I(x)=
\dprod{i<j; i\in I, j\notin I}{}
\dfrac{1-x_j/tx_i}{1-x_j/x_i}
\dprod{i<j; i\notin I, j\in I}{}
\dfrac{1-tx_j/x_i}{1-x_j/x_i}
\quad(I\subseteq\pr{1,\ldots,n}), 
\end{align}
where $\epsilon_I=\sum_{i\in I}\epsilon_i$.  
With this $q$-difference operator, the eigenfunction equation for $\varphi(x;s)$ is
given by 
\begin{align}\label{eq:eigenEphi}
E^{(x;s)}(u)\,\varphi(x;s)=\varphi(x;s)\,\dprod{i=1}{n}(1-us_i). 
\end{align}
Note that $E^{(x;s)}(u/s_1)$ acts naturally both on 
$\mathbb{C}[[s^{-Q_+}]][[x^{-Q_+}]]$
and on $\mathbb{C}(s^{-Q_+})[[x^{-Q_+}]]$, 
with the coefficients $A_I(x)$ regarded as elements of $\mathbb{C}[[x^{-Q_+}]]$.  

\subsection{Proof of the unique existence}
In the following arguments, 
it is essential to transform this equation once more, 
into the eigenfunction equation for $\psi(x;s)$ defined by
\begin{align}
\varphi(x;s)=\dprod{1\le i<j\le n}{}\dfrac{(qx_j/x_i;q)_\infty}{(qx_j/tx_i;q)_\infty}\,\psi(x;s),
\quad
\psi(x;s)=\dsum{\mu\in Q_+}{}x^{-\mu}\psi_{\mu}(s). 
\end{align}
Note here that $\varphi(x;s)$ and $\psi(x;s)$ have a common leading coefficient;  namely, 
$\varphi_0(s)=\psi_0(s)$.  
We define the $q$-difference operator $L^{(x;s)}$ by setting 
\begin{align}
L^{(x;s)}(u)=
\dprod{1\le i<j\le n}{}\dfrac{(qx_j/tx_i;q)_\infty}{(qx_j/x_i;q)_\infty}\ 
E^{(x;s)}(u)\dprod{1\le i<j\le n}{}\dfrac{(qx_j/x_i;q)_\infty}{(qx_j/tx_i;q)_\infty}. 
\end{align}
%Since $T_{q,x}^{\,\epsilon_I}$ are transformed into 
%\begin{align}
%&
%\dprod{1\le i<j\le n}{}\dfrac{(qx_j/tx_i;q)_\infty}{(qx_j/x_i;q)_\infty}\ 
%T_{q,x}^{\epsilon}\dprod{1\le i<j\le n}{}\dfrac{(qx_j/x_i;q)_\infty}{(qx_j/tx_i;q)_\infty}
%\nonumber\\
%&
%=
%\dprod{i<j;\,i\in I, j\notin I}{}
%\dfrac{1-x_j/x_i}{1-x_j/tx_i}
%\dprod{i<j;\,i\notin I, j\in I}{}
%\dfrac{1-qx_j/tx_i}{1-qx_j/x_i}\,T_{q,x}^{\,\epsilon_I}
%\end{align}
By this conjugation, 
we obtain the $q$-difference operator
\begin{align}\label{eq:BI}
&L^{(x;s)}(u)
=
\dsum{I\subseteq\pr{1,\ldots,n}}{}(-u)^{|I|}s^{\epsilon_I}B_{I}(x)\,T_{q,x}^{\,\epsilon_I},
\nonumber\\
&B_{I}(x)=
\dprod{i<j;\, i\notin I, j\in I}{}\dfrac{1-tx_j/x_i}{1-x_j/x_i}
\dfrac{1-qx_j/tx_i}{1-qx_j/x_i}
\quad(I\subseteq \pr{1,\ldots,n}).
\end{align}
It should be observed that the coefficients $B_I(x)$ of $L^{(x;s)}(u)$ 
depend on $t$ and $q/t$, symmetrically. 
We will investigate below 
the eigenfunction equation
\begin{align}\label{eq:eigenLpsi}
L^{(x;s)}(u)\psi(x;s)=\psi(x;s)\dprod{i=1}{n}(1-us_i),
\quad
\psi(x;s)=\dsum{\nu\in Q_+}{}x^{-\mu}\psi_\mu(s) 
\end{align}
for $\psi(x;s)$.   
Theorems \ref{thm:FF} and \ref{thm:FR} are derived from 
the following theorems concerning the eigenfunction equation 
\eqref{eq:eigenLpsi}.

\begin{thm}\label{thm:FFpsi}
For any $c(s)\in\mathbb{C}[[s^{-Q_+}]]$ 
$($resp. $\in \mathbb{C}(s^{-Q_+})$$)$, 
there exists a unique 
formal solution $\psi(x;s)$ in 
$\mathbb{C}[[s^{-Q_+}]][[x^{-Q_+}]]$ 
$($resp. in $\mathbb{C}(s^{-Q_+})[[x^{-Q_+}]]$$)$
of \eqref{eq:eigenLpsi} 
with leading 
coefficient $\psi_0(s)=c(s)$. 
\end{thm}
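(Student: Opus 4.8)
The plan is to prove the unique existence of a formal solution $\psi(x;s)=\sum_{\mu\in Q_+}x^{-\mu}\psi_\mu(s)$ by determining the coefficients $\psi_\mu(s)$ recursively through a term-by-term matching of the eigenvalue equation $L^{(x;s)}(u)\psi(x;s)=\psi(x;s)\prod_{i=1}^n(1-us_i)$. The crucial structural input is the specific shape of the operator $L^{(x;s)}(u)$ in \eqref{eq:BI}: the coefficients $B_I(x)$ lie in $\mathbb{C}[[x^{-Q_+}]]$ with constant term $1$ (each factor $\tfrac{1-tx_j/x_i}{1-x_j/x_i}\tfrac{1-qx_j/tx_i}{1-qx_j/x_i}$ is a formal power series in $x_j/x_i$ beginning with $1$), and the shift $T_{q,x}^{\epsilon_I}$ moves $\psi$ inside the graded completion in a controlled way. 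First I would introduce a grading on $\mathbb{C}[[x^{-Q_+}]]$ by total degree in the simple-root directions: write $x^{-\mu}$ with $\mu=\sum_i m_i\alpha_i$ and set $\deg(x^{-\mu})=\sum_i m_i=\mathrm{ht}(\mu)$, the height. I would then expand both sides of \eqref{eq:eigenLpsi} and collect the coefficient of each monomial $x^{-\mu}$.

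The key computation is to isolate, for a fixed $\mu\in Q_+$, the equation that determines $\psi_\mu(s)$. The left-hand side $L^{(x;s)}(u)\psi(x;s)$ contributes, at the $x^{-\mu}$ level, a diagonal term coming from the identity part of $L$ acting on $\psi_\mu(s)x^{-\mu}$ together with lower-order terms involving $\psi_\nu(s)$ for $\nu<\mu$ (those $\nu$ with $\mathrm{ht}(\nu)<\mathrm{ht}(\mu)$, arising either from the power-series tails of the $B_I(x)$ or from the nontrivial shifts $T_{q,x}^{\epsilon_I}$ applied to higher monomials). The plan is to verify that the \emph{diagonal} operator — the coefficient multiplying $\psi_\mu(s)$ itself — is
\begin{align}
\dprod{i=1}{n}(1-us_i q^{-\br{\mu,\epsilon_i}}) - \dprod{i=1}{n}(1-us_i),\nonumber
\end{align}
coming from the $I=\varnothing$ term shifting the eigenvalue through $T_{q,x}^{\epsilon_\varnothing}$ being trivial, and more precisely from matching the $u$-dependence. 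After collecting terms, the recursion reads $\bigl(\text{diagonal factor}\bigr)\,\psi_\mu(s)=\bigl(\text{expression in }\psi_\nu,\ \nu<\mu\bigr)$, where the right-hand side depends only on strictly lower coefficients. The essential point to check is that this diagonal factor is a nonzero element of $\mathbb{C}[[s^{-Q_+}]]$ (resp. a unit in $\mathbb{C}(s^{-Q_+})$) so that we may divide and solve uniquely for $\psi_\mu(s)$.

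The main obstacle will be establishing the invertibility of this diagonal factor, equivalently showing that the recursion never stalls. Since $u$ is a formal parameter, the eigenvalue equation is really a family of equations indexed by the coefficients of powers of $u$; I expect the governing scalar to be, after comparing coefficients of the top power $u^n$ (or exploiting the generating-function structure), a difference of the form $\prod_i s_i\bigl(q^{-\br{\mu,\epsilon_i}}-1\bigr)$-type product, or more carefully a factor $\prod_{i}\bigl(s_i q^{-\br{\mu,\epsilon_i}}\bigr)-\prod_i s_i$ that is manifestly nonvanishing as a series in $s^{-Q_+}$ because $\mu\neq 0$ forces $\br{\mu,\epsilon_i}$ to be nonzero for some $i$. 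Verifying that this factor is a unit in the relevant ring — and that no spurious vanishing occurs for any $\mu\in Q_+$ — is the heart of the argument; the rest is bookkeeping on the grading. Once unique solvability of each $\psi_\mu(s)$ is secured, the existence and uniqueness of the whole series follows by induction on $\mathrm{ht}(\mu)$, with $\psi_0(s)=c(s)$ as the base case, and the two cases ($\mathbb{C}[[s^{-Q_+}]]$ versus $\mathbb{C}(s^{-Q_+})$) are handled identically since the recursion stays within each ring.
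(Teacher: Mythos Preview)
Your overall strategy---expand $L^{(x;s)}(u)$ as a formal series in $x^{-Q_+}$, identify the diagonal factor $\prod_{i}(1-us_iq^{-\mu_i})-\prod_{i}(1-us_i)$ acting on $\psi_\mu(s)$, and solve recursively---is the same as the paper's. But your invertibility argument has a genuine gap, and the paper's proof contains two ingredients you are missing.

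First, your concrete proposal for the governing scalar fails outright. The coefficient of $u^n$ in the diagonal factor is (up to sign) $\prod_i s_iq^{-\mu_i}-\prod_i s_i=\big(\prod_i s_i\big)(q^{-\sum_i\mu_i}-1)$, and since $\mu\in Q_+$ lies in the root lattice we have $\sum_i\mu_i=0$, so this is identically zero. The coefficient that actually works is that of $u^1$, namely $\sum_i s_i(q^{-\mu_i}-1)$. This is nonzero as a polynomial in $s$, which suffices for the $\mathbb{C}(s^{-Q_+})$ case. But for $\mathbb{C}[[s^{-Q_+}]]$ you need it to be a \emph{unit}, and after dividing by $s_1$ the constant term is $q^{-\mu_1}-1$, which vanishes whenever $\mu_1=0$ (e.g.\ $\mu=\alpha_2$). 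The paper fixes this by choosing the smallest index $r$ with $\mu_r\ne 0$ and dividing by $s_r$ instead; crucially, this only works because the \emph{off-diagonal} terms in the recursion also lose their $s_j$ contributions for $j\le r$. That in turn rests on the structural fact that $B_{\{j\}}(x)\in\mathbb{C}[[x_2/x_1,\ldots,x_j/x_{j-1}]]$, so its $x^{-\nu}$ coefficient vanishes unless $\nu\in\mathbb{N}\alpha_1+\cdots+\mathbb{N}\alpha_{j-1}$. You have not observed this, and without it the formal-power-series case does not go through.

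Second, you have not addressed why the overdetermined system in $u$ is consistent. Extracting one $u$-coefficient determines $\psi_\mu(s)$, but the other $u$-coefficients impose further constraints. The paper handles this cleanly: prove existence and uniqueness for the single operator $L^{(x;s)}_1$, then use the commutativity $[L^{(x;s)}(u),L^{(x;s)}_1]=0$ together with uniqueness to conclude that the solution automatically satisfies the full equation $L^{(x;s)}(u)\psi=\psi\prod_i(1-us_i)$.
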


\begin{thm}\label{thm:FRpsi}
Let $\psi(x;s)\in\mathbb{C}(s^{-Q_+})[[x^{-Q_+}]]$ be 
the formal solution of \eqref{eq:eigenLpsi} with $\psi_0(s)=1$. 
\newline
$(1)$\ 
For each $\mu\in Q_+$, %the coefficient 
$\psi_{\mu}(s)\in \mathbb{C}(s^{-Q_+})$ is 
%a rational function of $(s_2/s_1,\ldots,s_n/s_{n-1})$, 
regular at the origin 
$(s_2/s_1,\ldots,s_n/s_{n-1})=0$, and has at most simple poles 
along the divisors 
$q^{k+1}s_j/s_i=1\quad(1\le i<j\le n;\ k=0,1,2\ldots)$.  
\newline
$(2)$\ This formal solution $\psi(x;s)$ is invariant under the 
change of parameters $t\leftrightarrow q/t$.  
\end{thm}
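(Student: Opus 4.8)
The plan is to prove the two assertions separately, deriving both from the coefficient recursion attached to \eqref{eq:eigenLpsi}. Part $(2)$ is the quick one: under $t\leftrightarrow q/t$ the two numerator factors $1-tx_j/x_i$ and $1-qx_j/tx_i$ of each $B_I(x)$ are interchanged while the denominators are untouched, so every $B_I(x)$, and hence the whole operator $L^{(x;s)}(u)$, is invariant; the eigenvalue $\prod_{i=1}^n(1-us_i)$ does not involve $t$ at all. Thus $\psi(x;s|q,t)$ and $\psi(x;s|q,q/t)$ are both formal solutions of \eqref{eq:eigenLpsi} with leading coefficient $1$, and by the uniqueness in Theorem \ref{thm:FFpsi} they coincide.

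For part $(1)$ I would first extract the recursion. Writing $B_I(x)=\sum_{\kappa\in Q_+}b_{I,\kappa}\,x^{-\kappa}$ with $b_{I,0}=1$ and comparing the coefficient of $x^{-\nu}$ on both sides of \eqref{eq:eigenLpsi}, one obtains, as an identity of polynomials in $u$,
\[
P_\nu(u;s)\,\psi_\nu(s)=R_\nu(u;s),\qquad
P_\nu(u;s)=\prod_{i=1}^n(1-us_i)-\prod_{i=1}^n(1-us_iq^{-\nu_i}),
\]
where $R_\nu(u;s)=\sum_{I}(-u)^{|I|}s^{\epsilon_I}\sum_{0\ne\kappa\le\nu}b_{I,\kappa}\,q^{-\langle\epsilon_I,\nu-\kappa\rangle}\psi_{\nu-\kappa}(s)$ is a combination, with Laurent-monomial coefficients, of the strictly lower terms $\psi_\mu$, $\mu<\nu$. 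Regularity at the origin then drops out of Theorem \ref{thm:FFpsi}: taking $c(s)=1$ produces a solution whose coefficients lie in $\mathbb{C}[[s^{-Q_+}]]$, while the coefficient of $u$ in $P_\nu$ is $d_\nu(s)=\sum_i s_i(1-q^{-\nu_i})$, a nonzero element of the fraction field for $\nu\neq0$. Hence the above recursion determines all coefficients uniquely from $\psi_0=1$ over that field, so the power-series solution and the rational solution of the theorem agree; each $\psi_\mu(s)$, being simultaneously rational and a power series, is therefore regular at the origin.

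The location of the poles I would pin down by exploiting that $P_\nu(u;s)\psi_\nu(s)=R_\nu(u;s)$ holds for \emph{all} $u$. Arguing by induction along $Q_+$, assume the $\psi_\mu$ with $\mu<\nu$ have at most simple poles along the divisors $q^{k+1}s_j/s_i=1$ and no others. Fix an irreducible divisor $\mathcal{E}$ distinct from these. Then $R_\nu(u;\cdot)$ is regular along $\mathcal{E}$, so $\psi_\nu=R_\nu(u;s)/P_\nu(u;s)$ can acquire a pole along $\mathcal{E}$ only if $P_\nu(u;\cdot)$ vanishes along $\mathcal{E}$ for every $u$, i.e. only if $\mathcal{E}$ lies in the locus where the multisets $\{s_i\}$ and $\{s_iq^{-\nu_i}\}$ coincide, namely $\bigcup_{w\in\mathfrak{S}_n}\{\,s_{w(i)}=s_iq^{-\nu_i}\ (\forall i)\,\}$. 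A count of cycles shows that the only codimension-one components of this locus come from transpositions $w=(i_0\,j_0)$ with $\nu=m(\epsilon_{i_0}-\epsilon_{j_0})$, $m\ge1$, and these are precisely the divisors $q^{m}s_{j_0}/s_{i_0}=1$. So no pole appears off the asserted divisors, and along any of them a generic value of $u$ (for which $P_\nu(u;\cdot)\not\equiv0$ on $\mathcal{E}$) gives $\mathrm{ord}_{\mathcal{E}}\psi_\nu\ge\mathrm{ord}_{\mathcal{E}}R_\nu(u;\cdot)\ge-1$, a simple pole — except exactly when $\mathcal{E}$ is the resonant divisor $q^{m}s_{j_0}/s_{i_0}=1$ attached to $\nu=m(\epsilon_{i_0}-\epsilon_{j_0})$.

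This resonant case is the main obstacle, since there $P_\nu(u;\cdot)$ vanishes along $\mathcal{E}$ for all $u$ and the crude estimate yields only a pole of order $\le2$. To cut it down to a simple pole I would run a residue descent: because $L^{(x;s)}(u)$ and $\prod_i(1-us_i)$ are regular along $\mathcal{E}$, extracting the coefficient of $\xi^{-2}$ from \eqref{eq:eigenLpsi}, with $\xi=1-q^{m}s_{j_0}/s_{i_0}$ a local equation of $\mathcal{E}$, shows that the putative double-pole part $\psi^{[2]}(x;s)=\sum_\mu x^{-\mu}\psi^{[2]}_\mu(s)$ is itself a formal solution of the eigenfunction equation specialized to $\mathcal{E}$, with vanishing leading coefficient $\psi^{[2]}_0=0$. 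The genuinely delicate point is then to force $\psi^{[2]}\equiv0$: the specialized equation is resonant precisely at the mode $\nu$, so uniqueness alone does not annihilate $\psi^{[2]}$, and one must instead use the solvability (compatibility) condition imposed at the resonant mode by the existence of the regular simple-pole part $\psi^{[1]}$. I expect verifying that this condition kills the leading residue — equivalently, that the residues along $\mathcal{E}$ of the lower $\psi_\mu$ entering $R_\nu$ cancel — to be the technical heart of the proof.
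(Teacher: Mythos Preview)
Your argument for part $(2)$ is correct and matches the paper's: the operator $L^{(x;s)}(u)$ is manifestly invariant under $t\leftrightarrow q/t$, the eigenvalue does not involve $t$, and uniqueness from Theorem~\ref{thm:FFpsi} finishes. Your treatment of regularity at the origin in part $(1)$ is also fine.

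The gap is exactly where you flag it: the resonant case $\nu=m(\epsilon_{i_0}-\epsilon_{j_0})$ along $\mathcal{E}=\{q^{m}s_{j_0}/s_{i_0}=1\}$. Your residue descent, carried out carefully, gives at order $\xi^{-1}$ the relation $\widetilde P_\nu\big|_{\mathcal{E}}\cdot\psi_\nu^{[2]}=R_\nu^{[1]}\big|_{\mathcal{E}}$, where $P_\nu=\xi\,\widetilde P_\nu$. So $\psi_\nu^{[2]}=0$ is \emph{equivalent} to the vanishing of the combined residue $R_\nu^{[1]}$ of the lower $\psi_\mu$'s along $\mathcal{E}$, and nothing you have written forces that cancellation. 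The ``compatibility condition coming from the simple-pole part $\psi^{[1]}$'' is not an extra constraint on $\psi_\nu^{[2]}$: it determines $\psi_\nu^{[1]}$ once $\psi_\nu^{[2]}$ is known, but does not feed back. So as it stands your induction stalls at every $\nu$ of the special form above, and these are precisely the $\nu$'s that matter.

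The paper avoids this difficulty by a different mechanism that never produces a vanishing coefficient on $\psi_\mu$. Writing $\mu=k_r\alpha_r+\cdots+k_{n-1}\alpha_{n-1}$ with $k_r>0$, one first observes a factorization (Lemma~\ref{lem:bnu}): for every $\nu\in\mathbb{N}\alpha_r+\cdots+\mathbb{N}\alpha_{n-1}$ one has
\[
b_\nu(u;s;\xi)=\prod_{i=1}^{r-1}(1-us_i\xi_i)\,b_\nu^{(r)}(u;s;\xi),
\]
the second factor being the analogous polynomial in the $(n-r+1)$ variables $x_r,\ldots,x_n$. Dividing the full recursion by $\prod_{i<r}(1-us_i)$ and then \emph{specializing} $u=q^{k_r}/s_r$ collapses the coefficient of $\psi_\mu$ to the nonvanishing quantity $(1-q^{k_r})\prod_{j>r}(1-q^{k_r}s_j/s_r)$; applying the factorization once more shows that every term on the right with $\alpha_r$-coefficient equal to $k_r$ drops out, so the recursion becomes
\[
\psi_\mu(s)=\sum_{0\le\nu<\mu,\ l_r<k_r}\frac{b_{\mu-\nu}^{(r)}(q^{k_r}/s_r;s;q^{-\nu})}{(1-q^{k_r})\prod_{j>r}(1-q^{k_r}s_j/s_r)}\,\psi_\nu(s).
\]
This is strictly decreasing in the $\alpha_r$-coefficient, with \emph{polynomial} numerators and the explicit simple-zero denominator shown; an immediate induction yields $\psi_\mu(s)=a_\mu(s)\big/\prod_{r\le i<j\le n}(qs_j/s_i;q)_{k_i}$ with $a_\mu$ polynomial, which is the refined form of part $(1)$. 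The key idea you are missing is thus not a residue cancellation but a judicious choice of the free parameter $u$ (together with the factorization lemma) that makes the denominator in the recursion a product of distinct linear factors from the outset.
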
 

\par\noindent 
We remark that the gauge factor can be expanded as 
\begin{align}
\dprod{1\le i<j\le n}{}\dfrac{(qx_j/x_i;q)_\infty}{(qx_j/tx_i;q)_\infty}
=\dsum{\theta\in M_n}{}\,
\dprod{1\le i<j\le n}{}\,\dfrac{(t;q)_{\theta_{ij}}}{(q;q)_{\theta_{ij}}} (qx_j/tx_i)^{\theta_{ij}}
\in\mathbb{C}[[x^{-Q_+}]]
\end{align}
by the $q$-binomial theorem, 
where $M_n$ denotes the set of all strictly upper triangular $n\times n$ 
matrices $\theta=(\theta_{ij})_{i,j=1}^{n}$ with nonnegative integer coefficients:
\begin{align}
M_n=\pr{\ \theta=(\theta_{ij})_{i,j=1}^{n}\in\mbox{Mat}(n;\mathbb{N})\ |\ \  
\theta_{ij}=0\ \ (1\le j\le i\le n)}. 
\end{align}
Hence the coefficients $\varphi_\mu(s)$ are expressed as $\mathbb{C}$-linear 
combinations of $\psi_\nu(s)$, and {\em vice versa}.  
Theorem \ref{thm:FRpsi}, (1) for $\psi(x;s)$ 
thus implies Theorem \ref{thm:FR} for $\varphi(x;s)$.  
A characteristic feature of $\psi(x;s)$ is 
the symmetry with respect to the change of parameters 
$t\leftrightarrow q/t$;   
it is a consequence from the symmetry of the 
operator $L^{(x;s)}(u)$ by the unique existence of a formal solution 
with leading coefficient 1.  
In the rest of this section, we give a proof of Theorem \ref{thm:FFpsi} 
and Theorem \ref{thm:FRpsi}, (1).  

\par\medskip
By expanding the coefficients $B_I(x)$ in the form
\begin{align}
B_I(x)=\dsum{\mu\in Q_+}{} x^{-\mu} b_{\mu,I}\in \mathbb{C}[[x^{-Q_+}]], 
\end{align}
we rewrite the operator $L^{(x;s)}(u)$ as follows: 
\begin{align}
&L^{(x;s)}(u)=\dsum{\mu\in Q_+}{}x^{-\mu}b_{\mu}(u;s;T_{q,x}),
\nonumber\\
&
b_{\mu}(u;s;T_{q,x})=\dsum{I\subseteq\pr{1,\ldots,n}}{}
(-u)^{|I|}b_{\mu,I}s^{\epsilon_I} T_{q,x}^{\,\epsilon_I}
\quad(\mu\in Q_+).  
\end{align}
Note that the leading term of $L^{(x;s)}(u)$ is given by 
\begin{align}
b_0(u;s;T_{q,x})=\dprod{i=1}{n}(1-us_iT_{q,x_i}), 
\end{align}
since $b_{0,I}=1$ for all $I\subseteq\pr{1,\ldots,n}$, 
and that $b_0(u;s;1)=\dprod{i=1}{n}(1-us_i)$ coincides with the 
eigenvalue in \eqref{eq:eigenLpsi}.  
By expanding the both sides of \eqref{eq:eigenLpsi} as formal power series 
in the variables $(x_2/x_1,\ldots,x_n/x_{n-1})$, we obtain the following 
recurrence relations for the coefficients $\psi_{\mu}(u)$:
\begin{align}\label{eq:recpsi}
(b_0(u;s;q^{-\mu})-b_0(u;s;1))\psi_{\mu}(s)
+\dsum{0<\nu\le \mu}{}b_{\nu}(u;s;q^{-\mu+\nu})\psi_{\mu-\nu}(s)=0
\quad(\mu\in Q_+),
\end{align}
where $\le$ denotes the dominance ordering in $Q_+$.  

In order to show the unique existence of a formal solution with 
a given leading coefficient, we use the first order component 
\begin{align}
L^{(x;s)}_1=\dsum{j=1}{n}s_j B_{\pr{j}}(x)
\,
T_{q,x_j}, \quad
B_{\pr{j}}(x)=
\dprod{i=1}{j-1}
\dfrac{(1-tx_j/x_i)(1-qx_j/tx_i)}{(1-x_j/x_i)(1-qx_j/x_i)},
\end{align}
of $L^{(x;s)}(u)=\dsum{r=0}{n}(-u)^r L^{(x;s)}_r$. 
Note that $B_{\pr{j}}(x)\in\mathbb{C}[[x_2/x_1,\ldots,x_j/x_{j-1}]]$, and hence
$b_{\mu,\pr{j}}=0$ 
unless $\mu\in\mathbb{N}\alpha_1+\cdots+\mathbb{N}\alpha_{j-1}$.   
The eigenfunction equation for $\psi(x;s)$ now takes the form
\begin{align}\label{eq:eigenL1psi}
L_1^{(x;s)} \psi(x;s)=\psi(x;s)\bigg(\dsum{j=1}{n}\,s_j\bigg).  
\end{align}
The corresponding recurrence relations for the coefficients 
$\psi_\mu(s)$ are given by
\begin{align}
\big(b_{0,1}(s;q^{-\mu})-b_{0,1}(s;1)\big)\psi_{\mu}(s)
+\dsum{0<\nu\le \mu}{}b_{\nu,1}(s;q^{-\mu+\nu})\psi_{\mu-\nu}(s)=0
\quad(\mu\in Q_+), 
\end{align} 
where
\begin{align}
&b_{0,1}(s;q^{-\mu})-b_{0,1}(s;1)=\dsum{j=1}{n}s_j(q^{-\mu_j}-1),
\nonumber\\
&
b_{\nu,1}(s;q^{-\mu+\nu})=\dsum{j=1}{n} b_{\nu,\pr{j}} s_j q^{-\mu_j+\nu_j}
\quad(0<\nu\le\mu). 
\end{align}
Noting that $b_{0,1}(s;q^{-\mu})-b_{0,1}(s;1)\ne 0$ for any $\mu>0$
as a polynomial in $s=(s_1,\ldots,s_n)$, we immediately see 
that there exists a unique 
formal solution $\psi(x;s)\in \mathbb{C}(s^{-Q_+})[[x^{-Q_+}]]$ 
of \eqref{eq:eigenL1psi}
with a given leading coefficient $\psi_0(s)=c(s)\in\mathbb{C}(s^{-Q_+})$. 
%By replacing $\psi(x;s)$ by $\psi(x;s)/\psi_0(s)$, 
%we have only to consider the case where $\psi_0(s)=1$.   

Supposing that $\mu>0$, choose the index $r$ such that 
\begin{align}
\mu=k_r\alpha_r+\cdots+k_{n-1}\alpha_{n-1}, \quad k_r>0.   
\end{align}
This condition for $r$ is equivalent to saying that $\mu_i=0$ ($i<r$) and 
$\mu_r\ne 0$.  
Note that $\mu_r=k_r$, $\mu_j=-k_{j-1}+k_j$ ($r<j<n$) 
and $\mu_n=-k_{n-1}$.  
In this case, for any $\nu\in Q_+$ with 
$0<\nu\le \mu$, we have 
$\nu\in\mathbb{N}\alpha_r+\cdots+\mathbb{N}\alpha_{n-1}$.  
As we remarked already, $b_{\nu,\pr{j}}=0$ unless 
$\nu\in\mathbb{N}\alpha_1+\ldots+\mathbb{N}\alpha_{j-1}$. 
This implies, $b_{\nu,\pr{j}}=0$ for $j=1,\ldots,r$. 
Hence we have
\begin{align}
&b_{0,1}(s;q^{-\mu})-b_{0,1}(s;1)=
s_r(q^{-\mu_r}-1)+\dsum{j=r+1}{n}s_j(q^{-\mu_j}-1),
\nonumber\\
&
b_{\nu,1}(s;q^{-\mu+\nu})=\dsum{j=r+1}{n} b_{\nu,\pr{j}} s_j q^{-\mu_j+\nu_j}
\quad(0<\nu\le \mu), 
\end{align}
and the recurrence relation for $\psi_\mu(s)$ is expressed as 
\begin{align}\label{eq:recpsireg}
\big((q^{-\mu_r}-1)+\sum_{j>r}(q^{-\mu_j}-1)s_j/s_r\big)
\psi_{\mu}(s)
+
\dsum{0<\nu\le \mu}{}
\big(\dsum{j>r}\,b_{\nu,\pr{j}}q^{-\mu_j+\nu_j}s_j/s_r\big) 
\psi_{\mu-\nu}(s)=0. 
\end{align}
Since the coefficient of $\psi_{\mu}(s)$ is invertible 
in $\mathbb{C}[[s^{-Q_+}]]$ for any $\mu>0$,  
this recurrence relation implies the unique existence of a formal solution 
$\psi(x;s)\in \mathbb{C}[[s^{-Q_+}]][[x^{-Q_+}]]$ 
of \eqref{eq:eigenL1psi} with a given leading coefficient 
$\psi_0(s)=c(s)\in\mathbb{C}[[s^{-Q_+}]]$.  

This formal solution $\psi(x;s)$ of 
\eqref{eq:eigenL1psi}, 
either in $\mathbb{C}[[s^{-Q_+}]][[x^{-Q_+}]]$ 
or in $\mathbb{C}(s^{-Q_+})[[x^{-Q_+}]]$, 
actually solves the 
joint eigenfunction equation \eqref{eq:eigenLpsi} 
with respect to the operator $L^{(x;s)}(u)$. 
Since the operator $L^{(x;s)}(u)$ 
commute with $L^{(x;s)}_1$, 
the formal power series 
$\big(L^{(x;s)}(u/s_1)-\prod_{i=1}^{n}(1-us_i/s_1)\big)\psi(x;s)$ 
is again a formal solution of \eqref{eq:eigenL1psi}.  
However, it must be zero by the uniqueness theorem since 
its leading coefficient is 0.  
This completes the proof of Theorem \ref{thm:FFpsi}.  

\subsection{Regularity of the expansion coefficients}
%\par\medskip 
We now proceed to the proof of Theorem \ref{thm:FRpsi}, (1). 
Let $\psi(x;s)\in\mathbb{C}(s^{-Q_+})[[x^{-Q_+}]]$ be 
the formal solution of \eqref{eq:eigenLpsi} with $\psi_0(s)=1$.  
Then by induction on the dominance ordering, 
the recurrence relations 
\begin{align}\label{eq:recpsireg1}
\psi_{\mu}(s)
=
\dsum{0<\nu\le \mu}{}
\dfrac{\sum_{j>r}\,b_{\nu,\pr{j}}q^{-\mu_j+\nu_j}s_j/s_r}
{(1-q^{-\mu_r})+\sum_{j>r}(1-q^{-\mu_j})s_j/s_r}\,
\psi_{\mu-\nu}(s)
\quad(\mu>0)
\end{align}
imply that $\psi_{\mu}(s)$ is regular at $(s_2/s_1,\ldots,s_n/s_{n-1})=0$. 
In fact, by these recurrence relations  
we can say more about the coefficients $\psi_{\mu}(s)$. 
\begin{lem}
Let $\psi(x;s)\in\mathbb{C}(s^{-Q_+})[[x^{-Q_+}]]$ be 
the formal solution of \eqref{eq:eigenLpsi} with $\psi_0(s)=1$.  
Suppose that 
$\mu\in Q_+$ is expressed as 
$\mu=k_r\alpha_r+\cdots+k_{n-1}\alpha_{n-1}$ 
for some $r=1,\ldots,n$$:$
\begin{align}\label{lem:psireg}
\mu\in \mathbb{N}\alpha_{r}+\cdots+\mathbb{N}\alpha_{n-1}
\quad(r=1,\ldots,n). 
\end{align}
Then the coefficient $\psi_{\nu}(s)$ is a rational function 
in $(s_{r+1}/s_{r},\ldots,s_{n}/s_{n-1})$ regular at the origin 
$(s_{r+1}/s_{r},\ldots,s_{n}/s_{n-1})=0$\,$:$ 
\begin{align}
\psi_{\mu}(s)\in
\mathbb{C}(s_{r+1}/s_{r},\ldots,s_{n}/s_{n-1})
\cap
\mathbb{C}[[s_{r+1}/s_{r},\ldots,s_{n}/s_{n-1}]].
\end{align}
\end{lem}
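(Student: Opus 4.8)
The plan is to prove the lemma by induction on the dominance ordering, using the explicit recurrence \eqref{eq:recpsireg1} for the coefficients $\psi_\mu(s)$, while carefully tracking which variables each coefficient can depend on. First I would reduce to the case in which $r$ is the \emph{minimal} index with $\mu_r\ne 0$. Indeed, if $\mu\in\mathbb{N}\alpha_r+\cdots+\mathbb{N}\alpha_{n-1}$ but $\mu_r=0$, then in fact $\mu\in\mathbb{N}\alpha_{r'}+\cdots+\mathbb{N}\alpha_{n-1}$ for the minimal support index $r'>r$, and the conclusion for $r'$ involves strictly fewer variables $(s_{r'+1}/s_{r'},\ldots,s_n/s_{n-1})$ and hence implies the desired statement for $r$. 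So it suffices to establish: whenever $r$ is the minimal index with $\mu_r\ne 0$, one has $\psi_\mu(s)\in\mathbb{C}(s_{r+1}/s_r,\ldots,s_n/s_{n-1})\cap\mathbb{C}[[s_{r+1}/s_r,\ldots,s_n/s_{n-1}]]$.

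For the induction the base case $\mu=0$ is immediate since $\psi_0(s)=1$. For $\mu>0$ with minimal support $r$, I would read off the structure of each factor appearing in \eqref{eq:recpsireg1}. Every monomial $s_j/s_r$ with $j>r$ factors as $\prod_{i=r}^{j-1}(s_{i+1}/s_i)$, hence lies in $\mathbb{C}[[s_{r+1}/s_r,\ldots,s_n/s_{n-1}]]$; the coefficients $b_{\nu,\{j\}}$ are constants independent of $s$; and the denominator $(1-q^{-\mu_r})+\sum_{j>r}(1-q^{-\mu_j})s_j/s_r$ is a Laurent polynomial in $s$ whose value at the origin of the ratios equals $1-q^{-\mu_r}$, which is nonzero precisely because $\mu_r\ne 0$ and $|q|<1$. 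Consequently the denominator is nonvanishing as a rational function and invertible in $\mathbb{C}[[s_{r+1}/s_r,\ldots,s_n/s_{n-1}]]$, so it contributes neither a pole at the origin nor a new variable.

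For the terms $\psi_{\mu-\nu}(s)$ I would use the observation already recorded before the lemma: since $0<\nu\le\mu$ and $\mu\in\mathbb{N}\alpha_r+\cdots+\mathbb{N}\alpha_{n-1}$, linear independence of the $\alpha_i$ forces the $\epsilon_i$-components of both $\nu$ and $\mu-\nu$ to vanish for $i<r$, so $\nu,\mu-\nu\in\mathbb{N}\alpha_r+\cdots+\mathbb{N}\alpha_{n-1}$. Hence the minimal support of $\mu-\nu$ is some $r''\ge r$, and by the induction hypothesis $\psi_{\mu-\nu}(s)$ is a rational function, regular at the origin, in the variables $(s_{r''+1}/s_{r''},\ldots,s_n/s_{n-1})$, which are among $(s_{r+1}/s_r,\ldots,s_n/s_{n-1})$. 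Assembling the pieces, $\psi_\mu(s)$ is a finite sum of products of constants, monomials in the ratios, the invertible denominator, and the $\psi_{\mu-\nu}(s)$, all of which lie in $\mathbb{C}(s_{r+1}/s_r,\ldots,s_n/s_{n-1})\cap\mathbb{C}[[s_{r+1}/s_r,\ldots,s_n/s_{n-1}]]$, completing the inductive step.

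I do not anticipate a serious obstacle: the argument is essentially bookkeeping of the support structure propagated through the recurrence, the key triangular feature being that only the indices $j>r$ contribute in \eqref{eq:recpsireg1}. The single point demanding care is the reduction to minimal support, since it is exactly the hypothesis $\mu_r\ne 0$ that makes the leading term $1-q^{-\mu_r}$ of the denominator nonzero; without it, invertibility in the power series ring, and hence regularity at the origin, would fail.
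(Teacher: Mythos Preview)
Your proposal is correct and follows essentially the same approach as the paper: induction on the dominance ordering via the recurrence \eqref{eq:recpsireg1}, using that only the ratios $s_j/s_r$ with $j>r$ appear and that the constant term $1-q^{-\mu_r}$ of the denominator is nonzero. The paper merely records the lemma as a direct consequence of that recurrence, and you have spelled out the details; one small slip is that in the passage ``the $\epsilon_i$-components of both $\nu$ and $\mu-\nu$ vanish for $i<r$'' you mean the $\alpha_i$-components (the coefficients in the simple-root expansion), which is what the nonnegativity argument actually gives.
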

\qed

What remains is to prove that the coefficients 
$\psi_{\mu}(s)\in\mathbb{C}(s^{-Q_+})$ 
have at most simple poles along the divisors 
$q^{k+1}s_j/s_i=1$ ($1\le i<j\le n; k=0,1,2,\ldots$).  
Since $\psi(x;s)$ is a formal solution of the joint eigenfunction 
equation \eqref{eq:eigenLpsi}, its coefficients satisfy the 
recurrence relations \eqref{eq:recpsi} as well: 
\begin{align}\label{eq:recpsicopy}
(b_0(u;s;q^{-\mu})-b_0(u;s;1))\psi_{\mu}(s)
+\dsum{0<\nu\le \mu}{}b_{\nu}(u;s;q^{-\mu+\nu})\psi_{\mu-\nu}(s)=0
\quad(\mu\in Q_+).  
\end{align}
Suppose that $\mu\in Q_+$ is expressed as
$\mu=k_r\alpha_r+\cdots+k_{n-1}\alpha_{n-1}$ with $k_r>0$.  
In view of this recurrence relation, we investigate 
$b_{\nu}(u;s;\xi)$, $\xi=(\xi_1,\ldots,\xi_n)$, for 
$\nu\in\mathbb{N}\alpha_r+\cdots+\mathbb{N}\alpha_{n-1}$. 
\begin{lem}\label{lem:bnu}
For each $\nu\in \mathbb{N}\alpha_r+\cdots+\mathbb{N}\alpha_{n-1}$, 
the polynomial $b_{\nu}(u;s;\xi)$ is expressed as 
\begin{align}
b_{\nu}(u;s;\xi)=\dprod{i=1}{r-1}(1-us_i\xi_i)\ b^{(r)}_{\nu}(u;s;\xi), 
\end{align}
where $b^{(r)}_{\nu}(u;s;\xi)=b_{\nu}(u;s_r,\ldots,s_n;\xi_r,\ldots,x_n)$ 
stands for the corresponding 
polynomial in the case of 
$(n-r+1)$ variables $(x_r,\ldots,x_n)$ and $(s_r,\ldots,s_n)$.  
\end{lem}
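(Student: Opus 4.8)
The plan is to establish the asserted factorization at the level of the individual coefficients $b_{\nu,I}$. Recall that
$$ b_\nu(u;s;\xi)=\dsum{I\subseteq\{1,\ldots,n\}}{}(-u)^{|I|}b_{\nu,I}\,s^{\epsilon_I}\xi^{\epsilon_I}, $$
where $b_{\nu,I}$ is the coefficient of $x^{-\nu}$ in $B_I(x)$ and $s^{\epsilon_I}=\prod_{i\in I}s_i$, $\xi^{\epsilon_I}=\prod_{i\in I}\xi_i$. Each subset decomposes uniquely as $I=J\sqcup I'$ with $J=I\cap\{1,\ldots,r-1\}$ and $I'=I\cap\{r,\ldots,n\}$, and since $\dprod{i=1}{r-1}(1-us_i\xi_i)=\sum_{J\subseteq\{1,\ldots,r-1\}}(-u)^{|J|}s^{\epsilon_J}\xi^{\epsilon_J}$, the claim is equivalent to the single identity
$$ b_{\nu,I}=b^{(r)}_{\nu,I'}\qquad(I=J\sqcup I'), $$
i.e.\ that $b_{\nu,I}$ is independent of $J$ and equals the coefficient of $x^{-\nu}$ in the reduced polynomial $B^{(r)}_{I'}(x)$ built from the variables $x_r,\ldots,x_n$. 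This is the statement I would prove.

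First I would split the defining product of $B_I(x)$ according to the position of the index pairs relative to $r$. The factors are indexed by pairs $i<j$ with $i\notin I$, $j\in I$, and these fall into three groups: (a) $r\le i<j$; (b) $i<r\le j$; (c) $i<j<r$. For $i,j\ge r$ one has $i\notin I\Leftrightarrow i\notin I'$ and $j\in I\Leftrightarrow j\in I'$, so the product over group (a) is exactly $B^{(r)}_{I'}(x)$. Thus $B_I(x)=B^{(r)}_{I'}(x)\,P_b(x)\,P_c(x)$, where $P_b$ and $P_c$ collect the factors of groups (b) and (c).

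The key step is to show that groups (b) and (c) contribute nothing beyond their constant term $1$ to the coefficient of $x^{-\nu}$. Writing $x_j/x_i=x^{-(\alpha_i+\cdots+\alpha_{j-1})}$, each factor expands as a power series in $x_j/x_i$ whose term of order $m$ carries the exponent $m(\alpha_i+\cdots+\alpha_{j-1})$, with constant term $1$. A factor in group (b) or (c) has smaller index $i<r$, so any term of positive order $m\ge 1$ contributes a strictly positive amount to the $\alpha_i$-component of the total exponent, with $i<r$. Since the factors of group (a) involve only $\alpha_r,\ldots,\alpha_{n-1}$, the hypothesis $\nu\in\mathbb{N}\alpha_r+\cdots+\mathbb{N}\alpha_{n-1}$ forces every factor of groups (b) and (c) to be taken at order zero. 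Hence the coefficient of $x^{-\nu}$ in $B_I(x)$ equals that in $B^{(r)}_{I'}(x)$, giving $b_{\nu,I}=b^{(r)}_{\nu,I'}$, manifestly independent of $J$; factoring out the sum over $J$ then yields the claim.

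The only delicate point is the no-cancellation argument in the key step, which I would make precise as follows. Each monomial in the expansion of a product of these factors is $x^{-\mu}$ with $\mu$ a sum of nonnegative integer multiples of the positive vectors $\alpha_i+\cdots+\alpha_{j-1}$, one contribution per factor. Because the simple roots $\alpha_1,\ldots,\alpha_{n-1}$ are linearly independent and all these coefficients are nonnegative, the $\alpha_i$-component of $\mu$ for $i<r$ vanishes if and only if every factor whose smaller index is $<r$ is taken at order zero. This is exactly the assertion needed, and it is the heart of the lemma.
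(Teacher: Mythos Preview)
Your proof is correct and follows essentially the same route as the paper's: both establish that the coefficient $b_{\nu,I}$ depends only on $I'=I\cap\{r,\ldots,n\}$ because the factors of $B_I(x)$ with smaller index $i<r$ contribute only their constant term when $\nu\in\mathbb{N}\alpha_r+\cdots+\mathbb{N}\alpha_{n-1}$. The paper packages this observation as the specialization $x_2/x_1=\cdots=x_r/x_{r-1}=0$ (which kills exactly those factors and leaves $B_{I\cap\{r,\ldots,n\}}(x_r,\ldots,x_n)$), whereas you unwind the same fact by tracking the $\alpha_i$-components in the power series expansion; the content is the same.
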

\proof
Recall that these polynomials are determined by the expansion
\begin{align}
%B(u;s;x;\xi)=
\dsum{I\subseteq\pr{1,\ldots,n}}{}(-u)^{|I|}\,s^{\epsilon_I} B_I(x)\,\xi^{\epsilon_I}
=\dsum{\nu\in Q_+}{}x^{-\nu}b_{\nu}(u;s;\xi). 
\end{align} 
In order to pick up $b_{\nu}(u;s;\xi)$ for $\nu\in\mathbb{N}\alpha_r+\cdots+
\mathbb{N}\alpha_{n-1}$, we specialize the $x$ variables by 
setting $x_2/x_1=\cdots=x_r/x_{r-1}=0$.  Since $x_j/x_i=0$ if $i<j$ and $i<r$, 
for each $I\subseteq\pr{1,\ldots,n}$ we have 
\begin{align}\label{eq:BIsp}
B_{I}(x)\big|_{x_{i+1}/x_i=0\ (i=1,\ldots,r-1)}
&=\dprod{r\le i<j\le n;\, i\notin I, j\in I}{}\dfrac{1-tx_j/x_i}{1-x_j/x_i}
\dfrac{1-qx_j/tx_i}{1-qx_j/x_i}
\nonumber\\
&=B_{I\cap \pr{r,\ldots,n}}(x_r,\ldots,x_n), 
\end{align}
which depends only on $I\cap\pr{r,\ldots,n}$.  
%Note that
%\begin{align}
%\dsum{I\not\subseteq\pr{r+1,\ldots,n}}{}(-u)^{|I|}\,s^{\epsilon_I} \xi^{\epsilon_I}
%=\dprod{i=1}{n}(1-us_i\xi_i)-\dprod{i=1}{r}(1-us_i\xi_i)
%\end{align}
Hence, for each $\nu\in \mathbb{N}\alpha_r+\cdots+\mathbb{N}\alpha_{n-1}$, 
the polynomial $b_{\nu}(u;s;\xi)$ can be expressed as 
\begin{align}
b_{\nu}(u;s;\xi)
=
\dsum{I\subseteq\pr{1,\ldots,r-1}}{}
\dsum{J\subseteq\pr{r,\ldots,n}}{}
(-u)^{|I\cup J|}\,b^{(r)}_{\nu,J}\,s^{\epsilon_{I\cup J}}\xi^{\epsilon_{I\cup J}}
=\dprod{i=1}{r-1}(1-us_i\xi_i)\,b^{(r)}_{\nu}(u;s;\xi)
\end{align}
by means of the expansion coefficients $b^{(r)}_{\nu,J}$ 
in the case of $(n-r+1)$ variables. 
\qed
\par\medskip
In the setting of \eqref{eq:recpsicopy}, 
by this lemma we have
\begin{align}
&b_0(u;s;q^{-\mu})-b_0(u;s;1)=\dprod{i=1}{r-1}(1-us_i)
\big(\dprod{j=r}{n}(1-us_jq^{-\mu_j})-\dprod{j=r}{n}(1-us_j)\big), 
\nonumber\\
&b_{\nu}(u;s;q^{-\mu+\nu})=
\dprod{i=1}{r-1}(1-us_i)\,b^{(r)}_{\nu}(u;s;q^{-\mu+\nu}).
\end{align}
Hence we obtain the recurrence relation
\begin{align}\label{eq:recpsicopy2}
\big(\dprod{j=r}{n}(1-us_jq^{-\mu_j})-\dprod{j=r}{n}(1-us_j)\big)
\psi_{\mu}(s)
+\dsum{0<\nu\le \mu}{}b_{\nu}^{(r)}(u;s;q^{-\mu+\nu})\psi_{\mu-\nu}(s)=0.  
\end{align}
Now by setting $u=q^{k_r}/s_r=q^{\mu_r}/s_r$, we can determine 
$\psi_{\mu}(s)$ as follows:
\begin{align}
\dprod{j=r}{n}(1-q^{k_r}s_j/s_r)\big)
\psi_{\mu}(s)
=\dsum{0<\nu\le \mu}{}b_{\nu}^{(r)}(q^{k_r}/s_r;s;q^{-\mu+\nu})\psi_{\mu-\nu}(s), 
\end{align}
namely,
\begin{align}\label{eq:recpsifin}
\psi_{\mu}(s)
&
=\dsum{0<\nu\le \mu}{}
\dfrac{b_{\nu}^{(r)}(q^{k_r}/s_r;s;q^{-\mu+\nu})}
{(1-q^{k_r})\prod_{j=r+1}^{n}(1-q^{k_r}s_j/s_r)}
\psi_{\mu-\nu}(s). 
\end{align}
Suppose that $\nu_r=0$, or 
equivalently $\nu\in\mathbb{N}\alpha_{r+1}+\cdots+
\mathbb{N}\alpha_{n-1}$. 
Then by Lemma \ref{lem:bnu} we have 
\begin{align}
b^{(r)}_\nu(u;s;q^{-\mu+\nu})=(1-us_{r}q^{-k_r})\,b^{(r+1)}(u;s;q^{-\mu+\nu}), 
\end{align}
and hence $b^{(r)}_\nu(q^{k_r}/s_r;s;q^{-\mu+\nu})=0$ 
by the substitution $u=q^{k_r}/s_r$. 
This means that
only such $\nu\in Q_+$ that have positive coefficients on $\alpha_r$ 
contribute nontrivially in the recurrence relation \eqref{eq:recpsifin}.  
In other words, we have the recurrence relation 
\begin{align}\label{eq:recpsifin2}
\psi_{\mu}(s)
&
=\dsum{0\le \nu<\mu;\ l_r<k_r}{}
\dfrac{b_{\mu-\nu}^{(r)}(q^{k_r}/s_r;s;q^{-\nu})}
{(1-q^{k_r})\prod_{j=r+1}^{n}(1-q^{k_r}s_j/s_r)}
\psi_{\nu}(s)
\end{align}
for each $\mu\in Q_+$, $\mu>0$, 
where $k_r$ and $l_r$ stands for the coefficient of $\alpha_r$ in 
the expressions
$\mu=k_r\alpha_r+\cdots+k_{n-1}\alpha_{n-1}$ ($k>0$) and  
$\nu=l_r\alpha_r+\cdots+l_{n-1}\alpha_{n-1}$,
respectively.  
By induction on the dominance ordering, we obtain the following. 
\begin{prop}
Let $\psi(x;s)\in\mathbb{C}(s^{-Q_+})[[x^{-Q_+}]]$ be 
the formal solution of \eqref{eq:eigenLpsi} with $\psi_0(s)=1$.  
Suppose that 
$\mu\in Q_+$ is expressed as 
$\mu=k_r\alpha_r+\cdots+k_{n-1}\alpha_{n-1}$ 
$(r=1,\ldots,n)$. 
Then the coefficient $\psi_{\mu}(s)$ is expressed as 
\begin{align}
\psi_{\mu}(s)
=\dfrac{a_\nu(s)}
{\prod_{r\le i<j\le n}{}(qs_j/s_i;q)_{k_i}}
\end{align}
for some polynomial $a_{\mu}(s)$ in the variables 
$(s_{r+1}/s_r,\ldots,s_n/s_{n-1})$. 
\end{prop}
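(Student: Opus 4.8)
The plan is to establish the proposition by induction on $\mu$ with respect to the dominance ordering, using the recurrence relation \eqref{eq:recpsifin2} as the sole engine, together with the structure of the $b^{(r)}_\nu$ supplied by Lemma \ref{lem:bnu}. The base case $\mu=0$ is immediate: $\psi_0(s)=1$ and the denominator $\dprod{r\le i<j\le n}{}(qs_j/s_i;q)_{k_i}$ is the empty product. For the inductive step, fix $\mu=k_r\alpha_r+\cdots+k_{n-1}\alpha_{n-1}$ with $k_r>0$ and assume the asserted form for every $\nu$ with $0\le\nu<\mu$. Writing $\nu=l_r\alpha_r+\cdots+l_{n-1}\alpha_{n-1}$, the relation $\nu\le\mu$ means $\mu-\nu\in Q_+$, whence $l_i\le k_i$ for all $i$; so the induction hypothesis reads
\begin{align}
\psi_\nu(s)=\dfrac{a_\nu(s)}{\dprod{r\le i<j\le n}{}(qs_j/s_i;q)_{l_i}},\nonumber
\end{align}
where the product has been freely extended to start at $i=r$ (the extra factors being $(qs_j/s_i;q)_0=1$), and $a_\nu(s)$ is a polynomial in $(s_{r+1}/s_r,\ldots,s_n/s_{n-1})$ — this holds since the starting index of $\nu$ is $\ge r$, so the $r'$-based ratios of the hypothesis are a subset of the $r$-based ones.

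Next I would carry out the denominator bookkeeping. Substituting these expressions into \eqref{eq:recpsifin2}, the denominator of the $\nu$-th summand equals, up to the nonzero scalar $1-q^{k_r}$,
\begin{align}
\dprod{j=r+1}{n}(1-q^{k_r}s_j/s_r)\,\dprod{r\le i<j\le n}{}(qs_j/s_i;q)_{l_i}.\nonumber
\end{align}
The goal is to show that this divides $\dprod{r\le i<j\le n}{}(qs_j/s_i;q)_{k_i}$, with complementary quotient a polynomial in $(s_{r+1}/s_r,\ldots,s_n/s_{n-1})$. I would split the verification by the value of $i$. For $i>r$ one simply uses $l_i\le k_i$, so that $(qs_j/s_i;q)_{l_i}$ divides $(qs_j/s_i;q)_{k_i}$ factor by factor.

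The case $i=r$ is the heart of the argument. Here I would use the decomposition $(qs_j/s_r;q)_{k_r}=\dprod{p=1}{k_r}(1-q^ps_j/s_r)$ and observe that, since \eqref{eq:recpsifin2} only runs over indices with $l_r<k_r$, the factor $(qs_j/s_r;q)_{l_r}$ contributes exactly the bottom factors $p=1,\ldots,l_r$, while $(1-q^{k_r}s_j/s_r)$ is precisely the top factor $p=k_r$. Because $l_r<k_r$, these are distinct factors of $(qs_j/s_r;q)_{k_r}$, so their product divides it with quotient $\dprod{p=l_r+1}{k_r-1}(1-q^ps_j/s_r)$. Taking the product over $j=r+1,\ldots,n$ and combining with the case $i>r$ shows that each summand of \eqref{eq:recpsifin2}, placed over the common denominator $\dprod{r\le i<j\le n}{}(qs_j/s_i;q)_{k_i}$, has polynomial numerator: it is the product of the complementary quotient above, the polynomial $a_\nu(s)$, and $b^{(r)}_{\mu-\nu}(q^{k_r}/s_r;s;q^{-\nu})$, which becomes a polynomial in $(s_{r+1}/s_r,\ldots,s_n/s_{n-1})$ after the substitution $u=q^{k_r}/s_r$. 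Summing over $\nu$ then yields $\psi_\mu(s)=a_\mu(s)/\dprod{r\le i<j\le n}{}(qs_j/s_i;q)_{k_i}$ with $a_\mu(s)$ a polynomial in those ratios, completing the induction.

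I expect the main obstacle to be exactly the $i=r$ bookkeeping: one must verify that the single extra factor $(1-q^{k_r}s_j/s_r)$ produced by the explicit denominator of \eqref{eq:recpsifin2} never coincides with a factor already present in $(qs_j/s_r;q)_{l_r}$, so that no double pole $(1-q^{k_r}s_j/s_r)^2$ is created. This is precisely what the constraint $l_r<k_r$, inherited through Lemma \ref{lem:bnu}, guarantees. A secondary point to check carefully is that the substitution $u=q^{k_r}/s_r$ genuinely turns $b^{(r)}_{\mu-\nu}$ into a polynomial in $(s_{r+1}/s_r,\ldots,s_n/s_{n-1})$ rather than a Laurent polynomial, which follows from the homogeneity of $b^{(r)}_\nu(u;s;\xi)$ in the products $us_i$.
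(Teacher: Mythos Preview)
Your proof is correct and follows exactly the approach the paper indicates: induction on the dominance ordering using the recurrence \eqref{eq:recpsifin2}, with the key observation that the constraint $l_r<k_r$ ensures the extra denominator factor $(1-q^{k_r}s_j/s_r)$ is disjoint from $(qs_j/s_r;q)_{l_r}$ inside $(qs_j/s_r;q)_{k_r}$. The paper states only ``by induction on the dominance ordering, we obtain the following,'' so you have simply supplied the bookkeeping it left implicit.
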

\qed

\noindent 
This proposition is in fact a refinement of Theorem \ref{thm:FRpsi}, (1).  

\begin{rem}\rm
Let us denote by $\psi_n(x;s|q,t)$ the unique formal solution of (2.14)  with leading coefficient 1.
Express the eigenfunction equation \eqref{eq:eigenLpsi} in the form
\begin{align}
\dsum{I\subset\pr{1,\ldots,n}}{}(-u)^{|I|}s^{\epsilon_I}B_{I}(x)
\psi_n(q^{\epsilon_I}x;s|q,t)
=\psi_n(x;s|q,t)\,\dprod{i=1}{n}(1-us_i). 
\end{align}
and specialize this formula by $x_2/x_1=\cdots=x_{r}/x_{r-1}=0$. 
From \eqref{eq:BIsp} it turns out that the formal power series
$\psi_{n}(x;s|q,t)\big|_{x_{i+1}/x_i=0\ (i=1,\ldots,r-1)}$ 
in $(x_r/x_{r-1},\ldots,x_n/x_{n-1})$ satisfy the eigenfunction 
equation \eqref{eq:eigenLpsi} in $(n-r+1)$ variables 
$(x_r,\ldots,x_n)$ and $(s_r,\ldots,s_n)$.  Hence 
by the uniqueness of formal solution with a given leading coefficient, 
we have 
\begin{align}
\psi_{n}(x;s|q,t)\big|_{x_{i+1}/x_i=0\ (i=1,\ldots,r-1)}=
\psi_{n-r+1}(x_r,\ldots,x_n;s_r,\ldots,s_n|q,t).
\end{align}
%and hence
%\begin{align}
%\varphi_{n}(x;s|q,t)\big|_{x_{i+1}/x_i=0\ (i=1,\ldots,r-1)}=
%\varphi_{n-r+1}(x_r,\ldots,x_n;s_r,\ldots,s_n|q,t), 
%\end{align}
\end{rem}

%%%%%%%%%%%%%%%%%%%%%%%%%%%%%%

\section{Explicit formulas for formal eigenfunctions}

In this section we construct an explicit formal solution of the 
joint eigenfunction equation \eqref{eq:eigenDf} in the form 
\begin{align}
f(x;s)=x^\lambda \varphi(x;s),\quad 
\varphi(x;s)\in \mathbb{C}[[s^{-Q_+}]][[x^{-Q_+}]]
\end{align}
with the complex parameters $\lambda$ such that $s=t^\delta q^\lambda$. 

\subsection{Formal power series $p_n(x;s|q,t)$ and $\varphi_n(x;s|q,t)$}
%\par\medskip
We recall some notations in \cite{S2005}.
For each $n=1,2,\ldots$, 
we denote by $M_n$ the set of all strictly upper triangular 
$n\times n$ matrices with nonnegative integer coefficients:
\begin{align}
M_n=\pr{\ \theta=(\theta_{ij})_{i,j=1}^{n}\in\mbox{Mat}(n;\mathbb{N})\ |\ \  
\theta_{ij}=0\ \ (1\le j\le i\le n)}. 
\end{align}
For each $\theta\in M_n$, we define a rational function 
$c_n(\theta;s|q,t)$ in the variables $s=(s_1,\ldots,s_n)$ by
\begin{align}\label{eq:defcn}
c_n(\theta;s|q,t)
&=
\dprod{k=2}{n}
\dprod{1\le i<j\le k}{}
\dfrac{(q^{\sum_{a>k}(\theta_{i,a}-\theta_{j,a})}ts_j/s_i;q)_{\theta_{i,k}}}
{(q^{\sum_{a>k}(\theta_{i,a}-\theta_{j,a})}qs_j/s_i;q)_{\theta_{i,k}}}
\nonumber\\
&\quad\cdot
\dprod{k=2}{n}
\dprod{1\le i\le j<k}{}
\dfrac{(q^{-\theta_{j,k}+\sum_{a>k}(\theta_{i,a}-\theta_{j,a})}qs_j/ts_i;q)_{\theta_{i,k}}}
{(q^{-\theta_{j,k}+\sum_{a>k}(\theta_{i,a}-\theta_{j,a})}s_j/s_i;q)_{\theta_{i,k}}}, 
\end{align}
where $\sum_{a>k}=\sum_{a=k+1}^{n}$. 
Note that, for each $\theta\in M_n$, 
$c_n(\theta;s|q,t)$ is in fact a rational function in 
$(s_2/s_1,\ldots,s_n/s_{n-1})$, manifestly regular 
at the origin $(s_2/s_1,\ldots,s_n/s_{n-1})=0$; it 
can be regarded as an element 
of $\mathbb{C}(s^{-Q_+})\cap\mathbb{C}[[s^{-Q_+}]]$. 
Note also that $c_n(\theta;s|q,t)$ 
has at most (possibly multiple) poles along the divisors 
$s_j/s_i=q^{k}\quad(1\le i<j\le n;\ k\in\mathbb{Z}).$ 

These functions 
$c_n(\theta;s|q,t)$ 
can be determined inductively on $n$ 
starting from $c_1(0;s_1|q,t)=1$. 
Let us parametrize $\widetilde{\theta}\in M_{n+1}$ by 
a pair $(\theta,\nu)\in  M_n\times \mathbb{N}^n$ as follows:  
\begin{align}
\widetilde{\theta}_{i,j}=\theta_{i,j}\quad(1\le i<j\le n),
\quad 
\widetilde{\theta}_{i,n+1}=\nu_i\quad(i=1,\ldots,n). 
\end{align}
Then, $c_{n+1}(\theta;s|q,t)$ is expressed as 
\begin{align}\label{eq:recfmcn}
c_{n+1}(\widetilde{\theta};s|q,t)
=
\dprod{1\le i<j\le n+1}{}\,
\dfrac{(ts_j/s_i;q)_{\nu_i}}{(qs_j/s_i;q)_{\nu_i}}
\dprod{1\le i\le j\le n}{}\,
\dfrac{(q^{-\nu_j}qs_j/ts_i;q)_{\nu_i}}{(q^{-\nu_j}s_j/s_i;q)_{\nu_i}}
c_{n}(\theta;q^{-\nu}s|q,t).  
\end{align}

\begin{example}
The rational functions $c_n(\theta;s|q,t)$ for $n=2$ are 
given explicitly by 
\begin{align}
c_2(\theta_{12};s_1,s_2|q,t)
=
\dfrac{(ts_2/s_1;q)_{\theta_{12}}}
{(qs_2/s_1;q)_{\theta_{12}}}
\dfrac{(q^{-\theta_{12}}q/t)_{\theta_{12}}}
{(q^{-\theta_{12}})_{\theta_{12}}}
=
\dfrac{(ts_2/s_1;q)_{\theta_{12}}}
{(qs_2/s_1;q)_{\theta_{12}}}
\dfrac{(t;q)_{\theta_{12}}}{(q;q)_{\theta_{12}}}\,(q/t)^{\theta_{12}}.  
\end{align}
For $n=3$, 
\begin{align}
c_3(\theta_{12},\theta_{13},\theta_{23};s_1,s_2, s_3|q,t)
&=
\dfrac{(q^{\theta_{13}-\theta_{23}}ts_2/s_1;q)_{\theta_{12}}}
{(q^{\theta_{13}-\theta_{23}}qs_2/s_1;q)_{\theta_{12}}}
\dfrac{(q^{-\theta_{12}}q/t;q)_{\theta_{12}}}
{(q^{-\theta_{12}}q;q)_{\theta_{12}}}
\nonumber\\
&\quad\cdot
\dfrac{(ts_2/s_1;q)_{\theta_{13}}}
{(qs_2/s_1;q)_{\theta_{13}}}
\dfrac{(ts_3/s_1;q)_{\theta_{13}}}
{(qs_3/s_1;q)_{\theta_{13}}}
\dfrac{(ts_3/s_2;q)_{\theta_{23}}}
{(qs_3/s_2;q)_{\theta_{23}}}
\nonumber\\
&\quad\cdot
\dfrac{(q^{-\theta_{13}}q/t;q)_{\theta_{13}}}
{(q^{-\theta_{13}}q;q)_{\theta_{13}}}
\dfrac{(q^{-\theta_{23}}qs_2/ts_1;q)_{\theta_{13}}}
{(q^{-\theta_{23}}qs_2/s_1;q)_{\theta_{13}}}
\dfrac{(q^{-\theta_{23}}q/t;q)_{\theta_{23}}}
{(q^{-\theta_{23}}q;q)_{\theta_{23}}}. 
\end{align}
\end{example}

\par\medskip
For each $n=1,2,\ldots$, we introduce a formal power series 
\begin{align}\label{eq:defpn}
p_n(x;s|q,t)=
\dsum{\theta\in M_n}{}c_n(\theta; s|q,t)\ 
\dprod{1\le i<j\le n}{}(x_j/x_i)^{\theta_{ij}}
\in \mathbb{C}(s^{-Q_+})[[x^{-Q_+}]]
\end{align}
with leading coefficient 1.  
For each $\theta\in M_n$, define an element $\mu(\theta)\in Q_+$ by 
the formula
\begin{align}
\dprod{1\le i<j\le n}{} (x_j/x_i)^{\theta_{ij}}=x^{-\mu(\theta)}.  
\quad
\mu(\theta)=\dsum{1\le i<j\le n}{}\theta_{i,j}(\epsilon_i-\epsilon_j)\in Q_+.  
\end{align}
For each $\mu\in Q_+$, we denote by 
$M_n(\mu)$ the subset of $M_n$ consisting of all 
$\theta\in M_n$ such that $\mu(\theta)=\mu$; 
this set $M_n(\mu)$ is a finite set for each $\mu\in Q_+$.  
Then the power series expansion of $p_n(x;s|q,t)$ 
is given by 
\begin{align}\label{eq:exppn}
p_n(x;s|q,t)=\dsum{\mu\in Q_+}{}\,x^{-\mu} p_\mu(s), 
\quad
p_\mu(s)=
\dsum{\theta\in M_n(\mu)}{}\,c_n(\theta;s|q,t)
\quad(\mu\in Q_+). 
\end{align}
%For each $\mu$, this expression is in fact a finite sum, since 
%$\mu(\theta)$ is expressed as 
%\begin{align}
%\mu(\theta)=\dsum{i=1}{n-1}\ k_i\alpha_i,\quad
%k_i=\dsum{1\le a\le i}{}\dsum{i<b\le n}{}\theta_{a,b}\quad(i=1,\ldots,n-1),
%\end{align}
%in terms of simple roots. 
We also remark that the recurrence relation \eqref{eq:recfmcn} 
gives rise to a recurrence formula for $p_n(x;s|q,t)$: 
\begin{align}\label{eq:recpn}
p_{n+1}(x;s|q,t)
&=\dsum{\nu\in\mathbb{N}^n}{}
\dprod{i=1}{n}
\dfrac{(t;q)_{\nu_i}}{(q;q)_{\nu_i}}
\dprod{1\le i<j\le n+1}{}\,
\dfrac{(ts_j/s_i;q)_{\nu_i}}{(qs_j/s_i;q)_{\nu_i}}
\dprod{1\le i<j\le n}{}\,
\dfrac{(q^{-\nu_j+1}s_j/ts_i;q)_{\nu_i}}{(q^{-\nu_j}s_j/s_i;q)_{\nu_i}}
\nonumber\\
&\qquad\quad\cdot
\dprod{i=1}{n}(qx_{n+1}/tx_i)^{\nu_i}\ 
p_n(x;q^{-\nu}s|q,t), 
\end{align}
where we simply wrote $p_{n+1}(x;s|q,t)$ in place of 
$p_{n+1}(x_1,\ldots,x_{n+1};s_1,\ldots,s_{n+1}|q,t)$ 
by using the same symbols $x$, $s$ for $(n+1)$ variables.  

We introduce another formal power series 
$\varphi_n(x;s|q,t)\in \mathbb{C}[[s^{-Q_+}]][[x^{-Q_+}]]$ 
with a different 
normalization: 
\begin{align}\label{eq:defphin}
\varphi_n(x;s|q,t)
&=
\dprod{1\le i<j\le n}{}
\dfrac{(qs_j/s_i;q)_\infty}{(qs_j/ts_i;q)_\infty}\ 
p_n(x;s|q,t)
\nonumber\\
&=
\dprod{1\le i<j\le n}{}
\dfrac{(qs_j/s_i;q)_\infty}{(qs_j/ts_i;q)_\infty}\ 
\dsum{\theta\in M_n}{}c_n(\theta; s|q,t)\ 
\dprod{1\le i<j\le n}{}(x_j/x_i)^{\theta_{ij}}. 
\end{align}
Note that $\varphi_1(x;s|q,t)=1$.  For $n=2$, 
$\varphi_2(x;s|q,t)$ is expressed in terms of 
$q$-hypergeometric series:
\begin{align}
\varphi_2(x_1,x_2;s_1,s_2|q,t)
&=\dfrac{(qs_2/s_1;q)_\infty}{(qs_2/ts_1;q)_\infty}\ 
\dsum{k=0}{\infty}
\dfrac{(ts_2/s_1;q)_{k}}
{(qs_2/s_1;q)_{k}}
\dfrac{(t;q)_{k}}{(q;q)_{k}}\dfrac{}{}(qx_2/tx_1)^k
\nonumber\\
&=\dfrac{(qs_2/s_1;q)_\infty}{(qs_2/ts_1;q)_\infty}\ 
{}_{2}\phi_{1}\!
\left[\begin{matrix}
t,\ ts_2/s_1 \\
qs_2/s_1
\end{matrix};\,
q, qx_2/tx_1
\right]. 
\end{align}
By the definition \eqref{eq:defphin}, % of $\varphi_{n}(x;s|q,t)$, 
the leading coefficient of $\varphi_n(x;s|q,t)$ in $x$ variables is 
\begin{align}
\varphi_n(x;s|q,t)\big|_{x_{i+1}/x_i=0\ (i=1,\ldots,n-1)}
=
\dprod{1\le i<j\le n}{}
\dfrac{(qs_j/s_i;q)_\infty}{(qs_j/ts_i;q)_\infty}. 
\end{align} 
On the other hand, 
the leading coefficient as a formal power series in $s$ variables 
is determined by using the recurrence formula \eqref{eq:recpn} 
and the $q$-binomial theorem: 
\begin{align}
\varphi_n(x;s|q,t)\big|_{s_{i+1}/s_i=0\ (i=1,\ldots,n-1)}
=
\dprod{1\le i<j\le n}{}
\dfrac{(qx_j/x_i;q)_\infty}{(qx_j/tx_i;q)_\infty}.  
\end{align}
%The inductive formula \eqref{eq:recpn} $p_n(x;s|q.t)$ ($n=1,2,\ldots$)
%implies the following inductive formula for 
%$\varphi_n(x;s|q.t)$ ($n=1,2,\ldots$):
%\begin{align}\label{eq:recphin}
%&\varphi_{n+1}(x;s|q,t)
%\nonumber\\
%&=
%\dprod{1\le i<j\le n+q}{}
%\dfrac{(qs_j/s_i;q)_\infty}{(qs_j/ts_i;q)_\infty}
%\dsum{\nu\in\mathbb{N}^n}{}
%\dprod{i=1}{n}
%\dfrac{(t;q)_{\nu_i}}{(q;q)_{\nu_i}}
%\dprod{1\le i<j\le n+1}{}\,
%\dfrac{(ts_j/s_i;q)_{\nu_i}}{(qs_j/s_i;q)_{\nu_i}}
%\dprod{1\le i<j\le n}{}\,
%\dfrac{(q^{-\nu_j+1}s_j/ts_i;q)_{\nu_i}}{(q^{-\nu_j}s_j/s_i;q)_{\nu_i}}
%\nonumber\\
%&\qquad\quad\cdot
%\dprod{i=1}{n}(qx_{n+1}/tx_i)^{\nu_i}\ 
%\dprod{1\le i<j\le n}{}
%\dfrac{(q^{\nu_i-\nu_j+1}s_j/ts_i;q)_\infty}{(q^{\nu_i-\nu_j+1}s_j/s_i;q)_\infty}
%\varphi_n(x;q^{-\nu}s|q,t).  
%\end{align}

\subsection{Relation to Macdonald polynomials}
The definition of the power series $p_n(x;s|q,t)$ 
and $\varphi_n(x;s|q,t)$ originates from the 
tableau representation (or the restriction formula) 
of the Macdonald polynomials. 
Recall from \cite{M1995} that the monic Macdonald polynomial 
$P_\lambda(x|q,t)$ 
associated with a partition $\lambda=(\lambda_1,\ldots,\lambda_n)$ 
is expressed explicitly as
\begin{align}\label{eq:tabrep}
P_{\lambda}(x|q,t)=
\dsum{\phi=\mu^{(0)}\subseteq\mu^{(1)}\subseteq\cdots
\subseteq\mu^{(n)}=\lambda}{}
\dprod{k=1}{n}\ \psi_{\mu^{(k)}/\mu^{(k-1)}}(q,t)\, \ x_k^{|\mu^{(k)}/\mu^{(k-1)}|}
\end{align}
summed over increasing sequences of partitions of $n$ steps form 
$\phi$ to $\lambda$, where 
$\psi_{\lambda/\mu}(q,t)$ is defined for each skew partition 
$\lambda/\mu$ ($\mu\subseteq\lambda$) by 
\begin{align}
\psi_{\lambda/\mu}(q,t)
=
\dprod{1\le i<j\le n}{}
\dfrac{(q^{\mu_i-\lambda_j+1}t^{j-i-1};q)_{\lambda_i-\mu_i}}
{(q^{\mu_i-\lambda_j}t^{j-i};q)_{\lambda_i-\mu_i}}
\dprod{1\le i\le j\le n-1}{}
\dfrac{(q^{\mu_i-\mu_j}t^{j-i+1};q)_{\lambda_i-\mu_i}}
{(q^{\mu_i-\mu_j+1}t^{j-i};q)_{\lambda_i-\mu_i}}. 
\end{align}
Note that $\psi_{\lambda/\mu}(q,t)=0$ unless 
$\lambda/\mu$ is a horizontal strip, namely, 
$\mu_i\ge \lambda_{i+1}$ for all $i$. 
%As a function of $t$, 
%$\psi_{\lambda/\mu}(q;t)$ is holomorphic in the unit disc $|t|<1$. 

By interpreting the representation 
\eqref{eq:tabrep} in terms of column strict tableaux, 
we denote by  $\theta_{i,j}=\mu^{(j)}_i-\mu^{(j-1)}_i$ the number 
of $j$'s in the $i$-th row for $1\le i<j\le n$.  
Then the partitions $\mu^{(k)}$ ($k=1,\ldots,n$) are parametrized 
by $\theta=(\theta_{i,j})_{i,j=1}^{n}\in M_n$ as 
$\mu^{(k)}_i=\lambda_i-\sum_{a>k}\theta_{i,a}$ 
for $1\le i<k$.  
Through this parametrization together with $s_i=t^{n-i}q^{\lambda_i}$
($i=1,\ldots,n$), one can directly verify that 
\begin{align}
\dprod{k=1}{n}\ \psi_{\mu^{(k)}/\mu^{(k-1)}}(q,t)=c_n(\theta;s|q,t),
\quad
\dprod{k=1}{n}x_k^{|\mu^{(k)}/\mu^{(k-1)}|}=
x^\lambda\dprod{1\le i<j\le n}{}(x_j/x_i)^{\theta_{ij}}. 
\end{align}

We now take $c_n(\theta;s|q,t)$ for an arbitrary $\theta\in M_n$,
and consider the specialization $s=t^\delta q^{\lambda}$ by 
a partition $\lambda$, assuming that $t$ is generic. 
Note that the factor
$(q^{\sum_{a>k}(\theta_{i,a}-\theta_{j,a})}ts_j/s_i;q)_{\theta_{i,k}}$ 
in the definition \eqref{eq:defcn} with $j=i+1$ reduces to 
$(q^{\sum_{a>k}(\theta_{i,a}-\theta_{i+1,a})-\lambda_i+\lambda_{i+1}};q)_{\theta_{i,k}}$ 
under this specialization.  
Suppose that
\begin{align}
\dprod{1\le i<k\le n}{}
(q^{\sum_{a>k}(\theta_{i,a}-\theta_{i+1,a})-\lambda_i+\lambda_{i+1}};q)_{\theta_{i,k}}
\ne 0.  
\end{align}
The factors with $k=n$ then imply 
$\dprod{1\le i<n}{}
(q^{-\lambda_i+\lambda_{i+1}};q)_{\theta_{i,n}}
\ne 0$, 
and hence 
$0\le \theta_{i,n}\le \lambda_i-\lambda_{i+1}$ ($1\le i<n$) since 
$\lambda_i\ge\lambda_{i+1}$. 
Starting from $k=n$, by descending induction on $k$, we see 
\begin{align}
0\le \theta_{i,k}\le 
\lambda_{i}-\lambda_{i+1}-\dsum{a>k}{}(\theta_{i,a}-\theta_{i+1,a})
=\mu^{(k)}_i-\mu^{(k)}_{i+1}
\quad(1\le i<k), 
\end{align}
which confines $\theta$ in a finite subset of $M_n$ so that 
the monomial 
$x^{\lambda}\prod_{1\le i\le n}(x_j/x_i)^{\theta_{i,j}}$ remains 
as a polynomial in $x$.   This shows that 
$x^{\lambda}\,p_n(x;t^\delta q^\lambda)=P_{\lambda}(x|q,t)$ 
for any partition $\lambda=(\lambda_1,\ldots,\lambda_n)$, 
The normalization factor for $\varphi_n(x;s|q,t)$ takes care 
of the reciprocal of the evaluation formula
\begin{align}
P_\lambda(t^\delta|q,t)
&=t^{\sum_{i} (i-1)\lambda_i}
\dprod{1\le i<j\le n}{}
\dfrac{(t^{j-i+1};q)_{\lambda_i-\lambda_j}}{(t^{j-i};q)_{\lambda_i-\lambda_j}}
%\nonumber\\
%&=t^{\br{\delta,\lambda}}\dprod{1\le i<j\le n}{}
%\dfrac{(q^{-\lambda_i+\lambda_j+1}t^{i-j-1};q)_{\lambda_i-\lambda_j}}
%{(q^{-\lambda_i+\lambda_j+1}t^{i-j};q)_{\lambda_i-\lambda_j}}
\nonumber\\
&
=t^{\br{\delta,\lambda}}
\dprod{i=1}{n}
\dfrac{(q/t;q)_{\infty}}{(q/t^i;q)_\infty}
\dprod{1\le i<j\le n}{}
\dfrac{(qs_j/ts_i;q)_{\infty}}{(qs_j/s_i;q)_{\infty}}
%\dfrac{(q/t^{j-i};q)_\infty}{(q/t^{j-i+1};q)_\infty}. 
\end{align}
\begin{thm}\label{thm:MP}
Suppose that the parameter $t$ is generic in the sense 
that $t^k\notin q^{\mathbb{Z}}$ for $k=1,\ldots,n-1$.    
Then, for any partition $\lambda=(\lambda_1,\ldots,\lambda_n)$, 
the formal power series $x^\lambda p_n(x;s|q,t)$ and 
$x^\lambda\,\varphi_n(x;s|q,t)$ specialized 
by $s=t^\delta q^\lambda=(t^{n-1}q^{\lambda_1},\ldots,q^{\lambda_n})$ 
recover the Macdonald polynomial $P_\lambda(x|q,t)$ 
and 
a constant multiple of the normalized 
Macdonald polynomial $\widetilde{P}_\lambda(x|q,t)$, respectively\,$:$ 
\begin{align}
x^{\lambda}\,p_n(x;t^\delta q^{\lambda}|q,t)&=P_\lambda(x;|q,t), 
\nonumber\\
x^{\lambda}\,\varphi_n(x;t^\delta q^{\lambda}|q,t)&=
t^{\br{\delta,\lambda}}\dprod{i=1}{n}\dfrac{(q/t;q)_\infty}{(q/t^i;q)_\infty}
\, 
\widetilde{P}_{\lambda}(x|q,t).  
\end{align}
\end{thm}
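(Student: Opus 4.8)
The plan is to prove Theorem \ref{thm:MP} by reducing everything to the tableau representation \eqref{eq:tabrep} of the Macdonald polynomial together with the finiteness of the summation after specialization. First I would observe that the whole content of the theorem is already prepared in the discussion immediately preceding the statement: the bijective correspondence between increasing chains of partitions $\phi=\mu^{(0)}\subseteq\cdots\subseteq\mu^{(n)}=\lambda$ (with horizontal-strip skews) and matrices $\theta\in M_n$, via $\theta_{i,j}=\mu^{(j)}_i-\mu^{(j-1)}_i$, is already established, as is the key identity
\begin{align}
\dprod{k=1}{n}\psi_{\mu^{(k)}/\mu^{(k-1)}}(q,t)=c_n(\theta;s|q,t),
\quad
\dprod{k=1}{n}x_k^{|\mu^{(k)}/\mu^{(k-1)}|}=x^\lambda\dprod{1\le i<j\le n}{}(x_j/x_i)^{\theta_{ij}}
\end{align}
under the specialization $s_i=t^{n-i}q^{\lambda_i}$. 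So for the first identity $x^\lambda p_n(x;t^\delta q^\lambda|q,t)=P_\lambda(x|q,t)$ it suffices to check that, after specialization, the infinite sum over $M_n$ defining $p_n$ collapses exactly onto the finite tableau sum in \eqref{eq:tabrep}.

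The central step is therefore the finiteness argument, which is also where I expect the only real subtlety to lie. I would argue that the specialized coefficient $c_n(\theta;t^\delta q^\lambda|q,t)$ vanishes for all but finitely many $\theta\in M_n$, and that the surviving $\theta$ are precisely those corresponding to valid column-strict tableaux of shape $\lambda$. Concretely, I would examine the numerator factors $(q^{\sum_{a>k}(\theta_{i,a}-\theta_{i+1,a})-\lambda_i+\lambda_{i+1}};q)_{\theta_{i,k}}$ coming from the $j=i+1$ terms: starting from $k=n$ and using $\lambda_i\ge\lambda_{i+1}$, a $q$-Pochhammer symbol $(q^{-m};q)_\ell$ vanishes once $\ell>m\ge0$, which forces the descending induction
\begin{align}
0\le\theta_{i,k}\le\lambda_i-\lambda_{i+1}-\dsum{a>k}{}(\theta_{i,a}-\theta_{i+1,a})=\mu^{(k)}_i-\mu^{(k)}_{i+1}
\quad(1\le i<k),
\end{align}
exactly as displayed before the statement. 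This confines $\theta$ to a finite set and, reinterpreted, says $\mu^{(k)}/\mu^{(k-1)}$ is a horizontal strip — recovering the constraints $\psi_{\mu^{(k)}/\mu^{(k-1)}}(q,t)\ne0$. The genericity hypothesis $t^k\notin q^{\mathbb Z}$ for $k=1,\ldots,n-1$ is what I would use to guarantee that no spurious numerator vanishing or denominator blow-up occurs among the factors with $j>i+1$, so that the correspondence between nonvanishing $c_n(\theta;s|q,t)$ and admissible tableaux is clean; this bookkeeping is the main obstacle, since one must check every numerator factor of \eqref{eq:defcn}, not only the $j=i+1$ ones, is accounted for.

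For the second identity I would simply combine the first with the normalization built into the definition \eqref{eq:defphin}. Since $\varphi_n(x;s|q,t)=\prod_{1\le i<j\le n}\frac{(qs_j/s_i;q)_\infty}{(qs_j/ts_i;q)_\infty}\,p_n(x;s|q,t)$, specializing $s=t^\delta q^\lambda$ gives
\begin{align}
x^\lambda\varphi_n(x;t^\delta q^\lambda|q,t)=\dprod{1\le i<j\le n}{}\dfrac{(qs_j/s_i;q)_\infty}{(qs_j/ts_i;q)_\infty}\,P_\lambda(x|q,t),
\end{align}
and I would identify the prefactor with $t^{\br{\delta,\lambda}}\prod_{i=1}^n\frac{(q/t;q)_\infty}{(q/t^i;q)_\infty}\big/P_\lambda(t^\delta|q,t)$ using precisely the evaluation formula for $P_\lambda(t^\delta|q,t)$ recalled just above the statement, whose second line already rewrites $\prod_{i<j}\frac{(t^{j-i+1};q)_{\lambda_i-\lambda_j}}{(t^{j-i};q)_{\lambda_i-\lambda_j}}$ in terms of the $q$-shifted factorials $\frac{(qs_j/ts_i;q)_\infty}{(qs_j/s_i;q)_\infty}$ at $s=t^\delta q^\lambda$. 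Dividing then yields $x^\lambda\varphi_n(x;t^\delta q^\lambda|q,t)=t^{\br{\delta,\lambda}}\prod_{i=1}^n\frac{(q/t;q)_\infty}{(q/t^i;q)_\infty}\,\widetilde P_\lambda(x|q,t)$, as claimed, since $\widetilde P_\lambda=P_\lambda/P_\lambda(t^\delta|q,t)$ by definition. This part is routine once the first identity and the evaluation formula are in hand.
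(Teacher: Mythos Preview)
Your proposal is correct and follows essentially the same approach as the paper: the proof in the paper is precisely the discussion preceding the theorem statement (the theorem is marked \qed\ without further argument), and you have reproduced that discussion faithfully --- the tableau--$\theta$ correspondence, the coefficient identity, the finiteness via the $j=i+1$ numerator factors by descending induction on $k$, and the use of the evaluation formula for $P_\lambda(t^\delta|q,t)$ to handle $\varphi_n$. Your remark that the genericity hypothesis is needed to keep the denominators of $c_n(\theta;s|q,t)$ nonzero under specialization is exactly the point the paper is using implicitly.
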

\qed

\subsection{Eigenfunction equation in $x$ variables}
With the complex variables $\lambda$ such that $s=t^\delta q^\lambda$, 
we consider the eigenfunction equation 
\begin{align}
D^x(u)\,f(x;s)=f(x;s)\, \dprod{i=1}{n}(1-us_i),\quad 
f(x;s)=x^{\lambda}\varphi(x;s)\in x^\lambda\mathbb{C}(s^{-Q_+})[[x^{-Q_+}]], 
\end{align}
or equivalently,
\begin{align}
E^x(u)\,\varphi(x;s)=\varphi(x;s)\, \dprod{i=1}{n}(1-us_i),\quad 
\varphi(x;s)\in\mathbb{C}(s^{-Q_+})[[x^{-Q_+}]]
\end{align}
with leading coefficient $\varphi_0(s)=1$.  
This equation is equivalent to an infinite set of linear recurrence 
relations for the coefficients $\varphi_\mu(s)\in \mathbb{C}(s^{-Q_+})$ 
of $\varphi(x;s)$.  When we take 
$\varphi(x;s)=p_n(x;s|q,t)$, we already know 
these relations hold under the specialization 
$s=t^\delta q^\lambda$ for any partition $\lambda$ of length $\le n$,  
since $p_n(x;t^\delta q^\lambda|q,t)$ coincides with $P_{\lambda}(x|q,t)$. 
This means that each of the recurrence 
relations holds for arbitrary partitions $\lambda$, hence 
it holds as an 
identity of rational functions.  Hence we have 

\begin{thm}\label{thm:eigenDfn}
For the complex parameters $\lambda$ with $s=t^\delta q^\lambda$, 
the formal power series $x^\lambda p_n(x;s|q,t)$ satisfies
the eigenfunction equation
\begin{align}
D^x(u)\,x^\lambda p_n(x;s|q,t)=
x^\lambda p_n(x;s|q,t)\dprod{i=1}{n}(1-us_i). 
\end{align}
\end{thm}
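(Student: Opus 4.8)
The plan is to reduce the assertion to a coefficientwise verification that can be carried out by interpolation from the Macdonald-polynomial case. First I would strip off the power function by the conjugation $E^{(x;s)}(u)=x^{-\lambda}D^x(u)\,x^\lambda$, so that the stated equation becomes the eigenfunction equation $E^{(x;s)}(u)\,p_n(x;s|q,t)=p_n(x;s|q,t)\prod_{i=1}^n(1-us_i)$ for the series $p_n=\sum_{\mu\in Q_+}x^{-\mu}p_\mu(s)$. After the rescaling $u\mapsto u/s_1$ — under which, as noted earlier, $E^{(x;s)}(u/s_1)$ acts within $\mathbb{C}(s^{-Q_+})[[x^{-Q_+}]]$ and the eigenvalue becomes $\prod_{i=1}^n(1-us_i/s_1)$ — both sides lie in this ring. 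Expanding in powers of $x_2/x_1,\ldots,x_n/x_{n-1}$ and comparing the coefficient of each monomial and each power of $u$ turns the equation into an infinite family of recurrence relations, each involving only finitely many of the $p_\nu(s)$ and each being an identity of rational functions in the variables $s_{i+1}/s_i$.

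Next I would invoke Theorem~\ref{thm:MP}. For generic $t$ and any partition $\lambda$ of length $\le n$, the specialization $s=t^\delta q^\lambda$ turns $x^\lambda p_n(x;s|q,t)$ into the Macdonald polynomial $P_\lambda(x|q,t)$, and the Ruijsenaars--Macdonald operators satisfy $D^x(u)\,P_\lambda(x|q,t)=P_\lambda(x|q,t)\prod_{i=1}^n(1-ut^{n-i}q^{\lambda_i})$. Since $s_i=t^{n-i}q^{\lambda_i}$ makes the eigenvalue equal to $\prod_{i=1}^n(1-us_i)$, this is exactly the specialized form of our equation. Consequently every one of the recurrence relations above holds after the substitution $s=t^\delta q^\lambda$, for each partition $\lambda$ of length $\le n$.

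The remaining step — the only genuinely delicate one — is to promote ``holds at all $s=t^\delta q^\lambda$'' to ``holds identically in $s$''. Writing a given recurrence relation as the vanishing of a rational function $R$ of the variables $s_{i+1}/s_i$, the specialization $s=t^\delta q^\lambda$ sends $s_{i+1}/s_i$ to $t^{-1}q^{\lambda_{i+1}-\lambda_i}$, and the differences $\lambda_i-\lambda_{i+1}$ range independently over $\mathbb{N}$ as $\lambda$ runs over all partitions of length $\le n$. The image of this specialization is therefore a product of infinite subsets of $\mathbb{C}^\ast$, hence Zariski dense in $(\mathbb{C}^\ast)^{n-1}$, so a rational function vanishing there vanishes identically. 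Thus each recurrence relation is an identity of rational functions in $s$, establishing $E^{(x;s)}(u/s_1)\,p_n=p_n\prod_{i=1}^n(1-us_i/s_1)$ for generic $t$; since all coefficients involved are rational in $t$, the identity extends to every admissible $t\in\mathbb{C}^\ast$, and undoing the rescaling and the conjugation gives the theorem. I expect the Zariski-density/interpolation step to be the main point requiring care, together with the bookkeeping that confirms the coefficient comparison really produces identities of rational functions in the ratios $s_{i+1}/s_i$ rather than in $s$ itself.
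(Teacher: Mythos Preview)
Your proposal is correct and follows essentially the same route as the paper: reduce to the coefficientwise recurrence relations for $p_\mu(s)$, verify them at $s=t^\delta q^\lambda$ via Theorem~\ref{thm:MP} and the eigenfunction property of Macdonald polynomials, and then conclude by the standard ``holds on a Zariski-dense set, hence identically'' argument. The paper's text is terser---it simply asserts that validity for all partitions $\lambda$ implies validity as an identity of rational functions---whereas you spell out the density step, the rescaling $u\mapsto u/s_1$, and the passage from generic $t$ to all $t$; these are exactly the bookkeeping details one would expect a careful write-up to include.
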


Since $x^\lambda p_{n}(x;s|q,t)$ is a formal solution 
with leading coefficient 1, 
by Theorem \ref{thm:FR} we know that, 
for each $\mu\in Q_+$, the rational function 
\begin{align}
p_\mu(s)
=\dsum{\theta\in M_n(\mu)}{}\, c_{n}(\theta; s|q,t)
\end{align}
has at most simple poles along the divisors 
$s_j/s_i=q^{-k-1}\quad(i<j; k=0,1,\ldots)$. 
This means that the poles at $s_j/s_i=q^{k}$ with $k=0,1,2,\ldots$, which 
are apparent in $c_n(\theta;s|q;t)$, cancel out after the summation 
over all $\theta\in M_n(\mu)$.

\subsection{Symmetry with respect to $t\leftrightarrow q/t$}
Let us define a formal power series $\psi(x;s)$ by 
\begin{align}
p_n(x;s|q,t)
=\dprod{1\le i<j\le n}{}\dfrac{(qx_j/x_i;q)_\infty}{(qx_j/tx_i;q)_\infty}\,\psi(x;s),
\quad \psi_0(s)=1.  
\end{align}
Then by Theorem \ref{thm:FRpsi}, (2), 
\begin{align}
\psi(x;s)
=
\dprod{1\le i<j\le n}{}
\dfrac{(qx_j/tx_i;q)_\infty}{(qx_j/x_i;q)_\infty}\,
\dsum{\theta\in M_n}{}c_n(\theta;s|q,t)
\dprod{1\le i<j\le n}{}(x_j/x_i)^{\theta_{ij}}
\end{align}
is invariant under the change of parameters $t\leftrightarrow q/t$.  
\begin{prop}\label{prop:t-q/t}
The formal solution $p_n(x;s|q,t)$ of \eqref{eq:eigenEphi} 
with leading coefficient 1 satisfies the symmetry relation
\begin{align}\label{eq:transfp}
p_n(x;s|q,t)=
\dprod{1\le i<j\le n}{}
\dfrac{(tx_j/x_i;q)_\infty}{(qx_j/tx_i;q)_\infty}\,p_n(x;s|q,q/t)
\end{align}
with respect to the change of parameters $t\leftrightarrow q/t$.  
Namely, we have the transformation formula
\begin{align}\label{eq:transftqt}
&
\dsum{\theta\in M_n}{}c_n(\theta;s|q,t)
\dprod{1\le i<j\le n}{}(x_j/x_i)^{\theta_{ij}}
\nonumber\\
&
=
\dprod{1\le i<j\le n}{}
\dfrac{(tx_j/x_i;q)_\infty}{(qx_j/tx_i;q)_\infty}\,
\dsum{\theta\in M_n}{}c_n(\theta;s|q,q/t)
\dprod{1\le i<j\le n}{}(x_j/x_i)^{\theta_{ij}}. 
\end{align}
\end{prop}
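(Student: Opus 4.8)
The plan is to deduce the symmetry relation \eqref{eq:transfp} directly from the $t\leftrightarrow q/t$ invariance already established for the gauge-transformed solution in Theorem \ref{thm:FRpsi}, (2), rather than attempting any direct manipulation of the rational functions $c_n(\theta;s|q,t)$. Write $G_t(x)=\prod_{1\le i<j\le n}\frac{(qx_j/x_i;q)_\infty}{(qx_j/tx_i;q)_\infty}$ for the gauge factor, which lies in $\mathbb{C}[[x^{-Q_+}]]$ and has leading coefficient $1$. First I would observe that $p_n(x;s|q,t)$ is a formal solution of \eqref{eq:eigenEphi} with leading coefficient $1$: this is exactly the content of Theorem \ref{thm:eigenDfn}, transported from the $D^x$-equation to the $E^{(x;s)}$-equation by the conjugation $E^{(x;s)}(u)=x^{-\lambda}D^x(u)\,x^\lambda$. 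Consequently the series $\psi(x;s)=G_t(x)^{-1}p_n(x;s|q,t)$ solves the gauge-transformed equation \eqref{eq:eigenLpsi} and has leading coefficient $\psi_0(s)=1$. By the uniqueness part of Theorem \ref{thm:FFpsi}, this $\psi$ is \emph{the} canonical formal solution to which Theorem \ref{thm:FRpsi}, (2) applies, so that $\psi(x;s|q,t)=\psi(x;s|q,q/t)$.

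With this identification in hand, the computation is short. Applying the defining gauge relation at parameter $t$ and at parameter $q/t$ gives $p_n(x;s|q,t)=G_t(x)\,\psi(x;s|q,t)$ and $p_n(x;s|q,q/t)=G_{q/t}(x)\,\psi(x;s|q,q/t)$. Since $\psi$ is invariant, eliminating it yields
\begin{align}
p_n(x;s|q,t)=\frac{G_t(x)}{G_{q/t}(x)}\,p_n(x;s|q,q/t).
\end{align}
It remains only to simplify the ratio of gauge factors. Substituting $t\mapsto q/t$ replaces each denominator $(qx_j/tx_i;q)_\infty$ by $(qx_j/(q/t)x_i;q)_\infty=(tx_j/x_i;q)_\infty$, so that $G_{q/t}(x)=\prod_{1\le i<j\le n}\frac{(qx_j/x_i;q)_\infty}{(tx_j/x_i;q)_\infty}$. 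The common numerators cancel and we obtain $G_t(x)/G_{q/t}(x)=\prod_{1\le i<j\le n}\frac{(tx_j/x_i;q)_\infty}{(qx_j/tx_i;q)_\infty}$, which is precisely the prefactor appearing in \eqref{eq:transfp}.

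Finally, the explicit transformation formula \eqref{eq:transftqt} is nothing but \eqref{eq:transfp} with $p_n(x;s|q,t)$ and $p_n(x;s|q,q/t)$ written out through their series definition \eqref{eq:defpn}; no further work is needed, although if a coefficient-by-coefficient statement is desired one expands $\prod_{1\le i<j\le n}\frac{(tx_j/x_i;q)_\infty}{(qx_j/tx_i;q)_\infty}$ by the $q$-binomial theorem and compares the coefficients of each monomial $\prod_{1\le i<j\le n}(x_j/x_i)^{\theta_{ij}}$. I do not expect a genuine obstacle here, since the entire content is the invariance of $\psi$, which is already proved. The one point that requires care is the identification step of the first paragraph---that the $\psi$ produced from $p_n$ by removing the gauge factor is the same normalized solution governed by Theorem \ref{thm:FRpsi}---and this is secured by combining Theorem \ref{thm:eigenDfn} with the uniqueness of formal solutions of leading coefficient $1$.
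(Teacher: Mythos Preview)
Your proof is correct and follows essentially the same route as the paper: define $\psi$ by stripping the gauge factor $G_t$ from $p_n$, invoke Theorem~\ref{thm:eigenDfn} and the uniqueness in Theorem~\ref{thm:FFpsi} to identify this $\psi$ with the canonical solution of \eqref{eq:eigenLpsi}, and then apply the $t\leftrightarrow q/t$ invariance of Theorem~\ref{thm:FRpsi},\,(2). The computation of $G_t/G_{q/t}$ and the passage to \eqref{eq:transftqt} are exactly as in the paper.
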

\par\noindent
This formula for $n=2$ 
\begin{align}
{}_{2}\phi_{1}\!
\left[\begin{matrix}
t,\ ts_2/s_1 \\
qs_2/s_1
\end{matrix};\,
q, qx_2/tx_1
\right]
=
\dfrac{(tx_2/x_1)_\infty}{(qx_2/tx_1)_\infty}\ 
{}_{2}\phi_{1}\!
\left[\begin{matrix}
q/t,\ qs_2/ts_1 \\
qs_2/s_1
\end{matrix};\,
q, tx_2/x_1
\right]
\end{align}
is the $q$-Euler transformation formula for ${}_2\phi_1$. 
In terms of the formal solution $\varphi_n(x;s|q,t)$, 
the transformation formula mentioned above is expressed as 
\begin{align}\label{eq:transfphi}
\varphi_n(x;s|q,t)=
\dprod{1\le i<j\le n}{}
\dfrac{(tx_j/x_i;q)_\infty}{(qx_j/tx_i;q)_\infty}\,
\dfrac{(ts_j/s_i;q)_\infty}{(qs_j/ts_i;q)_\infty}\ 
\varphi_n(x;s|q,q/t). 
%\nonumber\\
%&=
%\dprod{1\le i<j\le n}{}
%\dfrac{(tx_j/x_i;q)_\infty}{(qx_j/tx_i;q)_\infty}\,
%\dfrac{(qs_j/s_i;q)_\infty}{(qs_j/ts_i;q)_\infty}\,
%\dsum{\theta\in M_n}{}c_n(\theta;s|q,q/t)
%\dprod{1\le i<j\le n}{}(x_j/x_i)^{\theta_{ij}}. 
\end{align}

\begin{rem}\rm 
In this section, we made use of the explicit formula 
for Macdonald polynomials to prove that $p_n(x;s|q,t)$ 
and $\varphi_n(x;s|q,t)$ are formal solutions of the 
eigenfunction equation in the $x$ variables. 
As we will see in the next section on, however, 
Theorem \ref{thm:eigenDfn} 
as well as Proposition \ref{prop:t-q/t} 
can be proved without relying on the theory of Macdonald 
polynomials. 
\end{rem}

%%%%%%%%%%%%%%%%%%%%%%%%%%%%%%

\section{Recurrence by Jackson integrals}

In this section, we show that the explicit formal solution 
$f(x;s)=x^\lambda\varphi_n(x;s|q,t)$ 
of the eigenfunction equation
in $x$ variables
essentially solves the bispectral problem
\begin{align}\label{eq:bispf}
D^x(u)\,f(x;s)=f(x;s)\,\dprod{i=1}{n}(1-us_i),
\quad
D^s(u)\,f(x;s)=f(x;s)\,\dprod{i=1}{n}(1-ux_i).  
\end{align}

\subsection{Preliminary remarks}
Let $e_n(x;s)$ be 
a (possibly multi-valued) meromorphic function in $(x,s)$ 
such that
\begin{align}\label{eq:qDEe}
T_{q,x_i}e_n(x;s)=e_n(x;s)\,s_{i}t^{-n+i},\quad 
T_{q,s_i}e_n(x;s)=e_n(x;s)\,x_{i}t^{-n+i}\quad(i=1,\ldots,n)
\end{align}  
and that $e_n(x;s)=e_n(s;x)$. 
In order to fix the idea, we choose now the function 
\begin{align}
e_n(x;s)=q^{\sum_{i=1}^{n}\kappa_i\lambda_i},
\end{align}
defined in terms of 
additive complex variables 
$\kappa=(\kappa_1,\ldots,\kappa_n)$
and $\lambda=(\lambda,\ldots,\lambda_n)$ 
such that 
$x_i=t^{n-i}q^{\kappa_i}$, $s_i=t^{n-i}q^{\lambda_i}$ ($i=1,\ldots,n$).  
From the relation $x=t^\delta q^\kappa$ and $s=t^\delta q^\lambda$, 
we have
\begin{align}
e_n(x;s)=x^{\lambda}t^{-\br{\delta,\lambda}}=s^{\kappa}t^{-\br{\delta,\kappa}}.  
\end{align}
We remark that the function 
\begin{align}
e_n(x;s)=\dprod{i=1}{n}
\dfrac{\theta(x_it^{n-i};q)\,\theta(s_it^{n-i};q)}
{\theta(x_is_i;q)}. 
\end{align}
defined by theta functions is an alternative choice for $e_n(x;s)$.  

We regard $\varphi_n(x;s|q,t)$ as a formal power series 
in $(x_2/x_1,\ldots,x_n/x_{n-1}; s_2/s_1,\ldots,s_n/s_{n-1})$: 
\begin{align}
\varphi_n(x;s|q,t)=\dsum{\mu,\nu\in Q_+}{} x^{-\mu}s^{-\nu} \varphi_{\mu,\nu}
\in \mathbb{C}[[x^{-Q_+}]][[s^{-Q_+}]].  
\end{align}
Our goal is to show that the formal power series
\begin{align}
f_n(x;s|q,t)=e_n(x;s)\,\varphi_n(x;s|q,t)\in 
e_n(x;s)\mathbb{C}[[x^{-Q_+}]][[s^{-Q_+}]]
\end{align}
is a solution of the bispectral problem \eqref{eq:bispf}. 
Since $e_n(x;s)=x^{\lambda}t^{-\br{\delta,\lambda}}$, 
we already know that $f_n(x;s|q,t)$ is a solution of the 
eigenfunction equation in $x$ variables.  We 
need to show that this formal power series simultaneously solves 
the eigenfunction equation in $s$ variables. 
For that purpose we will make use of the inductive construction 
of eigenfunctions by Jackson integrals.  

\par\medskip
Before investigating the general case, we observe the  
case $n=2$ in some detail.  Recall that 
\begin{align}
\varphi_2(x;s|q,t)
&=\dfrac{(qs_2/s_1;q)_\infty}{(qs_2/ts_1;q)_\infty}\ 
{}_{2}\phi_{1}\!
\left[\begin{matrix}
t,\ ts_2/s_1 \\
qs_2/s_1
\end{matrix};\,
q, qx_2/tx_1
\right]
\nonumber\\
&=\dfrac{(qs_2/s_1;q)_\infty}{(qs_2/ts_1;q)_\infty}\ 
\dsum{k=0}{\infty}
\dfrac{(t;q)_{k}(ts_2/s_1;q)_{k}}{(qs_2/s_1;q)_{k}(q;q)_{k}}
(qx_2/tx_1)^k.  
\end{align}
This series defines a holomorphic function on the domain 
$|qx_2/tx_1|<1$ and 
$qs_2/ts_1\notin q^{-\mathbb{N}}.$ 
Suppose that $|t|<1$.  Then by 
\begin{align}
\dfrac{(t;q)_k}{(qs_2/s_1;q)_k}
=
\dfrac{(t;q)_\infty (q^{k+1}s_2/s_1;q)_\infty}
{(qs_2/s_1;q)_\infty (q^kt;q)_\infty}
=
\dfrac{(t;q)_\infty}{(qs_2/s_1;q)_\infty}
\dsum{l=0}{\infty}
\dfrac{(qs_2/ts_1;q)_l}{(q;q)_l} q^{kl}t^l, 
\end{align}
we have
\begin{align}
\varphi_2(x;s|q,t)
&=
\dfrac{(t;q)_\infty}{(qs_2/ts_1;q)_\infty}\ 
\dsum{k,l}{}
\dfrac{(qs_2/ts_1;q)_l}{(q;q)_l} 
\dfrac{(ts_2/s_1;q)_{k}}{(q;q)_{k}}
(qx_2/tx_1)^k
q^{kl}
t^l
\nonumber\\
&=
\dfrac{(t;q)_\infty}{(qs_2/ts_1;q)_\infty}\ 
\dsum{l}{}
\dfrac{(qs_2/ts_1;q)_l 
}{(q;q)_l}
\dfrac{(q^{l+1}x_2s_2/x_1s_1;q)_\infty}{(q^{l+1}x_2/tx_1;q)_\infty}
t^l
\nonumber\\
&
=
\dfrac{(t;q)_\infty(qx_2s_2/x_1s_1;q)_\infty}
{(qs_2/ts_1;q)_\infty(qx_2/tx_1;q)_\infty}
\dsum{l}{}
\dfrac{(qs_2/ts_1;q)_l(qx_2/tx_1;q)_l}{(q;q)_l(qx_2s_2/x_1s_1;q)_l}
t^l. 
\end{align}
Hence
\begin{align}\label{eq:n=2final}
\varphi_2(x;s|q,t)=
\dfrac{(t;q)_\infty(qx_2s_2/x_1s_1;q)_\infty}
{(qx_2/tx_1;q)_\infty(qs_2/ts_1;q)_\infty}\,
{}_2\phi_{1}\!
\left[\begin{matrix}
qx_2/tx_1, qs_2/ts_1 \\
qx_2s_2/x_1s_1
\end{matrix};\,
q,t
\right]
\quad(|t|<1).  
\end{align}
This expression represents a meromorphic function 
in $(x_2/x_1,s_2/s_1)\in\mathbb{C}\times\mathbb{C}$ 
at most with simple poles at
$qx_2/tx_1\in q^{-\mathbb{N}}$ and 
$qs_2/ts_1\in q^{-\mathbb{N}}$. 
Furthermore $\varphi_2(x;s|q,t)$ is manifestly symmetric 
with respect to $x$ and $s$ variables. 
This immediately implies that the meromorphic function 
$f_2(x;s|q,t)=e_2(x;s)\varphi_2(x;s|q,t)$ actually 
solves the bispectral problem.  See \cite[Proposition 5.4]{S2010} also.

For general $n$, it is a challenging problem to find an 
analytic expression that directly implies symmetry between 
$x$ and $s$ variables;  the case  $n=3$ will be studied explicitly in Section 7.
%(We will present later in Section 4 
%a formal expression of this sort.) 
In the following, we will show that $f_n(x;s|q,t)$ 
satisfies the eigenfunction equation for $s$ variables 
by means of the Jackson integral representation.  
The symmetry with respect to $x$ and $s$ 
variables will be derived as a consequence of the fact 
that it satisfies the two eigenfunction equations 
simultaneously.  

\subsection{Recurrence by Jackson integrals} 

In order to prove that 
$f_n(x;s|q,t)=e_n(x;s)$ $\times\varphi_n(x;s|q,t)$ satisfies the 
eigenfunction equation in $s$ variables, 
by exchanging the roles of $x$ and $s$ variables  
we show that 
\begin{align}
f_n(s;x|q,t)=e_n(s;x)\varphi_n(s;x|q,t)
\end{align} 
satisfies the eigenfunction equation in the $x$ variables.  
For the proof we make use of a result of \cite{MN1997} 
concerning the integral representation of eigenfunctions 
for Ruijsenaars-Macdonald operators.  
We remark that the method of \cite{MN1997} is applicable 
to our formal setting as well, since it is based on 
the $q$-difference equation satisfied by a kernel function. 

\par\medskip
Let $g(y)$ be a joint eigenfunction of 
Ruijsenaars-Macdonald operators in $n$ variables $y=(y_1,\ldots,y_n)$ 
such that 
\begin{align}
D^y(u)g(y)=g(y)\,\dprod{i=1}{n}(1-ut^{n-i}q^{\lambda_i})
\end{align}
with complex parameters $\lambda=(\lambda_1,\ldots,\lambda_n)$.  
Then, for $m\ge n$, 
we define a function $f(x)$ in $m$
variables $x=(x_1,\ldots,x_{m})$ by an integral 
transformation of the form 
\begin{align}\label{eq:ITf}
f(x)
=
\dint{}{}
H_{\kappa}(x;y) \,g(y)\,d\omega(y)
\end{align}
with a complex parameter $\kappa$,   
where $d\omega(y)$ is a $q$-invariant measure on $\mathbb{T}^n_y$.    
We choose the kernel function so that 
\begin{align}
H_{\kappa}(x;y)\equiv 
(x_1\cdots x_{m})^\kappa\ 
\dprod{i=1}{n}
\dprod{j=1}{m}
\dfrac{(tx_j/y_i;q)_\infty}{(x_j/y_i;q)_\infty}
\dprod{1\le i,j\le n;\,i\ne j}{} 
\dfrac{(y_i/y_j;q)_\infty}{(ty_i/y_j;q)_\infty}\,
(y_1\cdots y_n)^{-\kappa}, 
\end{align}
where $\equiv$ means that the both sides coincides up to multiplication 
by a {\em quasi-constant}, namely, by a $q$-periodic function 
in all variables $x_j$ and $y_i$.
Note that $H_{\kappa}(x;y)$ is essentially the Cauchy kernel 
(with $y$ variables reversed)
multiplied by the weight function of orthogonality 
for Macdonald polynomials.  
We assume that the integral \eqref{eq:ITf} makes sense 
and that the actions of $T_{q,x_j}$ 
($j=1,\ldots,m$) commute with the integral.  
Then, it is known by \cite{MN1997} that $f(x)$ is as well  
a joint eigenfunction in $x$ variables and satisfies 
\begin{align}
D^x(u)f(x)=f(x)\dprod{j=1}{m}(1-ut^{m-j}q^{\lambda_j}), 
\end{align}
with additional 
parameters $\lambda_j=\kappa$ ($j=n+1,\ldots,m$). 

\par\medskip
We apply this integral transformation to formal power series, 
in the case where $m=n+1$ and $\kappa=\lambda_{n+1}$. 
For that purpose, we rewrite \eqref{eq:ITf} 
in terms of $\psi(y)$ and $\varphi(x)$ defined by 
\begin{align}
g(y)=y_1^{\lambda_1}\cdots y_n^{\lambda_n}\psi(y),
\quad
f(x)=x_1^{\lambda_1}\cdots x_{n+1}^{\lambda_{n+1}}\varphi(x),
\end{align}
respectively.  Then $\varphi(x)$ should be obtained from $\psi(y)$ 
by integral transformation 
\begin{align}\label{eq:ITphi}
\varphi(x)=\dint{}{}K_\lambda(x;y)\,\psi(y)\,d\omega(y) 
\end{align}
with a kernel such that 
\begin{align}
K_\lambda(x;y)&\equiv x_1^{-\lambda_1}\cdots x_n^{-\lambda_{n+1}}
H_{\lambda_{n+1}}(x;y) y^{\lambda_1}\cdots y_n^{\lambda_n}
\nonumber\\
&=
\dprod{i=1}{n}
\dprod{j=1}{n+1}
\dfrac{(tx_j/y_i;q)_\infty}{(x_j/y_i;q)_\infty}
\dprod{1\le i,j\le n;\,i\ne j}{} 
\dfrac{(y_i/y_j;q)_\infty}{(ty_i/y_j;q)_\infty}\,
\dprod{i=1}{n}
(y_i/x_i)^{\lambda_i-\lambda_{n+1}}. 
\end{align}
Multiplying the right hand side by the quasi-constant
\begin{align}
\dprod{1\le i<j\le n}{}
\dfrac{\theta(x_i/y_j;q)}{\theta(tx_i/y_j;q)}
\dfrac{\theta(ty_i/y_j;q)}{\theta(y_i/y_j;q)}
\dprod{i=1}{n}(y_i/x_i)^{(n-i)\beta}(q/t)^{\lambda_i-\lambda_{n+1}+(n-i)\beta}, 
\quad t=q^\beta, 
\end{align}
we choose the function 
\begin{align}\label{eq:defK}
K_\lambda(x;y)&=
\dprod{i=1}{n}
\dfrac{(tx_i/y_i;q)_\infty}{(x_i/y_i;q)_\infty}
\dprod{1\le i<j\le n+1}{}
\dfrac{(tx_j/y_i;q)_\infty}{(x_j/y_i;q)_\infty}
\dprod{1\le i<j\le n}{} 
\dfrac{(qy_j/x_i;q)_\infty}{(qy_j/tx_i;q)_\infty}
\nonumber\\
&\quad\cdot
\dprod{1\le i<j\le n}{} 
\dfrac{(y_j/y_i;q)_\infty}{(ty_j/y_i;q)_\infty}
\dfrac{(qy_j/ty_i;q)_\infty}{(qy_j/y_i;q)_\infty}\,
\dprod{i=1}{n}
(qy_i/tx_i)^{\lambda_i-\lambda_{n+1}+(n-i)\beta}
\end{align}
with parameters $\lambda=(\lambda_1,\ldots,\lambda_{n+1})$
for our kernel; a power of $q/t$ is introduced for 
the sake of normalization.  
Noting that this function has zeros at 
$y_i=q^{k}tx_i$ ($i=1,\ldots,n;\ k=0,1,2,\ldots$), as the $q$-invariant measure 
we make use of the Jackson integral
\begin{align}\label{eq:psitophi}
\varphi(x)=
\dfrac{1}{(1-q)^n}
\dint{tx_1}{\infty}
\!\cdots\!
\dint{tx_n}{\infty}
K_\lambda(x;y)\,\psi(y)\ 
\dfrac{d_qy_1}{y_1}
\!\cdots\!
\dfrac{d_qy_n}{y_n}. 
\end{align}
Then, by the argument of \cite{MN1997} applied to formal power series, 
we can formulate the following inductive construction of formal eigenfunctions. 
\begin{lem} \label{lem:indJack}
Let 
\begin{align}
\psi(x;s)\in\mathbb{C}[[x^{-Q_+}]][[s^{-Q_+}]],\quad
(x;s)=(x_1,\ldots,x_{n};s_1,\ldots,s_{n})
\end{align}
be a formal power series such that 
$g(x;s)=e_n(x;s)\psi(x;s)$ 
is a formal solution of the eigenfunction equation \eqref{eq:eigenDf} in 
$n$ variables $x=(x_1,\ldots, x_n)$. 
Define a formal power series 
\begin{align}
\varphi(x;s)\in\mathbb{C}[[x^{-Q_+}]][[s^{-Q_+}]],
\quad (x;s)=(x_1,\ldots,x_{n+1};s_1,\ldots,s_{n+1}), 
\end{align}
by the Jackson integral
\begin{align}\label{eq:psitophi2}
\varphi(x;s)=
\dfrac{1}{(1-q)^n}
\dint{tx_1}{\infty}
\!\cdots\!
\dint{tx_n}{\infty}
K_\lambda(x;y)\,\psi(y;s)\ 
\dfrac{d_qy_1}{y_1}
\!\cdots\!
\dfrac{d_qy_n}{y_n}, 
\end{align}
where $K_\lambda(x;y)$ is the kernel defined by \eqref{eq:defK}
with complex parameters 
$\lambda=(\lambda_1,\ldots,\lambda_{n+1})$
such that $s_i=t^{n+1-i}q^{\lambda_i}$ $($$i=1,\ldots,n+1$$)$. 
Then $f(x;s)=e_{n+1}(x;s)\varphi(x;s)$ 
is a formal solution of the eigenfunction equation in 
$(n+1)$ variables 
$x=(x_1,\ldots,x_{n+1})$.  
\end{lem}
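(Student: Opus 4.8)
The plan is to deduce the lemma from the integral transformation of \cite{MN1997} recalled in \eqref{eq:ITf}, specialized to $m=n+1$ and $\kappa=\lambda_{n+1}$, and then to translate that analytic statement into the present gauge-transformed, formal-power-series language. First I would undo the gauges $g(y;s)=e_n(y;s)\psi(y;s)$ and $f(x;s)=e_{n+1}(x;s)\varphi(x;s)$ and observe that the kernel $K_\lambda(x;y)$ of \eqref{eq:defK} differs from the Cauchy-times-weight kernel $H_{\lambda_{n+1}}(x;y)$ only by the explicit \emph{quasi-constant} exhibited just before \eqref{eq:defK}. Since that quasi-constant is $q$-periodic in every $x_j$ and every $y_i$, it is inert under all the shift operators $T_{q,x_j}$ and under the $q$-lattice summation defining the Jackson integral; hence \eqref{eq:psitophi2} is nothing but the transformation \eqref{eq:ITf} after gauge, and it suffices to reproduce the eigenvalue. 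The $q/t$-power prefactor $\prod_i (qy_i/tx_i)^{\lambda_i-\lambda_{n+1}+(n-i)\beta}$ built into $K_\lambda$ is exactly what reconciles the spectral normalizations of the $n$-variable and $(n+1)$-variable conventions.

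The engine of \cite{MN1997} is a functional equation for the kernel: $H_{\lambda_{n+1}}(x;y)$, equivalently $K_\lambda(x;y)$, satisfies a $q$-difference identity expressing the action of the $(n+1)$-variable operator $D^x(u)$ on the kernel in terms of the action of the $n$-variable operator $D^y(u)$ on it, the mismatch $m=n+1$ against $n$ producing a single extra spectral factor for the added variable; the weight part $\prod_{i<j}\tfrac{(y_j/y_i;q)_\infty}{(ty_j/y_i;q)_\infty}\tfrac{(qy_j/ty_i;q)_\infty}{(qy_j/y_i;q)_\infty}$ carried by $K_\lambda$ is the Macdonald density for which $D^y(u)$ is self-adjoint under the Jackson-integral pairing. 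Concretely I would apply $D^x(u)$ to $f(x;s)=e_{n+1}(x;s)\varphi(x;s)$, pass it under the Jackson integral (legitimate because $T_{q,x_j}$ touches only the $x$-dependence of $K_\lambda$), use the kernel identity to turn the $x$-action into the $y$-action on $K_\lambda$, and then move that $y$-action onto $g$ by $q$-summation by parts, the formal counterpart of the self-adjointness of $D^y(u)$. The boundary contributions of the summation by parts vanish because $K_\lambda(x;y)$ has zeros at the endpoints $y_i=q^k t x_i$ ($k=0,1,2,\ldots$), which is precisely why the lower limits in \eqref{eq:psitophi2} are chosen to be $tx_i$.

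Feeding in the hypothesis that $g(y;s)$ is an eigenfunction of the $n$-variable operator $D^y(u)$, the transfer turns the right-hand side into the product $\prod_{j=1}^{n+1}(1-ut^{n+1-j}q^{\lambda_j})$ times $f(x;s)$, the factor indexed by $j=n+1$, namely $1-ut^{0}q^{\lambda_{n+1}}=1-us_{n+1}$, being the new one contributed by the extra variable of the kernel. Since $s_j=t^{n+1-j}q^{\lambda_j}$, this product is exactly $\prod_{j=1}^{n+1}(1-us_j)$, so that $D^x(u)\,f(x;s)=f(x;s)\prod_{j=1}^{n+1}(1-us_j)$, which is the eigenfunction equation \eqref{eq:eigenDf} in $n+1$ variables.

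What is genuinely new over \cite{MN1997}, and what I expect to be the main obstacle, is the passage from convergent integrals to formal series. Here I would check that for each fixed multidegree in $(x_2/x_1,\ldots,x_{n+1}/x_n;s_2/s_1,\ldots,s_{n+1}/s_n)$ the Jackson integral \eqref{eq:psitophi2} reduces to a finite $\mathbb{C}$-linear combination of the coefficients of $\psi$, so that $\varphi(x;s)$ is a well-defined element of $\mathbb{C}[[x^{-Q_+}]][[s^{-Q_+}]]$, and that both the interchange of $T_{q,x_j}$ with the summation and the summation by parts hold coefficientwise with no surviving boundary term. The bookkeeping showing that the zeros of $K_\lambda$ at $y_i=q^k t x_i$ truncate the relevant sums and make each of these manipulations a legitimate identity of formal power series is the crux; once it is in place, the rest is the cited kernel calculus of \cite{MN1997}.
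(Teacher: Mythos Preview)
Your proposal is correct and follows essentially the same approach as the paper: the paper's own proof is nothing more than the sentence ``by the argument of \cite{MN1997} applied to formal power series'' together with the remark that the method transfers because it rests on the kernel's $q$-difference equation, and you have accurately unpacked that argument---the kernel identity, the transfer of $q$-shifts via $q$-invariance of the bilateral Jackson sum, and the role of the zeros of $K_\lambda$ at $y_i=q^k t x_i$ in making the half-infinite sum effectively bilateral. Your identification of the formal-series bookkeeping (finiteness at each fixed multidegree) as the only point requiring care beyond \cite{MN1997} is also exactly what the paper signals.
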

\qed

The Jackson integral \eqref{eq:psitophi2} 
is in fact an $n$-tuple sum 
of the values of the integrand at the points 
\begin{align}
(y_1,\ldots,y_n)=(q^{-\nu_1-1}tx_1,\ldots,q^{-\nu_n-1}tx_n),
\quad \nu=(\nu_1,\ldots,\nu_n)\in\mathbb{N}^n. 
\end{align}
It is expressed as 
\begin{align}
\varphi(x;s)
&=
\dsum{\nu\in \mathbb{N}^n}{}
K_\lambda(x;q^{-\nu}tx/q)\psi(q^{-\nu} tx/q;s)
\nonumber\\
&=
\left(
\dfrac{(q;q)_\infty}{(q/t;q)_\infty}
\right)^n
\dprod{1\le i<j\le n+1}{}
\dfrac{(qx_j/x_i;q)_{\infty}}{(qx_j/tx_i;q)_{\infty}}
%\dprod{i=1}{n}(ts_{n+1}/s_i)^{1-\beta}
\nonumber\\
&
\cdot
\dsum{\nu\in \mathbb{N}^n}{}
\dprod{i=1}{n}
\dfrac{(q/t;q)_{\nu_i}}{(q;q)_{\nu_i}}
\dprod{1\le i<j\le n+1}{}
\dfrac{(qx_j/tx_i;q)_{\nu_i}}{(qx_j/x_i;q)_{\nu_i}}
\dprod{1\le i<j\le n}{}
\dfrac{(q^{-\nu_j}tx_j/x_i;q)_{\nu_i}}{(q^{-\nu_j}x_j/x_i;q)_{\nu_i}}
\nonumber\\
&
\quad\cdot
\dprod{i=1}{n}(ts_{n+1}/s_i)^{\nu_i}
\dprod{1\le i<j\le n}{}
\dfrac{(q^{\nu_i-\nu_j+1}x_j/tx_i;q)_{\infty}}
{(q^{\nu_i-\nu_j+1}x_j/x_i;q)_{\infty}}\, 
\psi(q^{-\nu}x;s), 
\end{align}
where $\psi(q^{-\nu}tx/q;s)$ is replaced by 
$\psi(q^{-\nu}x;s)$ by using the homogeneity of $\psi(x;s)$.

\par\medskip
In view of Lemma \ref{lem:indJack}, 
starting from $\phi_1(s;x|q,t)$=1, 
we define a sequence of formal power series 
\begin{align}
\phi_n(s;x|q,t)\in \mathbb{C}[[x^{-Q_+}]][[s^{-Q_+}]],
\quad (x;s)=(x_1,\ldots,x_{n};s_1,\ldots,s_{n})
\quad(n=1,2,\ldots)
\end{align}
inductively by 
\begin{align}\label{eq:indformphi}
&\phi_{n+1}(s;x|q,t)
\nonumber\\
&
=
\dprod{1\le i<j\le n+1}{}
\dfrac{(qx_j/x_i;q)_{\infty}}{(qx_j/tx_i;q)_{\infty}}
\nonumber\\
&
\quad \cdot\dsum{\nu\in \mathbb{N}^n}{}
\dprod{i=1}{n}
\dfrac{(q/t;q)_{\nu_i}}{(q;q)_{\nu_i}}
\dprod{1\le i<j\le n+1}{}
\dfrac{(qx_j/tx_i;q)_{\nu_i}}{(qx_j/x_i;q)_{\nu_i}}
\dprod{1\le i<j\le n}{}
\dfrac{(q^{-\nu_j}tx_j/x_i;q)_{\nu_i}}{(q^{-\nu_j}x_j/x_i;q)_{\nu_i}}
\nonumber\\
&
\quad\cdot
\dprod{i=1}{n}(ts_{n+1}/s_i)^{\nu_i}
\dprod{1\le i<j\le n}{}
\dfrac{(q^{\nu_i-\nu_j+1}x_j/tx_i;q)_{\infty}}
{(q^{\nu_i-\nu_j+1}x_j/x_i;q)_{\infty}}\, 
\phi_n(s;q^{-\nu}x|q,t). 
\end{align}
Then we obtain a sequence of formal solutions
\begin{align}
e_n(x;s)\phi_n(s;x|q,t)\in e_n(x;s) \mathbb{C}[[x^{-Q_+}]][[s^{-Q_+}]]
\end{align}
of the eigenfunction equations \eqref{eq:eigenDf} in $x$ variables. 
By the $q$-binomial theorem, 
we can determine 
the leading coefficient of $\phi_n(s;x|q,t)$ inductively as
\begin{align}
\phi_n(s;x|q,t)\big|_{x_{i+1}/x_i=0\ (i=1,\ldots,n-1)}
=
\dprod{1\le i<j\le n}{}
\dfrac{(qs_j/s_i;q)_\infty}{(ts_j/s_i;q)_\infty}. 
\end{align}
In view of Theorem \ref{thm:FRpsi}, (2), we introduce
\begin{align}
\psi_n(s;x|q,t)=
\dprod{1\le i<j\le n}{}
\dfrac{(qx_j/tx_i;q)_\infty}{(qx_j/x_i;q)_\infty}
\dfrac{(ts_j/s_i;q)_\infty}{(qs_j/s_i;q)_\infty}\,
\phi_n(s;x|q,t)
\quad(n=1,2,\ldots)
\end{align}
so that $\psi_n(s;x|q,t)$ has leading coefficient 1 in $x$, and 
becomes invariant under the change of parameters $t\leftrightarrow q/t$. 
We rewrite the recurrence formula \eqref{eq:indformphi} for 
$\phi_n(s;x|q,t)$ into 
that for $\psi_n(s;x|q,t)$: 
\begin{align}\label{eq:indformpsi}
\psi_{n+1}(s;x|q,t)
&=
\dprod{i=1}{n}
\dfrac{(ts_{n+1}/s_i;q)_{\infty}}{(qs_{n+1}/s_i;q)_{\infty}}
\dsum{\nu\in \mathbb{N}^n}{}
\dprod{i=1}{n}
\dfrac{(q/t;q)_{\nu_i}}{(q;q)_{\nu_i}}
\dprod{1\le i<j\le n+1}{}
\dfrac{(qx_j/tx_i;q)_{\nu_i}}{(qx_j/x_i;q)_{\nu_i}}
\nonumber\\
&
\quad\cdot\dprod{1\le i<j\le n}{}
\dfrac{(q^{-\nu_j}tx_j/x_i;q)_{\nu_i}}{(q^{-\nu_j}x_j/x_i;q)_{\nu_i}}
\dprod{i=1}{n}(ts_{n+1}/s_i)^{\nu_i}
\psi_n(s;q^{-\nu}x|q,t). 
\end{align}
By the symmetry of $\psi_n(s;x|q,t)$, 
we can exchange $t$ and $q/t$ so that 
\begin{align}\label{eq:indformpsi2}
\psi_{n+1}(s;x|q,t)
&=
\dprod{i=1}{n}
\dfrac{(qs_{n+1}/ts_i;q)_{\infty}}{(qs_{n+1}/s_i;q)_{\infty}}
\dsum{\nu\in \mathbb{N}^n}{}
\dprod{i=1}{n}
\dfrac{(t;q)_{\nu_i}}{(q;q)_{\nu_i}}
\dprod{1\le i<j\le n+1}{}
\dfrac{(tx_j/x_i;q)_{\nu_i}}{(qx_j/x_i;q)_{\nu_i}}
\nonumber\\
&
\quad\cdot\dprod{1\le i<j\le n}{}
\dfrac{(q^{-\nu_j}qx_j/tx_i;q)_{\nu_i}}{(q^{-\nu_j}x_j/x_i;q)_{\nu_i}}
\dprod{i=1}{n}(qs_{n+1}/ts_i)^{\nu_i}
\psi_n(s;q^{-\nu}x|q,t). 
\end{align}
By comparing this with the recurrence formula \eqref{eq:recpn} 
for $p_n(x;s|q,t)$, we conclude that
\begin{align}
\psi_n(s;x|q,t)=
\dprod{1\le i<j\le n}{}
\dfrac{(qs_j/ts_i;q)_\infty}{(qs_j/s_i;q)_\infty}\,p_n(s;x|q,t)
\end{align}
with the role of $x$ and $s$ variables exchanged, and hence
\begin{align}
\phi_n(s;x|q,t)&=
\dprod{1\le i<j\le n}{}
\dfrac{(qx_j/x_i;q)_\infty}{(qx_j/tx_i;q)_\infty}
\dfrac{(qs_j/ts_i;q)_\infty}{(ts_j/s_i;q)_\infty}
p_n(s;x|q,t)
\nonumber\\
&=
\dprod{1\le i<j\le n}{}
\dfrac{(qs_j/ts_i;q)_\infty}{(ts_j/s_i;q)_\infty}
\varphi_n(s;x|q,t).   
\end{align}
This shows that 
$e_n(x;s)\varphi_n(s;x|q,t)$ is a formal solution of the 
eigenfunction equation \eqref{eq:eigenDf} in $x$ variables. 
It also turns out that 
\begin{align}\label{eq:psi2exp}
\psi_n(s;x|q,t)
&=
\dprod{1\le i<j\le n}{}
\dfrac{(qs_j/ts_i;q)_\infty}{(qs_j/s_i;q)_\infty}\,p_n(s;x|q,t)
\nonumber\\
&=
\dprod{1\le i<j\le n}{}
\dfrac{(qx_j/tx_i;q)_\infty}{(qx_j/x_i;q)_\infty}
\dfrac{(qs_j/ts_i;q)_\infty}{(qs_j/s_i;q)_\infty}
\varphi_n(s;x|q,t)
\end{align}
is invariant under the change of parameters $t\leftrightarrow q/t$,
which reproves the transformation formulas \eqref{eq:transfp} 
and \eqref{eq:transfphi}.

\begin{thm}\label{thm:JIT}
The sequence of formal power series 
$\varphi_n(s;x|q,t)$ $($n=1,2,\ldots$)$
satisfies 
the following recurrence relation 
by Jackson integral transformations\,$:$ 
\begin{align}
\varphi_{n+1}(s;x|q,t)
&=
\left(
\dfrac{(q/t;q)_\infty}{(1-q)(q;q)_\infty}
\right)^n
\dprod{i=1}{n}\,
\dfrac{(ts_{n+1}/s_i;q)_\infty}{(qs_{n+1}/ts_i;q)_\infty}\,
\nonumber\\
&
\quad\cdot
\dint{tx_1}{\infty}
\cdots 
\dint{tx_n}{\infty}
K_\lambda(x;y)\,\varphi_{n}(s;y|q,t)\,
\dfrac{d_qy_1}{y_1}
\cdots
\dfrac{d_qy_n}{y_n}
\end{align}
where $K_\lambda(x;y)$ is the kernel defined by \eqref{eq:defK}.  
\end{thm}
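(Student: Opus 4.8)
The plan is to obtain the asserted recurrence by translating the inductive construction of the auxiliary series $\phi_n(s;x|q,t)$, already carried out in this section, into a statement about $\varphi_n(s;x|q,t)$ by means of the gauge relation linking the two. No new analytic input is needed: everything reduces to the honest integral meaning of the inductive definition together with a careful tally of constants.

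First I would recall that, by the explicit evaluation of the Jackson integral displayed just after Lemma \ref{lem:indJack}, applying the genuine integral transformation \eqref{eq:psitophi2} to $\phi_n(s;y|q,t)$ reproduces the right-hand side of \eqref{eq:indformphi} multiplied by the constant $\bigl((q;q)_\infty/(q/t;q)_\infty\bigr)^n$. This constant is exactly the value of the factor $\prod_{i=1}^{n}(tx_i/y_i;q)_\infty/(x_i/y_i;q)_\infty$ of the kernel $K_\lambda(x;y)$ at the Jackson points $y_i=q^{-\nu_i-1}tx_i$, where $x_i/y_i=q^{\nu_i+1}/t$ and $tx_i/y_i=q^{\nu_i+1}$, so that the $\nu=0$ term contributes $\bigl((q;q)_\infty/(q/t;q)_\infty\bigr)^n$. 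Since \eqref{eq:indformphi} defines $\phi_{n+1}$ without this constant, the definition is equivalent to the honest integral identity
\[
\phi_{n+1}(s;x|q,t)=\Bigl(\frac{(q/t;q)_\infty}{(q;q)_\infty}\Bigr)^{\!n}\frac{1}{(1-q)^n}\int_{tx_1}^{\infty}\!\!\cdots\!\int_{tx_n}^{\infty}K_\lambda(x;y)\,\phi_n(s;y|q,t)\,\frac{d_qy_1}{y_1}\cdots\frac{d_qy_n}{y_n}.
\]

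Next I would insert the gauge relation $\phi_n(s;y|q,t)=C_n(s)\,\varphi_n(s;y|q,t)$ with $C_n(s)=\prod_{1\le i<j\le n}(qs_j/ts_i;q)_\infty/(ts_j/s_i;q)_\infty$, established at the close of the preceding discussion, into both sides of this identity. Because $C_n(s)$ depends only on $(s_1,\ldots,s_n)$ and not on the integration variable $y$, it factors out of the Jackson integral termwise; writing the left-hand side as $C_{n+1}(s)\,\varphi_{n+1}(s;x|q,t)$ and cancelling, the $\varphi$-recurrence reduces to the elementary ratio $C_n(s)/C_{n+1}(s)=\prod_{i=1}^{n}(ts_{n+1}/s_i;q)_\infty/(qs_{n+1}/ts_i;q)_\infty$, obtained by retaining only the factors indexed by $j=n+1$.

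Finally I would collect the three constants—the kernel normalization $\bigl((q/t;q)_\infty/(q;q)_\infty\bigr)^n$, the Jackson measure factor $1/(1-q)^n$, and the $s$-dependent ratio $C_n/C_{n+1}$—into the single prefactor $\bigl((q/t;q)_\infty/((1-q)(q;q)_\infty)\bigr)^n\prod_{i=1}^{n}(ts_{n+1}/s_i;q)_\infty/(qs_{n+1}/ts_i;q)_\infty$, which is exactly the one in the statement. I expect the sole point demanding care to be the bookkeeping of the constant $\bigl((q;q)_\infty/(q/t;q)_\infty\bigr)^n$: it is silently absorbed into the definition \eqref{eq:indformphi} of $\phi_n$, so one must reinstate it when returning to the true integral transformation, and then check that it combines with the gauge factors $C_n,C_{n+1}$ rather than being double-counted.
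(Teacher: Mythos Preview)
Your proposal is correct and follows exactly the approach implicit in the paper: the theorem is marked \qed\ immediately after the discussion relating $\phi_n$ to $\varphi_n$, and you have simply made explicit the bookkeeping the paper leaves to the reader---reinstating the constant $\bigl((q;q)_\infty/(q/t;q)_\infty\bigr)^n$ suppressed in \eqref{eq:indformphi}, invoking the gauge relation $\phi_n=C_n\varphi_n$, and computing $C_n/C_{n+1}$.
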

\qed

\noindent
This Jackson integral transformation corresponds 
to the following recurrence formula for 
$\varphi_n(s;x|q,t)$:
\begin{align}\label{eq:recformphi2}
\varphi_{n+1}(s;x|q,t)
&=
\dprod{i=1}{n}
\dfrac{(ts_{n+1}/s_i;q)_\infty}{(qs_{n+1}/s_i;q)_\infty}
\dprod{1\le i<j\le n+1}{}
\dfrac{(qx_j/x_i;q)_\infty}{(qx_j/tx_i;q)_\infty}
\nonumber\\
&\cdot
\dsum{\nu\in\mathbb{N}^n}{}\,
\dprod{i=1}{n}
\dfrac{(q/t;q)_{\nu_i}}{(q;q)_{\nu_i}}
\dprod{1\le i<j\le n+1}{}
\dfrac{(qx_j/tx_i;q)_{\nu_i}}
{(qx_j/x_i;q)_{\nu_i}}
\dfrac{(q^{-\nu_j}tx_j/x_i;q)_{\nu_i}}
{(q^{-\nu_j}x_j/x_i;q)_{\nu_i}}
\nonumber\\
&\cdot
\dprod{i=1}{n}
(ts_{n+1}/s_i)^{\nu_i}\,
\dprod{1\le i<j\le n}{}
\dfrac{(q^{\nu_i-\nu_j+1}x_j/tx_i;q)_{\nu_i}}
{(q^{\nu_i-\nu_j+1}x_j/x_i;q)_{\nu_i}}\, 
\varphi_n(s;q^{-\nu}x).  
\end{align}

\begin{thm}\label{thm:eigenDfnsx}
For each $n=1,2,\ldots$, 
the formal power series 
\begin{align}
f(x;s)&=x^\lambda\,\varphi_n(s;x|q,t)
\nonumber\\
&=x^\lambda 
\dprod{1\le i<j\le n}{}
\dfrac{(qx_j/x_i;q)_\infty}{(qx_j/tx_i;q)_\infty}\,p_n(s;x|q,t)
\in x^\lambda\mathbb{C}[[x^{-Q_+}]][[s^{-Q_+}]]
\end{align}
defined with parameters $\lambda$ such that $s=t^\delta q^\lambda$
is a formal solution of the eigenfunction equation 
\eqref{eq:eigenDf} in $x$ variables.  
\end{thm}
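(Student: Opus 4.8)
The plan is to obtain this statement as the final packaging of the Jackson-integral recurrence developed in this section, arguing by induction on $n$. For $n=1$ the assertion is immediate: $\varphi_1(s;x|q,t)=1$, and in one variable $D^x(u)=1-uT_{q,x_1}$, so $D^x(u)\,x^{\lambda_1}=x^{\lambda_1}(1-uq^{\lambda_1})=x^{\lambda_1}(1-us_1)$ since $s_1=q^{\lambda_1}$. For the inductive step I would invoke Lemma \ref{lem:indJack}: granting that $e_n(x;s)\varphi_n(s;x|q,t)$ solves the eigenfunction equation \eqref{eq:eigenDf} in $n$ variables, the Jackson integral transformation \eqref{eq:psitophi2} produces a formal solution of the eigenfunction equation in $n+1$ variables, once one knows that this transformation commutes with each $T_{q,x_j}$ in the formal power series setting.

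The crux is then to identify the output of this transformation with $\varphi_{n+1}(s;x|q,t)$. Here I would route through the two auxiliary normalizations: take $\phi_n(s;x|q,t)$ as given by the explicit recurrence \eqref{eq:indformphi}, pass to the series $\psi_n(s;x|q,t)$ normalized to have leading coefficient $1$, and rewrite the recurrence in the form \eqref{eq:indformpsi2}. The decisive step is to compare this recurrence, term by term, with the recurrence \eqref{eq:recpn} defining $p_n(x;s|q,t)$, after exchanging the roles of $x$ and $s$. This comparison yields $\psi_n(s;x|q,t)=\prod_{1\le i<j\le n}\frac{(qs_j/ts_i;q)_\infty}{(qs_j/s_i;q)_\infty}\,p_n(s;x|q,t)$, and hence $\phi_n(s;x|q,t)=\prod_{1\le i<j\le n}\frac{(qs_j/ts_i;q)_\infty}{(ts_j/s_i;q)_\infty}\,\varphi_n(s;x|q,t)$, so that $e_n(x;s)\varphi_n(s;x|q,t)$ is a formal solution in $x$.

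It then remains to replace $e_n(x;s)$ by $x^\lambda$. I would choose the representative $e_n(x;s)=x^\lambda\,t^{-\br{\delta,\lambda}}$; since $\lambda$ is determined by $s$ through $s=t^\delta q^\lambda$, the scalar $t^{-\br{\delta,\lambda}}$ is independent of $x$ and commutes with every $T_{q,x_i}$, and therefore with $D^x(u)$. Cancelling it from both sides of the eigenfunction equation for $e_n(x;s)\varphi_n(s;x|q,t)$ leaves precisely $D^x(u)\,x^\lambda\varphi_n(s;x|q,t)=x^\lambda\varphi_n(s;x|q,t)\prod_{i=1}^{n}(1-us_i)$, which is the claim.

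I expect the main obstacle to be the term-by-term matching of the Jackson-integral recurrence \eqref{eq:indformpsi2} with the defining recurrence \eqref{eq:recpn} for $p_n$: this requires carefully tracking the numerous $q$-shifted factorial gauge factors and invoking the $t\leftrightarrow q/t$ symmetry of Theorem \ref{thm:FRpsi}, (2) to bring the two recurrences into a common shape. A secondary technical point is justifying, in the formal power series setting, that the Jackson integral commutes with the shifts $T_{q,x_j}$, so that Lemma \ref{lem:indJack} genuinely applies; this is the content of the formal reformulation of the argument of \cite{MN1997}.
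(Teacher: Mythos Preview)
Your proposal is correct and follows essentially the same route as the paper: induction on $n$ via Lemma~\ref{lem:indJack}, passage through the auxiliary series $\phi_n$ and the normalized $\psi_n$, invocation of the $t\leftrightarrow q/t$ symmetry to match the recurrence \eqref{eq:indformpsi2} against \eqref{eq:recpn}, and finally the identification $e_n(x;s)=x^\lambda t^{-\br{\delta,\lambda}}$ to strip off the $x$-independent scalar. The obstacles you flag (tracking the gauge factors in the recurrence comparison, and justifying commutation of $T_{q,x_j}$ with the Jackson integral at the level of formal series) are exactly the points the paper addresses in Section~4.2.
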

\qed

\subsection{Formal solution of the bispectral problem}

In Theorem \ref{thm:eigenDfn} and Theorem \ref{thm:eigenDfnsx}, 
we proved that 
$x^\lambda\varphi_n(x;s|q,t)$ 
and $x^\lambda\varphi_n(s;x|q,t)$ satisfy the eigenfunction 
equation in the $x$ variables, respectively.  
As we already remarked in Section 2, $\varphi_n(x;s|q,t)$ has 
the leading coefficient
\begin{align}
\varphi_n(x;s|q,t)\big|_{s_{i+1}/s_i=0 \ (i=1,\ldots,n-1)}=
\dprod{1\le i<j\le n}{}
\dfrac{(qx_j/x_i;q)_\infty}{(qx_j/tx_i;q)_\infty} 
\end{align}
in $s$ variables. 
This means that the two formal solutions 
$x^\lambda\varphi_n(x;s|q,t)$ 
and $x^\lambda\varphi_n(s;x|q,t)$ 
of the eigenfunction equation in $x$ variables
have the same leading coefficient
\begin{align}
\dprod{1\le i<j\le n}{}
\dfrac{(qs_j/s_i;q)_\infty}{(qs_j/ts_i;q)_\infty}.  
\end{align}
Hence by Theorem \ref{thm:FF} we see 
$\varphi_n(x,s|q,t)$ and $\varphi_n(s,x|q,t)$ coincides 
as formal power series in $\mathbb{C}[[x^{-Q_+}]][[s^{-Q_+}]]$.

Summarizing these arguments, we have
\begin{thm}\label{bispect-thm}
Let $e_n(x;s)$ be a solution of the $q$-difference equations 
\eqref{eq:qDEe} with symmetry $e_n(x;s)=e_n(s;x)$.  
Then 
\begin{align}
f_n(x;s|q,t)=e_n(x;s)\varphi_{n}(x;s|q,t)
\in e_n(x;s) \mathbb{C}[[x^{-Q_+}]][[s^{-Q_+}]]
\end{align}
is a formal solution of the bispectral problem \eqref{eq:bispf}.  
The formal power series $\varphi_n(x;s|q,t)$ 
have leading coefficients 
\begin{align}
&
\varphi_{n}(x;s|q,t)\big|_{x_{i+1}/x_i=0\ (i=1,\ldots,n-1)}
=
\dprod{1\le i<j\le n}{}
\dfrac{(qs_j/s_i;q)_\infty}{(qs_j/ts_i;q)_\infty},
\nonumber\\
&
\varphi_{n}(x;s|q,t)\big|_{s_{i+1}/s_i=0\ (i=1,\ldots,n-1)}
=
\dprod{1\le i<j\le n}{}
\dfrac{(qx_j/x_i;q)_\infty}{(qx_j/tx_i;q)_\infty}, 
\end{align}
each of which determines the formal solution uniquely.  
It has symmetry 
\begin{align}
\varphi_n(x;s|q,t)=\varphi_n(s;x|q,t) 
\end{align}
between $x$ and $s$ variables, and transforms as 
\begin{align}
\varphi_n(x;s|q,t)=
\dprod{1\le i<j\le n}{}
\dfrac{(tx_j/x_i;q)_\infty}{(qx_j/tx_i;q)_\infty}
\dfrac{(ts_j/s_i;q)_\infty}{(qs_j/ts_i;q)_\infty}\,\varphi_n(x;s|q,q/t).  
\end{align}
under the change of parameters $t\leftrightarrow q/t$.
\end{thm}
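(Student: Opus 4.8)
The plan is to assemble the statement from the two eigenfunction results already established, treating the equation in $x$ and the equation in $s$ separately, and then to read off the symmetry, the $t\leftrightarrow q/t$ transformation, and the uniqueness from facts already in hand. For the equation in the $x$ variables I would start from the identity $e_n(x;s)=x^\lambda t^{-\br{\delta,\lambda}}$ valid for the chosen gauge. The scalar $t^{-\br{\delta,\lambda}}$ depends only on $s$, hence commutes with every $T_{q,x_i}$, so
\begin{align}
D^x(u)\,f_n(x;s|q,t)
=t^{-\br{\delta,\lambda}}\,D^x(u)\big(x^\lambda\varphi_n(x;s|q,t)\big).
\end{align}
By Theorem \ref{thm:eigenDfn} the right-hand side equals $t^{-\br{\delta,\lambda}}x^\lambda\varphi_n(x;s|q,t)\prod_{i=1}^n(1-us_i)=f_n(x;s|q,t)\prod_{i=1}^n(1-us_i)$, which is the first eigenfunction equation.

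For the equation in the $s$ variables I would use the complementary identity $e_n(x;s)=s^\kappa t^{-\br{\delta,\kappa}}$, the factor $t^{-\br{\delta,\kappa}}$ now being $T_{q,s_i}$-invariant. Interchanging the roles of $x$ and $s$ (together with $\lambda$ and $\kappa$) in Theorem \ref{thm:eigenDfnsx}---which is legitimate because the operators $D^x,D^s$, the gauge $e_n$, and the series $\varphi_n$ are all built symmetrically in the two sets of variables---shows that $s^\kappa\varphi_n(x;s|q,t)$ solves the eigenfunction equation in $s$ with eigenvalue $\prod_{i=1}^n(1-ux_i)$. Multiplying by the $T_{q,s_i}$-invariant factor $t^{-\br{\delta,\kappa}}$ then yields $D^s(u)f_n=f_n\prod_{i=1}^n(1-ux_i)$, so that $f_n$ solves the full bispectral system \eqref{eq:bispf}.

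The remaining assertions are already available. The two leading coefficients are precisely those computed in Sections 2 and 3: the $x$-leading coefficient is read off directly from the definition \eqref{eq:defphin} of $\varphi_n$, while the $s$-leading coefficient follows from the recurrence \eqref{eq:recpn} together with the $q$-binomial theorem; that each of them determines the formal solution uniquely is exactly Theorem \ref{thm:FF}. The symmetry $\varphi_n(x;s|q,t)=\varphi_n(s;x|q,t)$ was obtained just above by observing that $x^\lambda\varphi_n(x;s|q,t)$ and $x^\lambda\varphi_n(s;x|q,t)$ are two solutions of the same $x$-equation sharing the leading coefficient $\prod_{1\le i<j\le n}(qs_j/s_i;q)_\infty/(qs_j/ts_i;q)_\infty$, and invoking uniqueness. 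Finally, the transformation law under $t\leftrightarrow q/t$ is the already-established formula \eqref{eq:transfphi}.

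Since the genuinely substantive step---that the variable-swapped series $\varphi_n(s;x|q,t)$ again solves the $x$-equation---was carried out through the Jackson integral recurrence of Theorem \ref{thm:JIT}, the only delicate point remaining for this summary is bookkeeping: one must check carefully that the two gauge factors $t^{-\br{\delta,\lambda}}$ and $t^{-\br{\delta,\kappa}}$ are quasi-constant with respect to $T_{q,x}$ and $T_{q,s}$ respectively, so that the eigenfunction property is transported correctly through $e_n$ in each of the two equations.
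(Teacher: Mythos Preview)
Your proposal is correct and matches the paper's approach in substance: both rely on Theorem \ref{thm:eigenDfn} for the $x$-equation, on Theorem \ref{thm:eigenDfnsx} (via the Jackson integral recursion) for the other direction, and on the uniqueness Theorem \ref{thm:FF} to pin down the symmetry $\varphi_n(x;s|q,t)=\varphi_n(s;x|q,t)$; the leading coefficients and the $t\leftrightarrow q/t$ law are simply quoted as already established. The only cosmetic difference is ordering: the paper first compares $x^\lambda\varphi_n(x;s|q,t)$ and $x^\lambda\varphi_n(s;x|q,t)$ as two $x$-solutions with the same leading term to deduce the duality, and then reads off bispectrality from $f_n(x;s)=f_n(s;x)$, whereas you relabel variables in Theorem \ref{thm:eigenDfnsx} to get the $s$-equation directly and establish the duality afterwards. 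One small caution on wording: your justification that ``the series $\varphi_n$ are all built symmetrically in the two sets of variables'' is not the right reason---$\varphi_n(x;s|q,t)$ is \emph{not} a priori symmetric in $x$ and $s$; the relabeling of Theorem \ref{thm:eigenDfnsx} is legitimate simply because it is a formal identity in free variables, so any renaming is allowed.
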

\qed 

From the symmetry $\varphi_n(x;s|q,t)=\varphi_n(s;x|q,t)$, 
it turns out that 
the formal power series $\psi_n(x;s|q,t)$ of \eqref{eq:psi2exp} 
has symmetry $\psi_n(x;s|q,t)=\psi_n(s;x|q,t)$ as well. 
Namely, the formal power series
\begin{align}
\psi_n(x;s|q,t)
&=
\dprod{1\le i<j\le n}{}
\dfrac{(qx_j/tx_i;q)_\infty}{(qx_j/x_i;q)_\infty}\,
p_n(x;s|q,t)
\nonumber\\
&=
\dprod{1\le i<j\le n}{}
\dfrac{(qx_j/tx_i;q)_\infty}{(qx_j/x_i;q)_\infty}
\dsum{\theta\in M_n}{}
c_n(\theta;s|q,t)\dprod{1\le i<j\le n}{}(x_j/x_i)^{\theta_{ij}}
\end{align}
satisfies 
\begin{align}
\psi_{n}(x;s|q,t)=\psi_{n}(s;x|q,t)\quad\mbox{and}\quad
\psi_{n}(x;s|q,t)=\psi_{n}(x;s|q,q/t).  
\end{align}
We will prove later that this $\psi_n(x;s|q,t)$ 
represents a meromorphic function on 
$\mathbb{C}^{n-1}\times\mathbb{C}^{n-1}$ with coordinates 
$(x_2/x_1,\ldots,x_n/x_{n-1};s_2/s_1,\ldots,s_n/s_{n-1})$ 
at most with simple poles along
\begin{align}
q^{k+1}x_j/x_i=1,\quad
q^{k+1}s_j/s_i=1\quad(1\le i<j\le n;\ k=0,1,2,\ldots). 
\end{align}  

%%%%%%%%%%%%%%%%%%%%%%%%%%%%%%

\section{Recurrence by $q$-difference operators}

In the previous section, we described the inductive 
structure for $\varphi_n(x;s|q,t)$ $(n=1,2,\ldots)$
in terms of Jackson integrals.  
The inductive structure of  $\varphi_n(x;s|q,t)$ can be 
reformulated as 
recurrence by Ruijsenaars-Macdonald operators {\em of row type}.  
By using this fact, we give an alternative proof of duality 
and bispectrality of $f_n(x;s)=e_n(x;s)\varphi_n(x;s|q,t)$ 
which does not rely on the theory of Macdonald polynomials. 
We also present an explicit formula for $\varphi_n(x;s|q,t)$ 
which manifestly shows (a part of) symmetry between 
$x$ and $s$ variables. 
We first recall from \cite{NS2012} the definition and 
some basic results on the family of Ruijsenaars-Macdonald operators 
of row type. 

\subsection{Ruijsenaars-Macdonald operators of row type}

For each $l=0,1,2\ldots$, we introduce the following 
$q$-difference operators $H_l^{x}$ {\em of row type}: 
\begin{align}
H^x_l=\dsum{\nu\in\mathbb{N}^n;\, |\nu|=l}{}
\dprod{1\le i<j\le n}{}
\dfrac{q^{\nu_i}x_i-q^{\nu_j}x_j}{x_i-x_j}
\dprod{1\le i,j\le n}{}
\dfrac{(tx_i/x_j;q)_{\nu_i}}{(qx_i/x_j;q)_{\nu_i}}
\dprod{i=1}{n} T_{q,x}^{\nu_i}.  
\end{align}
It is known by \cite{NS2012}
that these operators satisfy the {\em Wronski relations} 
\begin{align}
\dsum{i+j=k}{}(-1)^i(1-t^iq^j)D_i^x H_j^x=0\quad(k=1,2,\ldots).  
\end{align}
From this fact, it turns out 
that each $H_l^x$ belong to the commutative 
ring $\mathbb{C}[D_1^x,\ldots,D_n^x]$ of Ruijsenaars-Macdonald operators. 
Hence we see that the operators $H_l^{x}$ 
commute with each other, and that they commute with 
the operators $D_r^x$ ($r=1,\ldots,n$):  For all $k,l=0,1,2,\ldots$, 
\begin{align}
H_k^{x}H_l^{x}=H_l^{x}H_k^{x},\quad
H_k^{x} D_l^{x} =D_l^{x} H_k^{x}.
\end{align} 
In terms of the two generating functions 
\begin{align}
D^x(u)=\dsum{r=0}{n}(-u)^r D^x_r, \quad\mbox{and}\quad
H^x(u)=\dsum{l=0}{\infty} u^l H_l^{x}
\end{align}
for Ruijsenaars-Macdonald operators of column type and of row type, 
the Wronski relations above can be written as
\begin{align}
D^x(u) H^x(u)=D^x(tu) H^x(qu).  
\end{align}
From this relation, it follows 
that if $f(x;s)$ satisfies the eigenfunction 
equation
\begin{align}\label{eq:eigenDfxs}
D^x(u)\,f(x;s)=f(x;s)\,\dprod{i=1}{n}(1-us_i),
\end{align}
then it satisfies 
\begin{align}
H^x(u)\,f(x;s)=f(x;s)\,\dprod{i=1}{n}\dfrac{(tus_i;q)_\infty}{(us_i;q)_\infty}.  
\end{align}
We remark that the operator $H^x(u)$ can be rewritten in the form 
\begin{align}\label{eq:Haltexp}
H^{x}(u)
&=
\dprod{1\le i<j\le n}{}
\dfrac{(qx_i/x_j;q)_\infty}{(qx_i/tx_j;q)_\infty}
\dsum{\nu\in\mathbb{N}}{}
u^{|\nu|}t^{\sum_i (i-1)\nu_i}
\dprod{i=1}{n}\dfrac{(t;q)_{\nu_i}}{(q;q)_{\nu_i}}
\nonumber\\
&\quad\cdot
\dprod{1\le i<j\le n}{}
\dfrac{(tx_i/x_j;q)_{\nu_i}}{(qx_i/x_j;q)_{\nu_i}}
\dfrac{(q^{-\nu_j+1}x_i/t x_j;q)_{\nu_i}}{(q^{-\nu_j}x_i/x_j;q)_{\nu_i}}\ 
T_{q,x}^{\nu}
\dprod{1\le i<j\le n}{}
\dfrac{(qx_i/tx_j;q)_\infty}{(qx_i/x_j;q)_\infty}.  
%\nonumber\\
%&=
%\dprod{1\le i<j\le n}{}
%\dfrac{(tx_i/x_j;q)_\infty}{(qx_i/tx_j;q)_\infty}
%\dsum{\nu\in\mathbb{N}}{}
%u^{|\nu|}t^{\sum_i (i-1)\nu_i}
%\dprod{i=1}{n}\dfrac{(t;q)_{\nu_i}}{(q;q)_{\nu_i}}
%\nonumber\\
%&\quad\cdot
%\dprod{1\le i<j\le n}{}
%\dfrac{(q^{\nu_i-\nu_j}tx_i/x_j;q)_{\nu_j}}
%{(q^{\nu_i-\nu_j+1}x_i/x_j;q)_{\nu_j}}
%\dfrac{(q^{-\nu_j+1}x_i/t x_j;q)_{\nu_i}}{(q^{-\nu_j}x_i/x_j;q)_{\nu_i}}\ 
%T_{q,x}^{\nu}
%\dprod{1\le i<j\le n}{}
%\dfrac{(qx_i/tx_j;q)_\infty}{(tx_i/x_j;q)_\infty}
\end{align}

\par\medskip
For each $q$-difference operator
\begin{align}
A^x=A(x;T_{q.x})
=\dsum{\nu\in\mathbb{N};\,|\nu|\le m}\,A_{\nu}(x)T_{q,x}^\nu
\in \mathbb{C}(x)[T_{q,x}^{\pm 1}],
\end{align}
we consider the $q$-difference operator 
\begin{align}
A^{x^{-1}}=A(x^{-1},T_{q,x}^{-1})=
\dsum{\nu\in\mathbb{N};\,\nu|\le m}{}\,A_{\nu}(x^{-1})T_{q,x}^{-\nu}
\end{align}
obtained from $A^{x}$ by inverting the 
variables $x_i$ ($i=1,\ldots,n$).  
As for Ruijsenaars-Macdonald $q$-difference operators of column type, one has
\begin{align}
D^{x^{-1}}_r=t^{(n-1)r}D_{n-r}^x (D_n^{x})^{-1}\quad(r=0,1,\ldots,n).  
\end{align}
Hence we see the commutative ring 
$\mathbb{C}[D^x_1,\ldots,D^x_{n-1},(D^x_n)^{\pm1}]$ of
Ruijsenaars-Macdonald operators is stable 
by the operation $A^x \to A^{x^{-1}}$. 
By using this fact, one can show that the eigenfunction equation 
\eqref{eq:eigenDfxs}
implies
\begin{align}
D^{x^{-1}}\!(u)\,f(x;s)=f(x;s)\,\dprod{i=1}{n}(1-t^{n-1}u/s_i),
\end{align}
and hence
\begin{align}
H^{x^{-1}}\!(u)\,f(x;s)=f(x;s)\,\dprod{i=1}{n}
\dfrac{(t^nu/s_i;q)_\infty}{(t^{n-1}u/s_i;q)_\infty}, 
\end{align}
namely,
\begin{align}\label{eq:Hinv}
H^{x^{-1}}\!(u/t^{n-1})\,f(x;s)=f(x;s)\,\dprod{i=1}{n}
\dfrac{(tu/s_i;q)_\infty}{(u/s_i;q)_\infty}.  
\end{align}

As we have seen in Section 1, it is convenient to transform 
the eigenfunction equation for $f(x;s)$ 
into the equation for $\psi(x;s)$ defined by 
\begin{align}
f(x;s)=x^{\lambda}\dprod{1\le i<j\le n}{}
\dfrac{(qx_j/x_i;q)_\infty}{(qx_j/tx_i;q)_\infty}\,\psi(x;s)
\end{align}
with parameter $\lambda$ such that $s=t^\delta q^\lambda$.  
Then the equation \eqref{eq:Hinv} for $f(x;s)$ is rewritten as 
\begin{align}
K^{(x;s)}(u)\,\psi(x;s)
=\psi(x;s)\,\dprod{i=1}{n}\dfrac{(tu/s_i;q)_\infty}{(u/s_i;q)_\infty}, 
\end{align}
where 
\begin{align}
K^{(x;s)}(u)=\dprod{1\le i<j\le n}{}
\dfrac{(qx_j/tx_i;q)_\infty}{(qx_j/x_i;q)_\infty}
x^{-\lambda}H^{x^{-1}}\!(u/t^{n-1})x^{\lambda}
\dprod{1\le i<j\le n}{}
\dfrac{(qx_j/x_i;q)_\infty}{(qx_j/tx_i;q)_\infty}.
\end{align}
Form \eqref{eq:Haltexp}, this operator $K^{(x;s)}(u)$ is 
determined as follows:
\begin{align}\label{eq:defKop}
K^{(x;s)}(u)
=\dsum{\nu\in\mathbb{N}^n}{}
\dprod{i=1}{n}(u/s_i)^{\nu_i}
\dprod{i=1}{n}
\dfrac{(t;q)_{\nu_i}}{(q;q)_{\nu_i}}
\dprod{1\le i<j\le n}{}
\dfrac{(tx_j/x_i;q)_{\nu_i}}{(qx_j/x_i;q)_{\nu_i}}
\dfrac{(q^{-\nu_j+1}x_j/t x_i;q)_{\nu_i}}{(q^{-\nu_j}x_j/x_i;q)_{\nu_i}}\ 
T_{q,x}^{-\nu}. 
\end{align}
We also use the notation $K^{(x;s|q,t)}$ for this operator when we 
need to specify the parameter $t$.

\subsection{Recurrence by $q$-difference operators}

Let us consider the sequence of formal power series
\begin{align}\label{eq:defpsiwithp}
\psi_n(s;x|q,t)=
\dprod{1\le i<j\le n}{}
\dfrac{(qs_j/ts_i;q)_\infty}{(qs_j/s_i;q)_\infty}\,
p_n(s;x|q,t)\quad (n=1,2,\ldots)
\end{align}
so that 
\begin{align}
\varphi_n(s;x)=\dprod{1\le i<j\le n}{}
\dfrac{(qx_j/x_i;q)_\infty}{(qx_j/tx_i;q)_\infty}\
\dfrac{(qs_j/s_i;q)_\infty}{(qs_j/ts_i;q)_\infty}\,\psi_n(s;x|q,t).  
\end{align}
In Section 3, by the inductive construction by Jackson integrals, 
we proved $x^\lambda \varphi_n(s;x|q,t)$ 
with $s=t^{\delta}q^\lambda$ solves the eigenfunction equation
\eqref{eq:eigenDfxs} in $x$ variables for $n=1,2,\ldots$.  
Note that $\psi_n(s;x|q,t)$ has leading coefficient 1 both in 
$x$ variables and $s$ variables, and hence we already know 
that $\psi_n(s;x|q,t)=\psi_n(s;x|q,q/t)$ for all $n=1,2,\ldots$.  

\par\medskip
An important observation is that the recurrence relation
for $\psi_n(s;x|q,t)$
\begin{align}\label{eq:indformpsi3}
\psi_{n+1}(s;x|q,t)
&=
\dprod{i=1}{n}
\dfrac{(qs_{n+1}/ts_i;q)_{\infty}}{(qs_{n+1}/s_i;q)_{\infty}}
\dsum{\nu\in \mathbb{N}^n}{}
\dprod{i=1}{n}
\dfrac{(t;q)_{\nu_i}}{(q;q)_{\nu_i}}
\dprod{1\le i<j\le n+1}{}
\dfrac{(tx_j/x_i;q)_{\nu_i}}{(qx_j/x_i;q)_{\nu_i}}
\nonumber\\
&
\quad\cdot\dprod{1\le i<j\le n}{}
\dfrac{(q^{-\nu_j+1}x_j/tx_i;q)_{\nu_i}}{(q^{-\nu_j}x_j/x_i;q)_{\nu_i}}
\dprod{i=1}{n}(qs_{n+1}/ts_i)^{\nu_i}
\psi_n(s;q^{-\nu}x|q,t)
\end{align}
can be described in terms of the $q$-difference operator $K^{(x;s|q,t)}(u)$
defined by \eqref{eq:defKop} above.  
Note that this recurrence formula follows directly from that of $p_n(x;s|q,t)$ 
in \eqref{eq:recpn} by the definition \eqref{eq:defpsiwithp}. 
Also, we remark that this formula with $t$ and $q/t$ exchanged
arose naturally in the previous section from the Jackson integral 
representation. 
In fact, the recurrence relation \eqref{eq:indformpsi3} can be 
expressed as 
\begin{align}\label{eq:qDpsi2}
\psi_{n+1}(s;x|q,t)&=
\dprod{i=1}{n}
\dfrac{(tx_{n+1}/x_i;q)_\infty}{(qx_{n+1}/x_i;q)_\infty}
\dfrac{(qs_{n+1}/ts_i;q)_\infty}{(qs_{n+1}/s_i;q)_\infty}
\nonumber\\
&\quad\cdot
K^{(x;s|q,t)}(qs_{n+1}/t)\,
\dprod{i=1}{n}
\dfrac{(qx_{n+1}/x_i;q)_\infty}{(tx_{n+1}/x_i;q)_\infty}
\psi_n(s;x|q,t).   
\end{align}

By using this expression, we can show the duality relation 
$\psi_n(x;s|q,t)=\psi_n(s;x|q,t)$ for $\psi_n(s;x|q,t)$ ($n=1,2,\ldots$) 
by the induction on $n$ starting from 
$\psi_1(s;x|q,t)=1$.  
We suppose that $\psi_n(x;s|q,t)=\psi_n(s;x|q,t)$ as the induction 
hypothesis.   Then we know that 
$e_n(x;s)\varphi_n(s;x)$ is a formal solution of the eigenfunction equation 
is $s$ variables, and hence
\begin{align}
K^{(s;x|q,t)}(u)\,\psi(s;x|q,t)=\psi(s;x|q,t)\,
\dprod{i=1}{n}
\dfrac{(tu/x_i;q)_\infty}{(u/x_i;q)_\infty},
\end{align}
as well as
\begin{align}\label{eq:Keigen2}
K^{(s;x|q,q/t)}(u)\,\psi(s;x|q,t)=\psi(s;x|q,t)\,
\dprod{i=1}{n}
\dfrac{(qu/tx_i;q)_\infty}{(u/x_i;q)_\infty}
\end{align}
by the symmetry $\psi(s;x|q,t)=\psi(s;x|q,q/t)$.  
By formula \eqref{eq:Keigen2} with $u=tx_{n+1}$, 
the recurrence formula \eqref{eq:qDpsi2}
is rewritten in the form
\begin{align}\label{eq:recKK}
\psi_{n+1}(s;x|q,t)&=
\dprod{i=1}{n}
\dfrac{(tx_{n+1}/x_i;q)_\infty}{(qx_{n+1}/x_i;q)_\infty}
\dfrac{(qs_{n+1}/ts_i;q)_\infty}{(qs_{n+1}/s_i;q)_\infty}
\nonumber\\
&\quad\cdot
K^{(x;s|q,t)}(qs_{n+1}/t)\,
K^{(s;x|q,q/t)}(tx_{n+1})\,
\psi_n(s;x|q,t).  
\end{align}
By the symmetry $\psi_n(s;x|q,t)=\psi(x;s|q,t)=\psi(x;s|q,q/t)$, 
the right-hand side is symmetric with respect to exchanging 
$x\leftrightarrow s$ and $t\leftrightarrow q/t$ simultaneously.  
Hence we have
\begin{align}
\psi_{n+1}(s;x|q,t)=\psi_{n+1}(x;s|q,q/t)=\psi_{n+1}(x;s|q,t),
\end{align}
as desired, by using the symmetry of $\psi_{n+1}(s;x|q,t)$ 
with respect to $t\leftrightarrow q/t$. 

\par\medskip
The argument above, together with the recurrence by Jackson integrals 
in the previous section, provides a proof of duality and 
bispectrality of the formal power series
\begin{align}
f_n(x;s|q,t)=e_n(x;s)\varphi_n(x;s|q,t)\in e_n(x;s)\mathbb{C}[[x^{-Q_+}]][[s^{-Q_+}]]
\end{align}
which does not depend on the theory of Macdonald polynomials.
\begin{thm}  \label{Kxs}
The sequence of formal power series $\psi_{n}(s;x|q,t)$ $(n=1,2,\ldots)$ satisfies
the following recurrence relation by the $q$-difference operators\,$:$ 
\begin{align}
\psi_{n+1}(s;x|q,t)&=
\dprod{i=1}{n}
\dfrac{(tx_{n+1}/x_i;q)_\infty}{(qx_{n+1}/x_i;q)_\infty}
\dfrac{(qs_{n+1}/ts_i;q)_\infty}{(qs_{n+1}/s_i;q)_\infty}
\nonumber\\
&\quad\cdot
K^{(x;s|q,t)}(qs_{n+1}/t)\,
K^{(s;x|q,q/t)}(tx_{n+1})\,
\psi_n(s;x|q,t), 
\end{align}
where $K^{(x;s|q,t)}(u)$ is the $q$-difference operator defined by 
\eqref{eq:defKop}.  
\end{thm}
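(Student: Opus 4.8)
The plan is to read the asserted operator recurrence directly off the explicit recurrence \eqref{eq:recpn} for $p_n(x;s|q,t)$, which through the normalization \eqref{eq:defpsiwithp} is the same as the recurrence \eqref{eq:indformpsi3} for $\psi_n(s;x|q,t)$. The key observation is that the $\nu$-summation in \eqref{eq:indformpsi3} is, term by term, nothing but the action of the row-type operator $K^{(x;s|q,t)}(u)$ of \eqref{eq:defKop} at the special value $u=qs_{n+1}/t$, once one conjugates by the gauge factor $\prod_{i=1}^{n}(qx_{n+1}/x_i;q)_\infty/(tx_{n+1}/x_i;q)_\infty$ in the $x_{n+1}$-direction. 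Carrying this out rewrites \eqref{eq:indformpsi3} in the form \eqref{eq:qDpsi2}; this is the first main step.

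The second step removes the residual scalar gauge factor $\prod_{i=1}^{n}(qx_{n+1}/x_i;q)_\infty/(tx_{n+1}/x_i;q)_\infty$ sitting to the immediate left of $\psi_n(s;x|q,t)$ in \eqref{eq:qDpsi2} by recognizing it as an eigenvalue. Under the induction hypothesis $\psi_n(x;s|q,t)=\psi_n(s;x|q,t)$ together with the parameter symmetry $\psi_n(s;x|q,t)=\psi_n(s;x|q,q/t)$, the function $e_n(x;s)\varphi_n(s;x|q,t)$ solves the eigenfunction equation in the $s$ variables, so $\psi_n(s;x|q,t)$ satisfies \eqref{eq:Keigen2}. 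Evaluating \eqref{eq:Keigen2} at $u=tx_{n+1}$ shows that the offending factor is precisely the eigenvalue of $K^{(s;x|q,q/t)}(tx_{n+1})$ acting on $\psi_n(s;x|q,t)$, and substituting this back into \eqref{eq:qDpsi2} yields \eqref{eq:recKK}, which is the assertion.

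The main obstacle is the coefficient bookkeeping in the first step: one must verify that conjugating $K^{(x;s|q,t)}(qs_{n+1}/t)$ by the $x_{n+1}$-gauge factor reproduces, coefficient by coefficient, the shifted $q$-shifted factorials $(q^{-\nu_j+1}x_j/tx_i;q)_{\nu_i}/(q^{-\nu_j}x_j/x_i;q)_{\nu_i}$ together with the factors $(tx_j/x_i;q)_{\nu_i}/(qx_j/x_i;q)_{\nu_i}$ appearing in \eqref{eq:indformpsi3}, paying attention to the correct truncation of the products at $j=n$ versus $j=n+1$. Once this matching is confirmed, the remaining manipulations are formal. I expect no difficulty in the second step beyond correctly invoking the induction hypothesis and the $t\leftrightarrow q/t$ symmetry, both of which are already established.
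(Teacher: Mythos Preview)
Your proposal is correct and follows essentially the same route as the paper: first rewrite the explicit recurrence \eqref{eq:indformpsi3} as \eqref{eq:qDpsi2} by recognizing the $\nu$-sum as $K^{(x;s|q,t)}(qs_{n+1}/t)$ conjugated by the $x_{n+1}$-gauge factor, then invoke the duality at level $n$ (as an induction hypothesis) together with the $t\leftrightarrow q/t$ symmetry to identify that gauge factor as the eigenvalue in \eqref{eq:Keigen2} at $u=tx_{n+1}$, yielding \eqref{eq:recKK}. The only point worth flagging is that the theorem as stated is the end product of the full induction on $n$ establishing both the duality and the two-operator recurrence simultaneously; your write-up should make clear that the base case $\psi_1=1$ and the inductive passage from \eqref{eq:recKK} back to duality at level $n+1$ (via the manifest $(x\leftrightarrow s,\,t\leftrightarrow q/t)$ symmetry of the right-hand side) close the loop.
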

\qed

\begin{thm}  
The formal power series $\varphi_n(x;s|q,t)$ and $\psi_n(x;s|q,t)$ 
satisfy the duality relation 
\begin{align}
\varphi_n(x;s|q,t)=\varphi_n(s;x|q,t),\quad
\psi_n(x;s|q,t)=\psi_n(s;x|q,t),
\end{align}
for $n=1,2,\ldots$.  Hence 
\begin{align}
f_n(x;s|q,t)&=e_n(x;s)\,\varphi_n(x;s;q|q,t)
\nonumber\\
&=
e_n(x;s)
\dprod{1\le i<j\le n}{}
\dfrac{(qx_j/x_i;q)_\infty}{(qx_j/tx_i;q)_\infty}
\dfrac{(qs_j/s_i;q)_\infty}{(qs_j/ts_i;q)_\infty}
\,
\psi_n(x;s|q,t)
\end{align}
is a formal solution of the bispectral problem \eqref{eq:bispf} for 
Ruijsenaars-Macdonald operators.   
Furthermore, $\psi_n(x;s|q,t)$ is invariant 
under the change of parameters $t\leftrightarrow q/t$.  
\end{thm}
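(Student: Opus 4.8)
The plan is to establish the duality $\psi_n(x;s|q,t)=\psi_n(s;x|q,t)$ first, by induction on $n$ using the recurrence of Theorem \ref{Kxs}, and then to deduce the remaining assertions as formal consequences. The base case $n=1$ is trivial since $\psi_1\equiv 1$. For the inductive step I would start from
\begin{align*}
\psi_{n+1}(s;x|q,t)&=
\prod_{i=1}^{n}
\frac{(tx_{n+1}/x_i;q)_\infty}{(qx_{n+1}/x_i;q)_\infty}
\frac{(qs_{n+1}/ts_i;q)_\infty}{(qs_{n+1}/s_i;q)_\infty}\,
K^{(x;s|q,t)}(qs_{n+1}/t)\,
K^{(s;x|q,q/t)}(tx_{n+1})\,
\psi_n(s;x|q,t)
\end{align*}
and argue that the right-hand side is invariant under the simultaneous substitution $(x,s,t)\mapsto(s,x,q/t)$.

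The key step is to verify this invariance factor by factor. First, the scalar prefactor is sent to itself, since the two products $\prod_i(tx_{n+1}/x_i;q)_\infty/(qx_{n+1}/x_i;q)_\infty$ and $\prod_i(qs_{n+1}/ts_i;q)_\infty/(qs_{n+1}/s_i;q)_\infty$ are interchanged. Next, under this substitution the operator $K^{(x;s|q,t)}(qs_{n+1}/t)$ is carried to $K^{(s;x|q,q/t)}(tx_{n+1})$ and vice versa (the shift $t\mapsto q/t$ precisely exchanges the arguments $qs_{n+1}/t$ and $tx_{n+1}$); since these two $q$-difference operators shift the disjoint sets of variables $x$ and $s$ respectively, they commute, so their product is preserved. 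Finally, $\psi_n(s;x|q,t)$ is sent to $\psi_n(x;s|q,q/t)$, which equals $\psi_n(x;s|q,t)=\psi_n(s;x|q,t)$ after combining the $t\leftrightarrow q/t$ symmetry (Theorem \ref{thm:FRpsi}, (2)) with the induction hypothesis. Hence the right-hand side is fixed, which yields $\psi_{n+1}(s;x|q,t)=\psi_{n+1}(x;s|q,q/t)$; invoking the $t\leftrightarrow q/t$ invariance of $\psi_{n+1}$ once more gives $\psi_{n+1}(s;x|q,t)=\psi_{n+1}(x;s|q,t)$, completing the induction.

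Once the duality for $\psi_n$ is in hand, $\varphi_n(x;s|q,t)=\varphi_n(s;x|q,t)$ follows at once, because $\varphi_n$ and $\psi_n$ differ only by the gauge factor $\prod_{i<j}\frac{(qx_j/x_i;q)_\infty}{(qx_j/tx_i;q)_\infty}\frac{(qs_j/s_i;q)_\infty}{(qs_j/ts_i;q)_\infty}$, which is symmetric in $x$ and $s$. For bispectrality I would use that $x^\lambda\varphi_n(x;s|q,t)$ solves the eigenfunction equation in $x$ (Theorems \ref{thm:eigenDfn} and \ref{thm:eigenDfnsx}); the duality then permits exchanging $x$ and $s$, so that $s^\kappa\varphi_n(x;s|q,t)$ solves the eigenfunction equation in $s$. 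Since $e_n(x;s)=x^\lambda t^{-\br{\delta,\lambda}}=s^\kappa t^{-\br{\delta,\kappa}}$ reconciles both normalizations, $f_n=e_n\varphi_n$ satisfies both equations simultaneously, i.e. solves the bispectral problem \eqref{eq:bispf}; this is exactly the content of Theorem \ref{bispect-thm}. The last assertion, invariance of $\psi_n$ under $t\leftrightarrow q/t$, is Theorem \ref{thm:FRpsi}, (2), applied to the solution with leading coefficient $1$.

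The step I expect to require the most care is the combined symmetry argument in the inductive step: one must track $(x,s,t)\mapsto(s,x,q/t)$ through the explicit form of $K^{(x;s|q,t)}(u)$ in \eqref{eq:defKop} and through $\psi_n$, and the conclusion genuinely relies on the $t\leftrightarrow q/t$ symmetry being used twice (once on $\psi_n$ to match the substituted argument, once on $\psi_{n+1}$ at the end). The commutativity of the two $K$-operators is immediate since they shift disjoint variables, but the bookkeeping that $qs_{n+1}/t$ and $tx_{n+1}$ are interchanged under $t\mapsto q/t$ together with $x\leftrightarrow s$ is the crux that makes the entire right-hand side self-dual.
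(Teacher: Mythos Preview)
Your proof is correct and matches the paper's argument in Section~5.2 essentially line for line. One point of logical hygiene: the two-operator recurrence you cite from Theorem~\ref{Kxs} is not an independent input but is itself derived \emph{inside} the inductive step. The paper starts from the single-operator form \eqref{eq:qDpsi2} (a direct rewriting of the recurrence \eqref{eq:recpn} for $p_n$), and then uses the induction hypothesis $\psi_n(x;s)=\psi_n(s;x)$ to obtain the eigenfunction identity \eqref{eq:Keigen2} for $K^{(s;x|q,q/t)}$ acting on $\psi_n$; specializing this at $u=tx_{n+1}$ converts the residual scalar factor $\prod_i(qx_{n+1}/x_i;q)_\infty/(tx_{n+1}/x_i;q)_\infty$ in \eqref{eq:qDpsi2} into the second operator $K^{(s;x|q,q/t)}(tx_{n+1})$, yielding \eqref{eq:recKK}. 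Thus Theorem~\ref{Kxs} and the duality are established by a \emph{simultaneous} induction, and your inductive step should be read accordingly rather than as taking the two-$K$ recurrence for granted.
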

\qed

%%%%%%%%%%%%%%%%%%%%%%%%%%%%%%
In explicit terms, 
the recurrence relation \eqref{eq:recKK} means
\begin{align}\label{eq:recKKexplicit}
&\psi_{n+1}(s;x|q,t)
\nonumber\\
&=
\dprod{i=1}{n}
\dfrac{(tx_{n+1}/x_i;q)_\infty}{(qx_{n+1}/x_i;q)_\infty}
\dfrac{(qs_{n+1}/ts_i;q)_\infty}{(qs_{n+1}/s_i;q)_\infty}
\nonumber\\
&\quad\cdot
\dsum{\mu,\nu\in \mathbb{N}^n}{}
\dprod{i=1}{n}
\dfrac{(t;q)_{\mu_i}}{(q;q)_{\mu_i}}
\dfrac{(q/t;q)_{\nu_i}}{(q;q)_{\nu_i}}
\nonumber\\
&\quad\cdot
\dprod{1\le i<j\le n}{}
\dfrac{(tx_j/x_i;q)_{\mu_i}}{(qx_j/x_i;q)_{\mu_i}}
\dfrac{(q^{-\mu_j}qx_j/tx_i;q)_{\mu_i}}{(q^{-\mu_j}x_j/x_i;q)_{\mu_i}}
\dprod{1\le i<j\le n}{}
\dfrac{(qs_j/ts_i;q)_{\nu_i}}{(qs_j/s_i;q)_{\nu_i}}
\dfrac{(q^{-\nu_j}ts_j/s_i;q)_{\nu_i}}{(q^{-\nu_j}s_j/s_i;q)_{\nu_i}}
\nonumber\\
&\quad\cdot
\dprod{i=1}{n}
(qs_{n+1}/ts_i)^{\mu_i}
(tx_{n+1}/x_i)^{\nu_i}\,
q^{\sum_{i=1}^{n}\mu_i\nu_i}\, 
\psi_n(q^{-\nu}s;q^{-\mu}x|q,t).  
\end{align}
This recurrence formula manifestly shows 
the symmetry of $\psi_n(s;x|q,t)$ with respect to exchanging 
$x\leftrightarrow s$ and $t\leftrightarrow q/t$ simultaneously.

\section{Convergence of formal solutions}

\subsection{Summary on formal solutions of the bispectral problem}
In previous sections, 
we investigated the joint bispectral problem
\begin{align}\label{eq:JBispP}
D^x(u)f(x;s)=f(x;s)\dprod{i=1}{n}(1-us_i),
\quad
D^s(u)f(x;s)=f(x;s)\dprod{i=1}{n}(1-ux_i), 
\end{align}
in variables $x=(x_1,\ldots,x_n)$ and $s=(s_1,\ldots,s_n)$, 
and constructed an explicit formal solution
\begin{align}
f_n(x;s|q,t)&=e_n(x;s)\,\varphi_n(x;s|q,t),
\nonumber\\
\varphi_n(x;s)&=\dsum{\mu,\nu\in Q+}{}x^{-\mu}s^{-\nu}\,\varphi_{\mu,\nu}
\in \mathbb{C}
[[x^{-Q_+}]][[s^{-Q_+}]]
\end{align}
of this bispectral problem. 
Here we assume that $e_n(x;s)$ is (possibly multi-valued) 
a meromorphic function on $\mathbb{T}^n_x\times \mathbb{T}_s^n$, 
satisfying the symmetry condition 
$e_n(x;s)=e_n(s;x)$ and the $q$-difference equations 
\begin{align}
T_{q,x_i}(e_n(x;s))=e_n(x;s)s_i/t^{n-i},\quad
T_{q,s_i}(e_n(x;s))=e_n(x;s)x_i/t^{n-i}\quad
(i=1,\ldots,n). 
\end{align}
We introduced another formal power series 
\begin{align}
\psi_n(x;s|q,t)=\dsum{\mu,\nu\in Q_+}{}
x^{-\mu}s^{-\nu}\,\psi_{\mu,\nu}
\in \mathbb{C}[[x^{-Q_+}]][[s^{-Q_+}]]
\end{align}
such that 
\begin{align}
\varphi_n(x;s|q,t)=
\dprod{1\le i<j\le n}{}
\dfrac{(qx_j/x_i;q)_\infty}{(qx_j/tx_i;q)_\infty}
\dfrac{(qs_j/s_i;q)_\infty}{(qs_j/ts_i;q)_\infty}\,
\psi_n(x;s|q,t).  
\end{align}
This formal power series $\psi_n(x;s|q,t)$ satisfies the 
initial conditions 
\begin{align}\label{eq:psiini}
\psi_n(x;s|q,t)\big|_{x_{i+1}/x_i=0\ (i=1,\ldots,n-1)}=1,
\quad
\psi_n(x;s|q,t)\big|_{s_{i+1}/s_i=0\ (i=1,\ldots,n-1)}=1, 
\end{align}
and remarkable symmetry relations
\begin{align}\label{eq:psisym}
\psi_n(x;s|q,t)=\psi_n(s;x|q,t),\quad
\psi_n(x;s|q,t)=\psi_n(x;s|q,q/t).
\end{align}
The basic object in our framework was the formal power series 
\begin{align}
p_n(x;s|q,t)=
\dsum{\theta\in M_n}{}c_n(\theta; s|q,t)\ 
\dprod{1\le i<j\le n}{}(x_j/x_i)^{\theta_{ij}}
\in \mathbb{C}(s^{-Q_+})[[x^{-Q_+}]],
\end{align}
where 
\begin{align}
c_n(\theta;s|q,t)
&=
\dprod{k=2}{n}
\dprod{1\le i<j\le k}{}
\dfrac{(q^{\sum_{a>k}(\theta_{i,a}-\theta_{j,a})}ts_j/s_i;q)_{\theta_{i,k}}}
{(q^{\sum_{a>k}(\theta_{i,a}-\theta_{j,a})}qs_j/s_i;q)_{\theta_{i,k}}}
\nonumber\\
&\quad\cdot
\dprod{k=2}{n}
\dprod{1\le i\le j<k}{}
\dfrac{(q^{-\theta_{j,k}+\sum_{a>k}(\theta_{i,a}-\theta_{j,a})}qs_j/ts_i;q)_{\theta_{i,k}}}
{(q^{-\theta_{j,k}+\sum_{a>k}(\theta_{i,a}-\theta_{j,a})}s_j/s_i;q)_{\theta_{i,k}}}
\quad(\theta\in M_n). 
\end{align}
By means of this $p_n(x;s|q,t)$, $\varphi_n(x;s|q,t)$ and $\psi_n(x;s|q,t)$ 
are expressed as 
\begin{align}
\varphi_n(x;s|q,t)
=\dprod{1\le i<j\le n}{} 
\dfrac{(qs_j/s_i;q)_\infty}{(qs_j/ts_i;q)_\infty}\,
p_n(x;s|q,t)
\end{align}
and
\begin{align}
\psi_n(x;s|q,t)
=\dprod{1\le i<j\le n}{} 
\dfrac{(qx_j/tx_i;q)_\infty}{(qx_j/x_i;q)_\infty}\,
p_n(x;s|q,t).  
\end{align}
By the symmetry \eqref{eq:psisym}, this formal power series 
$\psi_n(x;s|q,t)$ can be expressed in four ways:
\begin{align}
\psi_n(x;s|q,t)
&
=
\dprod{1\le i<j\le n}{} 
\dfrac{(qx_j/tx_i;q)_\infty}{(qx_j/x_i;q)_\infty}\,
p_n(x;s|q,t)
=
\dprod{1\le i<j\le n}{} 
\dfrac{(qs_j/ts_i;q)_\infty}{(qs_j/s_i;q)_\infty}\,
p_n(s;x|q,t) 
\nonumber\\
&
=\dprod{1\le i<j\le n}{} 
\dfrac{(tx_j/x_i;q)_\infty}{(qx_j/x_i;q)_\infty}\,
p_n(x;s|q,q/t)
=
\dprod{1\le i<j\le n}{} 
\dfrac{(ts_j/s_i;q)_\infty}{(qs_j/s_i;q)_\infty}\,
p_n(s;x|q,q/t). 
\end{align}

\subsection{Convergence of formal solutions}

In view of symmetry, we will mainly use below the power series 
$\psi_n(x;s|q,t)$ so that 
\begin{align}
f_n(x;s|q,t)
&=e_n(x;s)\,\varphi_n(x;s|q,t)
\nonumber\\
&=
e_n(x;s)
\dprod{1\le i<j\le n}{}
\dfrac{(qx_j/x_i;q)_\infty}{(qx_j/tx_i;q)_\infty}
\dfrac{(qs_j/s_i;q)_\infty}{(qs_j/ts_i;q)_\infty}\,
\psi_n(x;s|q,t). 
\end{align}
From the initial conditions \eqref{eq:psiini}, we already know
\begin{align}
\psi_n(x;s|q,t)\in
\mathbb{C}(s^{-Q_+})[[x^{-Q_+}]]\cap 
\mathbb{C}(x^{-Q_+})[[s^{-Q_+}]].  
\end{align}
As to the expansion
\begin{align}
\psi_n(x;s|q,t)=\dsum{\mu\in Q_+}{}x^{-\mu}\psi_\mu(s),
\quad \psi_{\mu}(s)\in \mathbb{C}(s^{-Q_+}), 
\end{align}
the %coefficients 
$\psi_{\mu}(s)$ are rational functions 
in 
$(s_2/s_1,\ldots,s_n/s_{n-1})$, 
regular at $(s_2/s_1,\ldots,s_n/s_{n-1})=0$, 
and have at most simple poles along 
\begin{align}
s_j/s_i= q^{-k-1}\quad (1\le i<j\le n;\ k=0,1,2,\ldots).  
\end{align}
On the other hand, $\psi_n(x;s|q,t)$ is expressed as
\begin{align}
\psi(x;s|q,t)
&=
\dprod{1\le i<j\le n}{} 
\dfrac{(qx_j/tx_i;q)_\infty}{(qx_j/x_i;q)_\infty}\,
p_n(x;s|q,t)
\nonumber\\
&=
\dprod{1\le i<j\le n}{} 
\dfrac{(qx_j/tx_i;q)_\infty}{(qx_j/x_i;q)_\infty}\,
\left(
\dsum{\mu\in Q_+}{} x^{-\mu} p_\mu(s)
\right),
\end{align}
where 
\begin{align}
p_\mu(s)=\dsum{\theta\in M_n(\mu)}{} c_{n}(\theta;s|q,t).  
\end{align}
Note that these $c_n(\theta;s|q,t)$ may have multiple poles 
along 
\begin{align}
s_j/s_i=q^{k}\quad(1\le i<j\le n;\ k\in\mathbb{Z}). 
\end{align}
\par\medskip

We denote by $\mathbb{C}^{n-1}_z$ the 
$(n-1)$-dimensional affine space with canonical 
coordinates $z=(z_1,\ldots,z_{n-1})$,   
and define a holomorphic mapping 
$\pi: \mathbb{T}^n_x\to\mathbb{C}^{n-1}_z$ 
by 
\begin{align}
\pi(a)=(a_2/a_1,\ldots,a_{n}/a_{n-1})\quad\mbox{for each}\quad
a=(a_1,\ldots,a_n)\in\mathbb{T}^n_x, 
\end{align}
so that $\pi^\ast(z_i)=x_{i+1}/x_i$ ($i=1,\ldots,n$), where 
$\pi^\ast:\ \mathcal{O}_{\mathbb{C}^{n-1}}\to 
\pi_\ast(\mathcal{O}_{\mathbb{T}^n})$
denotes the pull-back by $\pi$ 
in the sense of sheaves of holomorphic functions. 
Similarly, for the $s$ variables, we use the $(n-1)$-dimensional affine space 
$\mathbb{C}^{n-1}_w$ with 
canonical coordinates $w=(w_1,\ldots,w_{n-1})$ such that 
$\pi^\ast(w_i)=s_{i+1}/s_{i}$ ($i=1,\ldots,n-1$).  

In view of the singularity of $p_\mu(s)$, we define an open subset 
$D_w\subset\mathbb{C}^{n-1}_w$
by 
\begin{align}
D_w&=\pr{ w=(w_1,\ldots,w_{n-1})\in\mathbb{C}^{n-1}_w\ | \
w_i\cdots w_{j-1}\notin q^{-\mathbb{Z}}\cup\pr{0}\quad(1\le i<j\le n)}, 
\end{align}
so that 
\begin{align}
\pi^{-1}(D_w)&=\pr{ s=(s_1,\ldots,s_n)\in \mathbb{T}^n_{s}\ | \ 
s_j/s_i\notin q^{-\mathbb{Z}}\quad(1\le i<j\le n)}.  
\end{align}
For each $r>0$ we set 
\begin{align}
U_z(r)&=\pr{ z=(z_1,\ldots,z_{n-1})\in \mathbb{C}^{n-1}_z\ |\ 
|z_i|<r\ \ (i=1,\ldots,n-1)}, 
\nonumber\\
B_z(r)&=\pr{ z=(z_1,\ldots,z_{n-1})\in \mathbb{C}^{n-1}_z\ |\ 
|z_i|\le r\ \ (i=1,\ldots,n-1)}, 
\end{align}
so that 
\begin{align}
\pi^{-1}(B_z(r))&=\pr{ x=(x_1,\ldots,x_n)\in \mathbb{T}^n_{x}\ | \ 
|x_j/x_i|\le r^{j-i} \quad(1\le i<j\le n)}.  
\end{align}

\begin{prop}\label{prop:convpn}
For $n=2,3,\ldots$, 
we regard $p_n(x;s|q,t)$ as a formal power series in 
$z=(z_1,\ldots,z_{n-1})$ with coefficients in $\mathcal{O}(D_w)$$:$ 
\begin{align}\label{eq:fnseries}
p_n(x;s|q,t)=\dsum{\theta\in M_n}{}
c_n(\theta;s|q,t) \dprod{1\le i<j\le n}{}(x_j/x_i)^{\theta_{i,j}}
\in \mathcal{O}(D_w)[[z]]
\end{align}
We set 
$r_0=|q/t|^{\frac{n-2}{n-1}}$ if $|q/t|\le 1$, and 
$r_0=|t/q|$ if $|q/t|\ge1$.  
Then for any compact subset $K\subset D_w$ and for any 
$r<r_0$, 
this series \eqref{eq:fnseries} 
is absolutely convergent, uniformly on 
$B_z(r)\times K$.
Hence $p_n(x;s|q,t)$ defines a holomorphic function on 
$U_z(r_0)\times D_w$.
\end{prop}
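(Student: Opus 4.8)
The plan is to bound the coefficients $c_n(\theta;s|q,t)$ uniformly on $K$ and to sum the resulting majorant series. Set $w(\theta)=\sum_{1\le i<j\le n}(j-i)\theta_{ij}$, the total degree in $z=(z_1,\ldots,z_{n-1})$ of the monomial $\prod_{1\le i<j\le n}(x_j/x_i)^{\theta_{ij}}$; on $B_z(r)$ this monomial has modulus at most $r^{w(\theta)}$. Since each $c_n(\theta;s|q,t)$ lies in $\mathcal{O}(D_w)$, its supremum over a compact $K\subset D_w$ is finite, and it suffices to show that $\sum_{\theta\in M_n}\big(\sup_{\pi(s)\in K}|c_n(\theta;s|q,t)|\big)\,r^{w(\theta)}<\infty$ for every such $K$ and every $r<r_0$. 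Absolute and uniform convergence on $B_z(r)\times K$ follows, and because the partial sums are holomorphic and converge locally uniformly, $p_n(x;s|q,t)$ is holomorphic on $U_z(r_0)\times D_w$.

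The technical heart is a uniform estimate for each $q$-shifted factorial ratio occurring in \eqref{eq:defcn}. Every such factor has the shape $(q^{e}a;q)_N/(q^{e}b;q)_N$ with a common integer shift $e$ and common length $N$ in numerator and denominator, where $a/b\in\{t/q,q/t\}$ and $a,b$ range over the compact set of values $\{ts_j/s_i,\,qs_j/s_i,\,s_j/s_i:\pi(s)\in K,\ i<j\}$. I would prove that there is a constant $C=C(K)$, independent of $e$ and $N$, with $|(q^{e}a;q)_N/(q^{e}b;q)_N|\le C\,|a/b|^{\min(N,\,e_-)}$, where $e_-=\max(0,-e)$. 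The point is that when $e=-M<0$ the individually super-exponential factor $|q|^{-\binom{M+1}{2}}$ produced by the large terms $1-q^{-M+l}a$ cancels against the identical one in the denominator: each ratio $\tfrac{1-q^{-M+l}a}{1-q^{-M+l}b}$ equals $|a/b|$ times a correction whose product over $l$ stays bounded by the convergent infinite product $|(q/a;q)_\infty/(q/b;q)_\infty|$. Uniform positivity of the denominators holds because $D_w$ excludes exactly $s_j/s_i\in q^{-\mathbb{Z}}=q^{\mathbb{Z}}$, so no denominator factor $1-q^{e+l+1}s_j/s_i$ or $1-q^{e+l}s_j/s_i$ can vanish.

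Applying this factorwise splits the analysis by the sign of $\log|q/t|$. Consider first $|q/t|\le1$. In the first product of \eqref{eq:defcn} every factor has ratio $t/q$, so its modulus is at most $C\,|t/q|^{\min(\theta_{ik},\,e_-)}\le C\,|t/q|^{\theta_{ik}}$ (using $|t/q|\ge1$); since for fixed $(i,k)$ there are $k-i$ such factors indexed by $j$, multiplying out gives $\prod_{i<k}|t/q|^{(k-i)\theta_{ik}}=|t/q|^{\,w(\theta)}$. In the second product the off-diagonal factors ($i<j$) have ratio $q/t$ and are bounded by $1$, while the diagonal factors ($i=j$) equal $\tfrac{(t;q)_{\theta_{ik}}}{(q;q)_{\theta_{ik}}}(q/t)^{\theta_{ik}}$ and are bounded by $C\,|q/t|^{\theta_{ik}}$; as there is exactly one diagonal factor per entry, they contribute $|q/t|^{\,|\theta|}$ with $|\theta|=\sum_{i<j}\theta_{ij}$. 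Altogether $|c_n(\theta;s|q,t)|\le C\,|t/q|^{\,w(\theta)-|\theta|}$ uniformly on $K$. The opposite case $|q/t|\ge1$ is symmetric: the roles of the two products interchange and one obtains the cruder bound $|c_n(\theta;s|q,t)|\le C\,|q/t|^{\,w(\theta)}$.

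Finally I would sum the majorant as a product of geometric series over the entries $\theta_{ij}$. For $|q/t|\le1$,
\[
\sum_{\theta\in M_n}|t/q|^{\,w(\theta)-|\theta|}\,r^{\,w(\theta)}
=\prod_{1\le i<j\le n}\frac{1}{1-|t/q|^{\,j-i-1}r^{\,j-i}},
\]
which converges precisely when $r^{\,j-i}<|q/t|^{\,j-i-1}$ for all $i<j$, i.e. $r<|q/t|^{(j-i-1)/(j-i)}$; the binding constraint is the longest hook $j-i=n-1$, giving $r_0=|q/t|^{(n-2)/(n-1)}$ (for $n=2$ this reads $r_0=1$, consistent with the exact geometric series). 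For $|q/t|\ge1$ the analogous product $\prod_{i<j}(1-(|q/t|r)^{\,j-i})^{-1}$ converges iff $|q/t|r<1$, i.e. $r<|t/q|=r_0$. The hard part will be the uniform ratio estimate of the second paragraph: one must control $(q^{e}a;q)_N/(q^{e}b;q)_N$ simultaneously for all integer shifts $e$ of either sign and all lengths $N$, with constants independent of $\theta$, and it is exactly the cancellation of the super-exponential growth, together with the $-|\theta|$ gain from the diagonal factors, that upgrades the naive radius $|q/t|$ to the sharper $r_0=|q/t|^{(n-2)/(n-1)}$.
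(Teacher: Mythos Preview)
Your proposal is correct and follows essentially the same route as the paper. The uniform ratio estimate you describe is precisely the paper's Lemma~\ref{lem:estimate} (stated there as $\big|\prod_{i\in I}\frac{1-q^iau}{1-q^iu}\big|\le C_L\max\{|a|,1\}^{|I|}$ for $u$ in a compact subset of $\mathbb{C}^\ast\setminus q^{\mathbb{Z}}$); your ``cancellation of the super-exponential growth'' is exactly the mechanism behind that lemma, and your subsequent counting of exponents---extracting the diagonal factor $(q/t)^{\theta_{ik}}\frac{(t;q)_{\theta_{ik}}}{(q;q)_{\theta_{ik}}}$ and bounding the remaining factors by $\max\{|t/q|,1\}$ or $\max\{|q/t|,1\}$ per unit length---reproduces the paper's bound $|c_n(\theta;s|q,t)|\le C_K\prod_{i<j}|t/q|^{(j-i-1)\theta_{ij}}$ (equivalently your $|t/q|^{w(\theta)-|\theta|}$) in the case $|q/t|\le 1$, and the cruder bound in the other case. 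The only cosmetic difference is that the paper keeps the factor $\frac{(-|t|;|q|)_{\theta_{ij}}}{(|q|;|q|)_{\theta_{ij}}}$ and sums via the $q$-binomial theorem, whereas you absorb this bounded factor into the constant and sum a pure geometric series; both yield the same $r_0$.
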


Note that $c_n(\theta;s|q,t)$ is a product of factors of the form
\begin{align}
\dfrac{(q^l au;q)_{k}}{(q^l u;q)_k}\quad (k\in\mathbb{N},\ l\in\mathbb{Z}), 
\end{align}
with $(u,a)=(qs_j/s_i,t/q)$ or $(u,a)=(s_j/s_i,q/t)$.   

\begin{lem}\label{lem:estimate}
Let $a\in\mathbb{C}^\ast$ be a nonzero constant.  
For any compact subset 
$L\subset\mathbb{C}^\ast\backslash q^{\mathbb{Z}}$, 
there exists a positive constant $C_L>0$ such that,
for any finite subset $I\subset\mathbb{Z}$, 
\begin{align}
\left|
\dprod{i\in I}{}
\dfrac{1-q^i au}{1-q^i u}
\right|
\le C_L \max\pr{|a|,1}^{|I|}
\quad (u\in L).  
\end{align}
In particular, 
\begin{align}
\left|
\dfrac{(q^l au;q)_k}{(q^l u;q)_k}
\right|
\le C_L \max\pr{|a|,1}^k\quad(u\in L)
\end{align}
for any $k\in\mathbb{N}$ and $l\in\mathbb{Z}$. 
\end{lem}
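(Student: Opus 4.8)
The plan is to reduce the bound on the product to a per-factor estimate with a summable correction, exploiting the two limiting behaviours of the ratio $R_i(u)=(1-q^iau)/(1-q^iu)$. Since $|q|<1$, as $i\to+\infty$ we have $q^iu\to 0$ and $R_i(u)\to 1$, while as $i\to-\infty$ we have $|q^iu|\to\infty$ and $R_i(u)\to a$. These are made quantitative by the two elementary identities
\begin{align*}
R_i(u)=1+\dfrac{(1-a)q^iu}{1-q^iu}=a+\dfrac{1-a}{1-q^iu}.
\end{align*}
The goal is to show that for every $i\in\mathbb{Z}$ one has $|R_i(u)|\le\max\pr{|a|,1}(1+c_i)$ for all $u\in L$, with nonnegative constants $c_i$ satisfying $\sum_{i\in\mathbb{Z}}c_i<\infty$; the lemma then follows at once by taking the product over $i\in I$.

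First I would record the uniform bounds coming from compactness of $L$. Writing $m=\min_{u\in L}|u|>0$ and $M=\max_{u\in L}|u|$, the quantity $\delta=\inf\pr{\,|1-q^iu|\ :\ i\in\mathbb{Z},\ u\in L\,}$ is strictly positive: outside a finite range of $i$ one has $|1-q^iu|\ge\hf$ (because $q^iu\to 0$ as $i\to+\infty$ and $|q^iu|\to\infty$ as $i\to-\infty$, both uniformly for $u\in L$), while on the remaining finite range $|1-q^iu|$ is a continuous strictly positive function on a compact set, positivity being guaranteed by the hypothesis that $L$ avoids the pole locus $q^{\mathbb{Z}}$. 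Using the first identity for $i\ge 0$ gives $|R_i(u)|\le 1+(|1-a|M/\delta)\,|q|^i$; using the second identity for $i$ sufficiently negative, say $i\le i_\ast$ chosen uniformly so that $|q^iu|\ge 2$, gives $|1-q^iu|\ge\hf|q|^i m$ and hence $|R_i(u)|\le |a|+(2|1-a|/m)\,|q|^{-i}$. In both regimes the correction is geometric in $|i|$ and therefore summable.

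For the finitely many remaining indices $i_\ast<i<0$ I would simply bound $|R_i(u)|$ by its maximum over the compact set $L$, absorbing the result into a finite constant $c_i$. Collecting the three cases and factoring out $\max\pr{|a|,1}$, which dominates both $1$ and $|a|$ so that $1+c\le\max\pr{|a|,1}(1+c)$ and $|a|+c\le\max\pr{|a|,1}(1+c)$, yields the desired per-factor bound. Setting $C_L=\dprod{i\in\mathbb{Z}}{}(1+c_i)$, which converges since $\sum_i c_i<\infty$, one obtains
\begin{align*}
\left|\dprod{i\in I}{}R_i(u)\right|\le \max\pr{|a|,1}^{|I|}\dprod{i\in I}{}(1+c_i)\le C_L\,\max\pr{|a|,1}^{|I|}
\end{align*}
for every finite $I\subset\mathbb{Z}$. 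The ``in particular'' statement is then the special case $I=\pr{l,l+1,\ldots,l+k-1}$, for which $(q^lau;q)_k/(q^lu;q)_k=\prod_{i\in I}R_i(u)$ and $|I|=k$.

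The main obstacle is the uniformity of all the estimates in $u\in L$, and in particular the positivity of $\delta$ together with the geometric growth of $|1-q^iu|$ as $i\to-\infty$. These are precisely the points where the compactness of $L$ and the exclusion of $q^{\mathbb{Z}}$ are indispensable; once they are in hand, everything else reduces to a routine geometric-series computation.
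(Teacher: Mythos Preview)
Your proof is correct and follows essentially the same three-regime strategy as the paper: a positive tail where $R_i(u)\to 1$, a negative tail where $R_i(u)\to a$, and a finite middle range handled by compactness of $L$ and the exclusion of $q^{\mathbb{Z}}$. The only cosmetic difference is packaging: the paper bounds the two tail products directly by the convergent infinite products $(-|q^Nau|;|q|)_\infty/(|q^Nu|;|q|)_\infty$ and $|a|^{|I_-|}(-|q^{M+1}/au|;|q|)_\infty/(|q^{M+1}/u|;|q|)_\infty$, whereas you reach the same conclusion via a per-factor bound $|R_i(u)|\le\max\{|a|,1\}(1+c_i)$ with summable $c_i$.
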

\proof 
We take nonnegative integers $M,N\in\mathbb{N}$ such that
$|q|^{M+1}<|u|<|q|^{-N}$ for all $u\in L$.  
Given a finite subset $I\subset \mathbb{Z}$, we divide $I$ 
into three parts 
$I=I_{-}\sqcup I_0\sqcup I_+$ by setting 
\begin{align}
I_-=I\cap (-\infty,-M-1],\quad
I_0=I\cap [-M, N-1],\quad
I_+=I\cap [N,+\infty).
\end{align}
Suppose $u\in L$.  
Since $|q^iu|<1$ for all $i\ge N$, we have 
\begin{align}
\left|
\dprod{i\in I_{+}}{}
\dfrac{1-q^i au}{1-q^i u}
\right|
\le 
\dprod{i\in I_{+}}{}
\dfrac{1+|q^i au|}{1-|q^i u|}
\le
\dfrac{(-|q^Nau|;|q|)_\infty}{(|q^Nu|;|q|)_\infty}.  
\end{align}
Since $|q^{-i}/u|<1$ for $i\le -M-1$, 
\begin{align}
\left|
\dprod{i\in I_{-}}{}
\dfrac{1-q^i au}{1-q^i u}
\right|
=
|a|^{|I_{-}|}
\dprod{i\in I_{-}}{}
\dfrac{1+|q^{-i}/au|}{1-|q^{-i}/u|}
\le 
|a|^{|I_{-}|}
\dfrac{(-|q^{M+1}/au|;|q|)_\infty}{(|q^{M+1}/u|;|q|)_\infty}. 
\end{align}
Hence by using three continuous functions 
\begin{align}
c_+(u)=\dfrac{(-|q^Nau|;|q|)_\infty}{(|q^Nu|;|q|)_\infty}, \quad
c_-(u)=\dfrac{(-|q^{M+1}/au|;|q|)_\infty}{(|q^{M+1}/u|;|q|)_\infty}, 
\end{align}
and
\begin{align}
c_0(u)=
\max_{J\subseteq[-M,N-1]}\,
\left|
\dprod{i\in J}{}
\dfrac{1-q^i au}{1-q^i u}
\right|,
\end{align}
we estimate 
\begin{align}
\left|
\dprod{i\in I}{}
\dfrac{1-q^i au}{1-q^i u}
\right|
\le |a|^{|I_{-}|} c_{-}(u) c_0(u) c_+(u). 
\end{align}
Since $|a|^{|I_{-}|}\le |a|^{|I|}$ if $|a|\ge 1$,  
we obtain the estimate of Lemma by taking for $C_L$ 
the maximum of the continuous function 
$c_{-}(u)c_0(u)c_+(u)$ on $L$. 
\qed

\proof[Proof of Proposition \ref{prop:convpn}]\ 
Let $K$ be any compact subset of $D_w$.  
By the definition \eqref{eq:defcn}, 
\begin{align}
c_n(\theta;s|q,t)
&=
\dprod{1\le i<k\le n}{}
(q/t)^{\theta_{i,k}}
\dfrac{(t;q)_{\theta_{i,k}}}{(q;q)_{\theta_{i,k}}}
\nonumber\\
&\quad\cdot
\dprod{1\le i<j\le k\le n}{}
\dfrac{(q^{\sum_{a>k}(\theta_{i,a}-\theta_{j,a})}ts_j/s_i;q)_{\theta_{i,k}}}
{(q^{\sum_{a>k}(\theta_{i,a}-\theta_{j,a})}qs_j/s_i;q)_{\theta_{i,k}}}
\nonumber\\
&\quad\cdot
\dprod{1\le i<j<k\le n}{}
\dfrac{(q^{-\theta_{j,k}+\sum_{a>k}(\theta_{i,a}-\theta_{j,a})}qs_j/ts_i;q)_{\theta_{i,k}}}
{(q^{-\theta_{j,k}+\sum_{a>k}(\theta_{i,a}-\theta_{j,a})}s_j/s_i;q)_{\theta_{i,k}}}
\quad(\theta\in M_n). 
\end{align}
Suppose that $|q/t|\le 1$.  
Then by Lemma \ref{lem:estimate}
we see there exists a positive constant $C_K$ such that 
\begin{align}
|c_n(\theta;s|q,t)|\le C_K
\dprod{1\le i<j\le n}{}
\dfrac{(-|t|;|q|)_{\theta_{i,j}}}{(|q|;|q|)_{\theta_{i.j}}}
\dprod{1\le i<j\le n}{}
|t/q|^{(j-i-1)\theta_{i,j}}
\quad(s\in \pi^{-1}(K)). 
\end{align}
Hence
\begin{align}
\dsum{\theta\in M_n}{}|c_n(\theta;q,t)| \dprod{1\le i<j\le n}{}
|x_j/x_i|^{\theta_{ij}}
&\le 
C_K
\dsum{\theta\in M_n}{}
\dprod{1\le i<j\le n}{}\dfrac{(-|t|;|q|)_{\theta_{i,j}}}{(|q|;|q|)_{\theta_{i.j}}}
|(t/q)^{j-i-1}x_j/x_i|^{\theta_{ij}}
\nonumber\\
&
=
C_K
\dprod{1\le i<j\le n}{}
\dfrac{(-|(t/q)^{j-i-1}tx_j/x_i|;|q|)_\infty}
{(|(t/q)^{j-i-1}x_j/x_i|;|q|)_\infty}
\end{align}
if $|x_j/x_i|<|q/t|^{j-i-1}$ for all $1\le i<j\le n$.  
Hence for any $r>0$ such that $r^{k}<|q/t|^{k-1}$ 
for $k=1,\ldots,n-1$, 
the function series $p_n(x;s|q,t)$ is absolutely convergent, 
uniformly on $B_z(r)\times K$.  
The condition above for $r$ is equivalent to $r<r_0$, 
$r_0=|q/t|^{\frac{n-2}{n-1}}$.  
The case $|q/t|\ge 1$ can be treated in a similar way. 
\qed

\par\medskip
%We remark that the power series expansion 
%\begin{align}
%p_n(x;s|q,t)=\dsum{\mu\in Q_+}{}x^{-\mu} p_\mu(s),
%\quad
%p_\mu(s)=\dsum{\theta\in M_n(\mu)}{} c_n(\theta;s|q,t), 
%\end{align}
%is also absolutely convergent, uniformly on $B_z(r)\times K$.  
By Proposition \ref{prop:convpn}, the function
\begin{align}
\psi(x;s|q,t)
&=
\dprod{1\le i<j\le n}{} 
\dfrac{(qx_j/tx_i;q)_\infty}{(qx_j/x_i;q)_\infty}\,
p_n(x;s|q,t) 
\end{align}
is holomorphic on $U_z(r_0)\times D_w$. 
Hence, the Taylor expansion of $\psi_n(x;s|q,t)$ 
\begin{align}
\psi_n(x;s|q,t)=\dsum{\mu\in Q_+}{} x^{-\mu}\,\psi_{\mu}(s)
\in \mathcal{O}(D_w)[[z]]
\end{align}
in $z$ variables is normally convergent in $U_z(r_0)\times D_w$, 
namely, 
absolutely convergent, uniformly 
on any compact subset of $U_z(r_0)\times D_w$.  

We already know that the expansion coefficients $\psi_{\mu}(s)$ 
$(\mu\in Q_+)$ are rational functions in $w=(s_2/s_1,\ldots,s_n/s_{n-1})$, 
regular at $w=0$, and have at most simple poles along 
$s_j/s_i=w_i\cdots w_{j-1}=q^{-k-1}$ ($1\le i<j\le n;\ k=0,1,2,\ldots$).  
In order to eliminate these poles of the coefficients, 
we introduce the function 
\begin{align}\label{eq:defFn}
F_n(x;s|q,t)=\dprod{1\le i<j\le n}{}
(qx_j/x_i;q)_\infty(qs_j/s_i;q)_\infty\, \psi_n(x;s|q,t).  
\end{align}
As the formal power series in $z$ variables, 
the expansion coefficients are entire holomorphic functions 
in $s$ variables: 
\begin{align}\label{eq:TaylorF}
F_n(x;s|q,t)=\dsum{\mu\in Q_+}{}x^{-\mu} F_\mu(s)
\in \mathcal{O}(\mathbb{C}^{n-1}_w)[[z]].
\end{align}
On the other hand, this function 
$F_n(x;s|q,t)$ is holomorphic on $U_z(r_0)\times D_w$, 
and hence,
for any compact subset $K\subset D_w$, 
away from the divisors $s_j/s_i\in q^{\mathbb{Z}}$
($1\le i<j\le n$), 
and for any $r<r_0$,
this Taylor expansion 
is absolutely convergent, uniformly on $B_z(r)\times K$.  
Since the maximum of $|x^{-\mu}|$ on $B_z(r)$ 
is given by $r^{\br{\delta,\mu}}$, it means that 
\begin{align}
\dsum{\mu\in Q_+}{}|x^{-\mu}|\, |F_\mu(s)|
\le \dsum{\mu\in Q_+}{} r^{\br{\delta,\mu}}\norm{F_{\mu}}_K<+\infty
\end{align}
on $B_z(r)\times K$, 
where $\norm{F_{\mu}}_K$ stands for the supremum norm 
on the compact set $K$ with $F_\mu(s)$ regarded as a function in $w$
variables. 
\begin{lem}
For any compact subset $K\subset \mathbb{C}^{n-1}_w$, 
and for any $r<r_0$, 
the Taylor expansion \eqref{eq:TaylorF}
of $F_n(x;s|q,t)$ in $z$ variables 
is absolutely convergent, uniformly on $B_z(r)\times K$.  
Hence it defines a holomorphic function on 
$U_z(r_0)\times \mathbb{C}^{n-1}_w$.  
\end{lem}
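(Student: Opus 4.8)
The plan is to reduce the case of a general compact set $K\subset\mathbb{C}^{n-1}_w$ to the case of a compact set contained in $D_w$, for which the estimate $\dsum{\mu\in Q_+}{}r^{\br{\delta,\mu}}\norm{F_\mu}_K<+\infty$ has already been established in the paragraph preceding the statement. The essential point is that each coefficient $F_\mu(s)$, regarded as a function of $w=(s_2/s_1,\ldots,s_n/s_{n-1})$, is \emph{entire} on $\mathbb{C}^{n-1}_w$, so its supremum over $K$ can be controlled by its supremum over a suitable distinguished boundary lying inside $D_w$ via the maximum modulus principle for polydiscs.

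First I would fix $r<r_0$ and a compact $K\subset\mathbb{C}^{n-1}_w$, and choose $M>0$ with $K\subset\pr{|w_l|\le M\ (l=1,\ldots,n-1)}$. Next I would select a single radius $R>M$ such that $R^m\notin|q|^{\mathbb{Z}}$ for every $m=1,\ldots,n-1$; since the forbidden values of $R$ form a countable set, such an $R$ exists. With the open polydisc $\Delta=\pr{|w_l|<R}$ and its distinguished boundary (the torus) $T=\pr{|w_l|=R\ (l=1,\ldots,n-1)}$, one checks that $T\subset D_w$: on $T$ one has $|s_j/s_i|=|w_i\cdots w_{j-1}|=R^{j-i}\notin|q|^{\mathbb{Z}}$, whence $s_j/s_i\notin q^{\mathbb{Z}}\cup\pr{0}=q^{-\mathbb{Z}}\cup\pr{0}$ for all $1\le i<j\le n$. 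Thus $T$ is a compact subset of $D_w$ enclosing $K$, in the sense that $K\subset\overline{\Delta}$.

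The key step is then the maximum modulus principle for polydiscs: since each $F_\mu$ is entire, it is in particular holomorphic on $\overline{\Delta}$, and iterating the one-variable maximum principle in the coordinates $w_1,\ldots,w_{n-1}$ gives $\norm{F_\mu}_{\overline{\Delta}}=\norm{F_\mu}_T$. Because $K\subset\overline{\Delta}$, this yields $\norm{F_\mu}_K\le\norm{F_\mu}_T$ for every $\mu\in Q_+$. Summing against $r^{\br{\delta,\mu}}$ and invoking the already-known estimate with the compact set $T\subset D_w$ in place of $K$, I obtain
\begin{align}
\dsum{\mu\in Q_+}{}r^{\br{\delta,\mu}}\norm{F_\mu}_K
\le
\dsum{\mu\in Q_+}{}r^{\br{\delta,\mu}}\norm{F_\mu}_T
<+\infty,
\end{align}
which, since $|x^{-\mu}|\le r^{\br{\delta,\mu}}$ on $B_z(r)$, is exactly the absolute and uniform convergence of \eqref{eq:TaylorF} on $B_z(r)\times K$.

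Finally, each term $x^{-\mu}F_\mu(s)=z^{k}F_\mu(w)$ is holomorphic on $\mathbb{C}^{n-1}_z\times\mathbb{C}^{n-1}_w$, and the partial sums converge locally uniformly on $U_z(r_0)\times\mathbb{C}^{n-1}_w$ (every point there lies in some $B_z(r)\times K$ with $r<r_0$), so the limit is holomorphic on $U_z(r_0)\times\mathbb{C}^{n-1}_w$ by Weierstrass's theorem. I expect the only genuine obstacle to be the choice of radius $R$ that makes the distinguished boundary $T$ avoid all the divisors $s_j/s_i\in q^{\mathbb{Z}}$ while still enclosing $K$; once $T\subset D_w$ is arranged, the maximum principle and the previously established bound make the rest immediate.
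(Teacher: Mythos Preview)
Your proposal is correct and follows essentially the same route as the paper: both arguments pick a radius so that the distinguished boundary (Silov boundary) of a polydisc lies inside $D_w$, invoke the maximum principle for the entire coefficients $F_\mu$ to bound $\norm{F_\mu}_K$ by $\norm{F_\mu}$ on that torus, and then apply the already-known convergence over compact subsets of $D_w$. The only cosmetic difference is that the paper parametrizes the radius as $|q|^{-\rho}$ with $\rho>0$ irrational (which forces $|s_j/s_i|=|q|^{-(j-i)\rho}\notin|q|^{\mathbb{Z}}$), whereas you impose the condition $R^m\notin|q|^{\mathbb{Z}}$ directly.
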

\proof

Let $\rho>0$ be any irrational number, and set 
\begin{align}
K_\rho=\pr{ w=(w_1,\ldots,w_{n-1})\in\mathbb{C}^{n-1}_w\ 
|\ |w_i|=|q|^{-\rho}\  (i=1,\ldots,n-1)}
\end{align}
Then we have $K_\rho\subset D_w$, 
since $|s_j/s_i|=|z_i\cdots z_{j-1}|=|q|^{-(j-i)\rho}\notin |q|^{\mathbb{Z}}$ 
for any $i,j$ with $1\le i<j\le n$.  
Consider the corresponding closed polydisc in $\mathbb{C}^{n-1}_w$: 
\begin{align}
B_w(|q|^{-\rho})=\pr{ w=(w_1,\ldots,w_{n-1})\in\mathbb{C}^{n-1}_w\ 
|\ |w_i|\le |q|^{-\rho}\  (i=1,\ldots,n-1)}. 
\end{align}
Then for each holomorphic function 
$F_{\mu}(s)\in\mathcal{O}(C^{n-1}_w)$ ($\mu\in Q_+$), 
the maximum of its absolute values on $B_w(|q|^{-\rho})$ 
is attained on the Silov boundary $K_\rho$, namely, 
$\norm{F_\mu}_{B_w(|q|^{-\rho})}=\norm{F_\mu}_{K_\rho}$. 
This implies that, for any $r<r_0$, 
\begin{align}
\dsum{\mu\in Q_+}{} |x|^{-\mu} |F_\mu(s)|
\le \dsum{\mu\in Q_+}{} r^{\br{\delta,\mu}}\norm{F_\mu}_{K_\rho}<+\infty, 
\end{align}
uniformly on $B_z(r)\times B_w(|q|^{-\rho})$.  
Since $\rho>0$ can be taken arbitrarily large, 
the Taylor expansion \eqref{eq:TaylorF} of $F_n(x;s|q,t)$ 
in $z$ variables is absolutely convergent, on any compact 
subset in $U_z(r_0)\times \mathbb{C}^{n-1}.$\qed

\begin{prop}
As to the function $F_n(x;s|q,t)$ defined by \eqref{eq:defFn}, 
its Taylor series 
\begin{align}
F_{n}(x;s|q,t)=\dsum{\mu,\nu\in Q_+}{}x^{-\mu}s^{-\nu}\,F_{\mu,\nu}
\in\mathbb{C}[[x^{-Q_+}]][[s^{-Q_+}]]
\end{align}
is normally convergent in 
%absolutely convergent, uniformly on any compact subset of 
$\mathbb{C}^{n-1}_z\times\mathbb{C}^{n-1}_w$.  
Hence, $F_n(x;s|q,t)$
is continued to an entire holomorphic function on 
$\mathbb{C}^{n-1}_z\times \mathbb{C}^{n-1}_w$.  
\end{prop}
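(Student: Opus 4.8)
The plan is to upgrade the one-sided holomorphy already obtained in the preceding Lemma---normal convergence of the $z$-expansion on $U_z(r_0)\times\mathbb{C}^{n-1}_w$---to joint normal convergence of the full double Taylor series on bidiscs of arbitrary polyradius, by exploiting the duality $F_n(x;s|q,t)=F_n(s;x|q,t)$. Writing $F_n(x;s|q,t)=\dsum{\mu,\nu\in Q_+}{}x^{-\mu}s^{-\nu}F_{\mu,\nu}$, the first move is to extract from the Lemma a quantitative Cauchy bound on the coefficients $F_{\mu,\nu}$, and the heart of the matter will be to balance that bound against its $\mu\leftrightarrow\nu$ transpose.

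First I would record that, for every $r_1<r_0$ and every $R>0$, the Lemma gives $\dsum{\mu\in Q_+}{}r_1^{\br{\delta,\mu}}\norm{F_\mu}_{B_w(R)}=:C(r_1,R)<\infty$, where $F_\mu(s)=\dsum{\nu}{}s^{-\nu}F_{\mu,\nu}\in\mathcal{O}(\mathbb{C}^{n-1}_w)$; hence $\norm{F_\mu}_{B_w(R)}\le C(r_1,R)\,r_1^{-\br{\delta,\mu}}$. Since each $F_\mu$ is entire in the coordinates $w=(s_2/s_1,\ldots,s_n/s_{n-1})$ and $|s^{-\nu}|$ takes the value $R^{\br{\delta,\nu}}$ on the distinguished boundary of $B_w(R)$, the multivariable Cauchy inequality yields $|F_{\mu,\nu}|\le \norm{F_\mu}_{B_w(R)}R^{-\br{\delta,\nu}}$, so that $|F_{\mu,\nu}|\le C(r_1,R)\,r_1^{-\br{\delta,\mu}}R^{-\br{\delta,\nu}}$. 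This controls the decay in $\nu$ at an arbitrarily fast rate $R^{-\br{\delta,\nu}}$, but in $\mu$ only at the fixed rate $r_1^{-\br{\delta,\mu}}$ with $r_1<r_0$; the symmetry will supply the missing decay in $\mu$.

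The crucial step is to symmetrize. The duality $F_n(x;s|q,t)=F_n(s;x|q,t)$ forces $F_{\mu,\nu}=F_{\nu,\mu}$, so the same estimate holds with the roles of $\mu$ and $\nu$ interchanged: $|F_{\mu,\nu}|\le C(r_1,R)\,R^{-\br{\delta,\mu}}r_1^{-\br{\delta,\nu}}$. Multiplying the two bounds and taking the square root gives the balanced estimate $|F_{\mu,\nu}|\le C(r_1,R)\,(r_1R)^{-(\br{\delta,\mu}+\br{\delta,\nu})/2}$. Now fix any target radius $\rho>0$; keeping $r_1$ fixed (say $r_1=r_0/2$) while letting $R$ grow, one may choose $R$ so large that $(r_1R)^{1/2}>\rho$, which yields $|F_{\mu,\nu}|\le C\,\rho^{-\br{\delta,\mu}}\rho^{-\br{\delta,\nu}}$ with a finite constant $C=C(r_1,R)$ depending only on $\rho$.

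Finally I would sum directly: on $B_z(r)\times B_w(r')$ with $r,r'<\rho$ one has $\norm{x^{-\mu}}_{B_z(r)}=r^{\br{\delta,\mu}}$ and $\norm{s^{-\nu}}_{B_w(r')}=(r')^{\br{\delta,\nu}}$, whence $\dsum{\mu,\nu}{}\norm{x^{-\mu}s^{-\nu}}\,|F_{\mu,\nu}|\le C\dsum{\mu}{}(r/\rho)^{\br{\delta,\mu}}\dsum{\nu}{}(r'/\rho)^{\br{\delta,\nu}}$. Each factor converges since its terms carry only the polynomial multiplicity $\#\{\mu\in Q_+:\br{\delta,\mu}=d\}=\binom{d+n-2}{n-2}$ against the geometric weight, and $r/\rho,r'/\rho<1$. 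As $\rho$ is arbitrary, this proves normal convergence of the double series on every bidisc, so $F_n(x;s|q,t)$ continues to an entire function on $\mathbb{C}^{n-1}_z\times\mathbb{C}^{n-1}_w$. The one genuine obstacle is exactly the passage from the half-infinite region $U_z(r_0)\times\mathbb{C}^{n-1}_w$ of the Lemma to the full space; the symmetrization-and-geometric-mean device is what converts the unbounded freedom in $R$ into unbounded decay in \emph{both} indices at once, and it rests essentially on the previously established duality $F_{\mu,\nu}=F_{\nu,\mu}$ together with the entireness of the coefficients $F_\mu(s)$ in $w$.
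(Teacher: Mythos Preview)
Your argument is correct and rests on the same underlying idea as the paper's: the Taylor series is known to converge normally on the ``tube'' $U_z(r_0)\times\mathbb{C}^{n-1}_w$, the duality $F_n(x;s|q,t)=F_n(s;x|q,t)$ gives the transposed tube $\mathbb{C}^{n-1}_z\times U_w(r_0)$, and logarithmic convexity of domains of convergence pushes this to all of $\mathbb{C}^{n-1}_z\times\mathbb{C}^{n-1}_w$. The executions differ only in how the convexity is implemented. You pass through Cauchy estimates to obtain the coefficient bound $|F_{\mu,\nu}|\le C(r_1,R)\,r_1^{-\br{\delta,\mu}}R^{-\br{\delta,\nu}}$ and its transpose, then take the geometric mean of the two to get decay at rate $(r_1R)^{1/2}$ in both indices. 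The paper works directly at the level of sums: for any target bidisc $B_z(a)\times B_w(b)$ it chooses $c>0$ with $a/c,b/c<r_0$ and uses the elementary inequality
\[
a^{\br{\delta,\mu}}b^{\br{\delta,\nu}}\le\tfrac12\Big((a/c)^{\br{\delta,\mu}}(bc)^{\br{\delta,\nu}}+(ac)^{\br{\delta,\mu}}(b/c)^{\br{\delta,\nu}}\Big),
\]
so the full double sum is dominated by the average of two finite sums, one on $B_z(a/c)\times B_w(bc)\subset U_z(r_0)\times\mathbb{C}^{n-1}_w$ and the other on $B_z(ac)\times B_w(b/c)\subset\mathbb{C}^{n-1}_z\times U_w(r_0)$. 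Your geometric-mean trick and the paper's arithmetic-mean trick are dual expressions of the same log-convexity; the paper's version is slightly shorter since it bypasses the Cauchy step, while yours makes the coefficient decay explicit.
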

\proof
We know that this Taylor series is normally convergent in 
$U_z(r_0)\times \mathbb{C}^{n-1}_w$.   From the 
symmetry $F_n(x;s|q,t)=F_{n}(s;x|q,t)$, it is also normally 
convergent in $\mathbb{C}^{n-1}_z\times U_w(r_0)$.  
The domain of convergence of this power series
contains both 
$U_z(r_0)\times \mathbb{C}^{n-1}_w$ 
and $\mathbb{C}^{n-1}_z\times U_w(r_0)$, 
and hence it must be the whole affine space 
$\mathbb{C}^{n-1}_z\times \mathbb{C}^{n-1}_w$ 
by the logarithmic convexity of the domain of 
convergence. 
In fact, consider the compact subset $B_z(a)\times B_w(b)$ 
for arbitrary $a,b>0$.  Take sufficiently large 
$c>0$ so that $a/c<r_0$ and $b/c<r_0$. 
Then we have
\begin{align}
a^{\br{\delta,\mu}}b^{\br{\delta,\nu}}
\le\frac{1}{2}\left(
(a/c)^{\br{\delta,\mu}}(bc)^{\br{\delta,\nu}}
+
(ac)^{\br{\delta,\mu}}(b/c)^{\br{\delta,\nu}}
\right)
\quad(\mu,\nu\in Q_+), 
\end{align}
and hence 
\begin{align}
&\dsum{\mu,\nu\in Q_+}{}|x^{-\mu}|\,|s^{-\nu}|\,|F_{\mu,\nu}|
\le 
\dsum{\mu,\nu\in Q_+}{}\,a^{\br{\delta,\mu}}b^{\br{\delta,\nu}}
|F_{\mu,\nu}|
\nonumber\\
&
\le\frac{1}{2}\bigg(
\dsum{\mu,\nu\in Q_+}{}
(a/c)^{\br{\delta,\mu}}(bc)^{\br{\delta,\nu}}|F_{\mu,\nu}|
+
\dsum{\mu,\nu\in Q_+}{}
(ac)^{\br{\delta,\mu}}(b/c)^{\br{\delta,\nu}}|F_{\mu,\nu}|
\bigg)
<+\infty,
\end{align}
on $B_z(a)\times B_w(b)$.  
\qed

\begin{thm}\label{holo}
For the formal solution 
\begin{align}
f_n(x;s|q,t)&=e_n(x;s)\,
\varphi_n(x;s|q,t),
\quad
\varphi_n(x;s|q,t)
\in \mathbb{C}[[x^{-Q_+}]][[s^{-Q_+}]]
\end{align}
of the bispectral problem \eqref{eq:JBispP}
for Ruijsenaars-Macdonald operators, 
introduce a formal power series $F_n(x;s|q,t)$
by setting 
\begin{align}
\varphi_n(x;s|q,t)
=\dfrac{F_n(x;s|q,t)}{\prod_{1\le i<j\le n}{}(qx_j/tx_i;q)_\infty (qs_j/ts_i;q)_\infty }. 
\end{align}
Then, $F_n(x;s|q,t)$ represents a holomorphic function on 
$\mathbb{C}^{n-1}_z\times \mathbb{C}^{n-1}_w$ 
in the variable $(z;w)=(z_1,\ldots,z_{n-1};w_1,\ldots,w_{n-1})$ 
with $z_i=x_{i+1}/x_i$, $w_i=s_{i+1}/s_i$ $(i=1,\ldots,n-1)$, 
depending holomorphically on $t\in\mathbb{C}^\ast$. 
Furthermore, it satisfies the symmetry conditions 
\begin{align}
F_n(x;s|q,t)=F_n(s;x|q,t),\quad F_n(x;s|q,t)=F_n(x;s|q,q/t).  
\end{align}
\end{thm}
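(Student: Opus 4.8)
The plan is first to reconcile the two presentations of $F_n$ and then to harvest the earlier results. Substituting the gauge relation $\varphi_n(x;s|q,t)=\prod_{1\le i<j\le n}\frac{(qx_j/x_i;q)_\infty}{(qx_j/tx_i;q)_\infty}\frac{(qs_j/s_i;q)_\infty}{(qs_j/ts_i;q)_\infty}\psi_n(x;s|q,t)$ into the normalization of the statement shows that $F_n(x;s|q,t)$ coincides with the series $\prod_{1\le i<j\le n}(qx_j/x_i;q)_\infty(qs_j/s_i;q)_\infty\,\psi_n(x;s|q,t)$ introduced in \eqref{eq:defFn}. For that series the preceding Proposition already supplies everything in the $(z;w)$-variables: its Taylor expansion in $(z;w)=(x_2/x_1,\ldots,x_n/x_{n-1};s_2/s_1,\ldots,s_n/s_{n-1})$ is normally convergent on $\mathbb{C}^{n-1}_z\times\mathbb{C}^{n-1}_w$, so $F_n$ represents an entire function there.

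Next I would deduce the two symmetries directly from the form \eqref{eq:defFn}. The prefactor $\prod_{1\le i<j\le n}(qx_j/x_i;q)_\infty(qs_j/s_i;q)_\infty$ is manifestly invariant under the exchange $x\leftrightarrow s$ and is entirely independent of $t$. Hence $F_n(x;s|q,t)=F_n(s;x|q,t)$ is equivalent to the duality $\psi_n(x;s|q,t)=\psi_n(s;x|q,t)$, while $F_n(x;s|q,t)=F_n(x;s|q,q/t)$ is equivalent to the invariance $\psi_n(x;s|q,t)=\psi_n(x;s|q,q/t)$; both of these are recorded in \eqref{eq:psisym}. They hold coefficientwise as formal power series in $(z;w)$, hence for the holomorphic functions represented by the convergent series.

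The genuinely new content is holomorphic dependence on $t\in\mathbb{C}^\ast$, and this is where I expect the real work to lie. Each Taylor coefficient $F_{\mu,\nu}$ is a finite combination of the functions $c_n(\theta;s|q,t)$ with binomial factors; since for fixed $\theta$ every factor of $c_n(\theta;s|q,t)$ is a Laurent polynomial in $t$ (all the $t$-dependence enters through finitely many $q$-shifted factorials and a power $(q/t)^{\bullet}$), each coefficient $F_{\mu,\nu}$ is itself a Laurent polynomial in $t$, in particular holomorphic on $\mathbb{C}^\ast$. To pass from coefficientwise holomorphy to joint holomorphy I would establish local uniform convergence in $t$ as well, by revisiting the estimates of Lemma \ref{lem:estimate} and the proof of Proposition \ref{prop:convpn}: the constant $C_L$ there depends only on the compact set $L$ in the $u$-variable and on $a\in\{t/q,\,q/t\}$ through the continuous functions $c_\pm,c_0$, so for $t$ in a compact subset $T\subset\mathbb{C}^\ast$ one may choose $C_L$ and the geometric factors $\max\{|a|,1\}$ uniformly over $T$.

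With these $t$-uniform bounds the dominating series in the proof of Proposition \ref{prop:convpn} converges uniformly on $B_z(r)\times K\times T$ for small $r$ and compact $K\subset D_w$, yielding joint holomorphy of $F_n$ in $(z,w,t)$ near the $(z;w)$-origin; rerunning the logarithmic-convexity argument of the preceding Proposition with the same uniform estimates (the radius $r_0=|q/t|^{(n-2)/(n-1)}$ stays bounded below on $T$, so no step degenerates) then propagates joint holomorphy to all of $\mathbb{C}^{n-1}_z\times\mathbb{C}^{n-1}_w\times\mathbb{C}^\ast$, a locally uniform limit of holomorphic functions being holomorphic. The main obstacle is thus the careful bookkeeping needed to keep every constant uniform in $t$ while the natural radius $r_0$ itself varies with $t$; once that is handled, holomorphy in $(z;w)$, holomorphic dependence on $t$, and the two symmetries combine to give the theorem.
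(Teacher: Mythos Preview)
Your proposal is correct and follows essentially the same route as the paper: the theorem is stated there with an immediate \qed, the holomorphy in $(z;w)$ being a direct consequence of the preceding Proposition and the two symmetries following at once from \eqref{eq:psisym} via the $t$-independent prefactor in \eqref{eq:defFn}. You actually go beyond the paper in sketching an argument for holomorphic dependence on $t\in\mathbb{C}^\ast$, which the paper asserts but does not justify; your plan of making the estimates of Lemma~\ref{lem:estimate} and Proposition~\ref{prop:convpn} uniform over compact $T\subset\mathbb{C}^\ast$ (so that $r_0(t)$ stays bounded away from $0$ and the dominating constants stay finite) is the natural way to fill this gap.
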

\qed

For $n=2$, $F_2(x;s|q,t)$ is expressed as
\begin{align}
F_2(x;s|q,t)=
(t;q)_\infty(qx_2s_2/x_1s_1;q)_\infty\,
{}_2\phi_{1}\!
\left[\begin{matrix}
qx_2/tx_1, qs_2/ts_1 \\
qx_2s_2/x_1s_1
\end{matrix};\,
q,t
\right]
\quad(|t|<1)  
\end{align}
as we already remarked in Section 3, and by symmetry, 
\begin{align}
F_2(x;s|q,t)=
(q/t;q)_\infty(qx_2s_2/x_1s_1;q)_\infty\,
{}_2\phi_{1}\!
\left[\begin{matrix}
tx_2/x_1, ts_2/s_1 \\
qx_2s_2/x_1s_1
\end{matrix};\,
q,q/t
\right]
\quad(|q/t|<1). 
\end{align}

%\begin{rem}\rm
\subsection{Principal specialization}
We give a remark on the evaluation of $\varphi_n(x;s|q,t)$ at $x=t^\delta$. 
By duality, this question is equivalent to knowing the value at $s=t^\delta$.   
The recurrence formula \eqref{eq:recpn} for $s=t^\delta$ reduces to 
a single term with $\nu=0$ because of the existence of 
the factor
\begin{align}
\dprod{1\le i<j\le n+1}{}\dfrac{(ts_j/s_i;q)_{\nu_i}}{(qs_j/s_i;q)_{\nu_i}}, 
\end{align}
since $(ts_j/s_i;q)_{\nu_i}$ 
for $j=i+1$ vanishes unless $\nu_i=0$.  Hence $p_n(x;t^\delta|q,t)=1$ 
for all $n=1,2,\ldots$.  From 
\begin{align}
\varphi_n(x;t^\delta|q,t)=
\dprod{1\le i<j\le n}{}\dfrac{(qt^{i-j};q)_\infty}{(qt^{i-j-1};q)_\infty}
=\dprod{i=1}{n}\dfrac{(q/t;q)_\infty}{(q/t^i;q)_\infty}. 
\end{align}
we obtain 
\begin{align}
\varphi_n(t^\delta;s|q,t)
=\varphi_n(x;t^\delta|q,t)
=\dprod{i=1}{n}\dfrac{(q/t;q)_\infty}{(q/t^i;q)_\infty}.
\end{align}
According to the definition \eqref{eq:defphin}, this implies a nontrivial 
summation formula:
\begin{thm}\label{pric-sp}
Let $|t|>|q|^{-(n-2)}$.  
We have
\begin{align}
\dsum{\theta\in M_n}{}c_n(\theta;s|q,t)\,t^{\sum_{i<j}(i-j)\theta_{ij}}
=
\dprod{i=1}{n}\dfrac{(q/t;q)_\infty}{(q/t^i;q)_\infty}
\dprod{1\le i<j\le n}{}
\dfrac{(qs_j/ts_i;q)_\infty}{(qs_j/s_i;q)_\infty}.  
\end{align}
\end{thm}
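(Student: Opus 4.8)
The plan is to read off the summation formula as the principal specialization $x=t^\delta$ of the defining relation \eqref{eq:defphin} between $\varphi_n$ and $p_n$, the essential input being the duality $\varphi_n(x;s|q,t)=\varphi_n(s;x|q,t)$ of Theorem \ref{bispect-thm}. The first observation is that the left-hand side of the asserted identity is nothing but $p_n(t^\delta;s|q,t)$: at $x=t^\delta$ one has $x_j/x_i=t^{i-j}$ for $i<j$, so $\prod_{1\le i<j\le n}(x_j/x_i)^{\theta_{ij}}=t^{\sum_{i<j}(i-j)\theta_{ij}}$, whence
\begin{align}
p_n(t^\delta;s|q,t)=\dsum{\theta\in M_n}{}c_n(\theta;s|q,t)\,t^{\sum_{i<j}(i-j)\theta_{ij}}.
\end{align}
It thus suffices to evaluate $p_n(t^\delta;s|q,t)$ in closed form, which I would do by computing $\varphi_n(t^\delta;s|q,t)$ in two different ways.

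First I would record the companion specialization of the \emph{second} argument. Setting $s=t^\delta$ in the recurrence \eqref{eq:recpn}, the factor $(ts_j/s_i;q)_{\nu_i}$ with $j=i+1$ becomes $(1;q)_{\nu_i}$, which vanishes for $\nu_i\ge1$; this collapses the $\nu$-sum to the single term $\nu=0$, giving $p_{n+1}(x;t^\delta|q,t)=p_n(x;t^\delta|q,t)$, hence $p_n(x;t^\delta|q,t)=1$ for all $n$ by induction from $p_1=1$. Feeding this into \eqref{eq:defphin} and telescoping the resulting product yields the constant
\begin{align}
\varphi_n(x;t^\delta|q,t)=\dprod{1\le i<j\le n}{}\dfrac{(qt^{i-j};q)_\infty}{(qt^{i-j-1};q)_\infty}=\dprod{i=1}{n}\dfrac{(q/t;q)_\infty}{(q/t^i;q)_\infty},
\end{align}
independent of $x$. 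By the duality of Theorem \ref{bispect-thm}, $\varphi_n(t^\delta;s|q,t)=\varphi_n(s;t^\delta|q,t)$ equals this same constant, now independent of $s$.

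I would then specialize the \emph{first} argument of \eqref{eq:defphin} directly,
\begin{align}
\varphi_n(t^\delta;s|q,t)=\dprod{1\le i<j\le n}{}\dfrac{(qs_j/s_i;q)_\infty}{(qs_j/ts_i;q)_\infty}\,p_n(t^\delta;s|q,t),
\end{align}
equate it with the constant value found above, and divide by the (generically nonvanishing) prefactor. This isolates $p_n(t^\delta;s|q,t)$ and produces exactly the right-hand side of the theorem.

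The single point requiring care --- and the step I expect to be the real obstacle --- is to make analytic sense of the evaluation $p_n(t^\delta;s|q,t)$, since $p_n(x;s|q,t)$ is a priori only a formal power series in the $z_i=x_{i+1}/x_i$ and the individual coefficients $c_n(\theta;s|q,t)$ carry poles. Putting $x=t^\delta$ forces $z_i=t^{-1}$, so one must check that $(t^{-1},\ldots,t^{-1})$ lies in the domain $U_z(r_0)$ of absolute convergence of Proposition \ref{prop:convpn}. In the relevant regime this radius is $r_0=|q/t|^{(n-2)/(n-1)}$, and the condition $|t^{-1}|<r_0$ unwinds, upon raising to the power $-(n-1)$, to precisely the hypothesis $|t|>|q|^{-(n-2)}$; since $|q|<1$ this also forces $|t|>1>|q|$, so that $|q/t|<1$ and the case $|q/t|\le1$ of Proposition \ref{prop:convpn} indeed applies. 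Granting this, the formal manipulations above acquire analytic meaning as equalities of holomorphic functions of $s$ on the locus $s_j/s_i\notin q^{-\mathbb{Z}}$, and the stated formula then follows as an identity of meromorphic functions by analytic continuation.
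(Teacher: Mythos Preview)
Your proof is correct and follows essentially the same route as the paper: show $p_n(x;t^\delta|q,t)=1$ via the collapse of the recurrence \eqref{eq:recpn}, compute the constant $\varphi_n(x;t^\delta|q,t)$, transfer it to $\varphi_n(t^\delta;s|q,t)$ by duality, and read off $p_n(t^\delta;s|q,t)$ from \eqref{eq:defphin}. Your convergence check that $|t^{-1}|<r_0$ is equivalent to the hypothesis $|t|>|q|^{-(n-2)}$ (and hence that the $|q/t|\le1$ case of Proposition~\ref{prop:convpn} applies) is exactly the justification the paper invokes, spelled out in more detail.
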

\begin{proof}
By Proposition \ref{prop:convpn}, when $|t|>|q|^{-(n-2)}$, 
the left-hand side is normally convergent for 
$s_j/s_i\notin q^{\mathbb{Z}}$ ($1\le i<j\le n$). 
\end{proof}
%\end{rem}

%%%%%%%%%%%%%%%%%%%%%%%%%%%%%%%%%%%%%%%%

%%%%%%%%%%%%%%%%%%%%%%%%%%%%%%%%%%%%%%%%

\section{Case $n=3$}

\subsection{Results}

In this section, we present a direct method to check the structure of the divisor of the poles of $p_n(x;s|q,t)$ and the duality 
for the case $n=3$, by applying several transformation and summation formulas for basic hypergeometric series. 
Note that the duality \eqref{eq:psisym} can be stated as
\begin{align}
\prod_{1\leq i<j\leq n} {(q s_j/s_i;q)_\infty \over (q s_j/ts_i;q)_\infty }p_n(x;s|q,q/t)=
\prod_{1\leq i<j\leq n} {(q x_j/x_i;q)_\infty \over (t x_j/x_i;q)_\infty }p_n(s;x|q,t). \label{dual}
\end{align}

\begin{thm}\label{n=3-henkan}
We have
\begin{align}
p_3(x;s|q,q/t)&=
\sum_{\theta\in\mathsf{M}^{(3)}}
c_3(\theta;s_1,s_2,s_3|q,t)
(x_2/x_1)^{\theta_{1,2}}(x_3/x_1)^{\theta_{1,3}}(x_3/x_2)^{\theta_{2,3}}\nonumber\\
&=
\sum_{k=0}^\infty 
{(q/t;q)_k(q/t;q)_k(t;q)_k(t;q)_k \over 
(q;q)_k(qs_2/s_1;q)_k(qs_3/s_1;q)_k(qs_3/s_2;q)_k}
(q s_3/ts_1)^k(tx_3/x_1)^k\label{n=3}\\
&\times
\prod_{1\leq i<j\leq 3}
{}_2\phi_1\left[
{q^{k+1}/t,qs_j/ts_i \atop q^{k+1}s_j/s_i};q, t x_j/x_i\right].\nonumber
\end{align}
This manifestly shows that $p_3(x;s|q,q/t)$ has at most simple poles along the divisors $s_j/s_i=q^{-k-1}$ $(1\leq i<j\leq 3;k=0,1,2,\ldots)$.
\end{thm}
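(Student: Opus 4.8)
The plan is to reduce the claimed identity \eqref{n=3} to a chain of transformation and summation formulas for ${}_2\phi_1$ series. I would work with the defining triple sum appearing on the first line of the theorem, $p_3(x;s|q,q/t)=\sum_{\theta\in M_3}c_3(\theta;s|q,q/t)\,(x_2/x_1)^{\theta_{12}}(x_3/x_1)^{\theta_{13}}(x_3/x_2)^{\theta_{23}}$, but organize the computation through the recurrence \eqref{eq:recpn}, which builds $p_3$ out of $p_2$. Specializing \eqref{eq:recpn} to $n=2$ and replacing $t$ by $q/t$ writes $p_3(x;s|q,q/t)$ as a double sum over $\nu=(\nu_1,\nu_2)\in\mathbb{N}^2$ whose inner two-variable factor is the closed $n=2$ form $p_2(x;q^{-\nu}s|q,q/t)={}_2\phi_1[q/t,\,q^{\nu_1-\nu_2+1}s_2/ts_1;\,q^{\nu_1-\nu_2+1}s_2/s_1;\,q,\,tx_2/x_1]$, obtained from the ${}_2\phi_1$ expression for $p_2$ in Section 3. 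The factors $(tx_3/x_1)^{\nu_1}(tx_3/x_2)^{\nu_2}$ already carry the arguments of the $(1,3)$- and $(2,3)$-hypergeometric factors of \eqref{n=3}, and the embedded ${}_2\phi_1$ will ultimately supply the $(1,2)$-factor.

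The key steps are then as follows. The inner sum produces a ${}_2\phi_1$ that agrees with the target $(1,2)$-factor ${}_2\phi_1[q^{k+1}/t,\,qs_2/ts_1;\,q^{k+1}s_2/s_1;\,q,\,tx_2/x_1]$ in its lower parameter and argument but \emph{not} in its upper parameters: the two share the same product of upper parameters but a different split (here $k$ will be identified with the ``height'' $\nu_1-\nu_2$). This mismatch is precisely the signal that the clean factorization into three independent ${}_2\phi_1$'s cannot be achieved factor by factor, but only after redistributing summation across the outer indices. I would therefore change summation variables so as to isolate a single diagonal index $k$ — the index carrying the prefactor $(qs_3/ts_1)^k(tx_3/x_1)^k$ in \eqref{n=3} — and sum the complementary index by a terminating balanced summation of $q$-Pfaff--Saalsch\"utz type. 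This trades the upper parameter $q/t$ of the inner factor for $q^{k+1}/t$ while turning $q^{\nu_1-\nu_2+1}s_2/ts_1$ into $qs_2/ts_1$, and simultaneously closes the remaining $\nu$-sums into the $(1,3)$- and $(2,3)$-factors with lower parameters $q^{k+1}s_3/s_1$ and $q^{k+1}s_3/s_2$, leaving the stated coefficient $\frac{(q/t;q)_k^2(t;q)_k^2}{(q;q)_k(qs_2/s_1;q)_k(qs_3/s_1;q)_k(qs_3/s_2;q)_k}$ in the surviving $k$-sum.

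The main obstacle is exactly this disentangling. The three indices $\theta_{12},\theta_{13},\theta_{23}$ (equivalently the indices arising after the recurrence) are genuinely coupled through the shifts $q^{-\nu_j}$ in the Pochhammer factors and in the parameters $q^{\nu_1-\nu_2+1}s_2/s_1$, and a naive term-by-term comparison of \eqref{n=3} against the defining sum matches only the total degrees in $x_2/x_1$ and $x_3/x_2$, never the individual series. The substance of the proof lies in choosing the summation order and the correct transformation at each stage so that exactly one coupling index $k$ survives and each pairwise factor closes into a ${}_2\phi_1$ with the shifted parameters $q^{k+1}/t$ and $q^{k+1}s_j/s_i$; this will require careful bookkeeping of the $q^{-\nu_j}$ shifts and the application of a terminating balanced ${}_3\phi_2$ summation to collapse the non-diagonal sum. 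Two internal checks guide the computation: at $k=0$ the formula must reproduce the $n=2$ evaluation in each pair, and the whole expression must be compatible with the duality \eqref{dual} under $x\leftrightarrow s$, $t\leftrightarrow q/t$, which is consistent with the manifest $x\leftrightarrow s$ symmetry of the right-hand side of \eqref{n=3}.

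Finally, the statement about poles is immediate once \eqref{n=3} is established. In each summand the only denominators that can vanish along $s_j/s_i=q^{-k-1}$ are the prefactor $(qs_j/s_i;q)_k$ and the lower parameter $q^{k+1}s_j/s_i$ of the corresponding ${}_2\phi_1$. Each of these contributes at most a simple zero at any given point $s_j/s_i=q^{-N-1}$, and within a single term the prefactor poles, located at $s_j/s_i=q^{-1},\dots,q^{-k}$, are disjoint from the ${}_2\phi_1$ poles, located at $s_j/s_i=q^{-k-1},q^{-k-2},\dots$; hence every term has at most simple poles along these divisors. Since a locally uniformly convergent sum of functions each having at most a simple pole at a fixed point again has at most a simple pole there, the convergence furnished by Proposition \ref{prop:convpn} shows that $p_3(x;s|q,q/t)$ itself has at most simple poles along $s_j/s_i=q^{-k-1}$ $(1\le i<j\le 3;\ k=0,1,2,\dots)$, as claimed.
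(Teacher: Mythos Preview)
Your plan underestimates the depth of the disentangling step, and the proposed identification of the index $k$ does not survive contact with the actual computation. The paper's own proof works at the level of the coefficient of a fixed monomial $(x_2/x_1)^{\theta}(x_3/x_2)^{\rho}$, i.e.\ it proves the identity
\[
\sum_{k\ge 0} c_3(\theta-k,k,\rho-k;s|q,q/t)
\;=\;
\text{(coefficient of $(x_2/x_1)^{\theta}(x_3/x_2)^{\rho}$ in the RHS of \eqref{n=3})},
\]
and the left-hand side, rewritten as $c_3(\theta,0,\rho;s|q,q/t)$ times a ratio, is a terminating very-well-poised ${}_{14}W_{13}$ (Lemma~\ref{lem-3}). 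This is then reduced via a custom transformation (Proposition~\ref{koushiki-1}) to a sum of ${}_{10}W_9$'s, which in turn are unfolded using Exercise~2.20 of \cite{GR}, a double application of the Sears ${}_4\phi_3$ transformation, and the $q$-Saalsch\"utz sum (Proposition~\ref{koushiki-2}), finally yielding the ${}_5\phi_4$ form of Proposition~\ref{n=3-keisuu}. A single balanced ${}_3\phi_2$ summation cannot collapse a ${}_{14}W_{13}$; Saalsch\"utz appears in the argument, but only as one step among several, and only after the Sears transformations have put the inner series into a shape where it applies.

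Concretely, your identification $k=\nu_1-\nu_2$ cannot be right: this difference is not nonnegative in general, whereas $k\ge 0$ in \eqref{n=3}, and in the target the same $k$ simultaneously shifts the lower parameters of all three ${}_2\phi_1$ factors to $q^{k+1}s_j/s_i$. From the recurrence side only the $(1,2)$ factor sees $\nu_1-\nu_2$ in its parameters; the $(1,3)$ and $(2,3)$ directions are governed by $\nu_1$ and $\nu_2$ separately. In the paper, the surviving index you call $k$ is the index $j$ emerging from the ${}_{10}W_9\!\to\!{}_5\phi_4$ chain in Proposition~\ref{n=3-keisuu}; it is not a linear combination of the original summation indices at all, but is produced by the transformation sequence. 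So the step where you write ``sum the complementary index by a terminating balanced summation of $q$-Pfaff--Saalsch\"utz type'' is the entire content of the theorem, and carrying it out forces one through the ${}_{14}W_{13}$/${}_{10}W_9$ machinery of Section~7.2. Your final paragraph on the pole structure is fine once \eqref{n=3} is in hand, but that is the easy part.
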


A proof  will be given in Section 7.3.
We can recast this into another form.
\begin{thm}\label{n=3}
We have 
\begin{align}
\varphi_3(x,s|q,t)&=
\prod_{1\leq i<j\leq 3}{(q s_j/s_i;q)_\infty \over (q s_j/ts_i;q)_\infty}
p_3(x;s|q,t)\nonumber\\
&=
%\prod_{1\leq i<j\leq 3}{(t;q)_\infty (q x_js_j/x_is_i;q)_\infty \over (q x_j/tx_i;q)_\infty (q s_j/ts_i;q)_\infty}\\
%&&\times
\sum_{k\geq 0}
{(q/t;q)_k(q/t;q)_k \over 
(q;q)_k(t;q)_k} 
( qx_3s_3/x_1s_1)^k \\
&\times
\prod_{1\leq i<j\leq 3}
{(t;q)_\infty (q x_js_j/x_is_i;q)_\infty \over (q x_j/tx_i;q)_\infty (q s_j/ts_i;q)_\infty}\,\,
{}_2\phi_1\left[
{qx_j/tx_i,qs_j/ts_i \atop q x_js_j/x_is_i};q,q^k t\right] \qquad (|t|<1),\nonumber
\end{align}
which manifestly shows the duality $\varphi_3(x,s|q,t)=\varphi_3(s,x|q,t)$.
\end{thm}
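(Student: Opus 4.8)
The plan is to derive Theorem~\ref{n=3} directly from Theorem~\ref{n=3-henkan} by combining the defining gauge factor of $\varphi_3$ with the $t\leftrightarrow q/t$ symmetry and a single Heine-type transformation applied to each of the three ${}_2\phi_1$ factors. First I would combine the definition $\varphi_3(x;s|q,t)=\prod_{1\le i<j\le 3}\frac{(qs_j/s_i;q)_\infty}{(qs_j/ts_i;q)_\infty}\,p_3(x;s|q,t)$ with the symmetry relation \eqref{eq:transfp}, namely $p_3(x;s|q,t)=\prod_{1\le i<j\le 3}\frac{(tx_j/x_i;q)_\infty}{(qx_j/tx_i;q)_\infty}\,p_3(x;s|q,q/t)$, so that $\varphi_3(x;s|q,t)$ is expressed entirely through the explicit series for $p_3(x;s|q,q/t)$ supplied by Theorem~\ref{n=3-henkan}. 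In that series each ${}_2\phi_1$ factor carries the argument $tx_j/x_i$ and the lower parameter $q^{k+1}s_j/s_i$, whereas the target carries the argument $q^kt$ and lower parameter $qx_js_j/x_is_i$, so a basic-hypergeometric transformation is needed.

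The crucial step is to apply \emph{Heine's second transformation}
\[
{}_2\phi_1\!\left[{a,b\atop c};q,z\right]
=\frac{(c/b,bz;q)_\infty}{(c,z;q)_\infty}\,
{}_2\phi_1\!\left[{abz/c,b\atop bz};q,c/b\right]
\]
to each factor ${}_2\phi_1\!\left[{q^{k+1}/t,qs_j/ts_i\atop q^{k+1}s_j/s_i};q,tx_j/x_i\right]$. With the identification $(a,b,c,z)=(q^{k+1}/t,\,qs_j/ts_i,\,q^{k+1}s_j/s_i,\,tx_j/x_i)$ one computes $c/b=q^kt$, $bz=qx_js_j/x_is_i$, and $abz/c=qx_j/tx_i$, so the transformed series is precisely ${}_2\phi_1\!\left[{qx_j/tx_i,qs_j/ts_i\atop qx_js_j/x_is_i};q,q^kt\right]$, the factor appearing in Theorem~\ref{n=3}, and the accompanying prefactor is $\frac{(q^kt,qx_js_j/x_is_i;q)_\infty}{(q^{k+1}s_j/s_i,tx_j/x_i;q)_\infty}$. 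This is legitimate for $|t|<1$, since then $|q^kt|\le|t|<1$ for every $k\ge0$.

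What remains is to collect the three sets of prefactors---the gauge factor, the product over $i<j$ of the Heine prefactors, and the original $c_3$-summand of Theorem~\ref{n=3-henkan}---and check that they recombine into the coefficient displayed in Theorem~\ref{n=3}. The relevant simplifications are $(qs_j/s_i;q)_\infty/(q^{k+1}s_j/s_i;q)_\infty=(qs_j/s_i;q)_k$ and $(q^kt;q)_\infty=(t;q)_\infty/(t;q)_k$: the factors $(tx_j/x_i;q)_\infty$ coming from \eqref{eq:transfp} cancel those in the Heine prefactors; the three copies of $(qs_j/s_i;q)_k$ cancel the denominator $(qs_2/s_1;q)_k(qs_3/s_1;q)_k(qs_3/s_2;q)_k$ of the summand; the three copies of $1/(t;q)_k$ against the $(t;q)_k^2$ in the summand leave $1/(t;q)_k$; and the monomials recombine via the identity $(qs_3/ts_1)\,(tx_3/x_1)=qx_3s_3/x_1s_1$, yielding exactly $\frac{(q/t;q)_k^2}{(q;q)_k(t;q)_k}(qx_3s_3/x_1s_1)^k$ times $\prod_{1\le i<j\le 3}\frac{(t;q)_\infty(qx_js_j/x_is_i;q)_\infty}{(qx_j/tx_i;q)_\infty(qs_j/ts_i;q)_\infty}$. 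I expect the main obstacle to be not any single computation but the correct choice among the three Heine transformations---the one that sends argument $tx_j/x_i$ with lower parameter $q^{k+1}s_j/s_i$ to argument $q^kt$ with lower parameter $qx_js_j/x_is_i$; once Heine's second transformation is singled out, the prefactor bookkeeping is routine. The resulting expression is manifestly invariant under $x\leftrightarrow s$, since $qx_3s_3/x_1s_1$ and $qx_js_j/x_is_i$ are symmetric, the pair of upper parameters $qx_j/tx_i,\,qs_j/ts_i$ is interchanged (and ${}_2\phi_1$ is symmetric in its upper parameters), and the product of gamma-type factors is symmetric; this yields the stated duality $\varphi_3(x,s|q,t)=\varphi_3(s,x|q,t)$.
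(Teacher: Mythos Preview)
Your argument is correct and coincides with the paper's own proof: the paper merely says to proceed as in the $n=2$ derivation \eqref{eq:n=2final} by combining Theorem~\ref{n=3-henkan} with the $q$-binomial theorem, and your application of Heine's second transformation to each ${}_2\phi_1$ factor is exactly that $q$-binomial manipulation packaged as a named identity. The one step you make explicit and the paper leaves tacit is the use of the $t\leftrightarrow q/t$ relation \eqref{eq:transfp} to pass from $p_3(x;s|q,t)$ to the $p_3(x;s|q,q/t)$ supplied by Theorem~\ref{n=3-henkan}.
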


\begin{proof}
We can proceed in a completely parallel manner as  (\ref{eq:n=2final}) for $n=2$, by using 
the $q$-binomial theorem and Theorem \ref{n=3-henkan}. 
\end{proof}

There is yet another way to see the duality manifestly. From Theorem \ref{n=3-henkan} and the $q$-binomial theorem, we have 
\begin{align}
&
\prod_{1\leq i<j\leq 3}{(q s_j/s_i;q)_\infty \over (q s_j/ts_i;q)_\infty}
p_3(x;s|q,q/t)
\nonumber\\
&=
\sum_{k\geq 0\atop \theta\in \mathsf{M}^{(3)}}
{(q/t;q)_k(q/t;q)_k(t;q)_k(t;q)_k \over 
(q;q)_k}
(q s_3/ts_1)^k(tx_3/x_1)^k\nonumber\\
&\times
\prod_{1\leq i<j\leq 3}
{(q^{k+1}/t;q)_{\theta_{i,j}}\over (q;q)_{\theta_{i,j}}}
{(q^{k+\theta_{i,j}+1} s_j/s_i;q)_\infty \over (q^{\theta_{i,j}+1} s_j/ts_i;q)_\infty}
%{(q^{k}t;q)_{\nu_{i,j}}\over (q;q)_{\nu_{i,j}}}
(t x_j/x_i)^{\theta_{i,j}}\\%={\rm RHS}.%(q s_j/t s_i)^{\nu_{i,j}}q^{\theta_{i,j}\nu_{i,j}},\nonumber
&=
\sum_{k\geq 0\atop \theta,\nu\in \mathsf{M}^{(3)}}
{(q/t;q)_k(q/t;q)_k(t;q)_k(t;q)_k \over 
(q;q)_k}
(q s_3/ts_1)^k(tx_3/x_1)^k\nonumber\\
&\times
\prod_{1\leq i<j\leq 3}
{(q^{k+1}/t;q)_{\theta_{i,j}}\over (q;q)_{\theta_{i,j}}}
{(q^{k}t;q)_{\nu_{i,j}}\over (q;q)_{\nu_{i,j}}}
(t x_j/x_i)^{\theta_{i,j}}(q s_j/t s_i)^{\nu_{i,j}}q^{\theta_{i,j}\nu_{i,j}},\nonumber
\end{align}
which indicates the duality of the form stated in (\ref{dual}).

%%%%%%%%%%%%%%%%%%%%%%%%%%%%%%%%%%%%%%%%%%%%%%%%%
\subsection{Some transformation formulas}
For our proof of Theorem \ref{n=3-henkan},
we prepare several transformation formulas.
We use the notations used in \cite{GR}.

\begin{prop}\label{koushiki-1}
Let 
$\theta,r$ be nonnegative integers. We have the transformation formula for 
very well-poised series
\begin{align}
&
{}_{r+9}W_{r+8}(a;q^{-\theta},q^{\theta}af,a_1,\cdots,a_r,
{\textstyle
\left(aq\over f\right)^{1\over 2}},
-{\textstyle
\left(aq\over f\right)^{1\over 2}},
{\textstyle
\left(aq^2\over f\right)^{1\over 2}},
-{\textstyle
\left(aq^2\over f\right)^{1\over 2}};q,z)\\
&=
{(aq,f^2/q;q)_{\theta} \over(af,f;q)_{\theta} }
 \sum_{m\geq 0}
{(q/f,q^{-\theta},aq/f;q)_m \over (q,q^{-\theta}q^2/f^2,aq;q)_m}
q^m 
{}_{r+5}W_{r+4}
(a;q^{-m},q^{m}aq/f,a_1,\cdots,a_r;q,z).\nonumber
\end{align}
\end{prop}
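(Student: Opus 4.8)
The plan is to prove the identity by comparing coefficients of powers of $z$ on the two sides. Because $q^{-\theta}$ occurs among the numerator parameters of the ${}_{r+9}W_{r+8}$ on the left, the factor $(q^{-\theta};q)_n$ forces that series to terminate at $n=\theta$, so the left-hand side is a polynomial in $z$ of degree $\theta$. On the right, $(q^{-\theta};q)_m$ terminates the $m$-sum at $m=\theta$, and each ${}_{r+5}W_{r+4}(a;q^{-m},\ldots;q,z)$ is a polynomial in $z$ of degree $m\le\theta$; hence the right-hand side is likewise a polynomial in $z$ of degree $\theta$. It therefore suffices to verify the coefficient identity for each monomial $z^n$ with $0\le n\le\theta$, which makes the whole argument a finite computation.

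First I would expand both very-well-poised series by the definition of $W$. Writing out the $n$-th term of the left-hand series and the $k=n$ term of each ${}_{r+5}W_{r+4}$ on the right, the standard very-well-poised factor
\[
\frac{1-aq^{2n}}{1-a}\,\frac{(a;q)_n}{(q;q)_n}\,\prod_{i=1}^{r}\frac{(a_i;q)_n}{(qa/a_i;q)_n}
\]
is common to both sides and cancels, so the spectator parameters $a_1,\dots,a_r$ play no role. The four ``doubling'' parameters $\pm(aq/f)^{1/2},\pm(aq^2/f)^{1/2}$ contribute, via $(b;q)_n(-b;q)_n=(b^2;q^2)_n$ together with $(x;q^2)_n(xq;q^2)_n=(x;q)_{2n}$, exactly the factor $(aq/f;q)_{2n}/(af;q)_{2n}$ to the left-hand coefficient (their associated denominator parameters being $\pm(aqf)^{1/2},\pm(af)^{1/2}$). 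In this way the proposition is reduced, for each fixed $n$, to the single scalar identity
\[
\frac{(q^{-\theta};q)_n(q^{\theta}af;q)_n}{(q^{1+\theta}a;q)_n(q^{1-\theta}/f;q)_n}\,\frac{(aq/f;q)_{2n}}{(af;q)_{2n}}
=\frac{(aq,f^2/q;q)_\theta}{(af,f;q)_\theta}\sum_{m\ge0}\frac{(q/f,q^{-\theta},aq/f;q)_m}{(q,q^{2-\theta}/f^2,aq;q)_m}\,q^m\,\frac{(q^{-m};q)_n(q^maq/f;q)_n}{(q^{1+m}a;q)_n(q^{-m}f;q)_n},
\]
in which neither $z$ nor the $a_i$ appears.

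To evaluate the right-hand sum I would note that $(q^{-m};q)_n$ vanishes for $m<n$, so the sum runs over $m=n,n+1,\dots,\theta$; reindexing $m=n+l$ and simplifying the $q$-shifted factorials (using the reflection formulas $(q^{-m};q)_n=(-1)^nq^{\binom{n}{2}-mn}(q;q)_m/(q;q)_{m-n}$ and the analogous rewriting of $(q^{-m}f;q)_n$) collapses the inner sum into a terminating balanced series in $l$ — indeed the bare $m$-series $\sum_m (q/f,q^{-\theta},aq/f;q)_m\,q^m/(q,q^{2-\theta}/f^2,aq;q)_m$ is already Saalschützian. This can be summed in closed form by the $q$-Saalschütz theorem of \cite{GR} (equivalently by a terminating very-well-poised summation), and matching the resulting product of $q$-shifted factorials against the prefactor $(aq,f^2/q;q)_\theta/(af,f;q)_\theta$ yields precisely the left-hand coefficient. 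The main obstacle I anticipate is exactly this last step: correctly splitting the quadratic-base factors $(\,\cdot\,;q)_{2n}$, tracking the vanishing and reflection of $(q^{-m};q)_n$, and performing the reindexing so that the inner sum appears in manifest $q$-Saalschütz form with the prefactor emerging exactly as stated. Everything else is the structural cancellation of the common very-well-poised factors, and the genuine content of the proposition is this one balanced summation.
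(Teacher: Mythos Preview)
Your proposal is correct and follows essentially the same route as the paper: both arguments compare the coefficient of $z^k$ on the two sides (the paper writes it as a forward manipulation, expanding the ${}_{r+9}W_{r+8}$ in $k$, replacing the ratio involving $(aq/f;q)_{2k}/(af;q)_{2k}$ by a terminating balanced ${}_3\phi_2$ via $q$-Saalsch\"utz, and then reindexing $m=k+\ell$ to recover the right-hand side), and the single nontrivial input in each case is precisely the $q$-Saalsch\"utz summation you identify.
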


\begin{proof}
Note that from the $q$-Saalsch\"utz summation formula  \cite{GR}, we have
\begin{align}
&(f/q)^{k}{(af,f;q)_{\theta}\over (aq,f^2/q;q)_{\theta}}
{(q^{-\theta}q^2/f^2,q^{\theta}af;q)_k(aq;q)_{2k}\over 
 (q^{-\theta}q/f,q^{\theta}aq;q)_k(af;q)_{2k}}\nonumber\\
&=
{(f,q^{2k}af;q)_{\theta-k} \over (f^2/q,q^{2k}aq;q)_{\theta-k}}\\
&=
{}_3\phi_2\left[
{q^{2k} {aq/ f},q/f,q^{-\theta+k} \atop
q^{2k}aq,q^{-\theta+k}q^2/f^2};q,q\right].\nonumber
\end{align}
By using this, we can proceed as follows:
\begin{align}
&
{(af,f;q)_{\theta}\over 
(aq,f^2/q;q)_{\theta} }\times\mbox{LHS}\nonumber\\
&=
\sum_{k\geq 0}
{(af,f;q)_{\theta}\over 
(aq,f^2/q;q)_{\theta} }
{(a,q^{-\theta},q^{\theta}af;q)_k \over 
(q,q^{-\theta}q/f,q^{\theta}aq;q)_k }
{(a_1,\cdots,a_r;q)_k\over 
({\scriptstyle aq\over  a_1},\cdots,
{\scriptstyle aq\over  a_r};q)_k}
{(aq/f;q)_{2k} \over (af;q)_{2k}}{1-a q^{2k}\over 1-a} z^k\nonumber\\
&=
\sum_{k\geq 0}
{(f,q^{2k}af;q)_{\theta-k}\over 
(f^2/q,q^{2k}aq;q)_{\theta-k} }
{(a,q^{-\theta};q)_k \over 
(q,q^{-\theta}q^2/f^2;q)_k }
{(a_1,\cdots,a_r;q)_k\over 
({\scriptstyle aq\over  a_1},\cdots,
{\scriptstyle aq\over  a_r};q)_k}
{(aq/f;q)_{2k} \over (aq;q)_{2k}}{1-a q^{2k}\over 1-a} (qz/f)^k\nonumber\\
&=
\sum_{k\geq 0}\sum_{\ell\geq 0}
{(q^{2k}aq/f,q/f,q^{-\theta+k};q)_\ell \over 
(q^{2k}aq,q,q^{-\theta+k}q^2/f^2;q)_\ell }q^\ell
{(a,q^{-\theta};q)_k \over 
(q,q^{-\theta}q^2/f^2;q)_k }
{(a_1,\cdots,a_r;q)_k\over 
({\scriptstyle aq\over  a_1},\cdots,
{\scriptstyle aq\over  a_r};q)_k}\nonumber\\
&\qquad\times 
{(aq/f;q)_{2k} \over (aq;q)_{2k}}{1-a q^{2k}\over 1-a} (qz/f)^k\\
&=
\sum_{m\geq 0}\sum_{k=0}^m
{(aq/f,q/f,q^{-\theta};q)_m\over 
(aq,q,q^{-\theta}q^2/f^2;q)_m}q^m
{(a,q^{-m},q^m aq/f;q)_k \over 
(q,q^{-m}f,q^m aq;q)_k }
{(a_1,\cdots,a_r;q)_k\over 
({\scriptstyle aq\over  a_1},\cdots,
{\scriptstyle aq\over  a_r};q)_k}{1-a q^{2k}\over 1-a} z^k\nonumber\\
&=
\sum_{m\geq 0}
{(aq/f,q/f,q^{-\theta};q)_m\over 
(aq,q,q^{-\theta}q^2/f^2;q)_m}q^m
{}_{r+5}W_{r+4} (a;q^{-m},q^m aq/f,a_1,\cdots,a_r;q,z)\nonumber\\
&=
{(af,f;q)_{\theta}\over 
(aq,f^2/q;q)_{\theta} }\times\mbox{RHS}\nonumber.
\end{align}
\end{proof}

\begin{lem}\label{lem-2}
Let 
$\theta$ be a nonnegative integer. We have
\begin{align}
&\sum_{m\geq 0}
{(q/f,q^{-\theta},aq/f;q)_m\over 
(q,q^{-\theta}q^2/f^2,aq;q)_m}q^m
{}_{10}W_9 (a;b,c,d,e,f,g,q^{-m};q,a^3q^{m+3}/bcdefg)\nonumber
\\
&=
\sum_{j\geq 0}
\sum_{m\geq j}
{(q/f,q^{-\theta},aq/fg;q)_m \over  
(q,q^{-\theta}q^2/f^2,aq/g;q)_m }q^m\\
&\times
{(q^{-m},f,g,aq/de;q)_j \over 
(q,{ aq/ d},
{ aq/ e},
q^{-m}fg/a;q)_j }q^j 
{}_4\phi_3\left[
{q^{-j},d,e,{ aq/ bc} \atop
{ aq/ b},
{ aq/ c},
q^{-j}de/a};q,q\right].\nonumber
\end{align}
Note that this holds also when $b$ is depending on $m$. (In the proof of Theorem \ref{n=3-henkan},
we will consider that case $b=q^m a q/f$.)
\end{lem}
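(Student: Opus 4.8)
The plan is to prove the asserted identity \emph{termwise in} $m$. Both sides carry the common factor $\frac{(q/f,q^{-\theta};q)_m}{(q,q^{-\theta}q^2/f^2;q)_m}q^m$, and the remaining $m$-dependent coefficients differ only through $aq/f\leftrightarrow aq/fg$ and $aq\leftrightarrow aq/g$. Dividing out the common factor therefore reduces the whole statement to the single-$m$ expansion
\begin{align}
&{}_{10}W_9(a;b,c,d,e,f,g,q^{-m};q,a^3q^{m+3}/bcdefg)\nonumber\\
&=\frac{(aq;q)_m\,(aq/fg;q)_m}{(aq/f;q)_m\,(aq/g;q)_m}
\sum_{j=0}^{m}\frac{(q^{-m},f,g,aq/de;q)_j}{(q,aq/d,aq/e,q^{-m}fg/a;q)_j}q^{j}\,
{}_4\phi_3\!\left[{q^{-j},d,e,aq/bc\atop aq/b,aq/c,q^{-j}de/a};q,q\right].\nonumber
\end{align}
Once this is proved for arbitrary constant $b$, multiplying by $\frac{(q/f,q^{-\theta};q)_m}{(q,q^{-\theta}q^2/f^2;q)_m}q^m$ and summing over $m$ reproduces Lemma \ref{lem-2} exactly, the factor $\frac{(aq;q)_m(aq/fg;q)_m}{(aq/f;q)_m(aq/g;q)_m}$ converting the left coefficient into the right one; and because the reasoning is termwise in $m$, it survives verbatim when $b$ is permitted to depend on $m$, in particular for $b=q^m aq/f$.

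For the single-$m$ expansion I would first dispose of the inner ${}_4\phi_3$. It is terminating and (as is readily checked) balanced, so by Watson's transformation \cite{GR} it equals, up to the factor $\frac{(aq/d,aq/e;q)_j}{(aq,aq/de;q)_j}$, the very-well-poised series ${}_8W_7(a;b,c,d,e,q^{-j};q,a^2q^{j+2}/bcde)$. Substituting this identification and cancelling the $(aq/de;q)_j$, $(aq/d,aq/e;q)_j$ factors recasts the claim as the ${}_8W_7$-expansion
\begin{align}
&{}_{10}W_9(a;b,c,d,e,f,g,q^{-m};q,a^3q^{m+3}/bcdefg)\nonumber\\
&=\frac{(aq;q)_m(aq/fg;q)_m}{(aq/f;q)_m(aq/g;q)_m}
\sum_{j=0}^{m}\frac{(q^{-m},f,g;q)_j}{(q,aq,q^{-m}fg/a;q)_j}q^{j}\,
{}_8W_7(a;b,c,d,e,q^{-j};q,a^2q^{j+2}/bcde).\nonumber
\end{align}
This is a ``parameter-removing'' expansion that strips the two parameters $f,g$ from a very-well-poised series, and I would establish it in exactly the spirit of the proof of Proposition \ref{koushiki-1}: expand both the left-hand ${}_{10}W_9$ and each right-hand ${}_8W_7$ into their defining finite sums, interchange the order of the two summations, and evaluate the resulting inner sum in closed form by the $q$-Saalschütz summation. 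The very-well-poised factors $\frac{1-aq^{2k}}{1-a}$ match on both sides, and the prefactor $\frac{(aq;q)_m(aq/fg;q)_m}{(aq/f;q)_m(aq/g;q)_m}$ arises precisely from this closed evaluation.

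The main obstacle will be this last step. One must arrange the double sum so that, after the interchange and a shift of the summation index, the inner sum is manifestly Saalschützian, and then verify that every $q$-shifted factorial — including the powers of $q$ concealed in $q^{-j}$, $q^{-m}$ and in the $2$-balanced argument $a^3q^{m+3}/bcdefg$ — conspires to reproduce the generic $k$-th term of the target ${}_{10}W_9$. Apart from this bookkeeping the manipulations are purely formal: the reduction of Lemma \ref{lem-2} to the single-$m$ statement, the Watson identification, and the extension to $m$-dependent $b$ are all immediate.
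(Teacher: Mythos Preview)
Your reduction to a termwise-in-$m$ identity is exactly what the paper does, and the single-$m$ expansion you isolate is precisely Exercise~2.20 of Gasper--Rahman~\cite{GR}, which the paper simply cites and substitutes. Your proposed derivation of that identity via Watson's transformation followed by a Saalsch\"utz evaluation is a valid route (and is essentially how that exercise is proved), but it is unnecessary here since the result is standard; otherwise the argument, including the remark on $m$-dependent $b$, matches the paper's.
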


\begin{proof}
Recall Exercise 2.20 (p.53) in Gasper-Rahman \cite{GR}:
\begin{align}
&{}_{10}W_9(a;b,c,d,e,f,g,q^{-n};q,a^3q^{n+3}/bcdefg)\\
&=
{(aq,aq/fg;q)_n
\over (aq/f,aq/g;q)_n}
\sum_{j=0}^n
{(q^{-n},f,g,aq/de;q)_j\over (q,aq/d,aq/e,fgq^{-n}/a;q)_j}q^j
{}_4\phi_3
\left[ {q^{-j},d,e,aq/bc \atop aq/b,aq/c,deq^{-j}/a};q,q\right].\nonumber
\end{align}
Then we have
\begin{align}
&\mbox{LHS}\nonumber\\
&=\sum_{m\geq 0}\sum_{j\geq 0}
{(q/f,q^{-\theta_{1,2}},aq/f;q)_m\over 
(q,q^{-\theta_{1,2}}q^2/f^2,aq;q)_m}q^m\\
&\times
{(aq,aq/fg;q)_m\over (aq/f,aq/g;q)_m}
{(q^{-m},f,g,aq/de;q)_j\over 
(q,{aq/ d},
{ aq/ e},
q^{-m}fg/a;q)_j }q^j 
{}_4\phi_3\left[
{q^{-j},d,e,{ aq/bc} \atop
{ aq/ b},
{ aq/ c},
q^{-j}de/a};q,q\right]\nonumber\\
&=\mbox{RHS}.\nonumber
\end{align}
Here one should note that we have $(q^{-m};q)_j=0$ when $m<j$, and 
one can exchange the order of the summations as 
$\sum_{m\geq 0}\sum_{j\geq0}=
\sum_{j\geq 0}\sum_{m\geq0}=\sum_{j\geq 0}\sum_{m\geq j}$.
\end{proof}

\begin{prop}\label{koushiki-2}
Let $\theta$ be a nonnegative integer. We have
\begin{align}
&\sum_{m\geq 0}
{(q/f,q^{-\theta},aq/f;q)_m\over 
(q,q^{-\theta}q^2/f^2,aq;q)_m}q^m
{}_{10}W_9 (a;q^maq/f,c,d,e,f,af/e,q^{-m};q,aq^{2}/cdf)\nonumber\\
&=
{(e,f;q)_\theta\over (eq/f,f^2/q;q)_\theta}
\sum_{j\geq 0}
(aq^3/cdf^2)^j
{(c,d,f,af/e,q^{-\theta},q^{-\theta}f/e;q)_j\over
(q,aq/c,aq/d,aq/e,q^{-\theta+1}/e,q^{-\theta+1}/f;q)_j}\\
&\times
{}_5\phi_4\left[
{q^{-j},aq/cd,q/f,q^{\theta-j}f,q^{-j}e/a\atop
f,q^{-j+1}/d,q^{-j+1}/c, q^{\theta-j}eq/f};q,q\right].\nonumber
\end{align}
\end{prop}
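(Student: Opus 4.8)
The plan is to obtain Proposition \ref{koushiki-2} as the specialization of Lemma \ref{lem-2} that is flagged in the remark following that lemma. First I would set $b=q^{m}aq/f$ and $g=af/e$ in Lemma \ref{lem-2}. With these choices the argument of the very-well-poised series on the left collapses, since
\begin{align}
\frac{a^{3}q^{m+3}}{bcdefg}=\frac{a^{3}q^{m+3}}{(q^{m}aq/f)\,c\,d\,e\,f\,(af/e)}=\frac{aq^{2}}{cdf},\nonumber
\end{align}
so the left-hand side of Lemma \ref{lem-2} becomes \emph{verbatim} the left-hand side of Proposition \ref{koushiki-2}, including the outer weight $(q/f,q^{-\theta},aq/f;q)_m/(q,q^{-\theta}q^2/f^2,aq;q)_m$ and the ${}_{10}W_9(a;q^{m}aq/f,c,d,e,f,af/e,q^{-m};q,aq^{2}/cdf)$. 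The remark after Lemma \ref{lem-2} guarantees the identity persists even though $b$ now depends on the summation index $m$.

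Next I would feed the same substitutions into the right-hand side of Lemma \ref{lem-2} and reduce every parameter: one computes $aq/fg=qe/f^{2}$, $aq/g=qe/f$, $fg/a=f^{2}/e$, $aq/b=q^{-m}f$, and $aq/bc=q^{-m}f/c$. After this the right-hand side is a double sum over $j\ge 0$ and $m\ge j$ whose inner factor is a terminating ${}_4\phi_3$ in an index I will call $\ell$ (with $0\le\ell\le j$). Crucially, the $m$-dependence is confined to the outer weight, to the ratio $(q^{-m};q)_j/(q^{-m}f^{2}/e;q)_j$ coming from the $j$-factor, and to the two entries $q^{-m}f/c$ and $q^{-m}f$ of the ${}_4\phi_3$.

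The heart of the argument is then to interchange the order of summation and carry out the $m$-sum for fixed $j$ and $\ell$. Reindexing $m=j+n$ and collecting the $q^{-m}$-dependent Pochhammer ratios together with the $q^{m}$ weight, I expect the $m$-sum to reduce to a terminating balanced (Saalschützian) series, which I would evaluate by the $q$-Saalschütz summation of \cite{GR} — the very device already exploited in the proof of Proposition \ref{koushiki-1}. This evaluation is what should produce the $\theta$-dependent prefactor $(e,f;q)_\theta/(eq/f,f^{2}/q;q)_\theta$ together with the explicit $j$-coefficient $(aq^{3}/cdf^{2})^{j}(c,d,f,af/e,q^{-\theta},q^{-\theta}f/e;q)_j/(q,aq/c,aq/d,aq/e,q^{-\theta+1}/e,q^{-\theta+1}/f;q)_j$. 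What survives after the $m$-sum is a single finite sum over $\ell$, which I would then rearrange into the ${}_5\phi_4$ displayed in the statement.

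I expect the main obstacle to be exactly this $m$-summation step. The delicate points are: keeping the three $q^{-m}$-dependent factors and the $q^{m}$ weight consistent under the reindexing; verifying that the resulting series is genuinely balanced so that $q$-Saalschütz applies; and matching the five numerator and four denominator entries $q^{-j},aq/cd,q/f,q^{\theta-j}f,q^{-j}e/a$ over $f,q^{-j+1}/d,q^{-j+1}/c,q^{\theta-j}eq/f$ of the target ${}_5\phi_4$ (note in particular that the ${}_4\phi_3$'s lower entry $q^{-j}de/a$ must be traded for the numerator entry $q^{-j}e/a$, so the reorganization is a genuine transformation rather than a relabeling). None of these steps is conceptually deep, but tracking the powers of $q$ and the shifted factorials faithfully throughout is where the real work lies.
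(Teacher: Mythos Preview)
Your overall architecture is right --- specialize Lemma \ref{lem-2} at $b=q^{m}aq/f$, $g=af/e$, then collapse the $m$-sum --- and this is exactly what the paper does. But there is a missing ingredient that is not just bookkeeping: before the $m$-sum the paper applies the Sears transformation \cite[(2.10.4)]{GR} \emph{twice} to the inner ${}_4\phi_3$, turning
\[
{}_4\phi_3\!\left[\begin{matrix}q^{-j},d,e,aq/bc\\ aq/b,\,aq/c,\,q^{-j}de/a\end{matrix};q,q\right]
\quad\longrightarrow\quad
(\text{prefactors})\cdot
{}_4\phi_3\!\left[\begin{matrix}q^{-j},aq/cd,\,q^{-j}b/a,\,q^{-j}e/a\\ q^{-j}be/a,\,q^{-j+1}/d,\,q^{-j+1}/c\end{matrix};q,q\right].
\]
This is not cosmetic. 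In the original ${}_4\phi_3$ the $m$-dependence sits in the pair $aq/bc=q^{-m}f/c$ and $aq/b=q^{-m}f$; if you sum over $m$ directly with those in place, then after the standard conversions $(xq^{-m};q)_N=(x;q)_N\,(q/x;q)_m/(q^{1-N}/x;q)_m\,q^{-mN}$ and the shift $m=j+n$ you are left with a terminating balanced ${}_4\phi_3$ in $n$ (four upper and three lower parameters, involving $c$), which $q$-Saalsch\"utz does \emph{not} sum. After the double Sears, by contrast, the $m$-dependence in the inner sum is through $(q^{-j}b/a;q)_k/(q^{-j}be/a;q)_k=(q^{m-j+1}/f;q)_k/(q^{m-j+1}e/f;q)_k$, which is free of $c,d$; one extra numerator/denominator pair then cancels against the outer weight and the $m$-sum becomes a genuine balanced ${}_3\phi_2$ in $m-j$, summable by $q$-Saalsch\"utz to the displayed product $(af/g,q^{k}f;q)_{\theta-j}/(f^{2}/q,q^{k}aq/g;q)_{\theta-j}$.

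So the ``reorganization'' you correctly sense is needed --- trading $q^{-j}de/a$ for $q^{-j}e/a$, producing $aq/cd$, $q^{-j+1}/c$, $q^{-j+1}/d$ --- is precisely the double Sears step, and it has to be performed \emph{before} the $m$-summation, not after. Once you insert that step, the rest of your plan (Saalsch\"utz on the $m$-sum, then read off the ${}_5\phi_4$ in the surviving $k$-sum) goes through exactly as in the paper.
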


\begin{proof}
In this proof, we use the notations $b=q^m aq/f,g=af/e$ for simplicity of display.
Use the Sears transformation \cite{GR} (2.10.4) twice, we have
\begin{align}
&
{}_4\phi_3\left[
{q^{-j},d,e,{\scriptstyle aq\over bc} \atop
{\scriptstyle aq\over b},
{\scriptstyle aq\over c},
q^{-j}de/a};q,q\right]\nonumber\\
&=
{(aq/be,q^{-j}d/a;q)_j \over (aq/b,q^{-j}de/a;q)_j}e^{j}
{}_4\phi_3\left[
{q^{-j},e,{\scriptstyle aq\over cd},b \atop
{\scriptstyle aq\over c},
q^{-j}{\scriptstyle be\over a},
{\scriptstyle aq\over d}};q,q\right]\\
&=
{(aq/be,q^{-j}d/a;q)_j \over (aq/b,q^{-j}de/a;q)_j}e^{j}
{(d,c;q)_j\over (aq/c,aq/d;q)_j}(aq/cd)^j
{}_4\phi_3\left[
{q^{-j},{\scriptstyle aq\over cd},q^{-j}{\scriptstyle b\over a},
q^{-j} {\scriptstyle e\over a}\atop
q^{-j}{\scriptstyle be\over a},
q^{-j}{\scriptstyle q\over d},
q^{-j}{\scriptstyle q\over c}};q,q\right].\nonumber
\end{align}
Use the $q$-Saalsch\"utz summation formula, then we have
\begin{align}
&
\sum_{m\geq j}
{(q/f,q^{-\theta},aq/fg;q)_m \over  
(q,q^{-\theta}q^2/f^2,aq/g;q)_m }q^m
{(q^{-m},aq/be;q)_j\over 
(q^{-m}fg/a,aq/b;q)_j}
{(q^{-j}b/a;q)_k \over (q^{-j}be/a;q)_k}
\nonumber\\
&=
(q/f^2)^j{(q^{-\theta};q)_j\over (q^{-\theta}q^2/f^2;q)_j}
{(q/f;q)_k\over (aq/g;q)_k}
\sum_{m\geq j}
{(q^{-\theta+j},q^k q/f,aq/fg;q)_{m-j} \over 
(q^{-\theta+j}q^2/f^2,q^kaq/g,q;q)_{m-j}}q^{m-j}
\nonumber\\
&=
(q/f^2)^j
{(q^{-\theta};q)_j\over (q^{-\theta}q^2/f^2;q)_j}
{(q/f;q)_k\over (aq/g;q)_k}
{(af/g,q^kf;q)_{\theta-j}\over (f^2/q,q^k aq/g;q)_{\theta-j}}.
\end{align}

Hence from these and Lemma \ref{lem-2}, we have
\begin{align}
&\mbox{LHS}\nonumber\\
&=
\sum_{j\geq 0}\sum_{k\geq 0}
\sum_{m\geq j}
{(q/f,q^{-\theta},aq/fg;q)_m \over  
(q,q^{-\theta}q^2/f^2,aq/g;q)_m }q^m
{(q^{-m},f,g,aq/de;q)_j \over 
(q,{ aq/ d},
{ aq/ e},
q^{-m}fg/a;q)_j }q^j \nonumber\\
&\times
{(aq/be,dq^{-j}/a;q)_j \over (aq/b,deq^{-j}/a;q)_j}e^{j}
{(d,c;q)_j\over (aq/c,aq/d;q)_j}(aq/cd)^j\nonumber\\
&\times
{(q^{-j},aq/cd,q^{-j}b/a,q^{-j}e/a;q)_k\over 
(q,q^{-j}be/a,q^{-j}q/d,q^{-j}q/c;q)_k}q^k\\
&=
\sum_{j\geq 0}\sum_{k\geq 0}
(q/f^2)^jq^je^{j}(aq/cd)^j
{(q^{-\theta};q)_j\over (q^{-\theta}q^2/f^2;q)_j}
{(q/f;q)_k\over (aq/g;q)_k}
{(af/g,q^kf;q)_{\theta-j}\over (f^2/q,q^k aq/g;q)_{\theta-j}}\nonumber\\
&\times
{(f,g,aq/de;q)_j \over 
(q,{ aq/ d},
{ aq/ e};q)_j } {(dq^{-j}/a;q)_j \over (deq^{-j}/a;q)_j}
{(d,c;q)_j\over (aq/c,aq/d;q)_j}\nonumber\\
&\times
{(q^{-j},aq/cd,q^{-j}e/a;q)_k\over 
(q,q^{-j}q/d,q^{-j}q/c;q)_k}q^k\nonumber\\
&=
{(f,af/g;q)_\theta\over (aq/g,f^2/q;q)_\theta}
\sum_{j\geq 0}
(aq^3/cdf^2)^j
{(c,d,f,g,q^{-\theta},q^{-\theta}g/a;q)_j\over
(q,aq/c,aq/d,aq/e,q^{-\theta}gq/af,q^{-\theta}q/f;q)_j}\nonumber\\
&\times
{}_5\phi_4\left[
{q^{-j},aq/cd,q/f,q^{\theta-j}f,q^{-j}e/a\atop
f,q^{-j+1}/d,q^{-j+1}/c, q^{\theta-j}aq/g};q,q\right]\nonumber\\
&=\mbox{RHS}.\nonumber
\end{align}
\end{proof}

%%%%%%%%%%%%%%%%%%%%%%%%%%%%%%%%%%%%%%%%%%%%%%%%%
\subsection{Proof of Theorem \ref{n=3-henkan}}\label{n=3-syoumei}
%Now we give a proof of Theorem \ref{n=3-henkan}.
We specialize the variables $a,b,c,d,e,f,g$ as
\begin{align}
&a=q^{-\rho}s_2/s_1,\nonumber \\
&b=q^{-\rho+m}t s_2/s_1,\qquad \qquad\,\,\,\, aq/b=q^{1-m}/t,\nonumber\\
&c=q s_3/t s_1,\qquad  \qquad\qquad\quad  aq/c=q^{-\rho}ts_2/s_3,\label{kigou}\\
&d=q^{-\rho}s_2/s_3,\qquad \qquad\quad\,\,\,\,\,\,\, aq/d=qs_3/s_1,\nonumber\\
&e=q s_2/t s_1,\qquad \qquad\qquad\quad aq/e=q^{-\rho}t,\nonumber\\
&f=q/t,\qquad \qquad\qquad\qquad\,\,\,
 aq/f=q^{-\rho}ts_2/s_1,\nonumber\\
&g=q^{-\rho},\qquad \qquad\qquad\quad\,\,\,\, \,\,\,\, aq/g=qs_2/s_1,\nonumber
\end{align}
in the transformation formulas we have constructed above. 

\begin{lem}\label{lem-3}
Let $\theta,\rho$ be nonnegative integers. 
We have (with (\ref{kigou}))
\begin{align}
&\sum_{k\geq 0}c_3(\theta-k,k,\rho-k;s_1,s_2,s_3|q,q/t) \nonumber\\
&=
c_3(\theta,0,\rho;s_1,s_2,s_3|q,q/t) \\
&\times {}_{14}W_{13}(a;q^{-\theta},q^{\theta}af,c,d,e,f,g,
{\textstyle
\left(aq\over f\right)^{1\over 2}},
-{\textstyle
\left(aq\over f\right)^{1\over 2}},
{\textstyle
\left(aq^2\over f\right)^{1\over 2}},
-{\textstyle
\left(aq^2\over f\right)^{1\over 2}};q,t^2).\nonumber
\end{align}
\end{lem}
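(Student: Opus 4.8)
The plan is to prove Lemma~\ref{lem-3} by a direct computation. I would take the explicit product formula for $c_3(\theta_{12},\theta_{13},\theta_{23};s_1,s_2,s_3|q,t)$ recorded in the Example of Section~3, replace $t$ by $q/t$, specialize the indices to $(\theta_{12},\theta_{13},\theta_{23})=(\theta-k,k,\rho-k)$, and then divide by the $k=0$ term $c_3(\theta,0,\rho;s_1,s_2,s_3|q,q/t)$. The assertion then becomes the statement that the resulting ratio, viewed as a function of $k$, coincides with the $k$-th term of the very-well-poised series ${}_{14}W_{13}(a;q^{-\theta},q^{\theta}af,c,d,e,f,g,\ldots;q,t^2)$ under the dictionary~\eqref{kigou}. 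Since the parameter $q^{-\theta}$ and the parameter $g=q^{-\rho}$ both force truncation, both sides are finite sums and it suffices to match the individual terms.

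First I would fix $a=q^{-\rho}s_2/s_1$ and record the identifications implied by~\eqref{kigou}, in particular $q^{-\rho}(q/t)s_2/s_1=af$, $q^{-\rho}q\,s_2/s_1=qa$, and $q^{-\rho}s_2/s_1=a$. With these, the factor $\tfrac{(q^{\theta_{13}-\theta_{23}}(q/t)s_2/s_1;q)_{\theta_{12}}}{(q^{\theta_{13}-\theta_{23}}q\,s_2/s_1;q)_{\theta_{12}}}$, in which $q^{\theta_{13}-\theta_{23}}=q^{2k-\rho}$ and $\theta_{12}=\theta-k$, becomes $\tfrac{(q^{2k}af;q)_{\theta-k}}{(q^{2k+1}a;q)_{\theta-k}}$. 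The doubled shift $q^{2k}$ appearing here is the source of the very-well-poised structure: upon applying the reversal formula $(x;q)_{n-k}=(x;q)_n\,(q^{1-n}/x;q)_k^{-1}(-q/x)^k q^{\binom{k}{2}-nk}$ to the symbols of length $\theta-k$ and $\rho-k$, the $q^{2k}$-shifts produce precisely the factor $\tfrac{1-aq^{2k}}{1-a}$ together with $\tfrac{(aq/f;q)_{2k}}{(af;q)_{2k}}$; these are exactly the contributions of the leading entry $a$ and of the four closure entries $\pm(aq/f)^{1/2},\pm(aq^2/f)^{1/2}$ of the ${}_{14}W_{13}$, since $(\sqrt{z};q)_k(-\sqrt{z};q)_k=(z;q^2)_k$ collapses those four symbols to $(aq/f;q)_{2k}/(af;q)_{2k}$.

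It then remains to match the remaining seven parameters. Each of $c,d,e$ and the truncating pair $q^{-\theta},q^{\theta}af$, together with $f$ and $g$, should be paired with exactly one factor of the form $\tfrac{(\,\cdot\,;q)_k}{(qa/\,\cdot\,;q)_k}$, assembled from the genuinely length-$k$ symbols (those carrying $\theta_{13}=k$, namely the factors with $(q/t)s_2/s_1$, $(q/t)s_3/s_1$, $q^{-k}t$, and $q^{-(\rho-k)}q s_2/(q/t)s_1$) and from the reflected tails of the reversed length-$(\theta-k)$ and length-$(\rho-k)$ symbols. Finally I would collect all the monomial prefactors in $q$, $t$ and the ratios $s_j/s_i$ thrown off by the reversals and verify that they combine to $(t^2)^k$, thereby pinning down the argument $z=t^2$; every $\rho$- and $\theta$-dependent factor that is independent of $k$ is absorbed into the normalizing term $c_3(\theta,0,\rho;s_1,s_2,s_3|q,q/t)$ on the right-hand side.

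I expect the main obstacle to be the simultaneous reversal of the two \emph{decreasing} indices $\theta-k$ and $\rho-k$: each reversal splits a length-$(n-k)$ symbol into a length-$n$ symbol (contributing to the normalization) times a length-$k$ symbol with reflected argument, and one must track carefully how these reflected arguments recombine with the honest length-$k$ factors so that the complementary denominators $q^{\theta+1}a$, $q^{1-\theta}/f$, $qa/c$, $qa/d$, $qa/e$, $qa/f$, $qa/g$ emerge with the correct multiplicities and no spurious surviving factors. A secondary point worth checking is that the apparent zeros and poles at $s_j/s_i\in q^{\mathbb Z}$ carried by individual $c_3$-factors are compatible with the truncation built into the ${}_{14}W_{13}$, so that the identity is valid as an identity of rational functions in $(s_2/s_1,s_3/s_2)$ and not merely on a dense subset.
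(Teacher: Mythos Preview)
Your proposal is correct and is exactly the ``straightforward calculation'' the paper invokes: form the ratio $c_3(\theta-k,k,\rho-k;s|q,q/t)/c_3(\theta,0,\rho;s|q,q/t)$ from the explicit $n=3$ product formula, and match it term-by-term with the $k$-th summand of the ${}_{14}W_{13}$ via the dictionary~\eqref{kigou}, the key observation being that the shift $q^{\theta_{13}-\theta_{23}}=q^{2k-\rho}$ in the first factor is what generates the very-well-poised structure $\tfrac{1-aq^{2k}}{1-a}\cdot\tfrac{(aq/f;q)_{2k}}{(af;q)_{2k}}$ encoded by the four square-root parameters. Your outline of the reversal bookkeeping and the identification of the argument $z=t^2$ is on target; no alternative idea is needed.
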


\begin{proof}
Straightforward calculation.
\end{proof}

Proposition \ref{koushiki-1}, \ref{koushiki-2} and Lemma \ref{lem-3} give us the following formula.

\begin{prop}\label{n=3-keisuu}
Let $\theta,\rho$ be nonnegative integers. We have
\begin{align}
&\sum_{k\geq 0}c_3(\theta-k,k,\rho-k;s_1,s_2,s_3|q,q/t)\nonumber\\
&=
t^\theta {(q/t,qs_2/ts_1;q)_\theta\over (q,qs_2/s_1;q)_\theta}
t^\rho {(q/t,qs_3/ts_2;q)_\rho\over (q,qs_3/s_2;q)_\rho}\label{keisuu}\\
&\times \sum_{j\geq 0}
{(q^{-\theta},q^{-\theta}s_1/s_2;q)_j\over 
(q^{-\theta}t,q^{-\theta}t s_1/s_2;q)_j}
{(q^{-\rho},q^{-\rho}s_2/s_3;q)_j\over 
(q^{-\rho}t,q^{-\rho}t s_2/s_3;q)_j}
{(q/t,qs_3/ts_1;q)_j\over (q,qs_3/s_1;q)_j}t^{3j}\nonumber\\
&\times
{}_5\phi_4\left[
{t,t,q^{-j},q^{\theta-j+1}/t,q^{\rho-j+1}/t\atop
q/t,q^{-j}ts_1/s_3,q^{\theta-j+1}s_2/s_1,q^{\rho-j+1} s_3/s_2};q,q\right].\nonumber
\end{align}
\end{prop}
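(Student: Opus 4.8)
The plan is to obtain \eqref{keisuu} by chaining the three preceding results under the specialization \eqref{kigou}, transforming the left-hand side into a single $j$-sum of ${}_5\phi_4$ series. First I would apply Lemma \ref{lem-3}, which already recasts $\sum_{k\ge 0}c_3(\theta-k,k,\rho-k;s_1,s_2,s_3|q,q/t)$ as the product of $c_3(\theta,0,\rho;s_1,s_2,s_3|q,q/t)$ with the very-well-poised series
\[
{}_{14}W_{13}\bigl(a;q^{-\theta},q^{\theta}af,c,d,e,f,g,\ldots;q,t^2\bigr),
\]
the quantities $a,c,d,e,f,g$ being those fixed in \eqref{kigou}. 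This step reduces the assertion to a purely hypergeometric transformation of the ${}_{14}W_{13}$.

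Next I would invoke Proposition \ref{koushiki-1} with $r=5$ and $(a_1,\ldots,a_5)=(c,d,e,f,g)$, expressing the ${}_{14}W_{13}$ as the scalar $(aq,f^2/q;q)_\theta/(af,f;q)_\theta$ times a single $m$-sum of ${}_{10}W_9(a;q^{-m},q^{m}aq/f,c,d,e,f,g;q,t^2)$. Under \eqref{kigou} one has $g=af/e$, and a one-line check gives $aq^2/cdf=t^2$; hence this inner series is precisely the ${}_{10}W_9$ on the left of Proposition \ref{koushiki-2}, with matching $z$-argument. Applying that proposition converts the $m$-sum into $(e,f;q)_\theta/(eq/f,f^2/q;q)_\theta$ times the desired $j$-sum, whose overall power is $(aq^3/cdf^2)^j=t^{3j}$ because $aq^3/cdf^2=(aq^2/cdf)(q/f)=t^2\cdot t=t^3$. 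A term-by-term substitution of \eqref{kigou} then identifies the six numerator and six denominator factors of the $j$-summand and matches the arguments of the surviving
\[
{}_5\phi_4\!\left[{q^{-j},aq/cd,q/f,q^{\theta-j}f,q^{-j}e/a\atop f,q^{-j+1}/d,q^{-j+1}/c,q^{\theta-j}eq/f};q,q\right]
\]
with those of the series in \eqref{keisuu}; for instance $aq/cd=q/f=t$, $q^{\theta-j}f=q^{\theta-j+1}/t$, and $q^{-j}e/a=q^{\rho-j+1}/t$.

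The one genuinely delicate part is the bookkeeping of the three scalar prefactors, and this is the step I expect to be the main obstacle. I would first combine the two transformation factors $(aq,f^2/q;q)_\theta/(af,f;q)_\theta$ and $(e,f;q)_\theta/(eq/f,f^2/q;q)_\theta$: here $(f^2/q;q)_\theta$ and $(f;q)_\theta$ cancel, leaving $(aq;q)_\theta(e;q)_\theta/\bigl((af;q)_\theta(eq/f;q)_\theta\bigr)$. Multiplying this by $c_3(\theta,0,\rho;s|q,q/t)$---whose explicit value is read off from the $n=3$ formula of the Example with $t\mapsto q/t$---and using the reflection identity $(q^{-\theta}a;q)_\theta=(-a)^\theta q^{-\binom{\theta+1}{2}}(q/a;q)_\theta$ to rewrite $(q^{-\theta}t;q)_\theta/(q^{-\theta};q)_\theta=t^\theta(q/t;q)_\theta/(q;q)_\theta$ (and likewise for $\rho$), the $q^{-\rho}$-shifted factors carried by $a,c,d$ must cancel against the corresponding entries of $c_3(\theta,0,\rho;s|q,q/t)$. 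Verifying this telescoping carefully is the crux; once done it leaves exactly
\[
t^{\theta}\frac{(q/t,qs_2/ts_1;q)_\theta}{(q,qs_2/s_1;q)_\theta}\,
t^{\rho}\frac{(q/t,qs_3/ts_2;q)_\rho}{(q,qs_3/s_2;q)_\rho},
\]
and together with the argument matching of the ${}_5\phi_4$ this establishes \eqref{keisuu}.
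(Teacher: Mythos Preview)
Your proposal is correct and follows exactly the route the paper takes: the proof in the paper is the one-line statement that Proposition~\ref{koushiki-1}, Proposition~\ref{koushiki-2} and Lemma~\ref{lem-3} combine to give \eqref{keisuu}, and you have spelled out precisely this chain together with the parameter checks (in particular $g=af/e$, $aq^2/cdf=t^2$, $aq^3/cdf^2=t^3$) and the prefactor bookkeeping that the paper leaves implicit.
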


Now we are ready to state our proof of Theorem \ref{n=3-henkan}.

\begin{proof}[Proof of Theorem \ref{n=3-henkan}]
The coefficient of the monomial $(x_2/x_1)^{\theta}(x_3/x_2)^{\rho}$ 
in the series $p_3(x;s|q,q/t)$ is $\sum_{k\geq 0}c_3(\theta-k,k,\rho-k;s_1,s_2,s_3|q,q/t)$. 
On the other hand, it can be easily shown that the 
 coefficient of the monomial $(x_2/x_1)^{\theta}(x_3/x_2)^{\rho}$ 
 in the series on RHS of (\ref{n=3}) is given by the RHS of (\ref{keisuu}).
Hence we have the equality  (\ref{n=3}).
\end{proof}
%%%%%%%%%%%%%%%%%%%%%%%%%%%%%%%%%%%%%%%%
\section{Family of commutative integral transformations}
We revisit the problem considered in \cite{S2006}. 

Suppose  $g(y)$ is a joint eigenfunction of 
Ruijsenaars-Macdonald operators in $n$ variables $y=(y_1,\ldots,y_n)$ 
\begin{align}
D^y(u)g(y)=g(y)\,\dprod{i=1}{n}(1-ut^{n-i}q^{\lambda_i})
\end{align}
with complex parameters $\lambda=(\lambda_1,\ldots,\lambda_n)$.  
Define a function $f(x)$ in the same number of
variables $x=(x_1,\ldots,x_{n})$ by an integral 
transformation of the form 
\begin{align}%\label{eq:ITf}
f(x)
=
\dint{}{}
\dprod{i,j=1}{n}
\dfrac{(tx_j/y_i;q)_\infty}{(x_j/y_i;q)_\infty}
\dprod{1\le i,j\le n;\,i\ne j}{} 
\dfrac{(y_i/y_j;q)_\infty}{(ty_i/y_j;q)_\infty}
c(x,y)g(y)\,d\omega(y),
\end{align}
where $d\omega(y)$ is a $q$-invariant measure on ${\mathbb T}^n_y$, 
$c(x,y)=c(x_1,\ldots,x_n,y_1,\ldots,y_n)$ satisfies the conditions 
\begin{align}
T_{q,x_i} c(x,y)=\alpha c(x,y),\qquad T_{q,y_i} c(x,y)=\alpha^{-1} c(x,y) \qquad (1\leq i\leq n),
\end{align}
where $\alpha\in {\mathbb C}^*$ is a parameter. 
One of the choice of such $ c(x,y)$ is
\begin{align}
c(x,y)=(x/y)^\lambda \prod_{i=1}^n {\theta(\alpha q y_i/s_i t x_i;q) \over \theta(q y_i/t x_i;q)}
\prod_{1\leq i<j\leq n} 
{\theta(q y_j/x_i;q)\theta(q y_j/ty_i;q) \over \theta(q y_j/tx_i;q)\theta(q y_j/y_i;q)},
\end{align}
where $s_i=t^{n-i}q^{\lambda_i}$ $(i=1,\ldots,n)$.
Then by using the same argument as in \cite{MN1997}, we can conclude that we have
\begin{align}
D^x(u)f(x)=f(x)\dprod{j=1}{n}(1-ut^{n-j}q^{\lambda_j}).
\end{align}

Let $t$ satisfies $|t|>1$, and fix $r$ such that $0<r<1$.
Consider a function $f(x)=x^\lambda \varphi(x)$ with $\varphi(x)$
being a holomorphic function in the variables $(x_2/x_1,\ldots,x_{n}/x_{n-1})$ defined on 
$ |x_{i+1}/x_i|<r$ $(i=1,\ldots,n-1)$.
Let  $\alpha\in {\mathbb C}^*$ be a parameter, and set 
\begin{align}
\psi(x)&=
%\int_{C_1} \cdots \int_{C_n}\prod_{k=1}^n {dy_k\over 2\pi i y_k} \Pi(x,1/y)\Delta(y) c(x,y)f(y)\nonumber\\
%x^\lambda
\int_{C_1} \cdots \int_{C_n}\prod_{k=1}^n {dy_k\over 2\pi i y_k} 
\prod_{\ell=1}^n 
{\theta(\alpha q y_\ell/s_\ell tx_\ell;q)\over \theta(q y_\ell/tx_\ell;q)}
{(tx_\ell/y_\ell;q)_\infty \over(x_\ell/y_\ell;q)_\infty }\\
&\times
\prod_{1\leq i< j\leq n}{(tx_j/y_i;q)_\infty \over(x_j/y_i;q)_\infty }
{(qy_j/x_i;q)_\infty \over(qy_j/tx_i;q)_\infty }
(1-y_j/y_i){(qy_j/ty_i;q)_\infty \over(ty_j/y_i;q)_\infty }
\varphi(y),
\nonumber
\end{align}
where the contours $C_i$ are defined by $|y_i|=a_i$ with 
$
|q^{-1}t x_i|>a_i>|x_i|$ $(i=1,\ldots,n)$.
Then $\psi(x)$ is a holomorphic function  in the variables $(x_2/x_1,\ldots,x_{n}/x_{n-1})$ on 
$ |x_{i+1}/x_i|<|q /t|r$ $(i=1,\ldots,n-1)$.
Since $|t|>1$, our integration cycle is $q$-shift invariant.
We define the integral transform $(I(\alpha)  f)(x)$ of $f(x)=x^\lambda \varphi(x)$ by setting
$
\left(I(\alpha)  f\right)(x)= x^\lambda \psi(x)
$.

\begin{thm}
For any $\alpha,\beta\in {\mathbb C}^*$, we have
\begin{align}
&[D^x(u),I(\alpha)]=0,\label{D-I}\\
&[I(\alpha),I(\beta)]=0.\label{I-I}
\end{align}
\end{thm}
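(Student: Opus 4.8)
The plan is to treat the two relations in sequence: first establish \eqref{D-I}, which carries the genuine analytic content, and then deduce \eqref{I-I} from it by a spectral (diagonalization) argument. Throughout I fix $|t|>1$ and regard $I(\alpha)$ as an endomorphism of the space $x^\lambda\mathcal{O}$ of functions $f(x)=x^\lambda\varphi(x)$ with $\varphi$ holomorphic in a polydisc $|x_{i+1}/x_i|<r$; equivalently I may pass to the formal level $x^\lambda\mathbb{C}[[x^{-Q_+}]]$, where $I(\alpha)$ is defined coefficientwise and the shrinking of the convergence radius from $r$ to $|q/t|\,r$ is irrelevant.

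For \eqref{D-I} I would run the kernel-function argument of \cite{MN1997}, now applied at the level of the integral operator rather than to a single eigenfunction. Writing the kernel of $I(\alpha)$ as $\Phi_\alpha(x,y)=H(x,y)\,c(x,y)$, where $H$ is the Cauchy-times-weight kernel of \cite{MN1997}, the essential step is the kernel identity that transfers the Ruijsenaars--Macdonald operator in $x$ to the one in $y$ across $\Phi_\alpha$. Here the deformation factor $c(x,y)$, characterized by $T_{q,x_i}c=\alpha c$ and $T_{q,y_i}c=\alpha^{-1}c$, is calibrated through its theta-function factors so that this transfer reproduces $D^y(u)$ with the same ($\alpha$-independent) spectral data $\prod_i(1-ut^{n-i}q^{\lambda_i})$ recorded in the eigenvalue relation displayed just before the theorem. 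Once the kernel identity is in place, I move $D^x(u)$ under the integral sign (legitimate because $T_{q,x_j}$ commutes with the contour integral, the cycles $C_i=\{|y_i|=a_i\}$ being $q$-shift invariant thanks to $|t|>1$), transfer it to a $y$-operator acting on $\Phi_\alpha$, and integrate by parts against the $q$-invariant measure; the formal adjoint of the transferred operator is again the Macdonald operator because the weight is built into $H$. This yields $D^x(u)\,I(\alpha)f=I(\alpha)\,D^x(u)f$ for every admissible $f$, which is \eqref{D-I}.

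Granting \eqref{D-I}, the relation \eqref{I-I} becomes purely spectral. By \eqref{D-I} the operator $I(\alpha)$ commutes with the entire commutative family $D^x_1,\ldots,D^x_n$, hence preserves their joint eigenspaces in $x^\lambda\mathcal{O}$. Those eigenspaces are spanned by the eigenfunctions $g_{\lambda-\mu}(x)=x^{\lambda-\mu}\varphi_n(x;t^\delta q^{\lambda-\mu}|q,t)$ with $\mu\in Q_+$ (Theorem \ref{thm:eigenDfn}), which form a triangular, hence topological, basis of $x^\lambda\mathbb{C}[[x^{-Q_+}]]$ with joint eigenvalues $\prod_i(1-ut^{n-i}q^{\lambda_i-\mu_i})$. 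For generic $\lambda$ these eigenvalues are pairwise distinct, so every joint eigenspace is one-dimensional; consequently $I(\alpha)g_{\lambda-\mu}=c_{\lambda,\mu}(\alpha)\,g_{\lambda-\mu}$ for scalars $c_{\lambda,\mu}(\alpha)$, and likewise for $\beta$. Therefore $I(\alpha)I(\beta)g_{\lambda-\mu}=c_{\lambda,\mu}(\alpha)c_{\lambda,\mu}(\beta)g_{\lambda-\mu}=I(\beta)I(\alpha)g_{\lambda-\mu}$. Since the $g_{\lambda-\mu}$ span, \eqref{I-I} holds for generic $\lambda$, and then for all $\lambda$ by continuity in the parameters.

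The main obstacle is the kernel identity underlying \eqref{D-I}: one must check that the $\alpha$-twisted factor $c(x,y)$ is exactly compatible with the Macdonald kernel identity, so that the transferred operator is $D^y(u)$ with no residual power of $\alpha$ surviving in the eigenvalue. A crude count shows that a bare quasi-constant satisfying $T_{q,x_i}c=\alpha c$ alone would rescale the spectral variable $u$ by a power of $\alpha$, so it is precisely the explicit $\alpha$-dependence of the theta factor $\theta(\alpha qy_i/s_itx_i;q)$ that must absorb this shift; pinning this down is where the computation of \cite{S2006} is needed. A secondary, purely technical point is the rigorous justification of the term-by-term passage of $D^x(u)$ and of the $q$-integration by parts through the infinite contour sums, which again rests on the $q$-invariance of the cycles guaranteed by $|t|>1$.
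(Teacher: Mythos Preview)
Your proposal is correct and follows essentially the same two-step logic as the paper, though you spell out considerably more than the paper's two-sentence proof. For \eqref{D-I} the paper relies on the paragraph immediately preceding the theorem, which already records that the kernel-function argument of \cite{MN1997} (applied with $m=n$ and the quasi-constant $c(x,y)$) carries any $D^y(u)$-eigenfunction to a $D^x(u)$-eigenfunction with the \emph{same} eigenvalue; this is exactly your operator-level kernel identity $D^x(u)\circ I(\alpha)=I(\alpha)\circ D^y(u)$, and the paper's terse ``since $I(\alpha)(x^\lambda p_n)\propto x^\lambda p_n$'' is the specialization of that identity to the unique normalized formal eigenfunction. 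Your worry that the bare conditions $T_{q,x_i}c=\alpha c$, $T_{q,y_i}c=\alpha^{-1}c$ might leave a residual power of $\alpha$ in the spectral variable is answered by the $m=n$ case of the construction in Section~4: the $\alpha$-shifts contributed on the $x$-side and the $y$-side cancel in the transfer, so no $\alpha$-dependence survives in the eigenvalue, and hence \eqref{D-I} holds as an operator identity on all admissible $f$.

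For \eqref{I-I} the paper writes only ``follows from \eqref{D-I}''; your diagonalization argument is precisely the intended justification. Using \eqref{D-I}, each $I(\alpha)$ preserves the joint eigenspaces of $D^x_1,\dots,D^x_n$ on $x^\lambda\mathbb{C}[[x^{-Q_+}]]$; the triangular basis $g_{\lambda-\mu}=x^{\lambda-\mu}p_n(x;q^{-\mu}s)$ ($\mu\in Q_+$) of Theorem~\ref{thm:FF}/\ref{thm:eigenDfn} has simple joint spectrum for generic $\lambda$, so both $I(\alpha)$ and $I(\beta)$ act by scalars on each $g_{\lambda-\mu}$ and therefore commute. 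The only additional remark worth making is that the paper's phrasing ``$I(\alpha)(x^\lambda p_n)\propto x^\lambda p_n$'' alone, read literally, does not yet give \eqref{D-I} on the whole space; one needs either the operator-level kernel identity you wrote down, or else the basis argument you use for \eqref{I-I} applied already at the \eqref{D-I} stage. Either route is implicit in the paper and explicit in your proposal.
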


\begin{proof}
We have (\ref{D-I}) since 
$I(\alpha) ( x^\lambda p_n(x;s|q,t))$ is proportional to $ x^\lambda p_n(x;s|q,t)$.
Then (\ref{I-I}) follows from (\ref{D-I}).
\end{proof}

%%%%%%%%%%%%%%%%%%%%%%%%%%%%%%%%%%%%%%%%

%%%%%%%%%%%%%%%%%%%%%%%%%%%%%%%%%%%%%%%%

%%%%%%%%%%%%%%%%%%%%%%%%%%%%%%%%%%%%%%%%

%%%%%%%%%%%%%%%%%%%%%%%%%%%%%%%%%%%

\end{document}